\documentclass[10pt,bezier]{article}
\usepackage{amsmath,amssymb,amsfonts,graphicx,caption,subcaption}
\usepackage[colorlinks,linkcolor=red,citecolor=blue]{hyperref}

\textwidth = 16.4 cm 
\textheight = 21.5 cm 
\oddsidemargin = 0 cm
\evensidemargin = 0 cm 
\topmargin = -0.7 cm
\parskip = 2.5 mm
\newtheorem{prethm}{{\bf Theorem}}[section]
\newenvironment{thm}{\begin{prethm}{\hspace{-0.5
em}{\bf.}}}{\end{prethm}}
\newtheorem{prepro}{{\bf Theorem}}

\newtheorem{precor}[prethm]{{\bf Corollary}}
\newenvironment{cor}{\begin{precor}{\hspace{-0.5
em}{\bf.}}}{\end{precor}}
\newtheorem{preconj}[prethm]{{\bf Conjecture}}

\newtheorem{preremark}[prethm]{{\bf Remark}}
\newenvironment{remark}{\begin{preremark}\em{\hspace{-0.5
em}{\bf.}}}{\end{preremark}}
\newtheorem{prelem}[prethm]{{\bf Lemma}}
\newenvironment{lem}{\begin{prelem}{\hspace{-0.5
em}{\bf.}}}{\end{prelem}}
\newtheorem{preque}[prethm]{{\bf Question}}

\newtheorem{preobserv}[prethm]{{\bf Observation}}

\newtheorem{predef}[prethm]{{\bf Definition}}

\newtheorem{preproposition}[prethm]{{\bf Proposition}}
\newenvironment{proposition}{\begin{preproposition}{\hspace{-0.5
em}{\bf.}}}{\end{preproposition}}

\newtheorem{preproof}{{\bf Proof.}}
\newtheorem{preprooff}{{\bf Proof}}

\newenvironment{proof}[1]{\begin{preproof}{\rm
#1}\hfill{$\Box$}}{\end{preproof}}

\newtheorem{preproofs}{{\bf The second proof of }}

\newenvironment{proofs}[1]{\begin{preproofs}{\rm
#1}\hfill{$\Box$}}{\end{preproofs}}

\newtheorem{preprooft}{{\bf Third proof of }}

\newtheorem{preproofF}{{\bf Proof of}}

\newenvironment{proofF}[1]{\begin{preproofF}{\rm
#1}\hfill{$\Box$}}{\end{preproofF}}

\title{\bf\Large 
Packing spanning partition-connected subgraphs with small degrees
}
\author{{\normalsize{\sc Morteza Hasanvand${}$} }\vspace{3mm}
\\{\footnotesize{${}$\it Department of Mathematical
 Sciences, Sharif
University of Technology, Tehran, Iran}}
{\footnotesize{}}\\{\footnotesize{   $\mathsf{hasanvand@alum.sharif.edu  }$ }}}

\date{}
\def\OMEGA  {\text{$\Theta$}}
%
%
%
%
%
\begin{document}
\maketitle
\begin{abstract}{
Let $G$ be a graph  with $X\subseteq V(G)$ and let $l$ be an intersecting supermodular  subadditive integer-valued function on subsets of  $V(G)$. The graph $G$ is said to be $l$-partition-connected, if for every partition $P$ of $V(G)$, $e_G(P)\ge \sum_{A\in P} l(A)-l(V(G))$, where $e_G(P)$ denotes the number of edges of $G$ joining different parts of $P$. Let $\lambda \in [0,1]$ be a real number and let $\eta $ be a real function on $X$. In this paper, we show that if   $G$ is $l$-partition-connected  and for all $S\subseteq X$, $$\Theta_l(G \setminus S) \le \sum_{v\in S} (\eta(v) -2l(v))+l(V(G))+l(S)-\lambda (e_G(S))+l(S)),$$  then $G$ has an $l$-partition-connected spanning subgraph $H$ such that for each vertex $v\in X$,  $d_H(v)\le \lceil \eta(v) -\lambda l(v) \rceil $, where  $e_G(S)$ denotes the number of edges of $G$ with both ends in $S$ and  $\Theta_l(G \setminus S)$ denotes the maximum number  of all $\sum_{A\in P} l(A)-e_{G\setminus S}(P)$  taken over all partitions  $P$ of $V(G)\setminus S$.  Finally, we show that if $H$ is an  $(l_1+\cdots +l_m)$-partition-connected graph, then it can be decomposed into $m$ edge-disjoint spanning subgraphs $H_1,\ldots, H_m$ such that every graph $H_i$ is $l_i$-partition-connected, where  $l_1, l_2,\ldots, l_m$ are $m$  intersecting supermodular subadditive integer-valued functions on subsets of $V(H)$. These results generalize several known results.
\\
\\
\noindent {\small {\it Keywords}:
\\
Partition-connected;
tree-connected;
supermodular;
edge-decomposition;
vertex degree;
toughness.
}} {\small
}
\end{abstract}
%
%
%
%
%
%
%
%
%
%
%
%
%
%
\section{Introduction}
In this article, all graphs have  no  loop, but  multiple  edges are allowed.
%
%
 Let $G$ be a graph. 
The vertex set and  the edge set  of $G$ are denoted by $V(G)$ and $E(G)$, respectively. 
The degree $d_G(v)$ of a vertex $v$ is the number of edges of $G$ incident to $v$.
We denote by $d_G(C)$ the number of edges of $G$ with exactly one end in $V(C)$, where $C$ is a subgraph of $G$.
For a set $X\subseteq V(G)$, we denote by $G[X]$ the induced  subgraph of $G$  with the vertex set $X$  containing
precisely those edges 
of $G$  whose ends lie in~$X$.
For a spanning subgraph $H$  with the   integer-valued function $h$ on $V(H)$,
the {\bf total excess of $H$ from  $h$} is defined as follows:
$$te(H, h)=\sum_{v\in V(H)}\max\{0,d_H(v)-h(v)\}.$$
According to this definition, $te(H,h)=0$ if  and only if  for each vertex $v$, $ d_H(v)\le h(v)$.
Let $S\subseteq V(G)$. 
The vertex set $S$ is called   {\bf independent}, if there is no edge of $G$ connecting  vertices in $S$.
 The graph obtained from $G$ by removing all  vertices of $S$ is denoted by $G\setminus S$.
Let $F$ be a spanning subgraph of $G$.
Denote by $G\setminus [S,F]$  the graph obtained from $G$ by removing all edges incident to the vertices of $S$ except the edges of $F$. 
Note that while the vertices of $S$ are deleted in $G\setminus S$, no vertices are removed in $G\setminus [S, F]$.
Let $A$ and $B$ be two subsets of $V(G)$.
This pair   is said to be {\bf intersecting}, if $A\cap B \neq \emptyset$.
Let $l$ be a real function on subsets of $V(G)$ with $l(\emptyset) =0$.
For notational simplicity, we write $l(G)$ for $l(V(G))$ and write $l(v)$ for $l(\{v\})$.
The function $l$ is said to be {\bf supermodular}, if for all vertex sets $A$ and $B$,
$$l(A\cap B)+l(A\cup B)\ge l(A)+l(B).$$
Likewise, $l$ is said to be  intersecting  supermodular, if for all intersecting  pairs $A$ and $B$
the above-mentioned inequality holds.
The set function $l$ is called  (i) {\bf nonincreasing}, if  $l(A)\ge l(B)$, for all nonempty vertex sets $A,B$ with $A\subseteq B$,
(ii) {\bf subadditive}, if  $l(A) +l(B)\ge l(A\cup B)$, for any two disjoint   vertex sets $A$ and $B$,
(iii) {\bf element-subadditive}, if  $l(A)+l(v)\ge l(A\cup \{v\})$, for all  vertices $v$ and all vertex sets $A$ excluding $v$,
and also  is called (iv) {\bf weakly subadditive}, if  $\sum_{v\in A}l(v)\ge l(A)$, for all  vertex sets $A$.
Note that several  results of  this paper can be hold for real functions $l$ such that $\sum_{v\in A}l(v)-l(A)$ is integer for every vertex set $A$.
For clarity of presentation, we  will  assume that $l$ is  integer-valued.
The graph $G$ is said to be {\bf $l$-edge-connected}, if for all nonempty proper vertex sets $A$,
 $d_G(A) \ge l(A)$, where
$d_G(A)$ denotes the number of edges of $G$ with exactly one end in $A$.
Likewise, the graph $G$ is called {\bf $l$-partition-connected}, 
if for every partition $P$ of $V(G)$,
$e_G(P)\ge \sum_{A\in P}l(A)-l(V(G)),$
where $e_G(P)$  denotes the number of edges of $G$ joining different parts of $P$.
An $l$-partition-connected graph $G$ is {\bf minimally $l$-partition-connected}, 
if for every edge $e$ of $G$, the resulting $G-e$  is not $l$-partition-connected.
We will show that if $l$ is intersecting supermodular, then 
the
vertex set of $G$ can be expressed uniquely (up to order) as a disjoint union of vertex sets of some
induced $l$-partition-connected subgraphs.
These subgraphs are called the $l$-partition-connected components of $G$.
To measure $l$-partition-connectivity of $G$, we define the  parameter
$\OMEGA_l(G)=\sum_{A\in P} l(A)-e_G(P),$ where 
 $P$ is the partition of $V(G)$ obtained from $l$-partition-connected components of $G$.
The definition implies that for the null graph $K_0$ with no
vertices is $l$-partition-connected and
$\OMEGA_l(K_0 ) = 0$.
We will show that 
$\OMEGA_l(G)$ is  the maximum of all  $\sum_{A\in P} l(A)-e_G(P)$ taken over all
  partitions $P$ of $V(G)$.
We say that a spanning  subgraph $F$  is {\bf $l$-sparse}, if for all vertex sets $A$,
$e_F(A)\le \sum_{v\in A} l(v)-l(A)$, where 
 $e_F(A)$ denotes  the number of edges of $F$ with both ends in $A$. 
Clearly, $1$-sparse graphs are forests.
Note that  all    maximal $l$-sparse spanning subgraphs  of $G$ form  the bases of a matroid, 
when $l$ is an intersecting supermodular weakly subadditive integer-valued  function on subsets of $V(G)$,
 see~\cite{MR0270945}.
Note also that several  basic tools in this paper for working with sparse and partition-connected graphs can be obtained using matroid theory.
 A {\bf packing} refers to a collection of  edge-disjoint subgraphs.
A graph is said to be {\bf $m$-tree-connected}, it has $m$ edge-disjoint spanning trees.
It is known that every  $m$-partition-connected  graph is $m$-tree-connected~\cite{MR0133253, MR0140438}.
For every vertex set $A$ of a directed graph $G$,  we  denote  by $d^-_G(A)$  the number of edges   entering $A$ and denote by  $d^+_G(A)$ the number of edges  leaving  $A$.
An orientation of $G$ is called {\bf  $l$-arc-connected}, if for every vertex set $A$, $d_G^-(A)\ge l(A)$.
Likewise, an orientation of $G$ is called {\bf $r$-rooted $l$-arc-connected},
 if for every vertex set $A$, $d_G^-(A)\ge l(A)-\sum_{v\in A}r(v)$, where
 $r$ is a nonnegative integer-valued on $V(G)$ with $l(G)=\sum_{v\in V(G)}r(v)$.
 Throughout this article, we denote by $e^*_G(A)$
the maximum number of all $e_H(A)$ taken over all 
minimally $l$-partition-connected spanning subgraphs $H$ of $G$, all set functions are zero on the empty set, and also all  variables $k$ and $m$ are integer and positive, unless otherwise  stated.
%
%
%
%
%
%
%
%
%
%
%
%

Recently, the present author~\cite{II} investigated bounded degree $m$-tree-connected spanning subgraphs
and established the following theorem.
This result gives a number of new applications on connected factors and
 generalizes and improves several known results in~\cite{MR1871346,MR1740929, MR519276, MR3336100, MR1621287, MR998275}.

\begin{thm}\label{intro:thm:general}{\rm (\cite{II})}
Let $G$ be an $m$-tree-connected graph  with $X\subseteq V(G)$.  
Let  $\lambda \in [0,1]$ be a real number and  let  $\eta$ be a  real function on $X$.
If  for all $S\subseteq X$,
$$\OMEGA_m(G\setminus S)\le\sum_{v\in S}\big(\eta(v)-2m\big)+2m-\lambda(e_G(S)+m),$$
then  $G$ has an $m$-tree-connected  spanning subgraph  $H$ such that for each $v\in X$,
 $d_H(v)\le \lceil \eta(v)-m\lambda\rceil$.
\end{thm}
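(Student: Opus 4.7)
The plan is to build an extremal $m$-tree-connected spanning subgraph and argue by an edge-exchange style contradiction. Define a target function $h$ on $V(G)$ by $h(v) = \lceil \eta(v) - m\lambda \rceil$ for $v\in X$ and $h(v)=+\infty$ otherwise, and among all $m$-tree-connected spanning subgraphs $H$ of $G$ select one that minimises $te(H,h)$. The aim is to show that this minimum value is $0$, which immediately yields the required subgraph.

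Suppose for contradiction that the chosen $H$ has total excess at least $1$, and let $S = \{v\in X : d_H(v) > h(v)\}$. The crucial step is an exchange argument driven by the matroid of $m$-sparse (equivalently, $m$-forest-decomposable) spanning subgraphs: for each $v\in S$, any edge $e\in E(H)$ at $v$ we might drop must still leave an $m$-tree-connected subgraph after being replaced by some $f\in E(G)\setminus E(H)$, and minimality of $te(H,h)$ forbids such swaps that remove an edge incident to $S$ and add an edge incident to vertices of lower excess. Translating this rigidity through the dual Nash--Williams/Tutte partition inequality, one extracts a partition $P$ of $V(G)\setminus S$ which certifies a large value of $\OMEGA_m(G\setminus S)$ and which, together with the edges of $H$ incident to $S$, uses up essentially all the slack in the hypothesis.

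The main obstacle I expect is the bookkeeping that produces the precise coefficient $\lambda$. Concretely, the degree contribution from $v\in S$ is at least $h(v)+1 \ge \eta(v) - m\lambda$; edges inside $S$ contribute $2e_G(S)$ to the degree sum; and edges leaving $S$ are counted by the Nash--Williams bound on $P$. The convex combination controlled by $\lambda$ arises because removing an edge at $v\in S$ incident to another vertex $u\in S$ would improve \emph{both} endpoints, so $e_G(S)$ must be penalised linearly with coefficient $\lambda\in[0,1]$, interpolating between the two extreme strategies that yield the $\lambda=0$ and $\lambda=1$ cases. After collecting terms one should obtain an inequality of the form
$$\OMEGA_m(G\setminus S) > \sum_{v\in S}\bigl(\eta(v)-2m\bigr) + 2m - \lambda\bigl(e_G(S)+m\bigr),$$
contradicting the assumption.

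A cleaner formulation passes to a \emph{minimally} $m$-tree-connected $H$, exploiting the fact that in that case $e^*_G(A)=e_H(A)$; then $\sum_{v\in S} d_H(v)$ decomposes neatly into $2e_H(S)$ plus the number of edges from $S$ to $V(G)\setminus S$, the latter being controlled from below by $\OMEGA_m(G\setminus S)$ via the partition $P$ extracted above. Verifying that the $\lambda=0$ and $\lambda=1$ endpoints recover the two previously known degree-bounded packing theorems cited by the author will serve as a useful sanity check on the linear term in $\lambda$.
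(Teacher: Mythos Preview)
Your overall strategy---choose an extremal $m$-tree-connected spanning subgraph minimising total excess, then use edge exchanges to produce a contradictory lower bound on $\Theta_m(G\setminus S)$---is exactly the paper's approach (via Theorem~\ref{thm:sufficient}, which specialises to the present statement when $l\equiv m$). But the proposal has a genuine gap at the single step that carries all the weight.

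You take $S=\{v:d_H(v)>h(v)\}$ and then claim that an exchange argument forces $\Theta_m(G\setminus S)=\Theta_m(H\setminus S)$. This fails: there can be an edge $xy\in E(G)\setminus E(H)$ with $x,y\notin S$ which nevertheless joins different $m$-tree-connected components of $H\setminus S$. Your minimality argument says ``swap $xy$ in and drop an edge at some $z\in S$, decreasing the excess at $z$''---but if $d_H(x)=h(x)$ exactly (which is perfectly consistent with $x\notin S$), then adding $xy$ creates a new unit of excess at $x$ and the swap does not improve $te(H,h)$. So with your naive $S$ the equality $\Theta_m(G\setminus S)=\Theta_m(H\setminus S)$ need not hold, and the contradiction does not materialise. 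The paper handles this by an \emph{iterative} enlargement of $S$ (Theorem~\ref{thm:preliminary:structure}): starting from $V_1=\{v:d_H(v)>h(v)\}$, one repeatedly adjoins vertices $v$ for which $d_{H'}(v)\ge h(v)$ in \emph{every} nearby minimally $m$-tree-connected $H'$, until the process stabilises. Only for this larger $S$ do Conditions~\ref{Condition 2.1}--\ref{Condition 2.3} hold simultaneously, and Condition~\ref{Condition 2.3} gives merely $d_H(v)\ge h(v)$ on $S$, not $d_H(v)\ge h(v)+1$; the ``$+1$'' you invoke is replaced by the global quantity $te(H,h)$ in the identity $\sum_{v\in S}d_H(v)=\sum_{v\in S}h(v)+te(H,h)$.

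Two smaller points. First, the claim ``for minimally $m$-tree-connected $H$ one has $e^*_G(A)=e_H(A)$'' is false: $e^*_G(A)$ is a maximum over \emph{all} minimally $m$-tree-connected spanning subgraphs, so only $e_H(A)\le e^*_G(A)$ holds, and this inequality (not equality) is exactly what the paper uses in Relation~(\ref{eq:A:3}). Second, your description of which edge exchanges are forbidden is inverted: in a minimally $m$-tree-connected $H$ no edge can be dropped without losing $m$-tree-connectivity, so the relevant exchange starts by \emph{adding} $xy$ and then removing an edge on the unique minimal $m$-tree-connected subgraph spanning $x$ and $y$ (Proposition~\ref{prop:replacing}).
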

In this paper, we   generalize the above-mentioned theorem to  the following supermodular version   
 by investigating  bounded degree  partition-connected spanning subgraphs.
Moreover, we generalize several results in~\cite{II}   toward this concept.
\begin{thm}\label{intro:thm:main}
Let $G$ be an $l$-partition-connected graph with $X\subseteq V(G)$, 
where  $l$ is an   intersecting supermodular  subadditive  integer-valued function on 
 subsets of  $V(G)$. 
 Let  $\lambda \in [0,1]$ be a real number and  let  $\eta$ be a  real function on $X$.
If  for all $S\subseteq X$, 
$$\OMEGA_l(G\setminus S)\le \sum_{v\in S}\big(\eta(v)-2l(v)\big)+l(G)+l(S)-\lambda(e_G(S)+l(S)),$$
then  $G$ has an  $l$-partition-connected  spanning subgraph  $H$ such that for each $v\in X$, 
$d_H(v)\le \lceil \eta(v)-\lambda l(v) \rceil.$
\end{thm}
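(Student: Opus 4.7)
The plan is to carry out an extremal minimization argument analogous to the one used for Theorem~\ref{intro:thm:general}, adapted to the intersecting supermodular subadditive setting. Define $h(v):=\lceil \eta(v)-\lambda l(v)\rceil$ for $v\in X$ and $h(v):=d_G(v)$ otherwise; the task becomes producing an $l$-partition-connected spanning subgraph $H$ of $G$ with $te(H,h)=0$. Among all $l$-partition-connected spanning subgraphs of $G$, I would choose $H$ minimizing $te(H,h)$ and, subject to that, $|E(H)|$. Since edge deletion cannot increase excess, this second-level minimization forces $H$ to be minimally $l$-partition-connected; by the subadditivity of $l$, the singleton partition provides the tightest edge count constraint, so $|E(H)|=\sum_{v\in V(G)}l(v)-l(G)$.

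Assume for contradiction that $te(H,h)>0$ and set $S:=\{v\in X:d_H(v)>h(v)\}\ne\emptyset$. Applying $l$-partition-connectivity of $H$ to partitions of $V(G)$ of the form $\{\{v\}:v\in S\}\cup P_0$ with $P_0$ ranging over partitions of $V(G)\setminus S$ yields the upper bound $\OMEGA_l(H\setminus S)\le d_H(S,V(G)\setminus S)+e_H(S)-\sum_{v\in S}l(v)+l(G)$, while the singleton partition of $V(G)\setminus S$ together with the exact edge count of $H$ supplies the matching lower bound. Hence
\[
\OMEGA_l(H\setminus S)\ =\ d_H\bigl(S,V(G)\setminus S\bigr)+e_H(S)-\sum_{v\in S}l(v)+l(G).
\]
Combining with $\sum_{v\in S}d_H(v)=2e_H(S)+d_H(S,V(G)\setminus S)$ and the excess lower bound $\sum_{v\in S}d_H(v)\ge \sum_{v\in S}(\eta(v)-\lambda l(v)+1)$ produces
\[
\OMEGA_l(H\setminus S)\ \ge\ \sum_{v\in S}\eta(v)-(1+\lambda)\sum_{v\in S}l(v)+|S|-e_H(S)+l(G).
\]

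To transfer this lower bound to $\OMEGA_l(G\setminus S)$, use $\OMEGA_l(G\setminus S)\ge \OMEGA_l(H\setminus S)-\Delta$, where $\Delta$ counts edges of $E(G)\setminus E(H)$ within $V(G)\setminus S$ that cross the partition $P^{*}$ by $l$-partition-connected components of $H\setminus S$. The main technical obstacle is to control $e_H(S)+\Delta$: using the matroid structure of $l$-sparse subgraphs of $G$ together with a $\lambda$-weighted matroid-exchange argument on the $l$-partition-connectivity matroid, one shows that $e_H(S)+\Delta$ is bounded by a convex combination (with parameter $\lambda$) of two expressions in $e_G(S)$, $\sum_{v\in S}l(v)$, and $l(S)$, corresponding to the endpoints $\lambda=0$ and $\lambda=1$. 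Substituting this bound into the displayed lower bound on $\OMEGA_l(H\setminus S)$ and rearranging yields the strict inequality
\[
\OMEGA_l(G\setminus S)\ >\ \sum_{v\in S}\bigl(\eta(v)-2l(v)\bigr)+l(G)+l(S)-\lambda\bigl(e_G(S)+l(S)\bigr),
\]
contradicting the hypothesis. The hardest step is the $\lambda$-weighted matroid-exchange bound on $e_H(S)+\Delta$; subadditivity of $l$ enters both to equate $|E(H)|$ with $\sum_v l(v)-l(G)$ and to relate $\sum_{v\in S}l(v)$ with $l(S)$, while intersecting supermodularity guarantees that the $l$-partition-connected components of $H\setminus S$ form the well-defined witnessing partition $P^{*}$.
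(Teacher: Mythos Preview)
Your overall framework is right---choose a minimally $l$-partition-connected $H$ minimizing total excess, compute $\OMEGA_l(H\setminus S)$ exactly via Lemma~\ref{lem:minimally-omega}, and compare with the hypothesis---but the argument breaks at the step you yourself flag as ``the hardest''. With your choice $S=\{v:d_H(v)>h(v)\}$ there is no reason for $\Delta$ (the number of edges of $E(G)\setminus E(H)$ inside $V(G)\setminus S$ crossing the component partition $P^*$ of $H\setminus S$) to be small, and the promised ``$\lambda$-weighted matroid-exchange bound on $e_H(S)+\Delta$'' is not an argument: nothing in the matroid structure of $l$-sparse graphs controls how many non-$H$ edges happen to cross a fixed partition of $V(G)\setminus S$. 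Concretely, if $e'\in E(G)\setminus E(H)$ crosses $P^*$, Proposition~\ref{prop:replacing} does give an exchange $H-e+e'$ with $e$ incident to $S$, but the endpoints of $e'$ lie in $V(G)\setminus S$ and may already satisfy $d_H=h$; adding $e'$ then creates a new excess vertex, so you cannot conclude $te(H-e+e',h)<te(H,h)$. Thus the naive $S$ yields no contradiction and no usable bound on $\Delta$.

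This is exactly the difficulty Theorem~\ref{thm:preliminary:structure} is designed to overcome: instead of fixing $S$ as the strict-excess set, one iteratively enlarges it (the sets $V_1\subseteq V_2\subseteq\cdots$) to absorb those ``tight'' vertices with $d_H(v)=h(v)$ that obstruct the exchange, until one reaches a set $S$ for which $\OMEGA_l(G\setminus S)=\OMEGA_l(H\setminus S)$, i.e.\ $\Delta=0$ outright. The price is that for $v\in S$ one only has $d_H(v)\ge h(v)$ rather than $>h(v)$, so your $+|S|$ bonus disappears; the paper's computation (proof of Theorem~\ref{thm:sufficient}) shows the remaining slack still forces $te(H,h)=0$. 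In short, the missing idea is precisely the construction of $S$ in Theorem~\ref{thm:preliminary:structure}; without it the transfer from $\OMEGA_l(H\setminus S)$ to $\OMEGA_l(G\setminus S)$ cannot be made.
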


In Section~\ref{sec:packing}, we generalize the well-known result of 
Nash-Williams~\cite{MR0133253} and Tutte~\cite{MR0140438} to the following supermodular version.
This version can provide an alternative  proof for a special case of Theorem~\ref{intro:thm:main}.
\begin{thm}
{Let $H$ be a graph and let $l_1, l_2,\ldots, l_m$ be $m$  intersecting supermodular subadditive integer-valued functions on subsets of $V(H)$.  
 Then $H$ is $(l_1+\cdots +l_m)$-partition-connected, if and only if it can be decomposed into $m$ edge-disjoint 
spanning subgraphs $H_1,\ldots, H_m$ such that every  graph $H_i$ is $l_i$-partition-connected.
}\end{thm}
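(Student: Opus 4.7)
The sufficiency is a straightforward additivity calculation. Assume $H = H_1 \cup \cdots \cup H_m$ is a decomposition into edge-disjoint spanning subgraphs with each $H_i$ being $l_i$-partition-connected. For any partition $P$ of $V(H)$, summing the defining inequalities gives
\[
e_H(P) = \sum_{i=1}^m e_{H_i}(P) \ge \sum_{i=1}^m\Bigl(\sum_{A\in P} l_i(A) - l_i(V(H))\Bigr) = \sum_{A\in P}(l_1+\cdots+l_m)(A) - (l_1+\cdots+l_m)(V(H)),
\]
so $H$ is $(l_1+\cdots+l_m)$-partition-connected.

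For the converse, my plan is to invoke Edmonds' matroid union theorem applied to the matroids $\mathcal{M}_1,\ldots,\mathcal{M}_m$ on the common ground set $E(H)$ whose independent sets are the $l_i$-sparse edge subsets of $H$; each is a matroid by the result cited from \cite{MR0270945} since $l_i$ is intersecting supermodular and (being subadditive) weakly subadditive. A short preliminary shows that a spanning subgraph $F$ of $H$ is minimally $l_i$-partition-connected if and only if $F$ is both $l_i$-sparse and $l_i$-partition-connected, in which case $|E(F)|=\rho_i:=\sum_{v\in V(H)} l_i(v)-l_i(V(H))$ (lower bound from the singleton partition, upper bound from $l_i$-sparsity evaluated at $V(H)$). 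Since each $l_j$ is subadditive, $\sum_{A\in P}l_j(A)\ge l_j(V(H))$ for every partition $P$, so the hypothesis $(l_1+\cdots+l_m)$-partition-connectivity of $H$ forces $l_i$-partition-connectivity of $H$ for every $i$. Therefore $\mathcal{M}_i$ has rank $\rho_i$ on $E(H)$, and its bases are precisely the minimally $l_i$-partition-connected spanning subgraphs of $H$.

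By the matroid union theorem, $E(H)$ contains pairwise edge-disjoint bases $B_1,\ldots,B_m$ with $B_i\in\mathcal{M}_i$ if and only if $|E(H)\setminus F|+\sum_{i=1}^m r_i(F)\ge \sum_{i=1}^m \rho_i$ for every $F\subseteq E(H)$, where $r_i$ is the rank function of $\mathcal{M}_i$. Once such bases are obtained, setting $H_i:=(V(H),B_i)$ and distributing the leftover edges of $E(H)\setminus\bigcup_i B_i$ arbitrarily among the $H_i$'s yields the required decomposition, as adjoining edges cannot destroy $l_i$-partition-connectivity. Using the standard rank formula for sparsity matroids, $r_i(F)=\min_{\mathcal{P}}\bigl(e_F(\mathcal{P})+\sum_{A\in\mathcal{P}}(\sum_{v\in A}l_i(v)-l_i(A))\bigr)$ over partitions $\mathcal{P}$ of $V(H)$, a direct algebraic rearrangement reduces the Edmonds inequality to the statement that for every $F\subseteq E(H)$ and every choice of partitions $\mathcal{P}_1,\ldots,\mathcal{P}_m$ of $V(H)$,
\[
|E(H)\setminus F|+\sum_{i=1}^m e_F(\mathcal{P}_i)\ \ge\ \sum_{i=1}^m\Bigl(\sum_{A\in\mathcal{P}_i} l_i(A)-l_i(V(H))\Bigr).
\]

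The crux of the argument is then to pass to the common refinement $\mathcal{Q}$ of $\mathcal{P}_1,\ldots,\mathcal{P}_m$. Every edge of $F$ crossing $\mathcal{Q}$ must cross some $\mathcal{P}_i$, so $e_F(\mathcal{Q})\le \sum_i e_F(\mathcal{P}_i)$. Subadditivity of each $l_i$, applied iteratively to the way parts of $\mathcal{P}_i$ decompose into parts of $\mathcal{Q}$, yields $\sum_{A\in\mathcal{P}_i}l_i(A)\le \sum_{B\in\mathcal{Q}}l_i(B)$. Combined with the trivial bound $e_H(\mathcal{Q})\le e_F(\mathcal{Q})+|E(H)\setminus F|$ and the hypothesis $(l_1+\cdots+l_m)$-partition-connectivity of $H$ applied to $\mathcal{Q}$, these inequalities chain together to close the required bound. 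The main obstacle I anticipate is precisely this coordination: each matroid $\mathcal{M}_i$ naturally suggests its own optimal partition witness $\mathcal{P}_i$, and one must simultaneously bound all $r_i(F)$ using a single partition on which the global hypothesis can be applied; subadditivity of the $l_i$'s is exactly what makes the common-refinement coordination valid, which explains the presence of that assumption in the statement.
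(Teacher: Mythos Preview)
Your argument is correct, and it takes a genuinely different route from the paper's own proof.

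\medskip
\textbf{What the paper does.} The paper first proves a structural exchange lemma (Theorem~\ref{thm:generalized:D}): starting from any family $F_1,\ldots,F_m$ of edge-disjoint $l_i$-sparse spanning subgraphs with $|E(F_1\cup\cdots\cup F_m)|$ maximum, one can find a partition $P$ of $V(H)$ such that no edge of $E(H)\setminus\bigcup_i E(F_i)$ crosses $P$ and each $F_i[A]$ is $l_i$-partition-connected for every $A\in P$. The proof is a direct augmenting-type argument tracking hyperedge exchanges through the minimal $l_i$-partition-connected ``closure'' subgraphs $Q_i$, using Propositions~\ref{prop:xGy-exchange}, \ref{prop:Q:subadditive}, and~\ref{prop:deducing}. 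Theorem~\ref{thm:main:partition-connected} then follows by a short edge count: since $e_F(P)=e_H(P)\ge\sum_{A\in P}l(A)-l(H)$ while each $e_{F_i}(A)=\sum_{v\in A}l_i(v)-l_i(A)$, one gets $|E(F_i)|=\rho_i$ and hence $F_i$ is $l_i$-partition-connected by Proposition~\ref{prop:sparse}.

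\medskip
\textbf{What you do.} You invoke Edmonds' matroid union theorem for the sparsity matroids $\mathcal{M}_i$ directly, and verify the rank inequality by passing to the common refinement $\mathcal{Q}$ of the witnessing partitions $\mathcal{P}_i$. The two uses you make of subadditivity (that each summand $\sum_{A\in\mathcal{P}_i}l_i(A)-l_i(V(H))$ is nonnegative, and that $\sum_{A\in\mathcal{P}_i}l_i(A)\le\sum_{B\in\mathcal{Q}}l_i(B)$) are exactly what is needed to push all $m$ partitions to a single $\mathcal{Q}$ on which the global hypothesis applies; the remaining inequalities $e_F(\mathcal{Q})\le\sum_i e_F(\mathcal{P}_i)$ and $e_H(\mathcal{Q})\le e_F(\mathcal{Q})+|E(H)\setminus F|$ are immediate. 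Your preliminary identification of bases of $\mathcal{M}_i$ with minimally $l_i$-partition-connected spanning subgraphs is also correct and essentially recovers Propositions~\ref{prop:minimally-edges} and~\ref{prop:sparse}.

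\medskip
\textbf{Comparison.} Your proof is shorter and conceptually cleaner, but it imports matroid union and the rank formula for sparsity matroids as black boxes. The paper is self-contained: Theorem~\ref{thm:generalized:D} is in effect a constructive, structured proof of (this instance of) matroid union, and as a byproduct it yields additional structural information (the partition $P$ controlling the leftover edges) that is reused elsewhere, notably in the hypergraph extension of Section~\ref{sec:hypergraphs}. The paper explicitly acknowledges in the Introduction that ``several basic tools in this paper for working with sparse and partition-connected graphs can be obtained using matroid theory'', so your approach is anticipated as an alternative. The only point you should make fully explicit in a final write-up is the justification of the rank formula $r_i(F)=\min_{\mathcal{P}}\bigl(e_F(\mathcal{P})+\sum_{A\in\mathcal{P}}(\sum_{v\in A}l_i(v)-l_i(A))\bigr)$ for these count matroids; this is indeed standard (see e.g.\ \cite{MR2848535}), but it is the one step where you are invoking a nontrivial external fact beyond Edmonds' union theorem itself.
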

%
%

%
%
%
%
%
%
%
%
%
%
%
\section{Basic tools}
For every  vertex $v$ of a graph $G$, 
consider an  induced $l$-partition-connected subgraph of $G$ containing $v$ with the maximal order.
 The following proposition  shows that these subgraphs  are  unique and decompose the vertex set of $G$ when $l$
 is intersecting  supermodular.
In fact, these subgraphs are the $l$-partition-connected components of G that already introduced in the Introduction.
\begin{proposition}\label{thm:basic:union}
{Let $G$ be a graph with $X,Y\subseteq V(G)$ and let $l$ be an intersecting supermodular  real function   on   subsets of $V(G)$.
If  $G[X]$ and $G[Y]$ are $l$-partition-connected and $X\cap Y\neq \emptyset$, 
then $G[X\cup Y]$ is also $l$-partition-connected.
}\end{proposition}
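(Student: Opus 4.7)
The plan is to verify the defining inequality of $l$-partition-connectedness for $G[X\cup Y]$ directly: for an arbitrary partition $P$ of $X \cup Y$, I must establish
\[
e_{G[X \cup Y]}(P) \;\ge\; \sum_{A \in P} l(A) - l(X \cup Y).
\]
The overall strategy is to invoke the hypothesis on $G[X]$ and $G[Y]$ for the partitions of $X$ and $Y$ induced by $P$, and then collapse the resulting $l$-sums using intersecting supermodularity of $l$, applied both globally to the pair $(X, Y)$ (permissible because $X \cap Y \neq \emptyset$) and locally to the pair $(A \cap X,\, A \cap Y)$ for each part $A \in P$ that meets $X \cap Y$.

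Concretely, I would first form $P_X = \{A \cap X : A \in P,\ A \cap X \ne \emptyset\}$ and $P_Y = \{A \cap Y : A \in P,\ A \cap Y \ne \emptyset\}$ and invoke the hypothesis to obtain
\[
e_{G[X]}(P_X) \ge \sum_{A' \in P_X} l(A') - l(X), \qquad e_{G[Y]}(P_Y) \ge \sum_{A'' \in P_Y} l(A'') - l(Y).
\]
Since an edge of $G[X\cup Y]$ crossing $P$ that lies entirely in $X$ (resp.\ in $Y$) is counted exactly by $e_{G[X]}(P_X)$ (resp.\ $e_{G[Y]}(P_Y)$), with edges of $G[X \cap Y]$ counted in both, a simple double-counting bound yields
\[
e_{G[X\cup Y]}(P) \;\ge\; e_{G[X]}(P_X) + e_{G[Y]}(P_Y) - e_{G[X\cap Y]}(P_{X\cap Y}).
\]
For each $A \in P$ with $A \cap X \cap Y \ne \emptyset$, intersecting supermodularity applied to the pair $(A \cap X,\, A \cap Y)$ (whose intersection is $A \cap X \cap Y$ and whose union is $A$) yields $l(A \cap X) + l(A \cap Y) \le l(A) + l(A \cap X \cap Y)$. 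Summing over such parts, and observing that $\{A \cap X \cap Y : A \in P\} \setminus \{\emptyset\}$ is a partition of $X \cap Y$, then combining with the supermodular estimate $l(X) + l(Y) \le l(X \cup Y) + l(X \cap Y)$ on $(X,Y)$, the $l(X \cap Y)$ terms should cancel and the desired inequality should drop out after elementary rearrangement.

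The main obstacle I anticipate is the presence of parts $A \in P$ that straddle $X$ and $Y$ without meeting $X \cap Y$, since for such $A$ the pair $(A \cap X,\, A \cap Y)$ is disjoint and intersecting supermodularity says nothing. My plan for neutralising these is to refine $P$ at the outset by splitting every such straddling part into $A \cap X$ and $A \cap Y$; the refinement only creates new crossing edges, so it is enough to prove the inequality for the refined partition, in which every part lies entirely inside $X$, entirely inside $Y \setminus X$, or meets $X \cap Y$ — precisely the three configurations handled above. Verifying that this reduction is admissible — i.e., that the increase in $\sum_A l(A)$ caused by the split dominates the increase in $e_{G[X\cup Y]}(P)$ — is the delicate step, and will require teasing out a subadditivity-type consequence of the intersecting-supermodular structure together with the $l$-partition-connectedness of $G[X]$ and $G[Y]$.
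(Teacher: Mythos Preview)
Your inclusion--exclusion bound
\[
e_{G[X\cup Y]}(P)\ \ge\ e_{G[X]}(P_X)+e_{G[Y]}(P_Y)-e_{G[X\cap Y]}(P_{X\cap Y})
\]
is correct, but the argument breaks down as soon as you substitute the hypothesis into $e_{G[X]}(P_X)$ and $e_{G[Y]}(P_Y)$: you are left needing an \emph{upper} bound on $e_{G[X\cap Y]}(P_{X\cap Y})$, and nothing in the hypotheses provides one. Tracing your outline through (even with no straddling parts), after using supermodularity on $(X,Y)$ you are reduced to showing
\[
\sum_{A\in P_0}\bigl(l(A\cap X)+l(A\cap Y)-l(A)\bigr)\ \ge\ l(X\cap Y)+e_{G[X\cap Y]}(P_{X\cap Y}),
\]
but intersecting supermodularity on each $(A\cap X,\,A\cap Y)$ only gives $l(A\cap X)+l(A\cap Y)-l(A)\le l(A\cap X\cap Y)$, the inequality pointing the \emph{wrong} way. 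A tiny example makes the failure concrete: take $l\equiv 1$, $X=\{a,c_1,c_2\}$, $Y=\{b,c_1,c_2\}$, with one edge $ac_1$, one edge $bc_2$, and $100$ parallel edges $c_1c_2$. For $P=\{\{a,c_1\},\{b,c_2\}\}$ your hypothesis-substituted bound gives $e_{G[X\cup Y]}(P)\ge 1+1-100=-98$, hopelessly short of the required $1$.

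The paper avoids this entirely by breaking the $X/Y$ symmetry: it partitions $X$ essentially as you do, but on the $Y$ side it lumps \emph{all} parts of $P$ meeting both $X$ and $Y$ into a single block $Z\subseteq Y$, so that $e_{G[X]}(P'_1)$ and $e_{G[Y]}(P'_2)$ count disjoint edge sets and no subtraction is needed. The leftover edges, together with the mismatch in $l$-values, are absorbed by a telescoping chain of supermodular inequalities applied to pairs $(B_i,A_i)$ with $B_i\supseteq X$; since each $A_i$ meets $X$, these pairs always intersect, which simultaneously handles your ``straddling'' parts without ever needing the subadditivity-type fact you were hoping to extract (and which does not follow from the stated hypotheses).
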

\begin{proof}
{Let $P$ be a partition of $X\cup Y$.
Take $A_1,\ldots, A_n$  to be all vertex sets belonging to $P$ such that for each $i$ 
with $1\le i \le n$, $A_i\cap X\neq \emptyset$ and $A_i\cap Y\neq \emptyset$.
Set $A_{n+1} =Y$.
Let $P_1$  be the set of all vertex sets  $A\in P$ with $A\subseteq X\setminus Y$, 
and set $P'_1=P_1\cup \{A_i\cap X: 1\le i \le n\}$.
Let $P_2$ be the set of all vertex sets   $A\in P$ with $A\subseteq Y\setminus X$, and set $P'_2=P_2\cup \{Z\}$,
 where  $X\cap Y\subseteq Z=(\cup_{1\le i\le n} A_i)\cap Y$.
Define $B_1=X$ and for every positive integer $i$ with $1 \le i \le n+1$ recursively define $B_i=B_{i-1}\cup A_{i-1}$.
Note that $B_i\cap A_i \neq \emptyset$.
It is easy to  check that
$$
e_{G[X\cup Y]}(P)\ge e_{G[X]}(P'_1)+ e_{G[Y]}(P'_2)+
\sum_{1\le i\le n+1} d_G(B_i, A_i),
$$
where  $d_G(B_i,A_i)$ denotes the number of edges of $G$ with one end in $B_i\setminus A_i$ and other one in $A_i\setminus B_i$.
Since $G[X]$ and $G[Y]$ are $l$-partition-connected,
$$e_{G[X\cup Y]}(P)\ge  
\sum_{A\in P'_1} l(A)-l(X)+\,
\sum_{A\in P'_2} l(A)-l(Y)+\,
\sum_{1\le i\le n+1} d_G(B_i, A_i).$$
which implies that
\begin{equation}\label{eq:convex:1}
e_{G[X\cup Y]}(P)\ge 
\sum_{A \in P_1} l(A)+
\sum_{1\le i\le n} l(A_i \cap X)-l(X)+
\sum_{A\in P_2} l(A)+l(Z)-l(Y)+
\sum_{1\le i\le n+1} d_G(B_i, A_i).
\end{equation}
By the assumption, for each $i$ with $1 \le i\le  n+1$, we have 
$$l(B_{i}\cap A_i)+l(B_{i}\cup A_i)\, + d_G(B_{i},A_i)\ge l(B_{i})+l(A_i),$$
which implies that
\begin{equation}\label{eq:convex:2}
\sum_{1\le i\le n} l(X\cap A_i)+l(Z)
+l(X\cup Y)+\sum_{1\le i\le n +1}
d_G(B_i,A_i)\ge
l(X)+
 \sum_{1\le i\le n}l(A_i) +l(Y).
\end{equation}
Therefore, Relations~(\ref{eq:convex:1}) and~(\ref{eq:convex:2}) can conclude that
$$e_{{G[X\cup Y]}}(P)
\ge
\sum_{ A\in P_1} l(A)
+\sum_{ 1\le i \le n} l(A_i)-l(X\cup Y)+
\sum_{ A\in P_2} l(A) =
\sum_{A\in P} l(A)-l(X\cup Y).$$
Hence the proposition holds.
}\end{proof}
The next proposition presents a simple way for deducing partition-connectivity of a graph
from whose contractions and whose special subgraphs.
\begin{proposition}\label{prop:deducing}
{Let $G$ be a graph with $X\subseteq V(G)$ and let $l$ be an intersecting supermodular real  function   on
  subsets of $V(G)$.
If $G[X]$ and $G/X$ are $l$-partition-connected, then $G$ itself is $l$-partition-connected.
}\end{proposition}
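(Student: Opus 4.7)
The plan is to verify the partition-connectivity inequality directly for an arbitrary partition $P$ of $V(G)$. Split the parts of $P$ into $A_1,\dots,A_n$, those that meet $X$, and $B_1,\dots,B_k$, those disjoint from $X$. From $P$ I would build two auxiliary partitions: the partition $P_X=\{A_i\cap X:1\le i\le n\}$ of $X$, and the partition $P'$ of $V(G/X)$ consisting of $B_1,\dots,B_k$ together with the single part $\{x\}\cup\bigcup_i(A_i\setminus X)$, where $x$ denotes the contracted vertex (identified with $X$ when evaluating $l$). Each edge counted by $e_{G[X]}(P_X)$ joins two different parts of $P$, and each edge of $G/X$ counted by $e_{G/X}(P')$ lifts to an edge of $G$ joining two different parts of $P$; these two edge sets are disjoint (the former lies inside $X$, the latter has at least one end outside $X$), so
$$e_G(P)\ \ge\ e_{G[X]}(P_X)+e_{G/X}(P').$$

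Applying the $l$-partition-connectivity of $G[X]$ to $P_X$ and of $G/X$ to $P'$, and noting that $X\subseteq\bigcup_i A_i$ so that the $l$-value of the big part of $P'$ equals $l(\bigcup_i A_i)$, I obtain
$$e_G(P)\ \ge\ \sum_{i=1}^n l(A_i\cap X)-l(X)+l\Bigl(\bigcup_{i=1}^n A_i\Bigr)+\sum_{j=1}^k l(B_j)-l(V(G)).$$
To conclude that $e_G(P)\ge\sum_{A\in P}l(A)-l(V(G))$, it suffices to prove
$$\sum_{i=1}^n l(A_i\cap X)+l\Bigl(\bigcup_{i=1}^n A_i\Bigr)\ \ge\ \sum_{i=1}^n l(A_i)+l(X). \qquad(\star)$$

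The main obstacle is establishing $(\star)$ from intersecting supermodularity alone (the pairs $A_i,A_j$ themselves are disjoint, so supermodularity cannot be invoked on them directly). The trick is to apply supermodularity to two different kinds of intersecting pairs. First, each pair $(A_i,X)$ is intersecting, yielding $l(A_i\cap X)+l(A_i\cup X)\ge l(A_i)+l(X)$; summing over $i$ gives
$$\sum_{i=1}^n l(A_i\cap X)+\sum_{i=1}^n l(A_i\cup X)\ \ge\ \sum_{i=1}^n l(A_i)+n\,l(X).$$
Second, the sets $A_i\cup X$ pairwise intersect (their intersection contains $X$), so a short induction on $n$, applying intersecting supermodularity to $A_i\cup X$ and $\bigcup_{j<i}(A_j\cup X)$, gives $\sum_{i=1}^n l(A_i\cup X)\le (n-1)l(X)+l(\bigcup_i A_i\cup X)=(n-1)l(X)+l(\bigcup_i A_i)$. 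Subtracting this from the previous display produces $(\star)$ at once. The trivial cases $X=\varnothing$ and $n=0$ reduce immediately to the hypotheses on $G/X$ and $G[X]$, completing the proof.
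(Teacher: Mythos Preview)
Your proof is correct and follows essentially the same route as the paper, which obtains this proposition by specializing the proof of Proposition~\ref{thm:basic:union} to $Y=V(G)$: both arguments split $P$ by which parts meet $X$, form the induced partition $\{A_i\cap X\}$ of $X$ and a coarse partition of $G/X$ with one big part containing $X$, apply the two partition-connectivity hypotheses, and close the gap via intersecting supermodularity. The only difference is organizational: you derive $(\star)$ in two passes (supermodularity on each $(A_i,X)$, then a telescope over the $A_i\cup X$), whereas the paper telescopes once over the pairs $(B_i,A_i)$ with $B_i=X\cup A_1\cup\cdots\cup A_{i-1}$, using $B_i\cap A_i=A_i\cap X$ and $B_i\cup A_i=B_{i+1}$ to obtain $(\star)$ directly.
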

\begin{proof}
{It is enough to apply the same arguments in the proof of Theorem~\ref{thm:basic:union}, by setting $Y=V(G)$.
Note that  we still  have $e_G(P'_2) \ge \sum_{A\in P'_2} l(A)-l(Y)$, since $G/X$ is $l$-partition-connected.
}\end{proof}
\subsection{Minimally partition-connected and  maximal sparse spanning subgraphs}
The following lemma presents a simple way for inducing $l$-partition-connectivity of a graph
to whose special subgraphs. 
\begin{lem}\label{lem:inducing}
{Let $G$ be a graph and let $l$ be a real  function on
subsets of $V(G)$.
If $G$ is $l$-partition-connected and $P$ is a   partition of $V(G)$ with 
$$e_G(P)=\sum_{A\in P}l(A)-l(G),$$
then for any $A\in P$, the graph $G[A]$ is also $l$-partition-connected.
}\end{lem}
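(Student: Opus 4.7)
The plan is to exploit equality in the partition-connectivity bound for $P$ by refining it with an arbitrary partition of $A$ and reading off the desired inequality for $G[A]$. Fix $A\in P$ and let $Q$ be any partition of $A$. I would form the refined partition $P'=(P\setminus\{A\})\cup Q$ of $V(G)$, and observe that the edges crossing $P'$ split cleanly into the edges crossing $P$ and the edges of $G[A]$ crossing $Q$, giving
$$e_G(P')=e_G(P)+e_{G[A]}(Q).$$

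Next I would apply the $l$-partition-connectivity of $G$ to the partition $P'$, which yields
$$e_G(P)+e_{G[A]}(Q)=e_G(P')\ge \sum_{B\in P'}l(B)-l(G)=\sum_{B\in P}l(B)-l(A)+\sum_{B\in Q}l(B)-l(G).$$
Substituting the hypothesized equality $e_G(P)=\sum_{B\in P}l(B)-l(G)$ makes the terms $\sum_{B\in P}l(B)-l(G)$ cancel on both sides, leaving exactly $e_{G[A]}(Q)\ge \sum_{B\in Q}l(B)-l(A)$. Since $Q$ was an arbitrary partition of $A$, this is the definition of $l$-partition-connectivity for $G[A]$.

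I do not anticipate a real obstacle here: the argument is a one-line refinement, and it does not need supermodularity or subadditivity of $l$, only the definition of $l$-partition-connectivity and the bookkeeping identity for edges across a refined partition. The only thing to be careful about is correctly counting that refining $P$ inside the single block $A$ adds precisely the edges of $G[A]$ between the new subblocks, and that the ``boundary'' contribution from other blocks of $P$ is unchanged; this is immediate because no endpoint outside $A$ has its block altered.
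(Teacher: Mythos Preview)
Your proof is correct and essentially identical to the paper's own argument: the paper also refines $P$ by replacing the block $A$ with an arbitrary partition of $A$, uses the identity $e_{G[A]}(P') = e_G(P'') - e_G(P)$, and cancels via the assumed equality to obtain the required inequality. Only the notation differs.
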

\begin{proof}
{Let $A\in P$
and let $P'$ be an arbitrary partition of $A$.
Define $P''$ to be the partition of $V(G)$  with $P''=P'\cup (P-A)$. 
Since $G$ is $l$-partition-connected, 
$$e_{G[A]}(P') =e_{G}(P'') -e_{G}(P) \ge
 \sum_{A'\in P''}l(A')-l(G)- (\sum_{A'\in P}l(A')-l(G))=
\sum_{A'\in P'} l(A')-l(A).$$
Hence $G[A]$ is also $l$-partition-connected.
}\end{proof}
The following proposition establishes a simple but important property of minimally partition-connected graphs.
\begin{proposition}\label{prop:minimally-edges}
{Let $H$ be a graph and let  $l$ be  an intersecting supermodular weakly subadditive integer-valued function on 
subsets of $V(H)$.
If  $H$ is minimally $l$-partition-connected, then
$$|E(H)|= \sum_{v\in V(G)} l(v)-l(H).$$
}\end{proposition}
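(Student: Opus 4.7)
The bound $|E(H)| \geq \sum_{v} l(v) - l(H)$ is immediate from $l$-partition-connectivity applied to the singleton partition $P_{0} = \{\{v\} : v \in V(H)\}$, so the entire content of the statement is the matching upper bound. My plan is to exhibit a single partition $P^{*}$ that is simultaneously \emph{tight} (meaning $e_{H}(P^{*}) = \sum_{B \in P^{*}} l(B) - l(H)$) and crossed by every edge of $H$; weak subadditivity will then convert tightness into the desired upper bound.

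First, minimality of $H$ produces, for each edge $e$, a tight partition that witnesses $e$. Indeed $H-e$ is not $l$-partition-connected, so some partition $P_{e}$ satisfies $e_{H-e}(P_{e}) < \sum_{B \in P_{e}} l(B) - l(H)$; combined with $e_{H-e}(P_{e}) \geq e_{H}(P_{e}) - 1$ and $e_{H}(P_{e}) \geq \sum_{B} l(B) - l(H)$, integrality pins everything down: $P_{e}$ is tight and $e$ crosses $P_{e}$.

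The core lemma is that tight partitions are closed under meet and join. I will deduce this from two submodular-type inequalities on the partition lattice. First, an edge-by-edge Iverson-bracket case analysis on whether $e$ separates the various parts yields
$$e_{H}(P \wedge Q) + e_{H}(P \vee Q) \leq e_{H}(P) + e_{H}(Q).$$
Second, intersecting supermodularity of $l$ gives supermodularity of the partition function,
$$\sum_{B \in P \wedge Q} l(B) + \sum_{B \in P \vee Q} l(B) \geq \sum_{B \in P} l(B) + \sum_{B \in Q} l(B),$$
which I expect to prove by iterating $l(A \cap B) + l(A \cup B) \geq l(A) + l(B)$ along a spanning tree of the bipartite incidence graph linking the parts of $P$ and $Q$ inside each component of $P \vee Q$, ordered so that each invocation is on two sets whose intersection is forced to be nonempty. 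Combining these with the tightness of $P, Q$ and the partition inequality applied to $P \wedge Q, P \vee Q$ squeezes everything to equality, so both meet and join of tight partitions are tight.

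Setting $P^{*} = \bigwedge_{e \in E(H)} P_{e}$ and iterating the lemma, $P^{*}$ is tight. Since $P^{*}$ refines every $P_{e}$, each edge $e$ still separates parts of $P^{*}$, so every edge crosses $P^{*}$ and hence $e_{H}(P^{*}) = |E(H)|$. Tightness of $P^{*}$ and weak subadditivity $l(B) \leq \sum_{v \in B} l(v)$ then yield
$$|E(H)| = e_{H}(P^{*}) = \sum_{B \in P^{*}} l(B) - l(H) \leq \sum_{v \in V(H)} l(v) - l(H),$$
which completes the proof when combined with the lower bound. The main obstacle is the partition-lattice supermodularity of $P \mapsto \sum_{B \in P} l(B)$: since the hypothesis on $l$ is only intersecting supermodularity, the iterated application of the pairwise inequality must be sequenced along a tree order in the bipartite incidence structure so that every pairwise use is between two vertex sets that share a vertex.
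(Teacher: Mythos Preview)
Your approach diverges from the paper's. The paper argues by a short induction on $|V(H)|$: pick any edge $e$; since $H-e$ fails $l$-partition-connectivity there is a tight partition $P$ crossed by $e$; by Lemma~\ref{lem:inducing} each $H[A]$ with $A\in P$ is $l$-partition-connected, and by Proposition~\ref{prop:deducing} in fact minimally so; apply the inductive hypothesis to each $H[A]$ and sum $e_H(A)+e_H(P)$.

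Your lattice route has a genuine gap. The partition-lattice supermodularity
\[
\sum_{B\in P\wedge Q}l(B)+\sum_{B\in P\vee Q}l(B)\ \ge\ \sum_{B\in P}l(B)+\sum_{B\in Q}l(B)
\]
does \emph{not} follow from intersecting supermodularity of $l$. Take $V=\{1,2,3,4\}$ and $l(A)=|A|-1$ for nonempty $A$, which is fully (hence intersecting) supermodular. With $P=\{\{1,2\},\{3,4\}\}$ and $Q=\{\{1,3\},\{2,4\}\}$ one has $P\wedge Q$ the singleton partition and $P\vee Q=\{V\}$, and the inequality reads $0+3\ge 2+2$, which is false. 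So no sequencing of the pairwise inequality $l(A\cap B)+l(A\cup B)\ge l(A)+l(B)$ along a spanning tree of the bipartite incidence graph can possibly yield the claim: the claim is simply false at that level of generality.

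It is plausible that the inequality \emph{does} hold once weak subadditivity is also invoked (for instance the $K_{2,2}$ example above succumbs to two intersecting-supermodular steps followed by one use of $l(3)+l(4)\ge l(\{3,4\})$), but you have not argued this, and your sketch explicitly appeals only to intersecting supermodularity. Until that lemma is established under the full hypotheses, the squeeze that forces $P\wedge Q$ to be tight does not close. The paper's induction sidesteps the partition lattice entirely and is considerably shorter.
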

\begin{proof}
{By induction on $|V(H)|$. For $|V(H)|=1$ the proof is clear. So, suppose $|V(H)|\ge 2$.
Since $H$ is $l$-partition-connected and $l$ is weakly subadditive, we have
$|E(H)|\ge \sum_{v\in V(H)} l(v)-l(H)\ge 0.$
If  $|E(H)|=0$, then the theorem holds.
So, suppose that $|E(H)| >0$ and 
let $e$ be a fixed edge of $H$.
Since $H-e$ is not $l$-partition-connected, there is a partition $P$ of $V(H)$ such that  
$e_H(P)=\sum_{A\in P} l(A)-l(H)$ and $e$ joins different parts of $P$.
By Lemma~\ref{lem:inducing}, for every $A\in P$, the $H[A]$ is $l$-partition-connected.
If for an edge $e'\in E(H[A])$, $H[A]- e'$ is still $l$-partition-connected, then 
 by Proposition~\ref{prop:deducing}, one  can conclude that $H\setminus e'$ is  still $l$-partition-connected, which is impossible. 
Thus $H[A]$ is minimally $l$-partition-connected and by  induction hypothesis, we therefore
have 
$$|E(H)|=\sum_{A\in P}e_H(A)+e_H(P)=
\sum_{A\in P}(\sum_{v\in A} l(v) -l(A))+\sum_{A\in P} l(A) -l(H)=
\sum_{v\in V(H)} l(v)-l(H),$$
 which  completes the proof.
}\end{proof}
The following proposition shows that maximal  sparse  spanning  graphs are also  partition-connected.
\begin{proposition}\label{prop:sparse}
{Let $F$ be an $l$-sparse graph with $|E(F)| = \sum_{v\in V(F)}l(v)-l(F)$, where $l$ is a weakly subadditive real function on 
subsets of $V(F)$. 
If $P$ is a partition of $V(F)$, then
$$e_F(P)\ge \sum_{A\in P}l(A)-l(F).$$
Furthermore, the equality holds only if for every $A\in P$, the graph $F[A]$ is $l$-partition-connected.
}\end{proposition}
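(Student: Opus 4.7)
The plan is to use a simple double-counting argument, exploiting the fact that $F$ attains the maximal possible edge count among $l$-sparse spanning subgraphs on $V(F)$.

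First I would fix a partition $P$ of $V(F)$ and split the edges of $F$ into those lying inside a single part and those crossing between parts:
$$|E(F)| = e_F(P) + \sum_{A\in P} e_F(A).$$
Since $F$ is $l$-sparse, each term $e_F(A)$ is bounded above by $\sum_{v\in A} l(v) - l(A)$. Summing these bounds over $A\in P$ turns the right-hand side into $e_F(P) + \sum_{v\in V(F)} l(v) - \sum_{A\in P} l(A)$. Plugging in the hypothesis $|E(F)| = \sum_{v\in V(F)}l(v)-l(F)$ and rearranging gives precisely the desired inequality $e_F(P)\ge \sum_{A\in P} l(A)-l(F)$.

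For the equality clause, I observe that equality in the displayed inequality forces equality in each of the $l$-sparseness bounds used above, i.e., $e_F(A) = \sum_{v\in A} l(v) - l(A)$ for every $A\in P$. Since $F[A]$ is itself $l$-sparse (being a subgraph of $F$) and now satisfies $|E(F[A])| = \sum_{v\in A} l(v) - l(A)$, the hypothesis of the proposition is satisfied with $F$ replaced by $F[A]$. Applying the already-established inequality internally to $F[A]$ for an arbitrary partition $P'$ of $A$ yields $e_{F[A]}(P')\ge \sum_{A'\in P'} l(A') - l(A)$, which is precisely the condition that $F[A]$ is $l$-partition-connected.

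There is no real obstacle here: the argument is a one-line double count combined with a self-similar application to the induced subgraphs, and the only subtlety is to be careful that the weak subadditivity hypothesis only enters through the definition of $l$-sparseness (so it need not be revisited in the proof itself). I would present the two assertions in sequence, sharing the single counting identity.
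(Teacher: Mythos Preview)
Your proposal is correct and mirrors the paper's proof exactly: the paper uses the same identity $e_F(P)=|E(F)|-\sum_{A\in P}e_F(A)$, bounds each $e_F(A)$ by $l$-sparseness, and for the equality case observes that $e_F(A)=\sum_{v\in A}l(v)-l(A)$ together with $F[A]$ being $l$-sparse forces $F[A]$ to be $l$-partition-connected by the first part. Your explicit recursive application of the inequality to $F[A]$ is precisely what the paper's terse final sentence is doing.
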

\begin{proof}
{Since $F$ is $l$-sparse,   $e_F(A)\le \sum_{v\in A}l(v)-l(A)$, for every $A\in P$, which implies that
$$e_F(P)=|E(F)|-\sum_{A\in P}e_F(A)\ge  \sum_{v\in V(F)}l(v)-l(F)-\sum_{A\in P}(\sum_{v\in A}l(v)-l(A)) =\sum_{A\in P} l(A)-l(F).$$
Furthermore, if the equality holds, then for every $A\in P$, we must have   $e_F(A)= \sum_{v\in A}l(v)-l(A)$.
Since the induced  graph $F[A]$ is $l$-sparse, it must be $l$-partition-connected.
Hence the proof is completed.
}\end{proof}
\begin{proposition}\label{prop:Q:subadditive}
{Let $F$ be an $l$-sparse graph with $x,y\in V(F)$, where  $l$ is  a weakly subadditive real function on 
subsets of $V(F)$.
Let   $Q$ be an  $l$-partition-connected subgraph of $F$ with the minimum number of vertices including $x$ and $y$.
If $l$ is element-subadditive, then for each $z\in V(Q)\setminus \{x,y\}$, $d_Q(z)\ge 1$.
Furthermore, if $l$ is subadditive, then for every vertex set $A$ with $\{x,y\}\subseteq A\subsetneq V(Q)$, $d_Q(A)\ge 1$.
}\end{proposition}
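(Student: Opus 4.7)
My plan is to prove both parts by contradiction, exploiting the minimality of $V(Q)$. In each case, I will suppose the conclusion fails, produce a smaller induced subgraph of $Q$ that still contains $\{x,y\}$, and then show it is $l$-partition-connected, contradicting the choice of $Q$. The $l$-sparseness of $F$ does not enter the contradiction itself; it is only the ambient setting.

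For the first statement, suppose $z\in V(Q)\setminus\{x,y\}$ is isolated in $Q$. Consider the induced subgraph $Q'=Q\setminus\{z\}$, which still contains $x$ and $y$. Given an arbitrary partition $P'$ of $V(Q')$, extend it to the partition $P=P'\cup\{\{z\}\}$ of $V(Q)$. Because $z$ has degree $0$, $e_{Q'}(P')=e_Q(P)$, and applying $l$-partition-connectivity of $Q$ gives
\[
e_{Q'}(P')\ge \sum_{A\in P'}l(A)+l(z)-l(Q).
\]
Element-subadditivity yields $l(Q)=l(V(Q')\cup\{z\})\le l(V(Q'))+l(z)$, so $l(z)-l(Q)\ge -l(Q')$, and hence $e_{Q'}(P')\ge \sum_{A\in P'}l(A)-l(Q')$. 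Thus $Q'$ is $l$-partition-connected, contradicting minimality of $|V(Q)|$.

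For the second statement, suppose there is a set $A$ with $\{x,y\}\subseteq A\subsetneq V(Q)$ and $d_Q(A)=0$. I claim the induced subgraph $Q[A]$ is $l$-partition-connected, which again contradicts minimality. For any partition $P'$ of $A$, extend it to the partition $P=P'\cup\{V(Q)\setminus A\}$ of $V(Q)$. Since no edge of $Q$ crosses the cut $(A,V(Q)\setminus A)$, we have $e_{Q[A]}(P')=e_Q(P)$, and $l$-partition-connectivity of $Q$ gives
\[
e_{Q[A]}(P')\ge \sum_{B\in P'}l(B)+l(V(Q)\setminus A)-l(Q).
\]
Subadditivity of $l$ applied to the disjoint sets $A$ and $V(Q)\setminus A$ yields $l(Q)\le l(A)+l(V(Q)\setminus A)$, so the right-hand side is at least $\sum_{B\in P'}l(B)-l(A)$, as required.

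The only genuinely delicate point — and what I would expect to be the main place to slip up — is verifying which of the two hypotheses on $l$ is actually needed in each case: element-subadditivity suffices in (1) because we split off a single vertex, whereas (2) requires the full subadditivity to handle the disjoint pair $(A,V(Q)\setminus A)$. Once the correct subadditivity inequality is identified, the rest is a direct extension of a partition followed by an application of the $l$-partition-connectivity of $Q$.
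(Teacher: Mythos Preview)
Your proof is correct. Both parts work exactly as you describe: extending a partition of the smaller set to a partition of $V(Q)$, using that no edges cross into the removed piece, and then applying the appropriate subadditivity inequality to pass from $l(Q)$ to $l(Q')$ or $l(A)$.

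Your route differs from the paper's. The paper argues the contrapositive via Proposition~\ref{prop:sparse}: since $Q$ is a subgraph of the $l$-sparse graph $F$ and is $l$-partition-connected, it satisfies $|E(Q)|=\sum_{v\in V(Q)}l(v)-l(Q)$; then with $B=V(Q)\setminus A$, minimality forces $Q[A]$ to be \emph{not} $l$-partition-connected, so the ``equality'' clause of Proposition~\ref{prop:sparse} fails for the partition $\{A,B\}$, giving the strict inequality $d_Q(A)>l(A)+l(B)-l(A\cup B)\ge 0$. Your argument bypasses Proposition~\ref{prop:sparse} entirely and, as you observed, never uses the $l$-sparseness of $F$; it shows directly that $d_Q(A)=0$ forces $Q[A]$ to be $l$-partition-connected. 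The trade-off is that the paper's route yields the sharper quantitative bound $d_Q(A)>l(A)+l(B)-l(A\cup B)$, while your route is more self-contained and reveals that the sparseness hypothesis is unnecessary for the proposition as stated.
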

\begin{proof}
{Let $A$ be a vertex set with $\{x,y\}\subseteq A\subsetneq V(Q)$ and set $B=V(Q)\setminus A$.
According to the minimality of $Q$,  the graph $Q[A]$
 is not partition-connected and so by Proposition~\ref{prop:sparse}, we must have
$d_Q(B)=d_Q(A)> l(A)+l(B)-l(A\cup B)\ge 0$, whether
 $l$ is element-subadditive and  $B=\{z\}$ or
$l$ is subadditive.
}\end{proof}
%
%
%
%
%
%
%
%
\subsection{Exchanging edges and preserving partition-connectivity}
The following proposition is a  useful tool for finding a pair of edges such that replacing
them preserves partition-connectivity of a given spanning subgraph. 
\begin{proposition}\label{prop:replacing}
{Let $G$ be a graph and let $l$ be an  intersecting supermodular  integer-valued  function on
subsets of $V(G)$.
Let $H$ be an  $l$-partition-connected spanning subgraph of $G$ and 
let $M$ be a nonempty edge subset of $E(H)$.
If a given  edge $e'\in E(G)\setminus E(H)$  joins different $l$-partition-connected components of $H\setminus M$, 
then there is an  edge 
$e$ belonging to $ M$ such that $H-e+e'$ is still $l$-partition-connected.
}\end{proposition}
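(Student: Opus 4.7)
The plan is to argue by contradiction. Suppose that for every edge $e \in M$ the graph $H - e + e'$ is not $l$-partition-connected, so for each such $e$ we can fix a partition $P_e$ of $V(G)$ with $e_{H-e+e'}(P_e) < \sum_{A \in P_e} l(A) - l(G)$. Since $H$ itself is $l$-partition-connected, $e_H(P_e) \geq \sum_{A \in P_e} l(A) - l(G)$, and the swap changes this crossing count by at most $1$ in each direction. A short four-case check on whether $e$ and $e'$ cross $P_e$ pins down the only consistent scenario: $P_e$ is tight for $H$ (equality holds in the $l$-partition-connectedness inequality), the edge $e'$ has both endpoints in a single part $A_e \in P_e$, and $e$ crosses $P_e$.

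By Lemma~\ref{lem:inducing}, tightness of $P_e$ gives that $H[A_e]$ is $l$-partition-connected, and since $e$ crosses $P_e$ we have $e \notin E(H[A_e])$. Thus for every $e \in M$ the set $A_e$ contains both endpoints of $e'$, induces an $l$-partition-connected subgraph of $H$, and avoids the particular edge $e$. The goal is to combine these sets into a single vertex set $A^*$ that contains both endpoints of $e'$, induces an $l$-partition-connected subgraph of $H$, and avoids every edge of $M$. Then $H[A^*]$ would be an $l$-partition-connected subgraph of $H \setminus M$ joining the endpoints of $e'$, contradicting the hypothesis that $e'$ connects different $l$-partition-connected components of $H \setminus M$.

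To build $A^*$ I would establish the following uncrossing lemma: if $A_1$ and $A_2$ are tight parts of two tight partitions of $H$ and $A_1 \cap A_2 \neq \emptyset$, then $A_1 \cap A_2$ is again a tight part of some tight partition of $H$. The argument proceeds in the spirit of Proposition~\ref{thm:basic:union}: one merges the two tight partitions along the sets $A_1 \cap A_2$ and $A_1 \cup A_2$, and uses the intersecting supermodular inequality $l(A_1) + l(A_2) \leq l(A_1 \cap A_2) + l(A_1 \cup A_2)$ together with the two tightness equalities to force equality throughout the combined count. Iterating intersection over all $e \in M$ then yields $A^* = \bigcap_{e \in M} A_e$, which is tight, contains the endpoints of $e'$, and (because each individual $A_e$ excludes $e$) avoids every edge of $M$ at once.

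The principal obstacle is exactly this intersection-closure step: Proposition~\ref{thm:basic:union} is written for unions of induced $l$-partition-connected subgraphs, so the analogous closure for intersections of tight parts must be obtained by careful supermodular bookkeeping using the equalities given by tightness. With that lemma in hand, the rest of the argument is routine verification of the case analysis and assembling the contradiction.
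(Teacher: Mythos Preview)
Your four-case analysis for a single edge $e$ is correct and matches exactly what the paper does for the base case $|M|=1$: tightness of $P_e$ together with Lemma~\ref{lem:inducing} gives an $l$-partition-connected $H[A_e]$ containing both ends of $e'$ and missing $e$. The difficulty is entirely in the passage from one edge to all of $M$, and here your intersection-closure lemma is a genuine gap rather than routine bookkeeping.

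Concretely, what you would need is that the common refinement $P_1\wedge P_2$ of two tight partitions is again tight, or at least that $H[A_1\cap A_2]$ is $l$-partition-connected. Writing $L(P)=\sum_{A\in P}l(A)$ and $f(P)=e_H(P)-L(P)+l(G)\ge 0$, one has $e_H(P_1\wedge P_2)+e_H(P_1\vee P_2)\le e_H(P_1)+e_H(P_2)$ by an easy edge count, so $f(P_1\wedge P_2)+f(P_1\vee P_2)\le 0$ would follow from $L(P_1\wedge P_2)+L(P_1\vee P_2)\ge L(P_1)+L(P_2)$. But this last inequality is \emph{not} a consequence of intersecting supermodularity alone: when two parts of $P_1$ and $P_2$ are disjoint (say $\{3,4\}\in P_1$ and $\{1,3\},\{2,4\}\in P_2$ on a four-point set), the chain of supermodular inequalities leaves a residual term of the form $l(3)+l(4)\ge l(\{3,4\})$, which is subadditivity and is not assumed in Proposition~\ref{prop:replacing}. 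Your appeal to Proposition~\ref{thm:basic:union} does not help, since that result produces $l$-partition-connected \emph{unions}, and the dual statement for intersections simply does not follow by the same mechanism. So the ``supermodular bookkeeping'' you defer is in fact the whole problem, and it is not clear it can be done under the stated hypotheses.

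The paper sidesteps this entirely by induction on $|M|$. The base case $|M|=1$ is your single-edge argument. For $|M|\ge 2$, fix $e\in M$: either $e'$ already joins different components of $H\setminus(M\setminus\{e\})$ and induction applies, or both ends of $e'$ lie in one component $C$ of $H\setminus(M\setminus\{e\})$; then $e$ must lie in $C$ as well, the base case inside $C$ gives that $C-e+e'$ is $l$-partition-connected, and Proposition~\ref{prop:deducing} lifts this to $H-e+e'$. This avoids any uncrossing of tight partitions.
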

\begin{proof}
{We proceed
by induction on $|M|$. 
Assume first that  $M=\{e\}$.
Suppose, by way of contradiction,  that $H-e+e'$ is not $l$-partition-connected.
Consequently, there is a partition $P$
 of $V(H')$ such that  $e_{H'}(P)<   \sum_{A\in P}l(A)-l(G)$, where $H'=H-e+e'$. 
Since   $l$ is integer-valued,  $ \sum_{A\in P}l(A)-l(G)$  is  integer, and so  
$e_{H'}(P)\le    \sum_{A\in P}l(A)-l(G)-1$.
Since $H$ is $l$-partition-connected, we must have  
$ e_{H'}(P) =e_{H}(P)-1$ and  $e_{H}(P)=    \sum_{A\in P}l(A)-l(G)$. 
Therefore,
the edge $e$  joins     different parts of $P$ and 
both ends of $e'$ lie in the same part $A$ of $P$.
By Lemma~\ref{lem:inducing}, the graph $H[A]$ is $l$-partition-connected, which is a contradiction.
Now, assume  that $|M|\ge 2$.
Pick $e\in M$.
If   $e'$ whose ends lie in different $l$-partition-connected components of $H-(M\setminus e)$, 
then the proof follows  by induction.
Suppose that both ends of $e'$  lies in the same  $l$-partition-connected component $C$ of $H-(M\setminus e)$.
By the assumption, both ends of  $e$ must  lie in $C$ and also $e'$ whose ends lies 
in different $l$-partition-connected components of $C-e$.
By applying induction  to $C$, the graph $C-e+e'$ must be $l$-partition-connected.
Thus by  Proposition~\ref{prop:deducing},
 the graph $H-e+e'$  is  $l$-partition-connected.
Hence the proposition holds.
}\end{proof}
The next proposition    is a useful  tool for finding a pair of edges such that replacing
them preserves sparse property  of a given sparse spanning subgraph.  
\begin{proposition}\label{prop:xGy-exchange}
{Let $G$ be a graph and let $l$ be an  intersecting supermodular  weakly subadditive  integer-valued function on 
subsets of $V(G)$.
Let $F$ be an $l$-sparse spanning subgraph  of $G$.  
If  $xy\in E(G)\setminus E(F)$ and $Q$ is an  $l$-partition-connected subgraph of $F$ including $x$ and $y$ with the minimum number of vertices,  then for every  $e \in E(Q)$, the graph $F-e+xy$ remains $l$-sparse.
}\end{proposition}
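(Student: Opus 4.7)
I plan to argue by contradiction. Suppose $F'=F-e+xy$ is not $l$-sparse. Then some vertex set $A$ satisfies $e_{F'}(A)>\sum_{v\in A}l(v)-l(A)$. Since $F$ itself is $l$-sparse, the only way the bound can be broken at $A$ is that the new edge $xy$ has both ends in $A$ while the deleted edge $e$ does not, so $x,y\in A$, $e\notin E(F[A])$, and the sparsity is saturated: $e_F(A)=\sum_{v\in A}l(v)-l(A)$. By Proposition~\ref{prop:sparse}, $F[A]$ is therefore $l$-partition-connected. The strategy is now to exhibit an $l$-partition-connected subgraph of $F$ containing $x$ and $y$ but using strictly fewer vertices than $Q$, contradicting the minimality of $Q$.

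Set $X=A\setminus V(Q)$, $Y=V(Q)\setminus A$, and $Z=A\cap V(Q)$. Because $e\in E(Q)$ has at least one end outside $A$, the set $Y$ is nonempty, so $Z\subsetneq V(Q)$. The induced subgraph $F[V(Q)]$ is $l$-partition-connected as a supergraph of $Q$, and $\{x,y\}\subseteq A\cap V(Q)$, so Proposition~\ref{thm:basic:union} yields that $F[A\cup V(Q)]$ is $l$-partition-connected. Combining $l$-partition-connectivity with the $l$-sparsity of $F$, each of $F[A]$, $F[V(Q)]$, and $F[A\cup V(Q)]$ has edge count saturating the sparsity bound.

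The heart of the proof is a short edge-count identity. Expanding both sides in terms of the parts $X,Y,Z$ gives
\[
e_F(X,Y)=e_F(A\cup V(Q))+e_F(Z)-e_F(A)-e_F(V(Q)).
\]
Substituting the three saturated identities, the $\sum l(v)$ contributions telescope and leave
\[
e_F(X,Y)\;=\;e_F(Z)-\sum_{v\in Z}l(v)+l(A)+l(V(Q))-l(A\cup V(Q)).
\]
Since $F[Z]$ is $l$-sparse and the pair $A,V(Q)$ is intersecting (because $Z\neq\emptyset$), applying both $e_F(Z)\le \sum_{v\in Z}l(v)-l(Z)$ and the intersecting supermodular inequality $l(Z)+l(A\cup V(Q))\ge l(A)+l(V(Q))$ forces $e_F(X,Y)\le 0$. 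Hence $e_F(X,Y)=0$ and both of the above bounds must be equalities; in particular $e_F(Z)=\sum_{v\in Z}l(v)-l(Z)$.

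Proposition~\ref{prop:sparse} applied to $F[Z]$ now certifies that $F[Z]$ is $l$-partition-connected. Since $\{x,y\}\subseteq Z$ and $Y\neq\emptyset$, we have $|Z|<|V(Q)|$, contradicting the minimality of $Q$. The main obstacle is the edge-count identity of the third paragraph combined with the correct application of intersecting supermodularity to the pair $(A,V(Q))$; once that inequality is in hand, everything else is bookkeeping.
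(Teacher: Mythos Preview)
Your proof is correct and follows essentially the same route as the paper: both deduce that $F[A\cap V(Q)]$ is $l$-partition-connected (contradicting the minimality of $Q$) by combining the saturated sparsity of $F[A]$ and $F[V(Q)]$ with intersecting supermodularity. The paper's argument is marginally shorter because it uses only the sparse upper bound $e_F(A\cup V(Q))\le \sum_{v}l(v)-l(A\cup V(Q))$ together with the submodularity inequality $e_F(A\cap V(Q))\ge e_F(A)+e_F(V(Q))-e_F(A\cup V(Q))$, so your detour through Proposition~\ref{thm:basic:union} to get equality at $A\cup V(Q)$ is unnecessary, though harmless.
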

\begin{proof}
{If  $F-e+xy$  is not $l$-sparse,
 then there is a vertex set $A$ including $x$ and $y$ such that $e\notin E(F[A])$ and $e_F(A)=\sum_{v\in A}l(v)-l(A)$. 
  Since $F$ is $l$-sparse,
$$e_F(A\cap B)\ge e_F(A)+e_F(B)-e_F(A\cup B)\ge
\sum_{v\in A}l(v)-l(A)+\sum_{v\in B}l(v)-l(B)-
 \sum_{v\in A\cup B}l(v)+l(A\cup B).$$
where  $B=V(Q)$.
Since $l$ is intersecting supermodular, we therefore,  $e_F(A\cap B)\ge \sum_{v\in A\cap B}l(v)+l(A\cap B)$.
By Proposition~\ref{prop:sparse}, the graph $F[A\cap B]$ must be $l$-partition-connected, which contradicts minimality of $Q$.
Note that $F[A\cap B]$ includes $x$ and $y$.
Hence the the proof is completed.
}\end{proof}
\subsection{Comparing partition-connectivity measures}
The following lemma  
 gives useful information about the existence of non-trivial $l$-partition-connected components and
 develops a result  in~\cite{MR2575822}.
\begin{lem}\label{lem:non-trivial-component}
{Let $G$ be a graph of order at least two and let $l$ be a real  function on subsets of $V(G)$.
If  $G$   contains at least $\sum_{v\in V(G)}l(v)-l(G)$ edges, then it  has an
$l$-partition-connected subgraph with at least two vertices.
}\end{lem}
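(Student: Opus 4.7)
My plan is to proceed by induction on $|V(G)|$. For the base case $|V(G)|=2$ with $V(G)=\{u,v\}$, the only non-trivial partition of $V(G)$ is $\{\{u\},\{v\}\}$, and the hypothesis $|E(G)|\ge l(u)+l(v)-l(\{u,v\})$ is precisely the $l$-partition-connectivity inequality for that partition, so $G$ itself is the desired subgraph.

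For the inductive step I would distinguish two cases. If $G$ itself is already $l$-partition-connected, taking $H=G$ finishes the proof. Otherwise I would choose a partition $P=\{A_1,\ldots,A_k\}$ that witnesses failure of the $l$-partition-connectivity inequality, so $e_G(P)<\sum_{i=1}^{k}l(A_i)-l(G)$. The trivial one-block partition gives $e_G(P)=0=l(G)-l(G)$ and hence cannot violate the inequality, so $k\ge 2$, and in particular $|A_i|<|V(G)|$ for every $i$.

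The main step is a short averaging argument. Writing $|E(G)|=e_G(P)+\sum_{i}e_G(A_i)$ and combining the violated inequality $e_G(P)<\sum_{i}l(A_i)-l(G)$ with the hypothesis $|E(G)|\ge\sum_{v\in V(G)}l(v)-l(G)$ yields $\sum_{i}e_G(A_i)>\sum_{i}\bigl(\sum_{v\in A_i}l(v)-l(A_i)\bigr)$. By pigeonhole, some index $i$ satisfies $e_G(A_i)>\sum_{v\in A_i}l(v)-l(A_i)$, and the strict inequality forces $|A_i|\ge 2$, because both sides vanish on a singleton. Then $G[A_i]$ is a graph of order at least two and at most $|V(G)|-1$ vertices containing at least $\sum_{v\in A_i}l(v)-l(A_i)$ edges, so the inductive hypothesis delivers an $l$-partition-connected subgraph of $G[A_i]$ on at least two vertices, which is also a subgraph of $G$.

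I do not foresee real obstacles: the argument relies only on the partition inequality and an edge count, and uses neither intersecting-supermodularity, subadditivity, nor integrality of $l$, since the strict inequality produced by the violating partition propagates through the averaging and obviates any rounding issue. The only point worth flagging is that the lemma asks for an arbitrary subgraph rather than an induced one, but the recursion naturally produces exactly that inside $G[A_i]$.
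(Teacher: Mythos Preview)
Your proof is correct and follows essentially the same approach as the paper: both argue by induction on $|V(G)|$, pass to a violating partition $P$ when $G$ itself is not $l$-partition-connected, and use the edge-count identity $|E(G)|=e_G(P)+\sum_{A\in P}e_G(A)$ together with the hypothesis to force some part $A$ with $e_G(A)\ge\sum_{v\in A}l(v)-l(A)$. The paper phrases this as a proof by contradiction (assuming no such subgraph exists and summing the contrapositive bounds over all parts), whereas you extract a single part by pigeonhole and recurse directly, but the mathematical content is identical.
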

\begin{proof}
{The proof is by induction on $|V(G)|$. 
For $|V(G)|=2$, the proof is clear.
Assume $|V(G)|\ge 3$.
Suppose the lemma is false. 
Thus there exists a partition $P$ of $V(G)$ such that
 $e_G(P)< \sum_{A\in P}l(A)-l(G)$.
By induction hypothesis, for every  $A\in P$, 
we have $e_{G}(A)\le\sum_{v\in A}l(v)-l(A)$, whether $|A|\ge 2$ or not. 
Therefore, 
$$ \sum_{v\in V(G)}l(v)-l(G) \le |E(G)|= e_G(P)+ \sum_{A\in P}e_G(A)
<
 \sum_{A\in P}l(A)-l(G)+
\sum_{A\in P}(\sum_{v\in A}l(v)-l(A))
= \sum_{v\in V(G)}l(v)-l(G).$$
This result is a contradiction, as desired.
}\end{proof}
The following result describes a relationship between  partition-connectivity measures of graphs.
\begin{thm}
{Let $G$ be a graph and let $l$ an intersecting supermodular real function on 
subsets of $V(G)$.
If  $\beta$ is a real number with  $\beta\ge 1$, then
$$l(G)\le \OMEGA_{l}(G) \le \frac{1}{\beta}\OMEGA_{\beta l}(G).$$
Furthermore, $G$ is $l$-partition-connected if and only if $\OMEGA_l(G)=l(G)$.
}\end{thm}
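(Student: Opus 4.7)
The plan is to first establish the max-characterization
$$\OMEGA_l(G) \;=\; \max_{P}\Big(\sum_{A\in P} l(A) \;-\; e_G(P)\Big),$$
where $P$ ranges over all partitions of $V(G)$. Once this characterization is in place, each of the three assertions of the theorem follows by a short deduction.

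To prove the characterization, I would pick a partition $P^*$ attaining the maximum (which exists since $V(G)$ is finite) and argue in two steps. First, if some part $A^*\in P^*$ induced a subgraph $G[A^*]$ that were not $l$-partition-connected, there would exist a partition $Q$ of $A^*$ with $e_{G[A^*]}(Q)<\sum_{B\in Q}l(B)-l(A^*)$; refining $P^*$ by replacing $A^*$ with $Q$ strictly increases the objective, contradicting maximality. So every part of $P^*$ induces an $l$-partition-connected subgraph. Second, by Proposition~\ref{thm:basic:union} the $l$-partition-connected components $C_j$ are the unique maximal induced $l$-partition-connected subgraphs, so each $A^*\in P^*$ must be contained in a single $C_j$; hence $P^*$ refines the component partition $P_c$. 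For each $j$, let $Q_j$ be the partition of $C_j$ consisting of those $A^*\in P^*$ lying in $C_j$. Using $e_G(P^*)=e_G(P_c)+\sum_j e_{G[C_j]}(Q_j)$, a direct calculation yields
$$\Big[\sum_{A\in P_c}l(A)-e_G(P_c)\Big]-\Big[\sum_{A\in P^*}l(A)-e_G(P^*)\Big] \;=\; \sum_{j}\Big[e_{G[C_j]}(Q_j)-\big(\sum_{A\in Q_j}l(A)-l(C_j)\big)\Big] \;\ge\; 0,$$
because each $G[C_j]$ is $l$-partition-connected. Thus $\OMEGA_l(G)=\sum_{A\in P_c}l(A)-e_G(P_c)$ is indeed the maximum.

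From the characterization, the inequality $\OMEGA_l(G)\ge l(G)$ follows immediately by evaluating the objective at the trivial partition $P=\{V(G)\}$. For $\OMEGA_l(G)\le \OMEGA_{\beta l}(G)/\beta$, I would pick a partition $P$ attaining $\OMEGA_l(G)$ and write
$$\OMEGA_{\beta l}(G)\;\ge\;\beta\sum_{A\in P}l(A)-e_G(P)\;\ge\;\beta\sum_{A\in P}l(A)-\beta\, e_G(P)\;=\;\beta\,\OMEGA_l(G),$$
where the second inequality uses $\beta\ge 1$ and $e_G(P)\ge 0$. For the iff, observe that $G$ is $l$-partition-connected precisely when $\sum_A l(A)-e_G(P)\le l(G)$ for every partition $P$, i.e.\ $\OMEGA_l(G)\le l(G)$; combined with $\OMEGA_l(G)\ge l(G)$, this is equivalent to $\OMEGA_l(G)=l(G)$.

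The crux of the proof is the max-characterization, and the delicate step within it is showing that a maximizing partition $P^*$ has each part lying inside a single $l$-partition-connected component. This depends essentially on Proposition~\ref{thm:basic:union} (to guarantee uniqueness of the maximal induced $l$-partition-connected subgraph through each vertex) and on the edge-counting identity relating $e_G(P^*)$ to $e_G(P_c)$. Once these are in place, everything else is essentially bookkeeping.
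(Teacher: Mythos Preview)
Your proposal is correct, but it follows a different route from the paper. The paper does \emph{not} establish the max-characterization first; in fact, that characterization is stated and proved as the \emph{next} theorem. Instead, the paper works directly with the two component partitions $P$ (into $l$-components) and $P'$ (into $\beta l$-components), observes that $P'$ refines $P$, and then applies Lemma~\ref{lem:non-trivial-component} to each contracted graph $G[X]/P'_X$ for $X\in P$ to obtain $e_{G[X]}(P'_X)\le\sum_{A\in P'_X}\beta l(A)-\beta l(X)$; summing these yields $\OMEGA_{\beta l}(G)\ge\beta\,\OMEGA_l(G)$. The ``furthermore'' part is handled similarly: if $G$ is not $l$-partition-connected, Lemma~\ref{lem:non-trivial-component} applied to $G/P$ gives $e_G(P)<\sum_{A\in P}l(A)-l(G)$, hence $\OMEGA_l(G)>l(G)$.

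Your approach effectively merges this theorem with the one that follows it: you prove the max-formula first (your argument for it is essentially the paper's own proof of that subsequent theorem, though you phrase the second step via refinement of $P_c$ rather than via minimality of $|P^*|$), and then the three assertions become one-line consequences. This is cleaner and more conceptual once the max-formula is in hand, and it avoids any direct appeal to Lemma~\ref{lem:non-trivial-component}. The paper's ordering, by contrast, keeps the two statements logically independent and extracts the inequality $\OMEGA_{\beta l}(G)\ge\beta\,\OMEGA_l(G)$ from structural information about how the $\beta l$-components sit inside the $l$-components, which is of some independent interest.
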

\begin{proof}
{Define $l'=\beta l$. Note that $l'$ is also intersecting supermodular.
Let $P$ and $P'$  be the partitions of $V(G)$ obtained from the $l$-partition-connected components and 
$l'$-partition-connected components  of $G$. If $G$ is $l$-partition-connected, 
then  we have $|P|=1$ and so $e_G(P)=0$ and  $\OMEGA_l(G)=l(G)$.
Oppositely, if $G$ is not $l$-partition-connected, then by applying  Lemma~\ref{lem:non-trivial-component} to the contracted graph $G/P$,
 $e_G(P)<\sum_{A\in P}l(A)-l(G)$ and hence $\OMEGA_l(G)> l(G)$.
For every $X\in P$, define $P'_X$ to be the partition of $X$ obtained from the vertex sets of $P'$.
By applying  Lemma~\ref{lem:non-trivial-component} to the graph $G[X]/P'_X$, we  have
 $e_{G[X]}(P'_X) \le \sum_{A\in P'_X}l'(A)-l'(X)$, whether $|P'_X|=1$ or not.
Therefore, 
$$\OMEGA_{l'}(G)=\sum_{A\in P'}l(A)-e_G(P')=
\sum_{X\in P}(\sum_{A\in P'_X} l'(A)-e_{G[X]}(P'_X))\,-e_G(P)\ge 
\sum_{X\in P}l'(X)-e_G(P)\ge \beta
\OMEGA_{l}(G).$$
This equality completes the proof.
}\end{proof}
The following theorem introduces an  interesting property of partition-connectivity measures.
\begin{thm}
{Let $G$ be a graph and  let $l$ be an intersecting supermodular real function on 
subsets of $V(G)$.
Then we have,
$$ \OMEGA_l(G) =\max \big\{\sum_{A\in P}l(A)-e_G(P): P \text{ is a partition of $V(G)$}\big\}.$$
}\end{thm}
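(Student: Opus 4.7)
{The plan is to prove the two inequalities separately. The bound $\OMEGA_l(G)\le\max\{\sum_{A\in P}l(A)-e_G(P)\}$ is immediate since, by definition, $\OMEGA_l(G)$ is the value at one particular partition $P^*$, namely the one coming from the $l$-partition-connected components of $G$. All the work lies in proving that no other partition exceeds this value.

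So fix an arbitrary partition $P$ of $V(G)$. My first step is to refine $P$ componentwise: for each $A\in P$, let $Q_A$ be the partition of $A$ into the $l$-partition-connected components of $G[A]$, and let $Q=\bigcup_{A\in P}Q_A$. The immediately preceding theorem, applied to $G[A]$, yields $\sum_{B\in Q_A}l(B)-e_{G[A]}(Q_A)=\OMEGA_l(G[A])\ge l(A)$. Together with the identity $e_G(Q)=e_G(P)+\sum_{A\in P}e_{G[A]}(Q_A)$, summing over $A\in P$ yields the intermediate bound
$$\sum_{B\in Q}l(B)-e_G(Q)\ \ge\ \sum_{A\in P}l(A)-e_G(P).$$

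Next, I would compare $\sum_{B\in Q}l(B)-e_G(Q)$ with $\OMEGA_l(G)$. Each $B\in Q$ induces an $l$-partition-connected subgraph of $G$, so by Proposition~\ref{thm:basic:union} it is contained in the (unique) maximal such subgraph, namely some $X\in P^*$. Hence $Q$ refines $P^*$. For each $X\in P^*$, let $Q_X$ be the set of parts of $Q$ contained in $X$; since $G[X]$ is $l$-partition-connected, $\sum_{B\in Q_X}l(B)-e_{G[X]}(Q_X)\le l(X)$. Combined with $e_G(Q)=e_G(P^*)+\sum_{X\in P^*}e_{G[X]}(Q_X)$, summing over $X\in P^*$ gives
$$\sum_{B\in Q}l(B)-e_G(Q)\ \le\ \sum_{X\in P^*}l(X)-e_G(P^*)\ =\ \OMEGA_l(G).$$
Chaining the two bounds completes the proof.

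The conceptual obstacle is arranging $Q$ so that it is compatible with both $P$ and $P^*$ at once. Building $Q$ componentwise from each $G[A]$ makes it refine $P$ by construction, while refining $P^*$ rests on intersecting supermodularity of $l$ through Proposition~\ref{thm:basic:union}: an $l$-partition-connected subgraph of $G[A]$ is \emph{a fortiori} an $l$-partition-connected subgraph of $G$ and so lies inside some $l$-partition-connected component of $G$.
}
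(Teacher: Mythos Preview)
Your proof is correct and takes a genuinely different route from the paper's. The paper argues extremally: it picks a partition $P$ attaining the maximum and having minimal cardinality, then shows (i) each $X\in P$ must induce an $l$-partition-connected subgraph (else refining $X$ would strictly increase the value, contradicting maximality), and (ii) no nontrivial union of parts can induce an $l$-partition-connected subgraph (else merging would not decrease the value, contradicting minimality of $|P|$); together these force $P=P^*$. Your argument instead sandwiches: for an \emph{arbitrary} $P$ you build the common refinement $Q$ by taking local $l$-partition-connected components inside each $G[A]$, use $\OMEGA_l(G[A])\ge l(A)$ from the preceding theorem to show passing from $P$ to $Q$ does not lower the value, and then use Proposition~\ref{thm:basic:union} to see that $Q$ also refines $P^*$, which caps the value at $\OMEGA_l(G)$. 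Your approach is more direct and constructive, avoiding the extremal selection; the paper's is slightly more self-contained in that it does not invoke the inequality $l(G)\le\OMEGA_l(G)$ from the previous result, instead re-deriving the needed refinement step inside the argument.
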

\begin{proof}
{Consider $P$ with the maximum 
$\sum_{A\in P}l(A)-e_G(P)$ and with the minimal $|P|$.
 If  for a vertex set $X\in P$, the graph  $G[X]$ is not $l$-partition-connected, 
then there is a partition $P'$ of $X$ such that  $e_{G[X]}(P')< \sum_{A\in P'} l(A) -l(X).$
Define $P''=P'\cup (P-X)$.
Then
$$\sum_{A\in P''}l(A)-e_G(P'')=\sum_{A\in P}l(A)-l(X)-e_{G}(P)+\sum_{A\in P'}l(A)-e_{G[X]}(P')> 
\sum_{A\in P}l(A)-e_G(P),$$
which contradicts maximality of $\sum_{A\in P}l(A)-e_G(P)$.
Hence    for every  set $X\in P$, the graph  $G[X]$ must be $l$-partition-connected.
Now, assume that   $G[X']$ is $l$-partition-connected, where 
$X'=\cup_{A\in P'} A$, $P'\subseteq P$, and $|P'| \ge 2$.
Thus $e_G[X'](P')\ge \sum_{A\in P'} l(A) -l(X')$.
Define $P''=(P\setminus P')\cup \{X'\}$. Then
$$\sum_{A\in P''} l(A)-e_G(P'')=\sum_{A\in P} l(A)+l(X')-\sum_{A\in P'} l(A)-(e_G(P)-e_{G[X']}(P'))\ge
 \sum_{A\in P}l(A)-e_G(P),$$
which contradicts minimality  of $|P|$.
It  is easy to check that $P$ must be the same partition of $G$ obtained from $l$-partition-connected components of $G$.
Hence the theorem holds.
}\end{proof}
%
%
%
%
%
%
%
%
%
%
%
%
%
%
%
%
%
%
%
%
%
%
\section{Highly partition-connected spanning subgraphs with small  degrees}

Here, we state following fundamental theorem, which gives much information about partition-connected spanning subgraphs  with the minimum total excess. 
In Section~\ref{sec:Generalizations},
 we present a stronger version for  this result with a proof, but we feel that it helpful to state the
proof of this special case before the general version.
\begin{thm}\label{thm:preliminary:structure}
{Let $G$ be a graph, let $l$ be an  intersecting supermodular element-subadditive  integer-valued function on
subsets of $V(G)$,
 and 
 let  $h$ be an  integer-valued function on $V(G)$.
 If $H$ is  a minimally   $l$-partition-connected spanning subgraph  of $G$  with the minimum total excess from $h$,  
then there exists a subset $S$ of $V(G)$ with the following properties:
\begin{enumerate}{
\item $\OMEGA_l (G\setminus S)=\OMEGA_l (H\setminus S)$.\label{Condition 2.1}
\item $S\supseteq \{v\in V(G):d_H(v)> h(v)\}$.\label{Condition 2.2}
\item For each vertex $v$ of $S$, $d_H(v)\ge h(v)$.\label{Condition 2.3}
}\end{enumerate}
}\end{thm}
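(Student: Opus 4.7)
The plan is to construct $S$ as the closure of the excess set $\mathrm{Ex}(H,h):=\{v\in V(G):d_H(v)>h(v)\}$ under an iterative rule driven by the edge-exchange tools of the previous subsections. Since $H$ is minimally $l$-partition-connected, Proposition~\ref{prop:minimally-edges} gives $|E(H)|=\sum_{v\in V(H)}l(v)-l(H)$, so $H$ is simultaneously a maximal $l$-sparse spanning subgraph of $G$; consequently both Proposition~\ref{prop:replacing} and Proposition~\ref{prop:xGy-exchange} apply to $H$, while Proposition~\ref{prop:Q:subadditive} controls the internal structure of a minimum-order $l$-partition-connected subgraph $Q_{xy}$ of $H$ containing a prescribed pair $\{x,y\}$ (in particular, every vertex of $V(Q_{xy})\setminus\{x,y\}$ has $d_{Q_{xy}}\ge 1$).

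Set $S_0=\mathrm{Ex}(H,h)$, and define $S_{i+1}$ by adjoining to $S_i$ the vertex set $V(Q_{xy})\cup\{x,y\}$ for every $xy\in E(G)\setminus E(H)$ with $V(Q_{xy})\cap S_i\neq\emptyset$; let $S$ be the stabilized union. Condition~\ref{Condition 2.2} is then automatic. For Condition~\ref{Condition 2.3}, suppose towards a contradiction that some $v\in S$ has $d_H(v)<h(v)$, and trace $v$ back through the chain of $Q$'s used in the closure to an excess vertex $u\in S_0$. Along this chain one performs a sequence of edge swaps, each legitimate by Proposition~\ref{prop:replacing}, whose net effect is to decrement $d_H(u)$ by one and increment $d_H(v)$ by one while leaving all remaining degrees unchanged. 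This yields an $l$-partition-connected spanning subgraph of $G$ with total excess strictly less than $\mathrm{te}(H,h)$; deleting any surplus edges to restore minimality can only decrease degrees further, contradicting the choice of $H$.

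For Condition~\ref{Condition 2.1}, the inclusion $H\subseteq G$ gives $\OMEGA_l(H\setminus S)\ge \OMEGA_l(G\setminus S)$, so it suffices to prove the reverse inequality. Let $P^*$ be a partition of $V(G)\setminus S$ attaining $\OMEGA_l(H\setminus S)$ and assume, for a contradiction, some $xy\in E(G)\setminus E(H)$ places $x,y$ in distinct parts of $P^*$. Then $x,y\notin S$, and by the closure definition $V(Q_{xy})\subseteq V(G)\setminus S$. Since $x,y$ lie in distinct parts of $P^*$ while $Q_{xy}$ is $l$-partition-connected of minimum order, one can locate an edge $e\in E(Q_{xy})$ whose endpoints share a common part of $P^*$; then $H-e+xy$ is $l$-partition-connected by Proposition~\ref{prop:replacing} and raises $e_H(P^*)$ by one, which both violates the optimality of $P^*$ and, after restoring minimality, reopens the excess-reduction argument used for Condition~\ref{Condition 2.3} to produce a final contradiction.

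The main obstacle is Condition~\ref{Condition 2.1}: simultaneously locating an edge $e\in E(Q_{xy})$ that does not cross $P^*$ and guaranteeing that the swap $H-e+xy$ does not inflate the total excess requires carefully exploiting the interplay between the minimality of $|V(Q_{xy})|$ (Proposition~\ref{prop:Q:subadditive}), the optimality of $P^*$, and the degree control on $S$ established in Condition~\ref{Condition 2.3}. A subtlety running throughout is that the closure defining $S$ must be large enough to absorb every partition-crossing $G$-edge outside $H$ yet small enough to exclude all deficient vertices; balancing these opposing requirements in the iterative construction is the central technical challenge.
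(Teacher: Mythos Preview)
Your closure rule is too aggressive, and the argument for Condition~\ref{Condition 2.3} does not go through. A single edge swap $H-e+xy$ changes degrees at four vertices, not two; your claim that a chain of swaps can be arranged ``to decrement $d_H(u)$ by one and increment $d_H(v)$ by one while leaving all remaining degrees unchanged'' is never justified, and in fact fails. Concretely, take $l\equiv 1$, $V(G)=\{x,u,v,y\}$, $H$ the path $x\text{--}u\text{--}v\text{--}y$, a single non-tree edge $xy$, and $h(x)=h(y)=h(u)=1$, $h(v)=10$. Then $te(H,h)=1$ is minimum over all spanning trees of $G$, so $H$ satisfies the hypothesis. Here $S_0=\{u\}$, $Q_{xy}=H$, and your closure gives $S=V(G)$; but $d_H(v)=2<10=h(v)$, so Condition~\ref{Condition 2.3} fails for your $S$. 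No chain of swaps rescues this, since $H$ already has minimum total excess. More structurally, when $v$ is an \emph{internal} vertex of $Q_{xy}$ (not $x$ or $y$), adding $xy$ cannot raise $d(v)$ at all, so the ``increment $d_H(v)$'' step has no mechanism behind it.

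The paper avoids this by defining the sets $V_n$ via a saturation condition rather than a geometric closure: $v$ is added to $V_n$ only if \emph{every} admissible local modification $H'\in\mathcal{A}(V_{n-1},v)$ still has $d_{H'}(v)\ge h(v)$. Since $H$ itself lies in $\mathcal{A}(V_{n-1},v)$, Condition~\ref{Condition 2.3} is immediate. The work then shifts to Condition~\ref{Condition 2.1}, proved by an inductive claim: if $xy\in E(G)\setminus E(H)$ joins different $l$-partition-connected components of $H\setminus V_{n-1}$, then $x\in V_n$ or $y\in V_n$. The proof uses the witnesses $H_x,H_y$ (certifying $x,y\notin V_n$) together with a swap at a vertex $z\in V_{n-1}\setminus V_{n-2}$ inside the minimal $Q$ to build an $H'$ that either drops the total excess (base case) or contradicts $z\in V_{n-1}$ (inductive step). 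Your Condition~\ref{Condition 2.1} argument, incidentally, is also muddled: once $V(Q_{xy})\subseteq V(G)\setminus S$, the $l$-partition-connectivity of $Q_{xy}$ already forces $x,y$ into the same component of $H\setminus S$, so no swap is needed there; the swap you propose changes $H$ and hence does not contradict the optimality of $P^*$ for the original $H$.
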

%
\begin{proof}
{Define $V_0=\emptyset $ and  $V_1=\{v\in V(H): d_H(v)> h(v)\}$.
For any $S\subseteq V(G)$ and   $u\in V(G)\setminus S$, 
let $\mathcal{A}(S, u)$ be the set of all minimally  $l$-partition-connected spanning subgraphs of
$H^\prime $ of $G$ 
such that    $d_{H'}(v)\le h(v)$ for all $v\in V(G)\setminus V_1$,    and 
$H^\prime$ and $H$ have the same edges, except for some of the edges of $G$ whose ends are in $X$, 
where $H[X]$ is the  $l$-partition-connected component of  $H\setminus S$ containing $u$.
Note that $H'[X]$   must automatically be   $l$-partition-connected.
Now, for each integer $n$ with $n\ge 2$,   recursively define $V_n$ as follows:
$$V_n=V_{n-1} \cup \{\, v\in V(G)\setminus V_{n-1} \colon \, d_{H^\prime }(v)\ge h(v) \text{,\, for all\, }H^\prime \in \mathcal{A}(V_{n-1},v)\,\}.$$
 Now, we prove the following claim.
%
\vspace{2mm}\\
{\bf Claim.} 
Let $x$ and $y$ be two vertices in different  $l$-partition-connected components  of $H\setminus V_{n-1}$.
If $xy\in E(G)\setminus E(H)$, then $x\in V_{n}$ or $y\in V_{n}$.
\vspace{2mm}\\
{\bf Proof of Claim.} 
By induction on $n$.
 For $n=1$, the proof is clear. 
Assume that the claim is true for $n-1$.
Now we prove it for $n$.
Suppose otherwise that vertices $x$ and $y$ are in different  $l$-partition-connected components of $H\setminus V_{n-1}$, respectively, with the vertex sets $X$ and $Y$,
$xy\in E(G)\setminus E(H)$, 
and $x,y\not \in V_{n}$. 
Since $x,y\not\in  V_{n}$,
 there exist
 $H_x\in \mathcal{A}(V_{n-1},x)$ and
 $H_y\in \mathcal{A}(V_{n-1},y)$ with $d_{H_x}(x)< h(x)$ and $d_{H_y}(y)< h(y)$. 
By the induction hypothesis,
 $x$ and $y$ are in the same $l$-partition-connected components
of $H\setminus V_{n-2}$ with the vertex set $Z$ so that $X\cup Y \subseteq Z$.
Let $Q$ be the unique $l$-partition-connected subgraph of $H$  with minimum number of vertices including $x$ and $y$.
 Notice that the vertices of $Q$ lie in $Z$ and also  $Q$ includes at least a vertex $z$ of $Z\cap V_{n-1}$ so that $d_H(z)\ge h(z)$.
Since $l$ is element-subadditive, $d_Q(z)\ge 1$ which  means  that  there is
  an edge $zz'$ of $Q$ incident to $z$. 
By Proposition~\ref{prop:replacing}, 
 the graph $H[Z]-zz'+xy$ must be  
$l$-partition-connected.
Now, let  $H'$ be the spanning subgraph of $G$ with
 $$E(H')=E(H)-zz'+xy
-E(H[X])+E(H_x[X])
-E(H[Y])+E(H_y[Y]).$$
By  repeatedly  applying Proposition~\ref{prop:deducing},  one can easily check that $H'$  is $l$-partition-connected. 
For each  $v\in V(H')$, we  have
$$d_{H'}(v)\le   
 \begin{cases}
d_{H_v}(v)+1,	&\text{if   $v\in \{x,y\}$};\\
d_{H}(v),	&\text{if   $v=z'$},
\end {cases}
\quad \text{ and }\quad
 d_{H'}(v)= 
 \begin{cases}
d_{H_x}(v),	&\text{if   $v\in X\setminus \{x,z'\}$};\\
d_{H_y}(v),	&\text{if   $v\in Y\setminus \{y,z'\}$};\\
d_{H}(v),	&\text{if   $v\notin X\cup Y\cup \{z,z'\}$}.
\end {cases}$$
If $n\ge 3$, then it is not hard to see that  $d_{H'}(z)<d_{H}(z)\le h(z)$ and $H'$ lies in $ \mathcal{A}(V_{n-2}, z)$.
Since $z\in V_{n-1}\setminus V_{n-2}$,  we arrive at a contradiction.
For the case $n=2$,  
since $z\in V_1$, it is easy to see that
$h(z)\le d_{H'}(z)<d_{H}(z)$ and  $te(H',h)< te(H,h)$,   which is again a contradiction.
Hence the claim holds.

%
%
%
Obviously, there exists a positive integer $n$ such that  $V_1\subseteq \cdots\subseteq V_{n-1}=V_{n}$.
 Put $S=V_{n}$.  
Since $S\supseteq V_1$, Condition~\ref{Condition 2.2} clearly holds.
For each $v\in V_i\setminus V_{i-1}$ with $i\ge 2$, 
we have $H\in \mathcal{A}(V_{i-1},v)$ and so  $d_H(v)\ge h(v)$. 
This establishes   Condition~\ref{Condition  2.3}.
 Because $S=V_{n}$, 
the previous claim implies Condition~\ref{Condition 2.1} and  completes the proof.
}\end{proof}
\begin{remark}
{The element-subadditive condition of Theorem~\ref{thm:preliminary:structure}  can be replaced  by weakly subadditive condition along with  the conditions
$l(A)+l(v)\ge l(A\cup \{v\})$, where $v$ is a vertex with  $ h(v) \le d_G(v)$  and $A\subsetneq V(G)\setminus v$.
This version allows us to construct another appropriate set function only by increasing $l(G)$. 
}\end{remark}
\begin{remark}
{Note that the proof of Theorem~\ref{thm:preliminary:structure}
  can be modified to present  a  polynomial-time algorithm,  similar to the algorithm of the proof of Theorem~1 in~\cite{MR1871346}, for producing an appropriate vertex set  $S$; by considering calling of  subroutines for finding partition-connected components and minimal partition-connected subgraphs as  single steps.
}\end{remark}
%
%
%
%
%
%
%
%
%
%
%
%
%
%
%
%
\subsection{Sufficient conditions depending on partition-connectivity measures}
The following lemma establishes an important property of  minimally $l$-partition-connected graphs.
\begin{lem}\label{lem:minimally-omega}
{Let $H$ be a graph  and let $l$ be an intersecting supermodular weakly subadditive integer-valued function on
subsets of $V(H)$.
If  $H$ is minimally $l$-partition-connected  and  $S\subseteq V(H)$,
 then $$\OMEGA_l (H\setminus S)= \sum_{v\in S}(d_H(v)-l(v))+l(H)-e_H(S).$$
}\end{lem}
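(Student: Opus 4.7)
My plan is to exploit the exact edge count available for minimally $l$-partition-connected graphs (Proposition~\ref{prop:minimally-edges}) together with the global $l$-partition-connectivity of $H$ and the local $l$-partition-connectivity of the components of $H\setminus S$, and then show that a certain chain of inequalities collapses to equality. Let $P$ be the partition of $V(H)\setminus S$ obtained from the $l$-partition-connected components of $H\setminus S$, so by the characterization of $\OMEGA_l$ proved earlier in the paper,
\[
\OMEGA_l(H\setminus S)=\sum_{A\in P}l(A)-e_{H\setminus S}(P).
\]
I augment $P$ to a partition $P^{*}=P\cup \{\{v\}:v\in S\}$ of $V(H)$. A direct double count gives
\[
e_H(P^{*})=e_{H\setminus S}(P)+\sum_{v\in S}d_H(v)-e_H(S),
\qquad
|E(H)|=e_H(P^{*})+\sum_{A\in P}|E(H[A])|,
\]
since singleton parts contain no edges.

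Next I assemble three inequalities and argue they must all be equalities. Proposition~\ref{prop:minimally-edges} yields $|E(H)|=\sum_{v\in V(H)}l(v)-l(H)$. The $l$-partition-connectivity of $H$, applied to $P^{*}$, gives $e_H(P^{*})\ge \sum_{A\in P^{*}}l(A)-l(H)$. For each $A\in P$ the induced subgraph $H[A]$ is $l$-partition-connected (it is a component of $H\setminus S$ and $A\subseteq V(H)\setminus S$), and applying the partition-connectivity inequality to the singleton partition of $A$ produces $|E(H[A])|\ge \sum_{v\in A}l(v)-l(A)$. Plugging these two bounds into the second display and using $\sum_{A\in P^{*}}l(A)=\sum_{A\in P}l(A)+\sum_{v\in S}l(v)$ gives
\[
|E(H)|\ge \sum_{v\in V(H)}l(v)-l(H),
\]
which matches $|E(H)|$ exactly by Proposition~\ref{prop:minimally-edges}. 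Hence both lower bounds are tight; in particular $e_H(P^{*})=\sum_{A\in P^{*}}l(A)-l(H)$.

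Finally I substitute the explicit formula for $e_H(P^{*})$ into this equality and rearrange to obtain
\[
\sum_{A\in P}l(A)-e_{H\setminus S}(P)=\sum_{v\in S}\bigl(d_H(v)-l(v)\bigr)+l(H)-e_H(S),
\]
whose left-hand side is $\OMEGA_l(H\setminus S)$. The only delicate point is the one I have flagged at the start: I must know that $\OMEGA_l(H\setminus S)$ is attained by the partition from its $l$-partition-connected components, which is the content of the theorem immediately preceding this lemma. Beyond that, no exchange, induction, or extremal argument is required — the entire proof is a short accounting argument in which minimality of $H$ forces a chain of partition-connectivity inequalities to be simultaneously tight.
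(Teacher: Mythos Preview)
Your proof is correct and follows essentially the same approach as the paper: both use the augmented partition $P^{*}=P\cup\{\{v\}:v\in S\}$, the edge-count identity $|E(H)|=e_H(P^{*})+\sum_{A\in P}|E(H[A])|$, and Proposition~\ref{prop:minimally-edges} for $|E(H)|$. The only minor difference is that the paper establishes $e_H(A)=\sum_{v\in A}l(v)-l(A)$ directly by showing each $H[A]$ is \emph{minimally} $l$-partition-connected (via Proposition~\ref{prop:deducing}), whereas you obtain the same equality by a squeeze argument; the two routes are equivalent.
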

\begin{proof}
{Let $P$ be the  partition of $V(H)\setminus S$ obtained from the  
  $l$-partition-connected  components of $H\setminus S$.
Obviously, $e_H( P\cup \{\{v\}:v\in S\})=\sum_{v\in S}d_H(v)\,-e_H(S)+e_{H\setminus S}(P)$.
By Proposition~\ref{prop:deducing}, one can conclude that $H[A]$ is minimally $l$-partition-connected, for any $A\in P$.
Hence  Proposition~\ref{prop:minimally-edges} implies that 
 $e_H(A)= \sum_{v\in A}l(v)-l(A)$, and also $|E(H)|=\sum_{v\in V(H)}l(v)-l(H)$.
Thus
$$e_H( P\cup \{\{v\}:v\in S\})=
|E(H)|-\sum_{A\in P} e_H(A)=
 \sum_{v\in S}l(v)+
\sum_{A\in P} l(A)-l(H).$$
Therefore,
$$\OMEGA_l (H\setminus S)=\sum_{A\in P}l(A)-e_{H\setminus S}(P)= \sum_{v\in S}(d_H(v)-l(v))+l(H)-e_H(S).$$
Hence the lemma is proved.
}\end{proof}
The following theorem is essential in this section.
\begin{thm}\label{thm:sufficient}
Let $G$ be a graph with $X\subseteq V(G)$  and let $l$ be an intersecting supermodular element-subadditive integer-valued function on 
 subsets of $V(G)$. 
 Let  $\lambda \in [0,1]$ be a real number and  let  $\eta$ be a  real function on $X$.
If  for all $S\subseteq X$, 
$$\OMEGA_l(G\setminus S)< 1+\sum_{v\in S}\big(\eta(v)-2l(v)\big)+l(G)+l(S)-\lambda(e^*_G(S)+l(S)),$$
then  $G$ has an  $l$-partition-connected  spanning subgraph  $H$ such that for each $v\in X$, 
$d_H(v)\le \lceil \eta(v)-\lambda l(v)\rceil.$
\end{thm}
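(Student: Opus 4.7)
The plan is to mimic the approach used for Theorem~\ref{intro:thm:general} in~\cite{II}: encode the degree bounds into an integer function $h$, pick an extremal spanning subgraph, invoke the structural Theorem~\ref{thm:preliminary:structure}, and then show that any residual degree excess would contradict the hypothesis on $\OMEGA_l$.

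First I would set $h(v)=\lceil\eta(v)-\lambda l(v)\rceil$ for $v\in X$ and $h(v)=d_G(v)+1$ for $v\in V(G)\setminus X$, and choose $H$ to be a minimally $l$-partition-connected spanning subgraph of $G$ minimising $te(H,h)$. Showing $te(H,h)=0$ gives the desired degree bound. Applying Theorem~\ref{thm:preliminary:structure} yields a set $S\subseteq V(G)$ with $\OMEGA_l(G\setminus S)=\OMEGA_l(H\setminus S)$, with $S\supseteq\{v:d_H(v)>h(v)\}$, and with $d_H(v)\ge h(v)$ for every $v\in S$. Because $h(v)>d_G(v)\ge d_H(v)$ outside $X$, such a vertex neither belongs to $V_1$ nor can be added during the iterative construction of $S$, so $S\subseteq X$ and the hypothesis applies.

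Assuming toward a contradiction that $te(H,h)>0$, some $v^{\ast}\in S$ satisfies $d_H(v^{\ast})\ge h(v^{\ast})+1\ge \eta(v^{\ast})-\lambda l(v^{\ast})+1$, while the remaining vertices of $S$ still obey $d_H(v)\ge\eta(v)-\lambda l(v)$. Evaluating $\OMEGA_l(H\setminus S)$ with Lemma~\ref{lem:minimally-omega} gives
$$\OMEGA_l(G\setminus S)=\sum_{v\in S}\bigl(d_H(v)-l(v)\bigr)+l(G)-e_H(S).$$
For the edge count I would combine two bounds: $e_H(S)\le e^{*}_G(S)$ by the definition of $e^{*}_G$, and $e_H(S)\le\sum_{v\in S}l(v)-l(S)$, which follows from $l$-partition-connectivity of $H$ applied to the partition $\{S\}\cup\{\{u\}:u\notin S\}$ (so $H$ is automatically $l$-sparse). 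Splitting $e_H(S)=\lambda\,e_H(S)+(1-\lambda)\,e_H(S)$ and bounding each piece with the corresponding inequality yields
$$-e_H(S)\ \ge\ -\lambda\,e^{*}_G(S)-(1-\lambda)\Bigl(\sum_{v\in S}l(v)-l(S)\Bigr).$$
Substituting this into the previous display together with the degree lower bounds produces exactly
$$\OMEGA_l(G\setminus S)\ \ge\ 1+\sum_{v\in S}\bigl(\eta(v)-2l(v)\bigr)+l(G)+l(S)-\lambda\bigl(e^{*}_G(S)+l(S)\bigr),$$
contradicting the hypothesis. Hence $te(H,h)=0$ and $d_H(v)\le\lceil\eta(v)-\lambda l(v)\rceil$ for every $v\in X$.

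The main obstacle is the bookkeeping between the pointwise sum $\sum_{v\in S}l(v)$ and the set value $l(S)$ together with the convex-combination trick on $e_H(S)$: the assumption $\lambda\in[0,1]$ is used precisely so that both weights $\lambda$ and $1-\lambda$ are non-negative and the two distinct bounds on $e_H(S)$ may be combined. Confirming $S\subseteq X$ and squeezing the crucial $+1$ out of the integrality of $d_H(v^{\ast})$ and $h(v^{\ast})$ are small but indispensable details.
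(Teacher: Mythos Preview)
Your proposal is correct and follows essentially the same route as the paper: define $h$ exactly as you do, pick a minimally $l$-partition-connected $H$ minimising $te(H,h)$, apply Theorem~\ref{thm:preliminary:structure} to obtain $S\subseteq X$, use Lemma~\ref{lem:minimally-omega} to rewrite $\OMEGA_l(G\setminus S)=\OMEGA_l(H\setminus S)$ as $\sum_{v\in S}(d_H(v)-l(v))+l(G)-e_H(S)$, and then combine the two bounds $e_H(S)\le e^{*}_G(S)$ and $e_H(S)\le\sum_{v\in S}l(v)-l(S)$ via the $\lambda$-convex-combination trick. The only cosmetic difference is that the paper runs the chain of inequalities forward to obtain $te(H,h)<1$ directly, whereas you phrase it as a contradiction squeezing the $+1$ out of a single overloaded vertex; the underlying arithmetic is identical.
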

%
%
\begin{proof}
{For each vertex $v$, define 
$$h(v)= 
 \begin{cases}
d_G(v)+1,	&\text{if   $v\not\in X$};\\
\lceil \eta(v)-\lambda l(v)\rceil,	&\text{if $v\in X$}. 
\end {cases}$$
Note that $G$ is automatically $l$-partition-connected, because  of $\OMEGA_l(G\setminus \emptyset) \le l(G)$.
Let $H$ be a minimally $l$-partition-connected  spanning subgraph  of $G$ with the minimum total excess from $h$.
Define $S$ to be a subset of $V (G)$  with the properties described in 
Theorem~\ref{thm:preliminary:structure}.
Obviously, $S\subseteq X$.
By Lemma~\ref{lem:minimally-omega},  
$$
\sum_{v\in S}h(v)\, +te(H,h)=
 \sum_{v\in S}d_H(v)= \OMEGA_l(H\setminus S)+\sum_{v\in S}l(v)-l(H)+e_H(S).
$$
and so
\begin{equation}\label{eq:A:1}
\sum_{v\in S}h(v)\, +te(H,h)=  \OMEGA_l(G\setminus S)+\sum_{v\in S}l(v)-l(G)+e_H(S).
\end{equation}
Also, by the assumption, we  have
\begin{equation}\label{eq:A:2}
\OMEGA_l(G\setminus S)+\sum_{v\in S}l(v)-l(G)+e_H(S)<
 1+\sum_{v\in S}\big(\eta(v)-l(v)\big)-\lambda(e^*_G(S)+l(S))+e_H(S)+l(S).
\end{equation}
Since
  $e_H(S)\le e^*_G(S)$ and 
$e_H(S)
\le \sum_{v\in S}l(v) -l(S)$,  
\begin{equation}\label{eq:A:3}
-\lambda(e^*_G(S)+l(S))+e_H(S)+l(S)\le  -\lambda(e_H(S)+l(S))+e_H(S)+l(S)\le (1-\lambda)\sum_{v\in S}l(v).
\end{equation}
Therefore, Relations (\ref{eq:A:1}),  (\ref{eq:A:2}), and (\ref{eq:A:3})  can conclude that
$$\sum_{v\in S}h(v)+te(H,h)<
1+ \sum_{v\in S}\big(\eta(v)-\lambda l(v) \big).$$
On the other hand, by the definition of $h(v)$, 
$$ \sum_{v\in S}\big(\eta(v)-\lambda l(v)-h(v)\big)  \le 0.$$
Hence  $te(H,h) = 0$ and  the theorem holds.
}\end{proof}
 When we consider the special cases $\lambda = 1$, the theorem becomes simpler as the following result.
\begin{cor}\label{cor:case1}
Let $G$ be a graph  and let $l$ be an intersecting supermodular element-subadditive integer-valued function on 
 subsets of $V(G)$. 
Let  $h$ be an  integer-valued function on $V(G)$.
If  for all $S\subseteq V(G)$, 
$$\OMEGA_l(G\setminus S)\le \sum_{v\in S}\big(h(v)-l(v)\big)+l(G)-e^*_G(S),$$
then  $G$ has an $l$-partition-connected  spanning subgraph  $H$ such that for each vertex  $v$,
 $d_H(v)\le h(v).$
\end{cor}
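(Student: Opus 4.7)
The plan is to derive Corollary \ref{cor:case1} as an immediate specialization of Theorem \ref{thm:sufficient}. I would apply that theorem with $X := V(G)$, $\lambda := 1$, and $\eta(v) := h(v) + l(v)$ for every $v \in V(G)$. Since $h$ and $l$ are integer-valued, the target degree bound in Theorem \ref{thm:sufficient} collapses to
\begin{equation*}
\lceil \eta(v) - \lambda l(v)\rceil = \lceil h(v)\rceil = h(v),
\end{equation*}
which matches the conclusion of the corollary exactly.

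Next, I would verify that the hypothesis of the corollary implies the sufficient condition of Theorem \ref{thm:sufficient}. With the chosen $\lambda$ and $\eta$, one has
\begin{equation*}
\sum_{v\in S}\bigl(\eta(v)-2l(v)\bigr) + l(G) + l(S) - \lambda\bigl(e^*_G(S)+l(S)\bigr) = \sum_{v\in S}\bigl(h(v)-l(v)\bigr) + l(G) - e^*_G(S),
\end{equation*}
so the condition to be checked is
\begin{equation*}
\OMEGA_l(G\setminus S) < 1 + \sum_{v\in S}\bigl(h(v)-l(v)\bigr) + l(G) - e^*_G(S).
\end{equation*}
Here every quantity on each side is an integer: $l$ is integer-valued, $\OMEGA_l(G\setminus S)$ and $e^*_G(S)$ are integer extrema over integer-valued functionals, and $h(v)$ is integer-valued by assumption. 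Hence the strict inequality with an added $+1$ on the right is equivalent to the weak inequality assumed in the corollary.

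Once this equivalence is recorded, Theorem \ref{thm:sufficient} applies verbatim and produces the desired $l$-partition-connected spanning subgraph $H$ with $d_H(v) \le h(v)$ for every vertex $v$. There is no real obstacle in this argument; the only substantive point is the integrality observation that turns the weak ``$\le$'' of the corollary into the strict ``$<$'' demanded by Theorem \ref{thm:sufficient}, and a brief remark checking that the chosen $\eta$ is indeed a real function on $X = V(G)$ so that all hypotheses of the parent theorem are formally in force.
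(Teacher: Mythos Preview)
Your proposal is correct and matches the paper's own proof exactly: the paper also derives this corollary by applying Theorem~\ref{thm:sufficient} with $\lambda=1$ and $\eta(v)=h(v)+l(v)$. Your additional remark about integrality, explaining why the weak inequality suffices, is a helpful clarification that the paper leaves implicit.
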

\begin{proof}
{Apply Theorem~\ref{thm:sufficient} with $\lambda = 1$ and $\eta(v) = h(v)+l(v)$.
}\end{proof}
Note that the above-mentioned corollary is equivalent to Theorem~\ref{thm:sufficient} and can concludes the next results.
\begin{cor}
Let $G$ be a graph  and let $l$ be an intersecting supermodular element-subadditive integer-valued function on subsets of $V(G)$. 
Let  $h$ be an  integer-valued function on $V(G)$.
If  for all $S\subseteq V(G)$, 
$$\OMEGA_l(G\setminus S)\le \sum_{v\in S}\big(h(v)-2l(v)\big)+l(G)+\OMEGA_l(G[S]),$$
then  $G$ has an $l$-partition-connected  spanning subgraph  $H$ such that for each vertex  $v$,
 $d_H(v)\le h(v).$
\end{cor}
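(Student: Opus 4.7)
The plan is to deduce this corollary directly from Corollary~\ref{cor:case1}. Comparing the two hypotheses, the only difference on the right-hand side is that the new bound replaces $-e^*_G(S)$ by $\OMEGA_l(G[S])-\sum_{v\in S}l(v)$. Consequently, the whole task reduces to proving the single inequality
$$\OMEGA_l(G[S])+e^*_G(S)\le \sum_{v\in S}l(v)\qquad \text{for every } S\subseteq V(G),$$
since then the hypothesis of the present corollary immediately implies that of Corollary~\ref{cor:case1}, whose conclusion is exactly what we want.

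To establish the key inequality, I would first observe that every minimally $l$-partition-connected spanning subgraph $H$ of $G$ is $l$-sparse. Because $l$ is element-subadditive (and $l(\emptyset)=0$), it is automatically weakly subadditive, so Proposition~\ref{prop:minimally-edges} applies and gives $|E(H)|=\sum_{v\in V(H)}l(v)-l(H)$. Testing the $l$-partition-connectivity of $H$ against the partition $\{A\}\cup\{\{v\}\colon v\notin A\}$ and simplifying with the edge-count above then yields $e_H(A)\le \sum_{v\in A}l(v)-l(A)$ for every $A\subseteq V(G)$, which is precisely $l$-sparsity.

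Next, I would choose a minimally $l$-partition-connected spanning subgraph $H$ of $G$ that realizes $e^*_G(S)=e_H(S)$, and let $P_S$ be a partition of $S$ attaining the maximum defining $\OMEGA_l(G[S])$, so that $\OMEGA_l(G[S])=\sum_{A\in P_S}l(A)-e_{G[S]}(P_S)$. Splitting the edges of $H$ inside $S$ according to $P_S$, using $e_{H[S]}(P_S)\le e_{G[S]}(P_S)$, and applying $l$-sparsity inside each block gives
$$e^*_G(S)=e_H(S)=\sum_{A\in P_S}e_H(A)+e_{H[S]}(P_S)\le \sum_{v\in S}l(v)-\sum_{A\in P_S}l(A)+e_{G[S]}(P_S)=\sum_{v\in S}l(v)-\OMEGA_l(G[S]),$$
which is the desired inequality, and the proof concludes by invoking Corollary~\ref{cor:case1}.

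The main obstacle, in my view, is recognizing that $e^*_G(S)$ and $\OMEGA_l(G[S])$ admit such a clean additive relationship, which rests on the structural fact that minimally $l$-partition-connected spanning subgraphs are $l$-sparse. Once that is noticed, the remainder of the argument is only a one-line substitution into the already established Corollary~\ref{cor:case1}.
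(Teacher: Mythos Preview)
Your proposal is correct and follows exactly the paper's approach: the paper's entire proof is the one line ``Apply Corollary~\ref{cor:case1} along with the inequality $e^*_G(S)\le \sum_{v\in S}l(v)-\OMEGA_l(G[S])$.'' You have simply supplied a careful justification of that inequality (via $l$-sparsity of minimally $l$-partition-connected subgraphs), which the paper states without proof.
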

\begin{proof}
{Apply Corollary~\ref{cor:case1} along with the inequality 
$e^*_G(S) \le   \sum_{v\in S} l(v)-\OMEGA_l(G[S]).$
}\end{proof}
The following corollary provides a necessary and sufficient condition for the existence of a partition-connected spanning subgraph with
the described properties.
\begin{cor}
{Let $G$ be a graph with independent set $X\subseteq V(G)$  and let $l$ be an intersecting supermodular element-subadditive integer-valued function on 
subsets of $V(G)$. 
Let $h$ be an integer-valued  function on $X$.
Then  for all $S\subseteq X$, 
$$\OMEGA_l(G\setminus S)\le \sum_{v\in S}\big(h(v)-l(v)\big)+l(G),$$
if and only if $G$ has an $l$-partition-connected spanning subgraph $H$ 
such that for each $v\in X$, $d_H(v)\le h(v)$.
}\end{cor}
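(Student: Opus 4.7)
The plan is to prove the corollary in two directions, with the sufficiency direction reducing to Theorem~\ref{thm:sufficient} (equivalently, Corollary~\ref{cor:case1}) and the necessity direction following quickly from Lemma~\ref{lem:minimally-omega}. The key simplification that makes this cleaner than the general version is that $X$ is independent, so for every $S\subseteq X$ we automatically have $E(G[S])=\emptyset$, and hence $e^*_G(S)=0$ and $e_H(S)=0$ for any subgraph $H$ of $G$.

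For sufficiency, I would apply Theorem~\ref{thm:sufficient} with the substitution $\lambda=1$ and $\eta(v)=h(v)+l(v)$ for $v\in X$. Under this substitution, the conclusion becomes $d_H(v)\le\lceil h(v)+l(v)-l(v)\rceil=h(v)$ for each $v\in X$, which is what the corollary asserts. It remains to verify the hypothesis of Theorem~\ref{thm:sufficient}. After substitution, the hypothesis reads
$$\OMEGA_l(G\setminus S)<1+\sum_{v\in S}\big(h(v)-l(v)\big)+l(G)-e^*_G(S)\quad\text{for all }S\subseteq X.$$
All quantities on both sides are integers, so the strict inequality with the $+1$ term is equivalent to the weak inequality without the $+1$. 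Since $X$ is independent and $S\subseteq X$, the induced subgraph $G[S]$ has no edges, hence $e^*_G(S)=0$. The condition then collapses to exactly the assumed inequality $\OMEGA_l(G\setminus S)\le\sum_{v\in S}(h(v)-l(v))+l(G)$, and Theorem~\ref{thm:sufficient} delivers the desired spanning subgraph.

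For necessity, suppose $H$ is an $l$-partition-connected spanning subgraph of $G$ satisfying $d_H(v)\le h(v)$ for each $v\in X$. By repeatedly deleting edges we may reduce $H$ to a minimally $l$-partition-connected spanning subgraph without increasing any degree, so without loss of generality $H$ is minimally $l$-partition-connected. Because $H\subseteq G$, for every partition $P$ of $V(G)\setminus S$ we have $e_{G\setminus S}(P)\ge e_{H\setminus S}(P)$, giving $\OMEGA_l(G\setminus S)\le\OMEGA_l(H\setminus S)$. Now Lemma~\ref{lem:minimally-omega} yields
$$\OMEGA_l(H\setminus S)=\sum_{v\in S}\big(d_H(v)-l(v)\big)+l(H)-e_H(S).$$
For $S\subseteq X$, independence of $X$ gives $e_H(S)=0$; and $l(H)=l(G)$. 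Combining these with $d_H(v)\le h(v)$ finishes the direction.

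I expect no substantial obstacle here; the proof is essentially a matter of bookkeeping. The one subtlety worth stating explicitly is the integer-valued translation between the strict form of Theorem~\ref{thm:sufficient} and the non-strict form appearing in the corollary, since it relies on all three quantities $\OMEGA_l(G\setminus S)$, $\sum_{v\in S}(h(v)-l(v))+l(G)$, and $e^*_G(S)$ being integers, which follows from the integer-valued hypotheses on $l$ and $h$.
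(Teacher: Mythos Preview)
Your proposal is correct and follows essentially the same approach as the paper: the paper's proof simply says to apply Corollary~\ref{cor:case1} with $e^*_G(S)=0$ for sufficiency and Lemma~\ref{lem:minimally-omega} for necessity, which is exactly what you do (noting that Corollary~\ref{cor:case1} is Theorem~\ref{thm:sufficient} with $\lambda=1$, $\eta=h+l$). Your added remarks on the integer-valuedness translation and on passing to a minimal $H$ for the necessity direction are the natural details one fills in.
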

\begin{proof}
{It is enough to apply Corollary~\ref{cor:case1} with $e^*_G(S) = 0$, and apply Lemma~\ref{lem:minimally-omega}. 
}\end{proof}
%
%
%
%
%
%
%
%
%
%
%
%
%
%
%
\subsection{An alternative proof for a weaker version  of Corollary~\ref{cor:case1}}
In this  subsection, we are going to present another proof for the following weaker version of Corollary~\ref{cor:case1}.
Our proof is based on orientations of partition-connected graphs. In Section~\ref{sec:hypergraphs}, 
we alternatively  present a new proof for it based on edge-decompositions with a stronger version on hypergraphs.
\begin{thm}\label{thm:alternative}
Let $G$ be a graph  and let $l$ be a nonincreasing intersecting supermodular nonnegative integer-valued function on 
subsets of $V(G)$. 
 Let $h$ be an integer-valued function on $V(G)$.
If  for all $S\subseteq V(G)$, 
$$\OMEGA_l(G\setminus S)\le \sum_{v\in S}\big(h(v)-l(v)\big)+l(G)-e_G(S),$$
then  $G$ has an  $l$-partition-connected  spanning subgraph  $H$ such that for each $v\in X$, 
$d_H(v)\le h(v).$
\end{thm}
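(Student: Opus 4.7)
The plan is to rely on the orientation characterization of partition-connectivity. First, the hypothesis at $S=\emptyset$ gives $\OMEGA_l(G)\le l(G)$, and combined with the general bound $\OMEGA_l(G)\ge l(G)$ from Section~2 this forces $G$ itself to be $l$-partition-connected. I would then invoke Frank's orientation theorem in the form: for any nonnegative integer-valued $r\colon V(G)\to\mathbb{Z}_{\ge 0}$ with $\sum_v r(v)=l(G)$, an $l$-partition-connected graph admits an $r$-rooted $l$-arc-connected orientation $\vec G$, meaning $d^-_G(A)\ge l(A)-r(A)$ for every $A\subseteq V(G)$. Because $l$ is nonincreasing and nonnegative, $l(v)\ge l(G)\ge 0$ for each vertex $v$, so one can always choose $r$ with $0\le r(v)\le l(v)$.

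The key observation is that if $H\subseteq G$ is any spanning subgraph whose inherited orientation still satisfies $d^-_H(A)\ge l(A)-r(A)$ for every $A$, then $H$ is automatically $l$-partition-connected: summing this inequality over the parts of any partition $P$ of $V(G)$ yields $e_H(P)=\sum_{A\in P}d^-_H(A)\ge\sum_{A\in P}l(A)-l(G)$. Hence it suffices to produce an $r$-rooted $l$-arc-connected orientation of $G$ equipped additionally with the out-degree bound $d^+(v)\le h(v)-(l(v)-r(v))$; afterwards one trims in-arcs at each vertex down to the minimum allowed by the cut condition (possible by a standard splitting-off argument preserving rooted arc-connectivity), which yields $H$ with $d_H(v)\le d^+(v)+(l(v)-r(v))\le h(v)$.

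Existence of such a simultaneously rooted-arc-connected and out-degree-bounded orientation would come from a combined Frank--Hakimi-type theorem, whose feasibility condition is
$$e_G\!\left(P'\cup\{\{v\}:v\in S\}\right)\ge\sum_{A\in P'}(l(A)-r(A))+\sum_{v\in S}(l(v)-r(v))-\sum_{v\in S}b(v)$$
for every $S\subseteq V(G)$ and every partition $P'$ of $V(G)\setminus S$, where $b(v)=h(v)-(l(v)-r(v))$. Expanding
$e_G(P'\cup\{\{v\}:v\in S\})=e_{G\setminus S}(P')+e_G(S,V(G)\setminus S)+e_G(S)$
and using $\sum_v r(v)=l(G)$, this rearranges precisely to
$$\sum_{A\in P'}l(A)-e_{G\setminus S}(P')\le\sum_{v\in S}(h(v)-l(v))+l(G)-e_G(S),$$
whose supremum over $P'$ is exactly the hypothesis $\OMEGA_l(G\setminus S)\le\sum_{v\in S}(h(v)-l(v))+l(G)-e_G(S)$.

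The main obstacle is this matching of algebraic conditions and verifying the combined orientation theorem in the required intersecting supermodular setting. The appearance of $e_G(S)$ (rather than the sharper $e_G^*(S)$ of Corollary~\ref{cor:case1}) arises because every edge inside $S$ crosses the partition $P'\cup\{\{v\}:v\in S\}$ and therefore contributes fully to the cut on the left-hand side of the feasibility inequality; this is exactly what makes this orientation-based argument only cover the weaker version of Corollary~\ref{cor:case1}.
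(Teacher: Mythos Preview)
Your overall strategy matches the paper's second proof: encode the degree bound as an extra in-degree requirement, realize the combined requirement as an orientation via Frank's theorem, then peel down to a minimal arc-connected subdigraph. However, your feasibility condition is misstated and the algebra does not rearrange as you claim. For a singleton $\{v\}$ with $v\in S$, the out-degree bound $d^+(v)\le b(v)$ is equivalent to the in-degree bound $d^-(v)\ge d_G(v)-b(v)$; the contribution of that singleton to the partition inequality is therefore $d_G(v)-b(v)$, not $(l(v)-r(v))-b(v)$. With the correct term the computation does collapse to the hypothesis (the $\sum_{v\in S}d_G(v)$ cancels against the expansion of $e_G(P'\cup\{\{v\}:v\in S\})$), whereas your stated version leaves stray terms $e_G(S,V(G)\setminus S)$ and $\sum_{v\in S}r(v)$ that do not vanish.

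The paper executes this cleanly without appealing to an unnamed ``combined Frank--Hakimi theorem'': it fixes a single root $r_0$, sets $\ell(A)=l(A)-l(G)$ or $l(A)$ according as $r_0\in A$ or not, defines $\ell'(v)=\max\{\ell(v),\,d_G(v)-h(v)+\ell(v)\}$ with $\ell'(A)=\ell(A)$ for $|A|\ge 2$, and verifies directly that $G$ is $\ell'$-partition-connected (this is the computation you intended). Lemma~\ref{lem:Frank} then gives an $\ell'$-arc-connected orientation, which automatically has $d^+_G(v)\le h(v)-\ell(v)$. For the trimming step, splitting-off is the wrong tool---it replaces pairs of arcs rather than deleting them and does not produce a subgraph. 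The paper instead passes to a \emph{minimally} $\ell$-arc-connected spanning subdigraph $H$ and uses Lemma~\ref{lem:minimal-directed} to conclude $d^-_H(v)=\ell(v)$ exactly, whence $d_H(v)=d^-_H(v)+d^+_H(v)\le \ell(v)+(h(v)-\ell(v))=h(v)$.
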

Before starting the proof,  let us to state the following two lemmas.
\begin{lem}{\rm (Frank~\cite{MR579073})}\label{lem:Frank}
{Let $G$ be a   graph and let $\ell$ 
be an intersecting  supermodular nonnegative integer-valued   function on
 subsets of $V(G)$ with $\ell(\emptyset)=\ell(G)=0$.
Then $G$ is $\ell$-partition-connected if and only if it has an $\ell$-arc-connected orientation.
}\end{lem}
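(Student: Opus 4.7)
The plan is to prove the two directions of the equivalence separately, with essentially all of the substance concentrated in the sufficiency direction.

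Necessity is direct counting. Suppose $\vec G$ is an $\ell$-arc-connected orientation of $G$ and $P=\{A_1,\ldots,A_k\}$ is any partition of $V(G)$. Each edge joining two different parts enters exactly one $A_i$ in $\vec G$, so $e_G(P)=\sum_{i=1}^k d^-_{\vec G}(A_i)\ge \sum_{i=1}^k \ell(A_i)$, which equals $\sum_{A\in P}\ell(A)-\ell(G)$ because $\ell(G)=0$. Thus $G$ is $\ell$-partition-connected.

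For sufficiency I would argue by iterative augmentation. Start from an arbitrary orientation $\vec G$ of $G$ and let the deficient family be $\mathcal{D}=\{A\subsetneq V(G): A\neq \emptyset,\ d^-_{\vec G}(A)<\ell(A)\}$. If $\mathcal{D}$ is empty we are done; otherwise I would pick a minimal deficient set $A_0$ and try to find a directed path in $\vec G$ that ends in $A_0$ and whose initial vertex lies in a set $B$ with strict slack $d^-_{\vec G}(B)>\ell(B)$. Reversing such a path strictly decreases the potential $\Phi(\vec G)=\sum_{A}\max\{0,\ell(A)-d^-_{\vec G}(A)\}$ while not creating any new deficiencies, so iterating terminates in finitely many steps with an $\ell$-arc-connected orientation. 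Standard uncrossing, using intersecting supermodularity of $\ell$ and submodularity of $d^-_{\vec G}$, shows that the collection of tight sets through which such paths must travel is closed under intersection and union, which is what makes the potential-decreasing step well-defined.

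The main obstacle, and the technical core of the argument, is to show that whenever $\mathcal{D}$ is nonempty such an augmenting path must in fact exist. I would argue by contradiction: if no directed path reaches $A_0$ from a set with slack, then propagating backward reachability in $\vec G$ produces a chain of sets that are all tight or deficient and that exhaust the vertex set. Using the supermodular inequality $\ell(A\cap B)+\ell(A\cup B)\ge \ell(A)+\ell(B)$ on intersecting members of this chain, together with the fact that in-degrees of a fixed orientation are submodular, I would assemble a partition $P$ of $V(G)$ whose parts are carved out of these tight/deficient sets and for which the inequalities telescope to $e_G(P)<\sum_{A\in P}\ell(A)-\ell(G)$, contradicting $\ell$-partition-connectivity. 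The normalization $\ell(\emptyset)=\ell(G)=0$ is used crucially at both ends of the telescoping so that the boundary terms cancel and the conclusion is a genuine partition inequality on all of $V(G)$.
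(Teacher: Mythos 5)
The paper does not actually prove this lemma; it is quoted verbatim from Frank's 1980 orientation paper and used as a black box, so there is no internal proof to compare yours against. Judged on its own, your necessity direction is correct and complete: every cross edge of a partition enters exactly one part, so $e_G(P)=\sum_{A\in P}d^-_{\vec G}(A)\ge\sum_{A\in P}\ell(A)$, and the normalization $\ell(G)=0$ turns this into the partition inequality.

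The sufficiency direction, however, has two genuine problems. First, your augmenting step is oriented the wrong way: reversing a directed path from $s$ to $t$ increases $d^-_{\vec G}(X)$ precisely for those sets $X$ with $s\in X$ and $t\notin X$, and decreases it for sets with $t\in X$ and $s\notin X$. A path that \emph{ends} in the deficient set $A_0$, once reversed, lowers $d^-_{\vec G}(A_0)$ and makes the deficiency worse; you need a directed path that \emph{starts} inside $A_0$ and terminates at a vertex not trapped in any tight or deficient set avoiding $A_0$. This is fixable, but the second problem is not cosmetic: the entire content of Frank's theorem lives in the step you defer, namely showing that if no admissible path exists then some partition violates $e_G(P)\ge\sum_{A\in P}\ell(A)$. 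Your sketch asserts that backward reachability yields ``a chain of tight or deficient sets that exhaust the vertex set'' from which a violating partition can be ``assembled by telescoping,'' but for a merely \emph{intersecting} supermodular $\ell$ the tight sets uncross only when they meet --- disjoint tight sets give you nothing --- and the unreachable region need not decompose into tight sets at all; padding the partition with slack singletons $\{v\}$ adds $\ell(v)\ge 0$ to the right-hand side and so does not preserve the violated inequality. Until you exhibit the actual partition (pairwise disjoint tight/deficient sets covering $V(G)$ whose total deficiency exceeds the number of cross edges), the argument terminates but proves nothing; as it stands the proposal reproduces the standard scaffolding of Frank's proof while omitting its load-bearing wall.
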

Note that one can apply Theorem 2 in~\cite{MR642039} instead  of the above-mentioned lemma to obtain further improvement.
Hence we state the following lemma in a  more general version. This can also be extended to a hypergraph version in the same way, 
which   along with  Theorem 3.2 in~\cite{MR2021108} can provide an alternative proof for a special case of Theorem~\ref{thm:hypergraph:toughness:degrees}
\begin{lem}\label{lem:minimal-directed}
{Let $G$ be a directed   
graph  and let $\ell$ be an element-nonincreasing  positively  intersecting supermodular  nonnegative integer-valued  function on 
 subsets of $V(G)$
with  $\ell(\emptyset)=\ell(G)=0$.
If  $H$ is  a minimally $\ell$-arc-connected spanning subdigraph  of $G$,
 then  for each vertex $v$, we must have $d^-_H(v)=\ell(v)$. 
}\end{lem}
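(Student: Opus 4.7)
The plan is to argue by contradiction. Since $H$ is $\ell$-arc-connected, $d^-_H(v)\ge \ell(v)$ at every vertex $v$, so I suppose $d^-_H(v)>\ell(v)$ for some $v$ and aim to exhibit a tight set $T^-$ containing $v$ which forces the reverse inequality $d^-_H(v)\le \ell(v)$.

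Because $H$ is minimally $\ell$-arc-connected, each of the $d^-_H(v)\ge \ell(v)+1\ge 1$ arcs $e_j=(u_j,v)$ entering $v$ has a witness set $A_j$ with $v\in A_j$, $u_j\notin A_j$, and $d^-_H(A_j)=\ell(A_j)$; note that $\ell(A_j)\ge 1$ since $e_j$ itself enters $A_j$. Setting $B_k=A_1\cap\cdots\cap A_k$, I show by induction on $k$ that $B_k$ is tight and $\ell(B_k)\ge 1$. For the inductive step, positively intersecting supermodularity of $\ell$ together with the standard submodularity of $d^-_H$ yields the squeeze
\[
\ell(B_{k})+\ell(A_{k+1})=d^-_H(B_k)+d^-_H(A_{k+1})\ge d^-_H(B_{k+1})+d^-_H(B_k\cup A_{k+1})\ge \ell(B_{k+1})+\ell(B_k\cup A_{k+1})\ge \ell(B_k)+\ell(A_{k+1}),
\]
which forces equality throughout and makes $B_{k+1}$ tight; positivity of $\ell(B_{k+1})$ follows because the arc $e_{k+1}$ has tail $u_{k+1}\notin A_{k+1}\supseteq B_{k+1}$ and head $v\in B_{k+1}$, so it enters $B_{k+1}$.

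Setting $T^-=B_{d^-_H(v)}$, the construction guarantees that $T^-$ contains $v$ but none of the tails $u_j$, so every arc entering $v$ enters $T^-$ as well. Therefore
\[
d^-_H(v)\le d^-_H(T^-)=\ell(T^-)\le \ell(v),
\]
where the last inequality comes from applying element-nonincreasingness of $\ell$ along a chain of nonempty sets from $\{v\}$ up to $T^-$. This contradicts the standing assumption $d^-_H(v)>\ell(v)$, completing the argument.

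The delicate point will be the positivity bookkeeping attached to the qualifier ``positively'' in the supermodular hypothesis: each application of the supermodular inequality during the induction requires both involved $\ell$-values to be strictly positive, and this is preserved only because each $B_k$ continues to be entered by the next surplus arc $e_{k+1}$, forcing $d^-_H(B_k)$, and hence $\ell(B_k)$, to stay at least one throughout the induction.
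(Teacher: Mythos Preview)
Your proof is correct and rests on the same uncrossing mechanism as the paper's: intersections of tight sets remain tight via submodularity of $d^-_H$ together with positively intersecting supermodularity of $\ell$, and element-nonincreasingness then compares $\ell(v)$ with $\ell$ of a larger tight set. The only difference is organizational: the paper picks a single tight witness set $A$ of \emph{minimal cardinality} containing the offending vertex, uses element-nonincreasingness up front to deduce $d^-_H(u)>\ell(u)\ge\ell(A)=d^-_H(A)$ and hence locate an arc entering $u$ from inside $A$, so that one further intersection contradicts the minimality of $|A|$; you instead intersect all the witness sets in turn and invoke element-nonincreasingness only at the very end.
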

\begin{proof}
{Suppose,  by way of contradiction, that $0 \le \ell(u) < d^-_H(u)$ for a vertex $u$.
Let $e=vu$ be a directed edge.
Since $|E(H)|$ is minimal, 
there is a vertex set $A$ including $u$ excluding $v$
such that $\ell(A)=d^-_H(A)> 0$; 
otherwise, the edge  $vu$ can be deleted  from $H$.
Consider $A$ with  minimal $|A|$.
Since $\ell$ is element-nonincreasing,
$d_H^-(u)>\ell(u)\ge  \ell(A)= d_H^-(A).$
Thus there is a directed edge $wu$ with $w\in A$.
Corresponding to $uw$, there is again a vertex set $B$ including $u$  excluding $w$
such that $\ell(B)=d^-_H(B)> 0$.
Therefore, 
$$\ell(A)+\ell(B)=d^-_H(A)+d^-_H(B) \ge d^-_H(A\cap B)+d^-_H(A\cup B) \ge \ell (A\cap B)+\ell (A\cup B).$$
Since $\ell$ is positively  intersecting supermodular and  $u\in A\cap B$, we must have $d^-_H(A\cap B)=\ell(A\cap B)$. 
Since  $A\cap B$ includes $u$ and  $|A\cap B| <|A|$,  we arrive at a contradiction.
}\end{proof}
Now, we are ready   to state the second proof of the above-mentioned theorem.
\begin{proofs}{
\noindent
\hspace*{-5mm}
\textbf{  Theorem~\ref{thm:alternative}.}
Let $r_0$ be a fixed vertex. 
For each vertex $v$, define
$$\ell'(v)=\max\{\ell(v), d_G(v)-h(v)+\ell(v)\},$$ where
$\ell(v)=l(v)-l(G)$ when $v=r_0$, and $\ell(v)=l(v)$ when $v\neq r_0$.
For all vertex  sets  $A$ including $r_0$ with $A\ge 2$,
define $\ell'(A)=\ell(A)=l(A) -l(G)$,
and for all vertex  sets $A$ excluding $r_0$ with $A\ge 2$, 
define $\ell'(A)=\ell(A)=l(A)$.
Let $P$ be a partition of $V(G)$ and 
take $S$  to be the set of all vertices $v$ with $\{ v\} \in P$ such that $\ell'(v)=d_G(v)-h(v)+\ell(v)$. 
Also, define $\mathcal{P}$ to  be the set of all vertex sets $A \in P$ such that $A\neq \{v\}$, when $v\in S$.
Note that for every $A\in \mathcal{P}$, $\ell'(A)=\ell(A)$.
It is not hard  to check that  $\ell$  is an  intersecting supermodular  nonnegative integer-valued set function with $\ell(G)=0$, and so does $\ell'$.
According to the assumption, 
$$\sum_{A\in \mathcal{P}} l(A) -e_{G\setminus S} (\mathcal{P}) \le 
\OMEGA_l(G\setminus S) \le 
 \sum_{v\in S}\big(h(v)-l(v)\big)+l(G)-e_G(S).$$
Since $e_G(P)=\sum_{v\in S}d_G(v)-e_G(S)+e_{G\setminus S} (\mathcal{P})$, we must have 
$$e_G(P) \ge
 \sum_{A\in \mathcal{P}} l(A)  +\sum_{v\in S} (d_G(v)-h(v)+l(v)) -l(G)=\sum_{A\in \mathcal{P}}\ell'(A)+  \sum_{v\in S}\ell'(v)=
\sum_{A\in P}\ell'(A).$$
Thus $G$ is $\ell'$-partition-connected.
By Lemma~\ref{lem:Frank},
the graph $G$ has an $\ell'$-arc-connected orientation so that for every  vertex set $A$, $d^-_G(A)\ge \ell'(A)\ge  \ell(A)$.
In particular, 
for each vertex $v$, 
$d^-_G(v)\ge \ell'(v)\ge d_G(v)-h(v)+\ell(v)$ 
which implies that $d^+_G(v)\le  h(v)-\ell(v)$.
Let $H$ be a minimally $\ell$-arc-connected spanning subdigraph  of $G$.
By Lemma~\ref{lem:minimal-directed}, for each vertex $v$, $d^-_H(v)=\ell(v)$, and so
$$d_H(v)=
d^-_H(v)+d^+_H(v)
 \le  \ell(v)+d^+_G(v) \le  
\ell(v)+
(h(v)-\ell(v))=
h(v).$$
For every partition $P$ of $V(H)$, we  have
$$e_H(P) \ge \sum_{A\in P} d^-_H(A)\ge\sum_{A\in P } \ell(A) = \sum_{A\in P} l(A) -l(G).$$
Hence $H$ is also $l$-partition-connected and the proof is completed.
}\end{proofs}
%
%
%
%
%
%
%
%
%
%
%
%
%
%
%
%
%
%
\subsection{Graphs with high edge-connectivity}
\label{subsec:edge-connected}
Highly edge-connected graphs are natural candidates for graphs satisfying the assumptions of Theorem~\ref{thm:sufficient}.
We  examine them in this subsection, beginning with
 the following lemma.
\begin{lem}\label{lem:high-edge-connectivity:Theta}
{Let $G$ be a graph,  let $l$ be an intersecting supermodular real function on
subsets of $V(G)$,
and let $k$ be a positive real number. If $ S\subseteq V(G)$, then
$$\OMEGA_l(G\setminus S)\le
 \begin{cases}
\sum_{v\in S}\frac{d_G(v)}{k}\,-\frac{2}{k}e_G(S),	&\text{when $G$ is $kl$-edge-connected, $k\ge 2$, and  $S\neq \emptyset$};\\ 
 \sum_{v\in S}\big(\frac{d_G(v)}{k}-l(v)\big)+l(G)-\frac{1}{k}e_G(S),	&\text{when $G$ is $kl$-partition-connected and $k\ge 1$}.
\end {cases}$$
}\end{lem}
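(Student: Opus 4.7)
\textbf{Proof plan for Lemma~\ref{lem:high-edge-connectivity:Theta}.}
My plan is to start from the variational characterization
$$\OMEGA_l(G\setminus S)=\max\Big\{\sum_{A\in P}l(A)-e_{G\setminus S}(P)\colon P\text{ is a partition of }V(G)\setminus S\Big\}$$
established earlier in Section~2, fix a partition $P$ of $V(G)\setminus S$ attaining the maximum, and deduce each of the two bounds by a short edge-counting argument based on the identity
$$\sum_{v\in S}d_G(v)=2e_G(S)+d_G\bigl(S,\,V(G)\setminus S\bigr),$$
where $d_G(S,V(G)\setminus S)$ denotes the number of edges with one end in $S$ and the other in $V(G)\setminus S$.

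For the first case, in which $G$ is $kl$-edge-connected with $k\ge 2$ and $S\ne\emptyset$, each $A\in P$ is a nonempty proper subset of $V(G)$, so $d_G(A)\ge kl(A)$. Summing over $A\in P$ and using
$$\sum_{A\in P}d_G(A)=2e_{G\setminus S}(P)+d_G\bigl(S,\,V(G)\setminus S\bigr)=2e_{G\setminus S}(P)+\sum_{v\in S}d_G(v)-2e_G(S),$$
I would solve for $\sum_{A\in P}l(A)$, subtract $e_{G\setminus S}(P)$ from both sides, and note that the resulting coefficient $(2/k-1)$ of $e_{G\setminus S}(P)$ is nonpositive because $k\ge 2$; discarding this term gives the first bound.

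For the second case, in which $G$ is $kl$-partition-connected and $k\ge 1$, I would instead extend $P$ to the partition $P'=P\cup\{\{v\}\colon v\in S\}$ of $V(G)$. The $kl$-partition-connectivity hypothesis applied to $P'$ gives $e_G(P')\ge k\sum_{A\in P'}l(A)-kl(G)$, while the same bookkeeping as above produces $e_G(P')=e_{G\setminus S}(P)+\sum_{v\in S}d_G(v)-e_G(S)$. Substituting, dividing by $k$, and rearranging yields
$$\sum_{A\in P}l(A)-\tfrac{1}{k}\,e_{G\setminus S}(P)\le\sum_{v\in S}\Big(\tfrac{d_G(v)}{k}-l(v)\Big)+l(G)-\tfrac{1}{k}\,e_G(S).$$
Because $k\ge 1$ we have $-e_{G\setminus S}(P)\le -\tfrac{1}{k}\,e_{G\setminus S}(P)$, so replacing the left-hand side by the larger quantity $\sum_{A\in P}l(A)-e_{G\setminus S}(P)$ preserves the inequality, giving the second bound.

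The only delicate point, and the easiest step to get wrong, is precisely how the numerical hypotheses $k\ge 2$ and $k\ge 1$ enter: in both cases the bound is obtained after reshuffling an inequality whose left-hand side carries a residual multiple of $e_{G\setminus S}(P)$, and the sign condition on $k$ is what lets this term be absorbed. Everything else is a routine accounting of the edges crossing the three types of cuts inside $S$, inside $V(G)\setminus S$, and between the two.
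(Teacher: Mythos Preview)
Your proposal is correct and follows essentially the same route as the paper's proof: both fix the optimal partition $P$ (the paper uses the one coming from the $l$-partition-connected components of $G\setminus S$, you invoke the variational characterization, but these coincide) and derive each bound by the same edge-counting identities, with the sign condition on $k$ absorbing the residual $e_{G\setminus S}(P)$ term. One small slip in wording: in the second case you call $\sum_{A\in P}l(A)-e_{G\setminus S}(P)$ the ``larger quantity,'' but it is in fact the \emph{smaller} one (as your own inequality $-e_{G\setminus S}(P)\le -\tfrac{1}{k}e_{G\setminus S}(P)$ shows), and that is exactly why the substitution preserves the upper bound.
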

\begin{proof}
{Let $P$ be the  partition of $V(G)\setminus S$ 
obtained from the $l$-partition-connected components of
 $G\setminus S$.  Obviously, we have
$$e_G( P\cup \{\{v\}:v\in S\})=\sum_{v\in S}d_G(v)\,-e_G(S)+e_{G\setminus S}(P).$$
If $G$ is $kl$-edge-connected and $S\neq \emptyset$,
 there are at least $kl(A)$ edges of $G$ with exactly one end in $A$, for any
$A\in P$. 
Thus
$e_G( P\cup \{\{v\}:v\in S\})\ge \sum_{A\in P}k l(A)-e_{G\setminus S}(P)+e_G(S)$
and so  if $k\ge 2$,  then 
$$k\OMEGA_l(G\setminus S)= \sum_{A\in P}kl(A)-ke_{G\setminus S}(P)\le
 \sum_{A\in P}kl(A)-2e_{G\setminus S}(P)\le
 \sum_{v\in S}d_G(v)\,-2e_G(S).$$
When  $G$ is $kl$-partition-connected, we have 
$e_G( P\cup \{\{v\}:v\in S\})\ge \sum_{A\in P}kl(A)+\sum_{v\in S} kl(v)-kl(G)$
 and so if $k\ge 1$, then
$$k\OMEGA_l(G\setminus S)=  \sum_{A\in P}kl(A)-ke_{G\setminus S}(P)\le 
 \sum_{A\in P}kl(A)-e_{G\setminus S}(P)\le 
\sum_{v\in S}\big(d_G(v)-kl(v)\big)+kl(G)-e_G(S).$$
These inequalities complete the proof.
}\end{proof}
Now, we are ready   to generalize a result in~\cite{II} as the following theorem.
\begin{thm}\label{thm:main:result}
{Let $G$ be a graph with $X\subseteq V(G)$,  let $l$ be an intersecting supermodular element-subadditive nonnegative integer-valued function on  
subsets of $V(G)$,
and let $k$ be a positive real number. Then $G$  has an $l$-partition-connected spanning subgraph $H$ 
such that for each~$v\in X$, 
$$d_H(v)\le
 \begin{cases}
 \big\lceil \frac{1}{k}(d_G(v)-2l(v))\big \rceil+2l(v),	&\text{if $G$ is $kl$-edge-connected and $k\ge 2$};\\ 
 \big\lceil \frac{1}{k}(d_G(v)-l(v))\big\rceil+l(v),	&\text{if $G$ is $kl$-partition-connected and $k\ge 1$};\\
 \big\lceil \frac{1}{k}d_G(v)\big \rceil+l(v),	&\text{if $G$ is $kl$-edge-connected, $k\ge 2$,  and  $X$ is independent};\\ 
 \big\lceil \frac{1}{k}d_G(v)\big\rceil,	&\text{if $G$ is $kl$-partition-connected, $k\ge 1$, and $X$ is independent}.
\end {cases}$$ 
}\end{thm}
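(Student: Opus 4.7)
The plan is to derive each of the four bounds from Theorem~\ref{thm:sufficient}, choosing the pair $(\lambda,\eta)$ differently in each case so that on the one hand the output bound $\lceil \eta(v)-\lambda l(v)\rceil$ coincides with the target degree bound, and on the other hand the hypothesis of Theorem~\ref{thm:sufficient} is supplied directly by Lemma~\ref{lem:high-edge-connectivity:Theta}. Concretely, I take
\[
(\lambda,\,\eta(v))=\bigl(\tfrac{2}{k},\,\tfrac{d_G(v)}{k}+2l(v)\bigr),\ \bigl(\tfrac{1}{k},\,\tfrac{d_G(v)}{k}+l(v)\bigr),\ \bigl(1,\,\tfrac{d_G(v)}{k}+2l(v)\bigr),\ \bigl(1,\,\tfrac{d_G(v)}{k}+l(v)\bigr)
\]
in the four listed cases (i)--(iv), respectively. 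Since $l$ is integer-valued, the identity $\lceil x+n\rceil=\lceil x\rceil+n$ for $n\in\mathbb{Z}$ (applied with $n=l(v)$ or $2l(v)$) immediately shows that $\lceil \eta(v)-\lambda l(v)\rceil$ agrees with the quantity asserted as the bound on $d_H(v)$ in each of the four cases.

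For any $S\subseteq X$ with $S\neq\emptyset$ I verify the hypothesis of Theorem~\ref{thm:sufficient} as follows. In case (i), substituting the chosen $(\lambda,\eta)$ on the right-hand side and the edge-connected bound of Lemma~\ref{lem:high-edge-connectivity:Theta} on the left-hand side causes the matching $\sum_{v\in S}d_G(v)/k$-terms to cancel; the required inequality reduces to
\[
\tfrac{2}{k}\bigl(e^*_G(S)-e_G(S)\bigr)<1+l(G)+\bigl(1-\tfrac{2}{k}\bigr)l(S),
\]
whose left-hand side is at most $0$ (using $e^*_G(S)\le e_G(S)$ and $k\ge 2$) and whose right-hand side is at least $1$ (using nonnegativity of $l$). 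Case (ii) is identical after swapping the edge-connected branch of the lemma for the partition-connected branch and replacing $2/k$ by $1/k$ throughout. In cases (iii) and (iv) the independence of $X$ forces $e_G(S)=e^*_G(S)=0$, and the required strict inequality collapses to $0<1+l(G)$ and $0<1$, respectively.

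The only remaining point is the case $S=\emptyset$, which the edge-connected branch of Lemma~\ref{lem:high-edge-connectivity:Theta} does not cover; there the hypothesis of Theorem~\ref{thm:sufficient} reduces to $\OMEGA_l(G)\le l(G)$, that is, to $G$ itself being $l$-partition-connected. In cases (ii) and (iv) this follows from the partition-connected branch of the lemma with $S=\emptyset$, and in cases (i) and (iii) from the easy computation $2e_G(P)=\sum_{A\in P}d_G(A)\ge\sum_{A\in P}2l(A)$ valid for any partition $P$ with $|P|\ge 2$ under $kl$-edge-connectedness with $k\ge 2$, together with $l(G)\ge 0$. The main obstacle is purely organisational: one must pair each of the four target degree bounds with the correct $(\lambda,\eta)$ and keep track of which branch of Lemma~\ref{lem:high-edge-connectivity:Theta} is applicable; once that bookkeeping is in place, all the slack in the strict inequalities is absorbed by the nonnegative terms $l(G)$, $l(S)$ and, in the independent cases, by $e_G(S)=0$, and Theorem~\ref{thm:sufficient} delivers the required spanning subgraph $H$.
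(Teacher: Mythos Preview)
Your proposal is correct and follows essentially the same approach as the paper: both apply Theorem~\ref{thm:sufficient} with $\eta(v)=\tfrac{d_G(v)}{k}+2l(v)$, $\lambda=\tfrac{2}{k}$ for case~(i) and $\eta(v)=\tfrac{d_G(v)}{k}+l(v)$, $\lambda=\tfrac{1}{k}$ for case~(ii), invoking Lemma~\ref{lem:high-edge-connectivity:Theta} to verify the hypothesis and treating $S=\emptyset$ separately in the edge-connected cases. The paper leaves cases~(iii) and~(iv) to the reader (``can similarly be proved''); your choice of $\lambda=1$ there is a clean way to fill in that gap, exploiting $e^*_G(S)=0$ when $X$ is independent.
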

\begin{proof}
{Let $S\subseteq V(G)$.
If $G$ is $kl$-edge-connected,  $k\ge 2$, and $S\neq \emptyset$, then
by Lemma~\ref{lem:high-edge-connectivity:Theta}, we have 
$$\OMEGA_l(G\setminus S)\le \sum_{v\in S}\frac{d_G(v)}{k}\,-\frac{2}{k}e_G(S)\le 
 \sum_{v\in S}(\eta(v)-2l(v))+l(G)+l(S)-\frac{2}{k}(e_G(S)+l(S)),$$
where 
for each vertex $v$,
 $\eta(v)= \frac{d_G(v)}{k}+2l(v)$.
Note that when $G$ is $kl$-edge-connected,  $k\ge 2$, and $S=\emptyset$, we must have $\OMEGA_l(G\setminus S) = l(G)$.
If $G$ is $kl$-partition-connected and $k\ge 1$, then  by
 Lemma~\ref{lem:high-edge-connectivity:Theta}, we also have 
$$\OMEGA_l(G\setminus S)\le 
 \sum_{v\in S}\big(\frac{d_G(v)}{k}-l(v)\big)+l(G)-\frac{1}{k}e_G(S)
\le 
 \sum_{v\in S}(\eta(v)-2l(v))+l(G)+l(S)-\frac{1}{k}(e_G(S)+l(S)),$$
where 
for each vertex $v$,
 $\eta(v)= \frac{d_G(v)}{k}+l(v)$.
Thus  the first two assertions follow from Theorem~\ref{thm:sufficient} for  $\lambda\in \{2/k, 1/k\}$.
The second two assertions can similarly be proved.
}\end{proof}
%
%
%
%
%
The following corollary can improve a result in~\cite{MR3192386} by replacing minimum degree condition.
We denote below by $\delta^+(G)$ the minimum out-degree of a directed graph $G$.
\begin{cor}
{Let $G$ be a  graph with  $X\subseteq V(G)$ and let $k$ be a real number with $k\ge 1$. If $G$ has an orientation with
$\delta^+(G)\ge km$, then it has a spanning subgraph $H$ with a new orientation such that
$\delta^+(H)\ge m$ and 
 for each 
$v\in X$, 
$$d_H(v)\le
 \begin{cases}

 \big\lceil \frac{1}{k}d_G(v)\big\rceil,	&\text{when  $X$ is independent};\\
 \big\lceil \frac{1}{k}(d_G(v)-m)\big\rceil+m,	&\text{otherwise}.
\end {cases}$$
}\end{cor}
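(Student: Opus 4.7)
The plan is to encode the orientation hypothesis as a partition-connectivity condition and then apply Theorem~\ref{thm:main:result}. Set $m'=\lceil km\rceil$ and define set functions $\ell,\tilde\ell$ on subsets of $V(G)$ by $\ell(\{v\})=m$, $\tilde\ell(\{v\})=m'$, and $\ell(A)=\tilde\ell(A)=0$ whenever $|A|\ne 1$; assume $|V(G)|\ge 2$ so that $\ell(V(G))=\tilde\ell(V(G))=0$. A routine case-check shows that both $\ell$ and $\tilde\ell$ are intersecting supermodular, element-subadditive, subadditive, and nonnegative integer-valued.

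Because $\delta^+$ is integer-valued, the hypothesis is equivalent to the existence of an orientation of $G$ with $\delta^+(G)\ge m'$. Reversing all arcs, this is equivalent to an orientation satisfying $d^-(v)\ge m'$ for every $v$, i.e.\ a $\tilde\ell$-arc-connected orientation (since $d^-(A)\ge\tilde\ell(A)=0$ is automatic for $|A|\ne 1$). Lemma~\ref{lem:Frank} applied to $\tilde\ell$ then shows that $G$ is $\tilde\ell$-partition-connected. Since $\tilde\ell\ge k\ell$ pointwise, $G$ is also $k\ell$-partition-connected.

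Now apply Theorem~\ref{thm:main:result} with this $\ell$ and the given real $k\ge 1$. The second case provides an $\ell$-partition-connected spanning subgraph $H$ of $G$ such that
$$d_H(v)\le\Big\lceil\tfrac{1}{k}\bigl(d_G(v)-\ell(v)\bigr)\Big\rceil+\ell(v)=\Big\lceil\tfrac{1}{k}\bigl(d_G(v)-m\bigr)\Big\rceil+m$$
for every $v\in X$, which is the asserted general bound; when $X$ is independent, the fourth case improves it to $d_H(v)\le\lceil d_G(v)/k\rceil$. Finally, because $H$ is $\ell$-partition-connected, a second application of Lemma~\ref{lem:Frank} furnishes an $\ell$-arc-connected orientation of $H$, i.e.\ one with $d^-(v)\ge m$ for every vertex; reversing all arcs produces the desired new orientation of $H$ with $\delta^+(H)\ge m$.

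The conceptual step is recognizing that the singleton-supported set function $\ell$ exactly encodes an in-degree condition while satisfying all the structural hypotheses of Theorem~\ref{thm:main:result}; the minor technicality that $km$ may not be an integer is handled by the integer lift $\tilde\ell$, which is used only to legitimately invoke Lemma~\ref{lem:Frank} on the hypothesis side. After that, the two cited results compose directly and the remainder is routine substitution.
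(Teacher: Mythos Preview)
Your proof is correct and follows essentially the same route as the paper: define the singleton-supported function $\ell$, verify that the orientation hypothesis makes $G$ $k\ell$-partition-connected, apply Theorem~\ref{thm:main:result}, and then use Lemma~\ref{lem:Frank} on $H$. The only cosmetic difference is that the paper establishes $k\ell$-partition-connectivity directly from the out-degree bound (each singleton part contributes $d^+(v)\ge km$ to $e_G(P)$), whereas you route through Lemma~\ref{lem:Frank} via the integer lift $\tilde\ell$; both are valid and the approaches are otherwise identical.
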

\begin{proof}
{Since $\delta^+(G)\ge km$, the graph $G$ is $kl$-partition-connected, where $l(v)=m$ for each vertex $v$ and $l(A)=0$ for every vertex set $A$ with $|A|\ge 2$.
Let $H$ be an $l$-partition-connected spanning subgraph of $G$ with the properties described in Theorem~\ref{thm:main:result}.
Since $H$ is $l$-partition-connected, by Lemma~\ref{lem:Frank},  it has an orientation such that $\delta^+(H)\ge m$.
}\end{proof}
%
%
%
%
%
%
%
%
%
%
%
\section{Highly partition-connected spanning subgraphs with bounded  degrees}
\label{sec:Generalizations}
In this section, we shall  strengthen  Theorem~\ref{thm:sufficient} for finding partition-connected spanning graphs with bounded degrees, when $l$ is nonincreasing.
Before doing so, 
we  establish  the following promised generalization of
Theorems~\ref{thm:preliminary:structure}. Note that $\OMEGA(G\setminus [S,F])=\OMEGA(G\setminus S)+\sum_{v\in S}l(v)$, when $F$ is the trivial spanning subgraph and $l$ is element-subadditive.
\begin{thm}\label{thm:preliminary:generalization}
{Let $G$ be an $l$-partition-connected  graph with the  spanning $l$-sparse subgraph  $F$, where $l$
 is a   intersecting supermodular weakly subadditive   integer-valued function on 
 subsets of $V(G)$.
Let  $h$ be an  integer-valued function on $V(G)$.
 If $H$ is  a minimally  $l$-partition-connected spanning subgraph of $G$ containing $F$  with the minimum total excess from $h+d_F$,  then there exists a subset $S$ of  $V(G)$ with the following properties:
\begin{enumerate}{
\item $\OMEGA_l(G\setminus [S,F])=\OMEGA_l(H\setminus [S,F])$.\label{Condition F2.1}
\item $S\supseteq \{v\in V(G):d_H(v)> h(v)+d_F(v)\}$.\label{Condition F2.2}
\item For each vertex $v$ of $S$, $d_H(v)\ge h(v)+d_F(v)$.\label{Condition F2.3}
}\end{enumerate}
}\end{thm}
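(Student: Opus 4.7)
The plan is to mirror the proof of Theorem~\ref{thm:preliminary:structure}, substituting $h+d_F$ for $h$ and $H\setminus[S,F]$ for $H\setminus S$ throughout. I would begin by setting $V_0=\emptyset$ and $V_1=\{v\in V(H):d_H(v)>h(v)+d_F(v)\}$, and for each $S\subseteq V(G)$ and each $u\in V(G)\setminus S$, I would let $\mathcal{A}(S,u)$ denote the family of minimally $l$-partition-connected spanning subgraphs $H'$ of $G$ satisfying $E(F)\subseteq E(H')$, $d_{H'}(v)\le h(v)+d_F(v)$ for every $v\notin V_1$, and agreeing with $H$ on every edge of $G$ whose endpoints are not both in $X$, where $X$ is the vertex set of the $l$-partition-connected component of $H\setminus[S,F]$ containing $u$. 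Then for each $n\ge 2$, I would recursively set $V_n=V_{n-1}\cup\{v\in V(G)\setminus V_{n-1}:d_{H'}(v)\ge h(v)+d_F(v)\text{ for every }H'\in\mathcal{A}(V_{n-1},v)\}$, fix the smallest $n$ with $V_n=V_{n-1}$, and take $S=V_n$.

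The core is the inductive claim: if $x$ and $y$ lie in different $l$-partition-connected components of $H\setminus[V_{n-1},F]$ and $xy\in E(G)\setminus E(H)$, then $x\in V_n$ or $y\in V_n$. Once the claim is proved, Conditions~\ref{Condition F2.1}--\ref{Condition F2.3} follow immediately: Condition~\ref{Condition F2.1} from the claim at the stable value of $n$, since no edge of $E(G)\setminus E(H)$ can then connect distinct components of $H\setminus[S,F]$ without hitting $S$; Condition~\ref{Condition F2.2} from $S\supseteq V_1$; and Condition~\ref{Condition F2.3} from the fact that $H\in\mathcal{A}(V_{i-1},v)$ itself witnesses $d_H(v)\ge h(v)+d_F(v)$ for each $v\in V_i\setminus V_{i-1}$ with $i\ge 2$.

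For the induction step, I would assume $x,y\notin V_n$. By the induction hypothesis applied to $V_{n-2}$, both $x$ and $y$ lie in a common component $Z$ of $H\setminus[V_{n-2},F]$, and, since $x,y\notin V_n$, there exist $H_x\in\mathcal{A}(V_{n-1},x)$ and $H_y\in\mathcal{A}(V_{n-1},y)$ with $d_{H_x}(x)<h(x)+d_F(x)$ and $d_{H_y}(y)<h(y)+d_F(y)$. Since $H[Z]$ is $l$-partition-connected while $x$ and $y$ fall in distinct $l$-partition-connected components of $H[Z]\setminus[V_{n-1}\cap Z,F[Z]]$, I would apply Proposition~\ref{prop:replacing} with $M$ taken as the set of edges of $H[Z]$ that are incident to $V_{n-1}$ and lie outside $F$. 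Because $H[Z]\setminus M=H[Z]\setminus[V_{n-1}\cap Z,F[Z]]$, the hypothesis of the proposition is met, and it produces an edge $e=zz'\in M$ such that $H[Z]-e+xy$ is still $l$-partition-connected, with $z\in V_{n-1}$ and $e\notin E(F)$. I would then define $E(H')=E(H)-e+xy-E(H[X])+E(H_x[X])-E(H[Y])+E(H_y[Y])$, verify via repeated applications of Proposition~\ref{prop:deducing} that $H'$ is $l$-partition-connected, and observe that $E(F)\subseteq E(H')$ since $e\notin E(F)$ and both $H_x$ and $H_y$ contain $F$. The degree bookkeeping then proceeds exactly as in Theorem~\ref{thm:preliminary:structure} with $h$ replaced by $h+d_F$: for $n\ge 3$ it yields $H'\in\mathcal{A}(V_{n-2},z)$ with $d_{H'}(z)<d_H(z)\le h(z)+d_F(z)$, contradicting $z\in V_{n-1}\setminus V_{n-2}$; for $n=2$ it yields a spanning subgraph with $te(H',h+d_F)<te(H,h+d_F)$, contradicting the minimality of $H$.

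The main obstacle I anticipate is producing, inside $H[Z]$, an edge that simultaneously (i) can be removed without destroying $l$-partition-connectivity when $xy$ is added in its place, (ii) is incident to some vertex of $V_{n-1}$, and (iii) lies outside $E(F)$. The first two requirements already appear in the proof of Theorem~\ref{thm:preliminary:structure}; the new and essential requirement (iii), which is exactly what allows the entire $l$-sparse $F$ to survive the exchange and remain inside $H'$, is secured by restricting the pool $M$ in Proposition~\ref{prop:replacing} to non-$F$ edges incident to $V_{n-1}$ and by checking the key identity $H[Z]\setminus M=H[Z]\setminus[V_{n-1}\cap Z,F[Z]]$. Once such an edge is in hand, the remaining construction of $H'$ and the verification of the degree constraints follow verbatim from the argument for Theorem~\ref{thm:preliminary:structure}.
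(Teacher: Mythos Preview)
Your proposal follows the paper's proof almost exactly, but there is one genuine gap in the choice of $M$. You take $M$ to be \emph{all} non-$F$ edges of $H[Z]$ incident to $V_{n-1}$, and you correctly verify that $H[Z]\setminus M = H[Z]\setminus[V_{n-1}\cap Z,\,F[Z]]$, so Proposition~\ref{prop:replacing} applies and returns an edge $e=zz'\in M$ with $z\in V_{n-1}$. However, your final contradiction for $n\ge 3$ reads ``contradicting $z\in V_{n-1}\setminus V_{n-2}$'', and this membership is \emph{not} guaranteed by your $M$. The point is that, unlike in Theorem~\ref{thm:preliminary:structure} where $H\setminus V_{n-2}$ literally deletes the vertices of $V_{n-2}$ (so $Z\cap V_{n-2}=\emptyset$ automatically), here $H\setminus[V_{n-2},F]$ keeps all vertices and only removes non-$F$ edges; hence $Z$ may well contain vertices of $V_{n-2}$, connected through $F$, and your $M$ may contain non-$F$ edges incident only to such vertices. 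If Proposition~\ref{prop:replacing} returns one of these, then $z\in V_{n-2}$, the expression $\mathcal{A}(V_{n-2},z)$ is undefined (it requires $z\notin V_{n-2}$), and the contradiction fails as written.

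The paper avoids this by taking the more restrictive set
\[
M=\bigl\{\,e\in E(H[Z])\setminus E(F)\;:\;e\text{ is incident to }V_{n-1}\setminus V_{n-2}\text{ and joins different }l\text{-pc components of }H[Z]\setminus[Z\cap V_{n-1},F]\,\bigr\},
\]
which forces $z\in V_{n-1}\setminus V_{n-2}$ directly; one then has to check (as the paper implicitly does) that $xy$ still joins different $l$-partition-connected components of $H[Z]\setminus M$, so that Proposition~\ref{prop:replacing} remains applicable. Alternatively, you could keep your larger $M$ and, when $z$ happens to land in some $V_j\setminus V_{j-1}$ with $j\le n-2$, run the contradiction at level $j$ instead: since all modifications in $H'$ occur inside $Z$, and $Z$ is contained in the component of $H\setminus[V_{j-1},F]$ through $z$, one gets $H'\in\mathcal{A}(V_{j-1},z)$ with $d_{H'}(z)<h(z)+d_F(z)$, contradicting $z\in V_j$. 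Either fix closes the gap; as stated, though, your argument does not go through.
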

\begin{proof}
{Define $V_0=\emptyset $ and  $V_1=\{v\in V(H): d_H(v)> h(v)+d_F(v)\}$.
For any $S\subseteq V(G)$ and   $u\in V(G)\setminus S$, 
let $\mathcal{A}(S, u)$ be the set of all  minimally $l$-partition-connected spanning subgraphs 
$H^\prime $ of $G$ containing $F$
such that    $d_{H'}(v)\le h(v)+d_F(v)$ for all $v\in V(G)\setminus V_1$,  
 and $H^\prime$ and $H$ have the same edges, except for some of the edges of $G$ whose ends are in $X$, 
where $H[X]$ is the $l$-partition-connected  component of $H\setminus [S,F]$ containing $u$.
Note that $H'[X]$ must  automatically be $l$-partition-connected.
Now, for each integer $n$ with $n\ge 2$,   recursively define $V_n$ as follows:
$$V_n=V_{n-1} \cup \{\, v\in V(G)\setminus V_{n-1} \colon \, d_{H^\prime }(v)= h(v)+d_F(v) \text{,\, for all\, }H^\prime \in \mathcal{A}(V_{n-1},v)\,\}.$$
 Now, we prove the following claim.
%
\vspace{2mm}\\
{\bf Claim.} 
Let $x$ and $y$ be two vertices in different $l$-partition-connected components of $H\setminus [V_{n-1},F]$.
If $xy\in E(G)\setminus E(H)$, then $x\in V_{n}$ or $y\in V_{n}$.
\vspace{2mm}\\
{\bf Proof of Claim.} 
By induction on $n$.
 For $n=1$, the proof is clear. 
Assume that the claim is true for $n-1$.
Now we prove it for $n$.
Suppose otherwise that vertices $x$ and $y$ are in different  $l$-partition-connected components of $H\setminus [V_{n-1},F]$, respectively, with the vertex sets $X$ and $Y$,
$xy\in E(G)\setminus E(H)$, 
and $x,y\not \in V_{n}$. 
Since $x,y\not\in  V_{n}$,
 there exist
 $H_x\in \mathcal{A}(V_{n-1},x)$ and
 $H_y\in \mathcal{A}(V_{n-1},y)$ with $d_{H_x}(x)< h(x)+d_F(x)$ and $d_{H_y}(y)< h(y)+d_F(y)$. 
By the induction hypothesis,
 $x$ and $y$ are in the same $l$-partition-connected component 
of $H\setminus [V_{n-2},F]$ with the vertex set $Z$ so that $X\cup Y \subseteq Z$.
Let $M$  be  the nonempty set of  edges of $H[Z]\setminus E(F)$ incident to the vertices in $V_{n-1}\setminus V_{n-2}$
 whose ends lie in different $l$-partition-connected components of $H[Z]\setminus [Z\cap V_{n-1}, F]$.
By Proposition~\ref{prop:replacing},
 there  exists an edge $zz'\in M$ with $z\in Z \cap V_{n-1}$  such that 
$H[Z]-zz'+xy$ is $l$-partition-connected.
Now, let  $H'$ be the spanning subgraph of $G$   containing    $F$ with
 $$E(H')=E(H)-zz'+xy
-E(H[X])+E(H_x[X])
-E(H[Y])+E(H_y[Y]).$$
By  repeatedly  applying Proposition~\ref{prop:deducing},  one can easily check that $H'$  
 is  $l$-partition-connected.
For each  $v\in V(H')$, we  have
$$d_{H'}(v)\le   
 \begin{cases}
d_{H_v}(v)+1,	&\text{if   $v\in \{x,y\}$};\\
d_{H}(v),	&\text{if   $v=z'$},
\end {cases}
\quad \text{ and }\quad
 d_{H'}(v)= 
 \begin{cases}
d_{H_x}(v),	&\text{if   $v\in X\setminus \{x,z'\}$};\\
d_{H_y}(v),	&\text{if   $v\in Y\setminus \{y,z'\}$};\\
d_{H}(v),	&\text{if   $v\notin X\cup Y\cup \{z,z'\}$}.
\end {cases}$$
If $n\ge 3$, then it is not hard to see that  $d_{H'}(z)<d_{H}(z)\le h(z)+d_F(z)$ and $H'$ lies in $ \mathcal{A}(V_{n-2}, z)$.
Since $z\in V_{n-1}\setminus V_{n-2}$,  we arrive at a contradiction.
For the case $n=2$,  
since $z\in V_1$, it is easy to see that
$h(z)+d_F(z)\le d_{H'}(z)<d_{H}(z)$ and  $te(H',h+d_F)< te(H,h+d_F)$,   which is again a contradiction.
Hence the claim holds.
%
%
%

Obviously, there exists a positive integer $n$ such that  $V_1\subseteq \cdots\subseteq V_{n-1}=V_{n}$.
 Put $S=V_{n}$.  
Since $S\supseteq V_1$, Condition~\ref{Condition F2.2} clearly holds.
For each $v\in V_i\setminus V_{i-1}$ with $i\ge 2$, 
we have $H\in \mathcal{A}(V_{i-1},v)$ and so  $d_H(v)\ge h(v)+d_F(v)$. 
This establishes   Condition~\ref{Condition  F2.3}.
 Because $S=V_{n}$, 
the previous claim implies Condition~\ref{Condition F2.1} and  completes the proof.
}\end{proof}
In the above-mentioned theorem, 
we could assume that $\OMEGA_l(H)=\OMEGA_l(G)$ and   choose $H$ with the minimum $te (H,h+d_F)$,  
whether $G$ is $l$-partition-connected or not.
Conversely, if we  assume  that $te(H,h+d_F)=0$ and  choose $H$ with the minimum $\OMEGA_l(H)$,
 the next theorem can be derived.
However, the above-mentioned theorem works remarkably well, we shall use  this result  
to  get further improvement  in the last subsection.
\begin{thm}\label{thm:final:minor-improvement}
{Let $G$ be a  graph with the  spanning $l$-sparse subgraph  $F$, where $l$
 is an intersecting supermodular weakly subadditive integer-valued  function on 
 subsets of $V(G)$.
Let  $h$ be an  integer-valued function on $V(G)$.
 If $H$ is  a  spanning subgraph of $G$ containing $F$ with $te(H,h+d_F)=0$ and with the minimum $\OMEGA_l(H)$,  
then there exists a subset $S$ of  $V(G)$ with the following properties:
\begin{enumerate}{
\item $\OMEGA_l(G\setminus [S,F])=\OMEGA_l(H\setminus [S,F])$.\label{condition:final:1}
\item For each vertex $v$ of $S$, $d_H(v)= h(v)+d_F(v)$.\label{condition:final:2}
}\end{enumerate}
}\end{thm}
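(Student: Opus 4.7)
My plan is to mirror the proof of Theorem \ref{thm:preliminary:generalization}, with the role played there by ``minimum total excess'' replaced here by ``minimum $\OMEGA_l$.'' Define $V_0=\emptyset$ and for $n\ge 1$ set
\[
V_n=V_{n-1}\cup\{v\in V(G)\setminus V_{n-1}:d_{H'}(v)=h(v)+d_F(v)\text{ for every }H'\in\mathcal{A}(V_{n-1},v)\},
\]
where $\mathcal{A}(V_{n-1},v)$ is the collection of spanning subgraphs $H'$ of $G$ containing $F$ such that $te(H',h+d_F)=0$, $\OMEGA_l(H')=\OMEGA_l(H)$, and $H'$ agrees with $H$ on every edge not having both ends in the vertex set $X$ of the $l$-partition-connected component of $H\setminus[V_{n-1},F]$ containing $v$; as in Theorem \ref{thm:preliminary:generalization}, $H'[X]$ is then automatically $l$-partition-connected. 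The chain stabilizes at some $S:=V_n=V_{n-1}$. Condition~2 of the theorem is immediate because each $v\in V_i\setminus V_{i-1}$ certifies $H\in\mathcal{A}(V_{i-1},v)$, forcing $d_H(v)=h(v)+d_F(v)$.

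Condition~1 will follow from the key claim: \emph{if $xy\in E(G)\setminus E(H)$ and $x,y$ lie in distinct $l$-partition-connected components of $H\setminus[V_{n-1},F]$, then $x\in V_n$ or $y\in V_n$.} Once this is established, every edge of $E(G)\setminus E(H)$ whose endpoints avoid $S$ must have both ends inside a single $l$-partition-connected component of $H\setminus[S,F]$; adding these edges therefore does not alter the components and hence $\OMEGA_l(G\setminus[S,F])=\OMEGA_l(H\setminus[S,F])$.

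I prove the claim by induction on $n$. For $n\ge 2$ the argument of Theorem \ref{thm:preliminary:generalization} carries over almost verbatim: the inductive hypothesis places $x$ and $y$ in a common $l$-partition-connected component $Z$ of $H\setminus[V_{n-2},F]$; Proposition \ref{prop:replacing} yields an edge $zz'\in E(H[Z])$ with $z\in Z\cap(V_{n-1}\setminus V_{n-2})$ such that $H[Z]-zz'+xy$ remains $l$-partition-connected; splicing in the witnesses $H_x,H_y$ for $x,y\notin V_n$ gives a subgraph $H'\in\mathcal{A}(V_{n-2},z)$ with $d_{H'}(z)=d_H(z)-1<h(z)+d_F(z)$, contradicting $z\in V_{n-1}\setminus V_{n-2}$.

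The main obstacle is the base case $n=1$, which was vacuous in Theorem \ref{thm:preliminary:generalization} under the hypothesis that $H$ is $l$-partition-connected; here $H$ may have several $l$-partition-connected components, so one must exhibit a direct contradiction with the minimality of $\OMEGA_l(H)$. The crucial auxiliary fact is that for any two distinct $l$-partition-connected components $X,Y$ of $H$ one has the integer gap $l(X)+l(Y)-l(X\cup Y)\ge 1$: indeed, if $l(X)+l(Y)\le l(X\cup Y)$, then a routine partition check using $l$-partition-connectivity of $H[X]$ and $H[Y]$ shows that $H[X\cup Y]$ would itself be $l$-partition-connected, contradicting the maximality of $X$ and $Y$ as components. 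Given this, for witnesses $H_x,H_y$ of $x,y\notin V_1$, the spliced subgraph $H'$ obtained from $H$ by replacing $H[X],H[Y]$ with $H_x[X],H_y[Y]$ and adding $xy$ satisfies $te(H',h+d_F)=0$; by direct computation its $l$-partition-connected components either coincide with those of $H$ (in which case $\OMEGA_l(H')=\OMEGA_l(H)-1$) or coincide with those of $H$ after merging $X,Y$ into $X\cup Y$ (in which case $\OMEGA_l(H')=\OMEGA_l(H)-(l(X)+l(Y)-l(X\cup Y))\le\OMEGA_l(H)-1$). Either way $\OMEGA_l(H')<\OMEGA_l(H)$, contradicting the minimality of $\OMEGA_l(H)$ and completing the proof.
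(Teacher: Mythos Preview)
Your overall strategy is the same as the paper's, and the inductive step $n\ge 2$ carries over as you say (with one additional check, absent from Theorem~\ref{thm:preliminary:generalization}, that $\OMEGA_l(H')=\OMEGA_l(H)$ so that indeed $H'\in\mathcal A(V_{n-2},z)$; the paper verifies this by noting that $H$ and $H'$ have the same $l$-partition-connected components and the same edges joining them).

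Your treatment of the base case $n=1$, however, has gaps. First, your dichotomy on the component structure of $H'$ is not exhaustive: after adding $xy$ the new $l$-partition-connected component of $H'$ containing $x$ may properly contain $X\cup Y$, absorbing further components $W_1,W_2,\ldots$ of $H$ (nothing prevents this when edges of $H$ already run between $X\cup Y$ and some $W_j$). Second, in your ``merging'' case the formula omits the $k$ edges of $H$ already running between $X$ and $Y$; the correct value is $\OMEGA_l(H')=\OMEGA_l(H)-(l(X)+l(Y)-l(X\cup Y))+k$, so to conclude you would need $l(X)+l(Y)-l(X\cup Y)\ge k+1$, not merely $\ge 1$. (This stronger inequality is in fact true---the component partition $\{X,Y\}$ of $H[X\cup Y]$ gives $\OMEGA_l(H[X\cup Y])=l(X)+l(Y)-k>l(X\cup Y)$---but your ``routine partition check'' does not establish it.)

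The paper bypasses the case analysis entirely. Since every partition $P$ attaining $\OMEGA_l(H)$ has each part inducing an $l$-partition-connected subgraph (else one could refine $P$ and strictly increase the value), any such $P$ refines the component partition of $H$; in particular $x$ and $y$ lie in different parts of $P$. Hence the edge $xy$ crosses every maximizing partition, and adding it strictly decreases the maximum, giving $\OMEGA_l(H')<\OMEGA_l(H)$ in one line.
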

\begin{proof}
{Define $V_0=\emptyset $. 
For any $S\subseteq V(G)$ and   $u\in V(G)\setminus S$, 
let $\mathcal{A}(S, u)$ be the set of all  spanning subgraphs 
$H^\prime $ of $G$ containing $F$
with  $te(H',h+d_F)=0$ such that   $\OMEGA_l(H')=\OMEGA_l(H)$, $H'[X]$ is  $l$-partition-connected,
$H^\prime$ and $H$ have the same edges, except for some of the edges of $G$ whose ends are in $X$, 
where $H[X]$ is the $l$-partition-connected  component of $H\setminus [S,F]$ containing $u$.
Now, for each integer $n$ with $n\ge 2$,   recursively define $V_n$ as follows:
$$V_n=V_{n-1} \cup \{\, v\in V(G)\setminus V_{n-1} \colon \, d_{H^\prime }(v)= h(v)+d_F(v) \text{,\, for all\, }H^\prime \in \mathcal{A}(V_{n-1},v)\,\}.$$
 Now, we prove the following claim.
%
\vspace{2mm}\\
{\bf Claim.} 
Let $x$ and $y$ be two vertices in different $l$-partition-connected components of $H\setminus [V_{n-1},F]$.
If $xy\in E(G)\setminus E(H)$, then $x\in V_{n}$ or $y\in V_{n}$.
\vspace{2mm}\\
{\bf Proof of Claim.} 
By induction on $n$.
Suppose otherwise that vertices $x$ and $y$ are in different  $l$-partition-connected components of $H\setminus [V_{n-1},F]$, respectively, with the vertex sets $X$ and $Y$,
$xy\in E(G)\setminus E(H)$, 
and $x,y\not \in V_{n}$. 
Since $x,y\not\in  V_{n}$,
 there exist
 $H_x\in \mathcal{A}(V_{n-1},x)$ and
 $H_y\in \mathcal{A}(V_{n-1},y)$ with $d_{H_x}(x)< h(x)+d_F(x)$ and $d_{H_y}(y)< h(y)+d_F(y)$. 
 For $n=1$, define  $H'$ to be the spanning subgraph of $G$ containing $F$ with
 $$E(H')=E(H)+xy
-E(H[X])+E(H_x[X])
-E(H[Y])+E(H_y[Y]).$$
Since  the edge $xy$ joins different $l$-partition-connected components of $H$, we must have
$\OMEGA_l(H') < \OMEGA_l(H)$.
Since   $te (H',h+d_F)=0$, we arrive at a contradiction.
Now, suppose $n\ge 2$.
By the induction hypothesis,
 $x$ and $y$ are in the same $l$-partition-connected component 
of $H\setminus [V_{n-2},F]$ with the vertex set $Z$ so that $X\cup Y \subseteq Z$.
Let $M$  be  the nonempty set of  edges of $H[Z]\setminus E(F)$ incident to the vertices in $V_{n-1}\setminus V_{n-2}$
 whose ends lie in different $l$-partition-connected components of $H[Z]\setminus [Z\cap V_{n-1}, F]$.
By Proposition~\ref{prop:replacing},
 there  exists an edge $zz'\in M$ with $z\in Z \cap V_{n-1}$  such that 
$H[Z]-zz'+xy$ is $l$-partition-connected.
Now, let  $H'$ be the spanning subgraph of $G$   containing    $F$ with
 $$E(H')=E(H)-zz'+xy
-E(H[X])+E(H_x[X])
-E(H[Y])+E(H_y[Y]).$$
It  is easy  to see that the  $l$-partition-connected components of $H'$ and $H$ have the same vertex sets.  
Since  $H$ and $H'$ have  the same edges  joining these $l$-partition-connected components, $\OMEGA_l(H') = \OMEGA_l(H)$.
For each  $v\in V(H')$, we  have
$$d_{H'}(v)\le   
 \begin{cases}
d_{H_v}(v)+1,	&\text{if   $v\in \{x,y\}$};\\
d_{H}(v),	&\text{if   $v=z'$},
\end {cases}
\quad \text{ and }\quad
 d_{H'}(v)= 
 \begin{cases}
d_{H_x}(v),	&\text{if   $v\in X\setminus \{x,z'\}$};\\
d_{H_y}(v),	&\text{if   $v\in Y\setminus \{y,z'\}$};\\
d_{H}(v),	&\text{if   $v\notin X\cup Y\cup \{z,z'\}$}.
\end {cases}$$
It is not hard to check that  $d_{H'}(z)<d_{H}(z)\le h(z)+d_F(z)$ and $H'$ lies in $ \mathcal{A}(V_{n-2}, z)$.
Since $z\in V_{n-1}\setminus V_{n-2}$,  we arrive at a contradiction.
Hence the claim holds.
%
%

%
Obviously, there exists a positive integer $n$ such that  $V_1\subseteq \cdots\subseteq V_{n-1}=V_{n}$.
 Put $S=V_{n}$.  
For each $v\in V_i\setminus V_{i-1}$, 
we have $H\in \mathcal{A}(V_{i-1},v)$ and so  $d_H(v)= h(v)+d_F(v)$. 
This establishes   Condition~\ref{condition:final:2}.
 Because $S=V_{n}$, 
the previous claim implies Condition~\ref{condition:final:1} and  completes the proof.
}\end{proof}
%
%
%
\subsection{Prerequisites}
The following lemma provides  a  generalization for 
Lemma~\ref{lem:minimally-omega}.
Recall that $\OMEGA_l(H\setminus [S,F])=\OMEGA_l(H\setminus S)+\sum_{v\in S} l(v)$ when $F$ is the trivial spanning subgraph and $l$ is element-subadditive.
\begin{lem}\label{lem:spanningforest:gen}
{Let $H$ be an $l$-sparse graph  with the spanning subgraph  $F$, where  $l$ is an intersecting supermodular weakly subadditive integer-valued function on 
 subsets of $V(H)$. 
 If $S\subseteq V(H)$ and  $\mathcal{F}=H\setminus E(F)$, then 
$$\sum_{v\in S}d_{\mathcal{F}}(v)=\OMEGA_l(H\setminus [S,F])-\OMEGA_l(H)+e_\mathcal{F}(S).$$
}\end{lem}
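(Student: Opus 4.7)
The plan is to extract from the $l$-sparse hypothesis the following clean identity: for every $l$-sparse graph $H'$,
$$\OMEGA_l(H') = \sum_{v\in V(H')} l(v) - |E(H')|,$$
and then subtract this identity applied to $H$ and to $H\setminus [S,F]$.

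First I would establish the identity. The ``$\ge$'' direction is immediate: the singleton partition $P_0 = \{\{v\} : v \in V(H')\}$ yields $\sum_{A \in P_0} l(A) - e_{H'}(P_0) = \sum_v l(v) - |E(H')|$, and this is one of the values in the max that defines $\OMEGA_l(H')$ (invoking the earlier theorem characterizing $\OMEGA_l$ as a maximum over \emph{all} partitions). For the ``$\le$'' direction, let $P$ be an arbitrary partition. Since $H'$ is $l$-sparse, $e_{H'}(A) \le \sum_{v\in A} l(v) - l(A)$ for every $A \in P$. Using $|E(H')| = e_{H'}(P) + \sum_{A\in P} e_{H'}(A)$, one computes
$$\sum_{A\in P} l(A) - e_{H'}(P) = \sum_{A\in P} l(A) + \sum_{A\in P} e_{H'}(A) - |E(H')| \le \sum_{v\in V(H')} l(v) - |E(H')|,$$
as desired.

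Next I would observe that $H\setminus[S,F]$ is a subgraph of $H$ on the same vertex set, hence automatically $l$-sparse (the inequality $e(A) \le \sum_{v\in A}l(v) - l(A)$ is preserved under deletion of edges). So the identity above applies to both $H$ and $H\setminus[S,F]$, with the same $\sum_v l(v)$.

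Finally, I would count the edges removed in passing from $H$ to $H\setminus[S,F]$: these are exactly the edges of $\mathcal F = H\setminus E(F)$ having at least one endpoint in $S$, and their number is $\sum_{v\in S} d_{\mathcal F}(v) - e_{\mathcal F}(S)$ (each edge with both ends in $S$ is counted twice by the degree sum). Thus
$$|E(H\setminus[S,F])| = |E(H)| - \sum_{v\in S} d_{\mathcal F}(v) + e_{\mathcal F}(S).$$
Subtracting the two instances of the identity gives
$$\OMEGA_l(H\setminus[S,F]) - \OMEGA_l(H) = |E(H)| - |E(H\setminus[S,F])| = \sum_{v\in S} d_{\mathcal F}(v) - e_{\mathcal F}(S),$$
which rearranges to the claimed formula. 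There is no real obstacle here; the only step that requires care is the edge-count of what $[S,F]$-deletion actually removes, and the observation that $l$-sparsity is inherited by the resulting graph so that the identity may be applied on both sides.
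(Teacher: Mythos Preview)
Your proof is correct and takes a genuinely different route from the paper's. The paper argues by induction on the number of edges of $\mathcal{F}$ incident to $S$: for a single such edge $e=uu'$ it verifies the four bookkeeping identities $\OMEGA_l(H)=\OMEGA_l(H\setminus e)-1$, $\OMEGA_l(H\setminus[S,F])=\OMEGA_l((H\setminus e)\setminus[S,F])$, and the corresponding shifts in $e_{\mathcal F}(S)$ and $\sum_{v\in S}d_{\mathcal F}(v)$, then invokes the induction hypothesis on $H\setminus e$. Your approach instead isolates the clean closed form $\OMEGA_l(H')=\sum_{v}l(v)-|E(H')|$ valid for any $l$-sparse $H'$, and then the lemma drops out as pure edge counting. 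Your argument is more conceptual and makes transparent why the hypothesis ``$H$ is $l$-sparse'' is exactly what is needed; it also explains in one stroke the paper's inductive step $\OMEGA_l(H)=\OMEGA_l(H\setminus e)-1$, which is really just your identity applied twice. The paper's induction, by contrast, avoids explicitly invoking the max characterization of $\OMEGA_l$, though it tacitly uses equivalent reasoning to justify that first bookkeeping identity.
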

\begin{proof}
{By induction on the number  of edges of $\mathcal{F}$ which are  incident to the vertices in $S$. If there is no edge of   $\mathcal{F}$  incident to a vertex  in $S$, then the proof is clear. Now, suppose that there exists an edge $e=uu'\in E(\mathcal{F})$ with $|S\cap \{u,u'\}|\ge 1$. Hence
\begin{enumerate}{
\item $\OMEGA_l(H)=\OMEGA_l (H\setminus e)-1,$
\item $\OMEGA_l(H\setminus [S,F])= \OMEGA_l((H\setminus e)\setminus [S,F]),$

\item $e_\mathcal{F}(S)=e_{\mathcal{F}\setminus e}(S)+|S\cap \{u,u'\}|-1,$
\item $\sum_{v\in S}d_{\mathcal{F}}(v)=\sum_{v\in S}d_{\mathcal{F}\setminus e}(v)\,+|S\cap \{u,u'\}|.$
}\end{enumerate}
Therefore, by the induction hypothesis on  $H\setminus e$ with the spanning subgraph $F$ the lemma holds.
}\end{proof}
The following lemma   provides a useful relationship between two parameters
$\OMEGA_l(G \setminus S)$
and $\OMEGA_l(G \setminus [S,F])$, when $l$ is nonincreasing.
We shall apply it in the subsequent subsections.
\begin{lem}\label{lem:gen}
{Let $G$ be  a  graph with  the spanning subgraph $F$ and let $l$ be a nonincreasing intersecting supermodular nonnegative  integer-valued  function on
 subsets of $V(G)$. 
If $S\subseteq V(G)$ then
$$  \OMEGA_l(G\setminus [S,F])\le
 \OMEGA_l(G\setminus S)
+\sum_{v\in S}\max\{0,l(v)- d_F(v)\}
+e_F(S).$$
Furthermore,
$  \OMEGA_l(G\setminus [S,F])\le
 \OMEGA_l(G\setminus S)
+\frac{1}{(c-1)}e_{F}(S),$
when every $l$-partition-connected component $C$ of $F$ we have $\sum_{v\in C}{l(v)}\ge cl(C)-\frac{c-1}{2}d_F(C)$ and  $c\ge 2$.
}\end{lem}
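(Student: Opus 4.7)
The plan is to begin with the partition $P$ of $V(G)$ into $l$-partition-connected components of $G\setminus[S,F]$, which is optimal for $\OMEGA_l(G\setminus[S,F])$. Setting $P'=\{A\setminus S:A\in P,\,A\not\subseteq S\}$ as a partition of $V(G)\setminus S$, and writing $e^*_F(P)$ for the number of $F$-edges incident to $S$ that cross $P$, the identity $e_{G\setminus[S,F]}(P)=e_{G\setminus S}(P')+e^*_F(P)$ together with the nonincreasing inequality $l(A)\le l(A\setminus S)$ (valid when $A\not\subseteq S$) reduces both claims to bounding the residual quantity $\sum_{A\in P,\,A\subseteq S}l(A)-e^*_F(P)$.

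The linchpin will be the structural observation that each $A\in P$ with $A\subseteq S$ is actually an $l$-partition-connected component of $F$. Indeed $(G\setminus[S,F])[A]=F[A]$, and for every $u\in V(G)\setminus A$ we have $(G\setminus[S,F])[A\cup\{u\}]=F[A\cup\{u\}]$, since any non-$F$ edge incident to $S\supseteq A$ has been deleted; hence maximality of $A$ in $G\setminus[S,F]$ transfers to maximality in $F$. For the first inequality I would then argue: when $|A|\ge 2$, applying $l$-partition-connectedness of $F[A]$ to the bipartition $\{\{v\},A\setminus\{v\}\}$ together with $l(A\setminus\{v\})\ge l(A)$ (nonincreasing) yields $d_{F[A]}(v)\ge l(v)$ for every $v\in A$, so summing and using $l(A)\le\min_v l(v)$ gives $l(A)\le e_F(A)$. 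Singletons $\{v\}\in P$ are handled via the trivial $l(v)\le d_F(v)+\max\{0,l(v)-d_F(v)\}$, noting that every $F$-edge at such $v$ crosses $P$; this makes $\sum_{v\in U}d_F(v)\le e_F(U)+e^*_F(P)$, and the disjointness bound $e_F(T_+)+e_F(U)\le e_F(T)\le e_F(S)$ (with $T_+,U$ the unions of the nontrivial and singleton parts of $P$ inside $S$) then closes out the first inequality.

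For the furthermore, since the relevant parts $A$ are $l$-partition-connected components of $F$, I would invoke the assumed inequality $\sum_{v\in A}l(v)\ge c\,l(A)-\tfrac{c-1}{2}d_F(A)$ and combine it with $\sum_{v\in A}l(v)\le l(A)+e_F(A)$ (obtained by applying $l$-partition-connectedness of $F[A]$ to its singleton partition) to derive $l(A)\le\tfrac{1}{c-1}e_F(A)+\tfrac{1}{2}d_F(A)$. Summing, the disjointness of the $A$'s gives $\sum_A e_F(A)\le e_F(S)$, while a careful accounting of $\sum_A d_F(A)$ (each intra-$T$ crossing edge counted twice, each edge from $T$ to $V(G)\setminus T$ counted once, both classes contributing to $e^*_F(P)$) yields $\tfrac{1}{2}\sum_A d_F(A)\le e^*_F(P)$, and the furthermore follows. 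The main obstacle I foresee is performing this edge bookkeeping cleanly and confirming the structural claim with sufficient rigour to legitimately apply the hypothesis.
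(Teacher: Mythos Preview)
Your proposal is correct and follows the same route as the paper: both start from the optimal partition $P$ for $G\setminus[S,F]$, compare it against a partition of $V(G)\setminus S$ (the paper uses the optimal one and refines by $P'_{A\setminus S}$, you project $P$; the two arrive at the same intermediate inequality), drop the straddling parts via the nonincreasing property, and then bound $\sum_{A\in P,\,A\subseteq S}l(A)$ against the crossing $F$-edges (the paper's $D_F(R)$ is your $e^*_F(P)$ restricted to $R$). For the structural claim you flag, the cleanest justification---which the paper also leaves implicit---is that $F\subseteq G\setminus[S,F]$, so the $F$-component containing $A$ lies inside the $(G\setminus[S,F])$-component $A$, while $F[A]=(G\setminus[S,F])[A]$ is already $l$-partition-connected; your $A\cup\{u\}$ argument alone does not quite give maximality, but this does.
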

\begin{proof}
{Define $P$ and $P'$ to be the partitions of $V(G)$ and $V(G)\setminus S$ obtained from the $l$-partition-connected components of $G\setminus [S,F]$ and $G\setminus S$.
Set $R=\{A\in P: A\subseteq S\}$, $R_1=\{A\in R:  |A|=1\}$, and $R_2=\{A\in R: |A|\ge 2\}$.
It is not difficult to check that
$$ e_{G\setminus [S,F] }(P)\ge
 e_{G\setminus S}(P') -\sum_{A\in P\setminus R}e_{G[A\setminus S]}(P'_{A\setminus S}) + D_F(R),$$
 where
 $P'_{A\setminus S}$ denotes the partition of $A\setminus S$ obtained from vertex sets of $P'$, and 
$D_F(R)$ denotes the number of edges of $F$ joining different parts of $P$ incident to vertex sets in $R$.
Thus
$$\OMEGA_l(G\setminus [S, F])-\sum_{A\in P}l(A)  \le \OMEGA_l(G\setminus S)
-\sum_{A\in P\setminus R} \OMEGA_l(G[A\setminus S])-D_F(R).$$
Since $\OMEGA_l(G[A\setminus S]) \ge l(A\setminus S)$, for any  $A\in P\setminus  R$, we have
$$\OMEGA_l(G\setminus [S, F]) \le 
\OMEGA_l(G\setminus S)
+\sum_{A\in P\setminus R} {(l(A)-l(A\setminus S))}
+\sum_{A\in R} {l(A)}
-D_F(R).$$
Since $l$ is nonincreasing,
$$\OMEGA_l(G\setminus [S, F]) \le 
\OMEGA_l(G\setminus S)
+\sum_{A\in R} {l(A)}-D_F(R).$$
In  the first statement,  $e_F(A)\ge \sum_{v\in A}l(v)-l(A)\ge l(A)$, for any $A\in R_2$, and so
$$\sum_{A\in R}l(A)-D_F(R)  \le  
\sum_{\{v\}\in R_1}l(v)+\sum_{A\in R_2}e_F(A)-\sum_{\{v\}\in R_1} d_F(v)+e_F(R_1)
\le  \sum_{\{v\}\in R_1} (l(v)-d_F(v))+e_F(S).$$
 Therefore,
$$\OMEGA_l(G\setminus [S, F]) 
\le    \OMEGA_l(G\setminus S) +
e_F(S)+\sum_{v\in S} \max\{0,l(v)- d_F(v)\}.$$
In  the second statement, $\sum_{v\in A}{l(v)}\ge cl(A)-\frac{c-1}{2}d_F(A)$ for any $A\in R$, and so
$$ \sum_{A\in R}\big(l(A)-\frac{1}{c-1}(\sum_{v\in A}l(v)-l(A))\big)\le \sum_{A\in R}\frac{1}{2} d_F(A) \le D_F(R).$$
Since  $e_F(A)\ge \sum_{v\in A}l(v)-l(A)$, for any $A\in R$, it is easy to check that
$$ \OMEGA_l(G\setminus [S,F])\le
 \OMEGA_l(G\setminus S)
+\frac{1}{(c-1)}e_{F}(S).$$
Hence the lemma holds.
}\end{proof}
\subsection{A strengthened version of a special case of  Theorem~\ref{thm:sufficient}}
A strengthened version   of Theorems~\ref{thm:sufficient}
is given in the following theorem, when $l$ is nonincreasing.
\begin{thm}\label{thm:Second-Generalization}
Let $G$ be a graph with $X\subseteq V(G)$  and with the spanning subgraph $F$, and
let $l$ be a nonincreasing intersecting supermodular  nonnegative integer-valued function on 
subsets of $V(G)$.
 Let  $\lambda \in [0,1]$ be a real number and  let  $\eta$ be a  real function on $X$.
If  for all $S\subseteq X$, 
$$\OMEGA_l(G\setminus S)< 1+\sum_{v\in S}\big(\eta(v)-2l(v)\big)+l(G)+l(S)-\lambda(e^*_G(S)+l(S)),$$
then  it  has an $l$-partition-connected  spanning subgraph  $H$ containing $F$ such that for each $v\in X$,
 $$d_H(v)\le \big\lceil \eta(v)-\lambda l(v)\big\rceil+\max\{0,d_F(v)-l(v)\}.$$
\end{thm}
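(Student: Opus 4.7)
The plan is to mirror the proof of Theorem~\ref{thm:sufficient}, replacing its structural input Theorem~\ref{thm:preliminary:structure} by the stronger Theorem~\ref{thm:preliminary:generalization} (which enforces $H\supseteq F$) and using Lemma~\ref{lem:gen} as a bridge between $\OMEGA_l(G\setminus[S,F])$ and $\OMEGA_l(G\setminus S)$. I would assume throughout (as is implicit in Theorem~\ref{thm:preliminary:generalization}) that $F$ is $l$-sparse. The $S=\emptyset$ instance of the hypothesis gives $\OMEGA_l(G)<1+l(G)$, so $\OMEGA_l(G)=l(G)$ and $G$ is $l$-partition-connected. Define
$$h(v)=\lceil \eta(v)-\lambda l(v)\rceil-\min\{d_F(v),l(v)\}\quad\text{for } v\in X,$$
and set $h(v)=d_G(v)$ for $v\notin X$. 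Since $a-\min\{a,b\}=\max\{0,a-b\}$, the bound $d_H(v)\le h(v)+d_F(v)$ on $X$ is exactly the conclusion, and is vacuous on $V(G)\setminus X$.

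Next, I would pick $H$ to be a minimally $l$-partition-connected spanning subgraph of $G$ containing $F$ with minimum total excess from $h+d_F$, and let $S\subseteq V(G)$ be the set produced by Theorem~\ref{thm:preliminary:generalization}. Because $h(v)+d_F(v)\ge d_G(v)$ off $X$, no such vertex can enter $V_1$ or any $V_n$, so $S\subseteq X$. Since $H$ is minimally $l$-partition-connected it is $l$-sparse (hence $\OMEGA_l(H)=l(G)$), so Lemma~\ref{lem:spanningforest:gen} applied to $H$ with spanning subgraph $F$ yields
$$\sum_{v\in S}\bigl(d_H(v)-d_F(v)\bigr)=\OMEGA_l(H\setminus[S,F])-l(G)+e_{H\setminus F}(S).$$
Using Condition~(1) of Theorem~\ref{thm:preliminary:generalization} to rewrite $\OMEGA_l(H\setminus[S,F])$ as $\OMEGA_l(G\setminus[S,F])$, then Lemma~\ref{lem:gen} to bound this above by $\OMEGA_l(G\setminus S)+\sum_{v\in S}\max\{0,l(v)-d_F(v)\}+e_F(S)$, and finally the hypothesis on $\OMEGA_l(G\setminus S)$, yields (exactly as in the proof of Theorem~\ref{thm:sufficient}) the chain ending in
$$\sum_{v\in S}h(v)+te(H,h+d_F)<1+\sum_{v\in S}\bigl(\eta(v)-2l(v)\bigr)+l(S)-\lambda(e^*_G(S)+l(S))+e_H(S)+\sum_{v\in S}\max\{0,l(v)-d_F(v)\}.$$

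To close, the expression $-\lambda(e^*_G(S)+l(S))+e_H(S)+l(S)$ is bounded above by $(1-\lambda)\sum_{v\in S}l(v)$ using $e_H(S)\le e^*_G(S)$ together with the $l$-sparseness estimate $e_H(S)\le\sum_{v\in S}l(v)-l(S)$, verbatim from the proof of Theorem~\ref{thm:sufficient}. After this substitution the key identity $-l(v)+\max\{0,l(v)-d_F(v)\}=-\min\{l(v),d_F(v)\}$ collapses the remaining terms to
$$\sum_{v\in S}h(v)+te(H,h+d_F)<1+\sum_{v\in S}\bigl(\eta(v)-\lambda l(v)-\min\{l(v),d_F(v)\}\bigr)\le 1+\sum_{v\in S}h(v),$$
forcing the nonnegative integer $te(H,h+d_F)$ to vanish and giving $d_H(v)\le h(v)+d_F(v)$ for every $v$, which is the advertised bound. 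The main obstacle is purely bookkeeping: aligning the $\max\{0,d_F(v)-l(v)\}$ that appears in the target degree bound with the $\max\{0,l(v)-d_F(v)\}$ extracted from Lemma~\ref{lem:gen} and the $-l(v)$ produced by the $(1-\lambda)l(v)$ slack, which is precisely the role of the identity above.
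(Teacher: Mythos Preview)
Your approach is exactly the paper's: define $h$ so that $d_H\le h+d_F$ encodes the target bound, apply Theorem~\ref{thm:preliminary:generalization} to get $S$, translate $\OMEGA_l(H\setminus[S,F])$ into $\OMEGA_l(G\setminus S)$ via Lemma~\ref{lem:spanningforest:gen} and Lemma~\ref{lem:gen}, and close with the same convexity estimate on $-\lambda(e^*_G(S)+l(S))+e_H(S)+l(S)$. Two gaps remain, however.

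First, your choice $h(v)=d_G(v)$ for $v\notin X$ does not guarantee $S\subseteq X$. The recursion in Theorem~\ref{thm:preliminary:generalization} places $v$ in $V_n$ (for $n\ge 2$) whenever $d_{H'}(v)=h(v)+d_F(v)$ for every $H'\in\mathcal{A}(V_{n-1},v)$; if $d_F(v)=0$ this reads $d_{H'}(v)=d_G(v)$, which can certainly happen. The paper sets $h(v)=d_G(v)+1$ precisely to make this equality impossible, forcing $S\subseteq X$ so that the hypothesis (stated only for $S\subseteq X$) applies.

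Second, and more substantively, the theorem does \emph{not} assume $F$ is $l$-sparse, so your proof covers only a special case. The paper dispatches the general case by a reduction: delete edges inside the $l$-partition-connected components of $F$ until the resulting $F'$ is $l$-sparse with the same components. The key observation is that any vertex $v$ with $d_{F'}(v)<d_F(v)$ lies in a nontrivial $l$-partition-connected component of $F'$, whence $d_{F'}(v)\ge l(v)$ (using that $l$ is nonincreasing), so $\max\{0,d_{F'}(v)-l(v)\}$ and $\max\{0,d_F(v)-l(v)\}$ differ by exactly $d_F(v)-d_{F'}(v)$; applying the sparse case to $F'$ and then reinserting $E(F)\setminus E(F')$ gives the bound for $F$.
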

\begin{proof}
{For each vertex $v$, define 
$$h(v)= 
 \begin{cases}
d_G(v)+1,	&\text{if   $v\not\in X$};\\
\lceil \eta(v)-\lambda l(v)\rceil -\min \{l(v), d_F(v)\},	&\text{if $v\in X$}.
\end {cases}$$
First, suppose that $F$ is $l$-sparse.
Note that $G$ is automatically $l$-partition-connected, because  of $\OMEGA_l(G\setminus \emptyset) \le l(G)$.
Let $H$ be a minimally $l$-partition-connected   spanning subgraph  of $G$ containing $F$ with the minimum total excess from 
$h+d_F$.
Define $S$ to be a subset of $V (G)$  with the properties described in 
Theorem~\ref{thm:preliminary:generalization}.
Obviously, $S\subseteq X$. Put $\mathcal{F}=H\setminus E(F)$.
By Lemma~\ref{lem:spanningforest:gen},
$$\sum_{v\in S} h(v)  +te(H, h+d_F)= \sum_{v\in S} d_\mathcal{F}(v) = 
\OMEGA_l(H\setminus [S, F])  -l(G)+ e_{\mathcal{F}}(S),$$
and so
$$\sum_{v\in S} h(v)  +te(H, h+d_F) =  \OMEGA_l(G\setminus [S, F])-l(G)  + e_{\mathcal{F}}(S).$$
Since $e_F(S)+e_\mathcal{F}(S)=e_H(S)$,  Lemma~\ref{lem:gen} implies that
\begin{equation}\label{thm:Gen:eq:1}
\sum_{v\in S} (h(v)- \max\{0, l(v)-d_F(v)\})  +te(H, h+d_F)
\le  \OMEGA_l(G\setminus S)  -l(G) +e_H(S).
\end{equation}
Also, by the assumption,
\begin{equation}\label{thm:Gen:eq:2}
\OMEGA_l(G\setminus S) -l(G)+ e_{H}(S) < 1+\sum_{v\in S}\big(\eta(v)-2l(v)\big)-\lambda(e_G(S)+l(S))+e_H(S)+l(S).
\end{equation}
Since $e_H(S)\le e^*_G(S)$ and $e_H(S)\le \sum_{v\in S} l(v)-l(S)$,
\begin{equation}\label{thm:Gen:eq:3}
-\lambda(e^*_G(S)+l(S))+e_H(S)+l(S) \le -\lambda(e_H(S)+l(S))+e_H(S)+l(S)\le (1-\lambda)\sum_{v\in S}{l(v)}.
\end{equation}
Therefore, Relations (\ref{thm:Gen:eq:1}),  (\ref{thm:Gen:eq:2}), and (\ref{thm:Gen:eq:3})  can conclude that
$$ \sum_{v\in S} (h(v)-\max\{0,l(v)-d_F(v)\})\,+te(H, h+d_F) 
< 1 +\sum_{v\in S} (\eta (v)-\lambda l(v)-l(v)).$$
On the other hand, by the definition of $h(v)$,
$$\sum_{v\in S} \big(\eta (v)-\lambda l(v)-l(v)-h(v)+\max\{0,l(v)-d_F(v)\}\big)\le 0.$$
Hence $te(H,h + d_F) = 0$ and the theorem holds.
Now, suppose that $F$ is not $l$-sparse.
Remove some of the edges of the $l$-partition-connected components  of $F$ until the resulting $l$-sparse  graph $F'$ have the same $l$-partition-connected components. 
For each vertex $v$ with $d_{F'}(v) < d_F(v)$,  
we have $d_F(v) \ge d_{F'}(v) \ge l(v)$, since $v$  must lie in a non-trivial 
$l$-partition-connected component of $F'$ and $l$ is nonincreasing.
It is enough, now,  to apply the theorem on $F'$ and finally add  the edges of $E(F)\setminus E(F')$ 
to that explored $l$-partition-connected spanning subgraph.
}\end{proof}
\subsection{Tough enough graphs}
In this subsection, we improve below  Theorems~\ref{thm:sufficient} for graphs that 
the values of $\OMEGA_l(G\setminus S) $ are small enough compared to $|S|$,  which enables us to choose $\eta(v)$ small enough, in compensation we require that the
given spanning subgraph F approximately have large $l$-partition-connected components.
\begin{thm}\label{thm:gen:tough-enough}
{Let $G$ be a  graph and  $l$ be  a nonincreasing intersecting supermodular 
nonnegative integer-valued function on 
subsets of $V(G)$.
Let $h$ be an  integer-valued function on $V(G)$.
 Let $F$ be a spanning subgraph of $G$ in which  for every $l$-partition-connected component  $C$ of $F$, we have
$\sum_{v\in V(C)}{l(v)}\ge cl(C)-\frac{c-1}{2}d_F(C)$ and  $c\ge 2$.
If for all $S\subseteq V(G)$,
$$ \OMEGA_l(G\setminus S) < 1 +\sum_{v\in S}\big(\frac{c}{2c-2}h(v)-\frac{1}{c-1}l(v)\big)+l(G)+\frac{1}{c-1}l(S),$$
then $G$ has an $l$-partition-connected  spanning subgraph  $H$ containing $F$ 
such that for each vertex $v$, 
$d_H(v)\le h(v)+d_F(v).$
}\end{thm}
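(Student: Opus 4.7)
The argument will follow the template of the proof of Theorem~\ref{thm:Second-Generalization}, with Lemma~\ref{lem:gen} (second statement) replacing the first so that the toughness hypothesis on $F$ can be exploited.

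First I would confirm that $G$ itself is $l$-partition-connected: substituting $S=\emptyset$ in the hypothesis gives $\OMEGA_l(G)<1+l(G)$, which together with $\OMEGA_l(G)\ge l(G)$ forces $\OMEGA_l(G)=l(G)$. Next, mimicking the final paragraph of the proof of Theorem~\ref{thm:Second-Generalization}, I would reduce to the case where $F$ is $l$-sparse by replacing $F$ with an $l$-sparse spanning subgraph $F'$ having the same $l$-partition-connected components; the toughness inequality and the values $d_F(C)$ are thereby preserved, and at any vertex $v$ with $d_{F'}(v)<d_F(v)$ one has $d_{F'}(v)\ge l(v)$, so the edges removed may be reinstated at the end without spoiling the degree bound.

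Then I would let $H$ be a minimally $l$-partition-connected spanning subgraph of $G$ containing $F$ with minimum total excess from $h+d_F$, and invoke Theorem~\ref{thm:preliminary:generalization} to produce a set $S\subseteq V(G)$ satisfying properties~(1)--(3). Writing $\mathcal{F}=H\setminus E(F)$, properties~(2) and~(3) give $\sum_{v\in S}d_{\mathcal{F}}(v)=\sum_{v\in S}h(v)+te(H,h+d_F)$, and combining this with Lemma~\ref{lem:spanningforest:gen} together with property~(1) yields
$$\sum_{v\in S}h(v)+te(H,h+d_F)=\OMEGA_l(G\setminus[S,F])-l(G)+e_\mathcal{F}(S).$$
Applying Lemma~\ref{lem:gen} (second statement)---the crucial step that consumes the toughness hypothesis---gives $\OMEGA_l(G\setminus[S,F])\le\OMEGA_l(G\setminus S)+\tfrac{1}{c-1}e_F(S)$. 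Plugging this in and invoking the main hypothesis converts the equality into the strict estimate
$$\tfrac{c-2}{2(c-1)}\sum_{v\in S}h(v)+te(H,h+d_F)<1+\tfrac{1}{c-1}\bigl(l(S)+e_F(S)-\sum_{v\in S}l(v)\bigr)+e_\mathcal{F}(S).$$
Using the $l$-sparsity of $H$ (automatic since $H$ is minimally $l$-partition-connected) in the form $e_F(S)+e_{\mathcal{F}}(S)=e_H(S)\le\sum_{v\in S}l(v)-l(S)$, I would simplify the right-hand side and conclude $te(H,h+d_F)<1$, whence $te(H,h+d_F)=0$ by integrality.

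The main obstacle is the final algebraic step. When $c=2$, the two coefficients $\tfrac{c-2}{2(c-1)}$ and $\tfrac{c-2}{c-1}$ both vanish and the estimate collapses immediately to the clean $c=2$ bound familiar from the proof of Corollary~\ref{cor:case1}. For $c>2$, however, a residual term proportional to $\sum_{v\in S}l(v)-l(S)-e_F(S)$ survives, and handling it requires combining the $l$-sparsity bound on $H$ with the component-wise lower bound on $e_F(V(C))$ that follows from $\sum_{v\in V(C)}l(v)\ge cl(C)-\tfrac{c-1}{2}d_F(C)$; reconciling these two inputs cleanly is the delicate part of the argument, and is the point at which the toughness condition on $F$ is genuinely used beyond its role inside Lemma~\ref{lem:gen}.
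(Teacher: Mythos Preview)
Your outline has a genuine gap at exactly the point you flag as ``the main obstacle,'' and the fix you sketch is a misdiagnosis. If you trace your inequality carefully, after using $e_F(S)+e_{\mathcal F}(S)\le\sum_{v\in S}l(v)-l(S)$ you are left with
\[
\frac{c-2}{2(c-1)}\sum_{v\in S}h(v)+te(H,h+d_F)<1+\frac{c-2}{c-1}\,e_{\mathcal F}(S),
\]
and the only further bound available on $e_{\mathcal F}(S)$ is $e_{\mathcal F}(S)\le\tfrac12\sum_{v\in S}d_{\mathcal F}(v)=\tfrac12\bigl(\sum_{v\in S}h(v)+te(H,h+d_F)\bigr)$. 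Substituting this cancels the $h$-sums and yields only $\tfrac{c}{2(c-1)}\,te(H,h+d_F)<1$, i.e.\ $te(H,h+d_F)<2-\tfrac{2}{c}$. For $c=2$ this gives $te<1$ and you are done, but for $c>2$ it only gives $te\le 1$. The toughness hypothesis on $F$ cannot rescue this: it concerns $e_F$ on the component level and has already been fully spent inside Lemma~\ref{lem:gen}; the obstruction here is the extra $te$ hiding inside $e_{\mathcal F}(S)$, which is information about $H\setminus E(F)$, not about $F$.

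The paper avoids this issue by switching the optimisation: instead of Theorem~\ref{thm:preliminary:generalization} it invokes Theorem~\ref{thm:final:minor-improvement}, choosing $H$ to be an ($l$-sparse) spanning subgraph containing $F$ with $te(H,h+d_F)=0$ from the start and with $\OMEGA_l(H)$ minimal. The set $S$ produced by that theorem satisfies $d_{\mathcal F}(v)=h(v)$ \emph{exactly} for $v\in S$, so $e_{\mathcal F}(S)\le\tfrac12\sum_{v\in S}h(v)$ with no excess term. The same chain of estimates (Lemma~\ref{lem:spanningforest:gen}, Lemma~\ref{lem:gen} second statement, and the hypothesis) then gives $\OMEGA_l(H)<l(G)+1$, hence $\OMEGA_l(H)=l(G)$ and $H$ is $l$-partition-connected. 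The toughness condition on $F$ is used \emph{only} inside Lemma~\ref{lem:gen}; no second application is needed.
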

\begin{proof}
{First, suppose that $F$ is $l$-sparse.
Note that $G$ is automatically $l$-partition-connected, because  of $\OMEGA_l(G\setminus \emptyset) \le l(G)$.
Let $H$ be an $l$-sparse spanning subgraph  of $G$ containing $F$ with $te(H,h+d_F)=0$ and with the minimum $\OMEGA_l(H)$.
Define $S$ to be a subset of $V (G)$  with the properties described in Theorem~\ref{thm:final:minor-improvement}.
Put $\mathcal{F}=H\setminus E(F)$.
By Lemma~\ref{lem:spanningforest:gen},
$$\sum_{v\in S} h(v) = \sum_{v\in S} d_\mathcal{F}(v) = 
 \OMEGA_l(H\setminus [S, F])  -\OMEGA_l(H)+ e_{\mathcal{F}}(S),$$
and so
\begin{equation}\label{eq:thm:2-connected:0}
 \OMEGA_l(H) =  \OMEGA_l(G\setminus [S, F]) + e_{\mathcal{F}}(S)- \sum_{v\in S} h(v) .
\end{equation}
Since 
$e_\mathcal{F}(S)+e_F(S)=e_{H}(S) \le \sum_{v\in S}l(v)-l(S)$
 and
 $e_\mathcal{F}(S)
\;\le\; \frac{1}{2}\sum_{v\in S}d_{\mathcal{F}}(v) =\frac{1}{2}\sum_{v\in S}h(v)$, we have
\begin{equation}\label{eq:thm:2-connected:1}
e_\mathcal{F}(S)+\frac{1}{c-1}e_F(S)
\le 
\frac{1}{2}\sum_{v\in S}h(v)+\frac{1}{c-1}\big( \sum_{v\in S}l(v)-l(S)-\frac{1}{2}\sum_{v\in S}h(v)\big).
\end{equation}
Also,  by Lemma~\ref{lem:gen},
\begin{equation}\label{eq:thm:2-connected:2}
\OMEGA_l(G\setminus [S,F]) + e_{\mathcal{F}}(S)\le 
\OMEGA_l(G\setminus S)+e_{\mathcal{F}}(S)+\frac{1}{c-1}e_F(S).
\end{equation}
Therefore, Relations~(\ref{eq:thm:2-connected:0}),~(\ref{eq:thm:2-connected:1}), and~(\ref{eq:thm:2-connected:2}) can conclude that
\begin{equation*}
\OMEGA_l(H) \le
\OMEGA_l(G\setminus S)-
 \frac{c}{2c-2} 
\sum_{v\in S}h(v)+\frac{ \sum_{v\in S}l(v)-l(S)}{c-1} 
<  l(G)+1.
\end{equation*}
Hence  $\OMEGA_l(H) =l(H)$ and the theorem holds.
Now, suppose that $F$ is not $l$-sparse.
Remove some of the edges of the $l$-partition-connected components  of $F$ until the resulting $l$-sparse  graph $F'$ have the same $l$-partition-connected components. 
For every $l$-partition-connected component $C$ of $F'$, we  still have $d_{F'}(C)=d_F(C)$.
It is enough, now,  to apply the theorem on $F'$ and finally add  the edges of $E(F)\setminus E(F')$ 
to that explored $l$-partition-connected spanning subgraph. 
}\end{proof}
When we consider the special cases $h(v)= 1$, the theorem becomes  simpler  as the following result.
\begin{cor}\label{thm:tough:1-Extend}
{Let $G$ be a  graph and   $l$ be   a nonincreasing intersecting supermodular nonnegative  integer-valued function on
subsets of $V(G)$.
Let $F$ be a spanning subgraph of $G$ in which for  every $l$-partition-connected component  $C$ of $F$, we have
$\sum_{v\in C}{l(v)}\ge cl(C)-\frac{c-1}{2}d_F(C)$ and  $c\ge 2$.
If for all $S\subseteq V(G)$,
$$\OMEGA_l(G\setminus S) \le  
\sum_{v\in S}\frac{c-2ml(v)}{2m(c-1)}+l(G)+\frac{1}{c-1}l(S)$$
then $G$ has an $ml$-partition-connected  spanning subgraph  $H$ containing $F$ 
such that for each vertex $v$, 
 $d_H(v) \le d_F(v)+1$.
}\end{cor}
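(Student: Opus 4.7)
The plan is to derive Corollary~\ref{thm:tough:1-Extend} from Theorem~\ref{thm:gen:tough-enough} by replacing $l$ with $l^\ast := ml$, taking $h \equiv 1$, and using a suitably chosen constant $c^\ast$ in place of $c$. Since $m$ is a positive integer, $l^\ast$ inherits all the required structural properties (nonincreasing, intersecting supermodular, nonnegative, integer-valued) from $l$, so the substitution is legal. With $h \equiv 1$, the conclusion of Theorem~\ref{thm:gen:tough-enough} produces an $l^\ast$-partition-connected (equivalently, $ml$-partition-connected) spanning subgraph $H \supseteq F$ with $d_H(v) \le 1 + d_F(v) = d_F(v)+1$, which is exactly what Corollary~\ref{thm:tough:1-Extend} asserts.

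The natural choice of constant is $c^\ast := m(c-1)+1 \ge 2$, forced by the desire to have $\tfrac{c^\ast-1}{2} = \tfrac{m(c-1)}{2}$. Scaling the given inequality through by $m$ then yields, on every $l$-partition-connected component $C$ of $F$,
$$\sum_{v\in C} ml(v) \;\ge\; c^\ast \cdot ml(C) - \tfrac{c^\ast - 1}{2}\, d_F(C).$$
To apply Theorem~\ref{thm:gen:tough-enough} one needs the same inequality on every $ml$-partition-connected component of $F$; since $ml$-partition-connectedness implies $l$-partition-connectedness, each such component $C^\ast$ sits inside a unique $l$-partition-connected component $C$, and I would propagate the bound down using the nonincreasing property of $l$ (which gives $l(C^\ast) \ge l(C)$) and the fact that refining $C$ into its $ml$-partition-connected pieces only adds internal edges to the quantity $\sum d_F$ of the refined pieces.

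Simultaneously, the required $\OMEGA$-inequality of Theorem~\ref{thm:gen:tough-enough} under the substitution $(l,c,h)\to(ml,c^\ast,\mathbf{1})$, after simplification using $c^\ast-1 = m(c-1)$, reads
$$\OMEGA_{ml}(G\setminus S) \;<\; 1 + \sum_{v\in S}\frac{c^\ast - 2ml(v)}{2m(c-1)} + ml(G) + \frac{l(S)}{c-1}.$$
The right-hand side is exactly $m$ times the right-hand side of the corollary's hypothesis (apart from the additive $1$ and the $ml(G)$ versus $l(G)$ scaling), so it remains to bridge $\OMEGA_{ml}(G\setminus S)$ and $m\,\OMEGA_l(G\setminus S)$.

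The main obstacle I expect is precisely this last step. By the comparison theorem proved in Section~2 one always has $\OMEGA_{ml}(G\setminus S) \ge m\,\OMEGA_l(G\setminus S)$, but no reverse inequality is available in general. To close the gap, I would fix the partition $P^\ast$ of $V(G)\setminus S$ realizing $\OMEGA_{ml}(G\setminus S)$, write $\OMEGA_{ml}(G\setminus S) = m\sum_{A\in P^\ast} l(A) - e_{G\setminus S}(P^\ast)$ together with $\OMEGA_l(G\setminus S) \ge \sum_{A\in P^\ast} l(A) - e_{G\setminus S}(P^\ast)$, and exploit the strict slack of size $1$ on the right-hand side of the displayed inequality above, together with the integrality of $\OMEGA_{ml}$ and of the relevant $l$-values, to convert the corollary's $\le$-hypothesis on $\OMEGA_l$ into the required $<$-hypothesis on $\OMEGA_{ml}$.
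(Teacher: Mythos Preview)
Your proposal has a genuine gap at precisely the point you flag as ``the main obstacle.'' The inequality $\OMEGA_{ml}(G\setminus S)\ge m\,\OMEGA_l(G\setminus S)$ goes the wrong way, and your suggested fix does not close it. From the two relations you write down one only gets
\[
\OMEGA_{ml}(G\setminus S)-m\,\OMEGA_l(G\setminus S)\;\le\;(m-1)\,e_{G\setminus S}(P^\ast),
\]
and $e_{G\setminus S}(P^\ast)$ can be arbitrarily large; the slack of size~$1$ and integrality are powerless against a term of this order. Moreover, your choice $c^\ast=m(c-1)+1$ does not make the right-hand sides line up: after substituting $(l,c)\to(ml,c^\ast)$ into Theorem~\ref{thm:gen:tough-enough} with $h\equiv 1$, the term $\tfrac{1}{c^\ast-1}l^\ast(S)=\tfrac{l(S)}{c-1}$ no longer matches $m\cdot\tfrac{l(S)}{c-1}$ on the hypothesis side, and the constant coefficient in the sum becomes $\tfrac{c^\ast}{2m(c-1)}$ rather than $\tfrac{c}{2(c-1)}$.

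The paper avoids the whole issue by a different device: it does not apply Theorem~\ref{thm:gen:tough-enough} to $G$ at all, but to the multigraph $G'$ consisting of $m$ parallel copies of $G$ on the same vertex set, with $l'=ml$ and the \emph{same} constant $c$. Because every crossing edge is now counted $m$ times, one has the exact equality $\OMEGA_{ml}(G'\setminus S)=m\,\OMEGA_l(G\setminus S)$, so multiplying the corollary's hypothesis by $m$ gives exactly the inequality Theorem~\ref{thm:gen:tough-enough} requires (with the strict ``$<1+\cdots$'' coming for free from the non-strict ``$\le$''). The resulting $ml$-partition-connected $H\subseteq G'$ then satisfies $d_H(v)\le d_F(v)+1$, which forces that $H$ uses at most one new edge at each vertex and hence cannot contain two parallel copies of any edge outside $F$; thus $H$ is already a subgraph of $G$. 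This $m$-fold-copy trick is the missing idea in your approach.
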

\begin{proof}
{Let   $G'$  be the union of $m$ copies of $G$ with the same vertex set and define $l'=ml$.
It is easy to check that  $\OMEGA_{l'}(G'\setminus S) =m\OMEGA_l(G\setminus S)$, for every $S\subseteq V(G)$. 
Define  $h(v)=1$ for each vertex $v$.
By  Theorem~\ref{thm:gen:tough-enough}, the graph 
$G'$ has an  $l'$-partition-connected spanning subgraph $H$ containing $F$ such that for each vertex $v$, 
$d_{H}(v)\le  h(v)+d_{F}(v)\le 1+d_F(v)$.
According to the construction,   the graph $H$ must  have no multiple edges of $E(G')\setminus E(F)$.
Hence  $H$ itself is a spanning subgraph of $G$ and the proof is completed.
}\end{proof}
\section{Total excesses from comparable functions}
\label{sec:comparable-functions}
In this section, 
we  formulate  the following  strengthened versions of the main results of this paper which are motivated by  Ozeki-type condition~\cite{MR3386038}.
As their proofs require  only  minor modifications,
we shall only state  the strategy of the proof in the subsequent subsection.
\begin{thm}\label{thm:final:gen}
Let $G$ be an $l$-partition-connected graph, where $l$ is an  intersecting supermodular element-subadditive integer-valued function on
subsets of $V(G)$. 
Let $p$ be a positive integer. 
For each integer $i$ with $1\le i\le p$, 
let $t_i$ be  a nonnegative integer,
 let  $\lambda_i \in [0,1]$ be a real number, and  let 
 $\eta_i$ be a  real function on $V(G)$ with 
$  \eta_1-\lambda_1 l
\ge \cdots
\ge
 \eta_p-\lambda_p l$.
If  for all $S\subseteq V(G)$  and  $i\in \{1,\ldots,p\}$,
$$\OMEGA_l(G\setminus S)<1+ \sum_{v\in S}\big(\eta_i(v)-2l(v)\big)+l(G)+l(S)-\lambda_i(e^*_G(S)+l(S))+t_i,$$
then  $G$ has  an $l$-partition-connected  spanning subgraph  $H$ satisfying
 $te(H,h_i)\le t_i$ for all $i$ with $1\le i\le p$, 
where $h_i(v)=\lceil \eta_i(v)-\lambda_i l(v)\rceil$ for all vertices  $v$.
\end{thm}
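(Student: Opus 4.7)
The plan is to follow the proof of Theorem~\ref{thm:sufficient}, replacing the single-function minimization of total excess by a lexicographic one across the $p$ functions. I would take $H$ to be a minimally $l$-partition-connected spanning subgraph of $G$ (which exists since $G$ itself is $l$-partition-connected) minimizing the vector $\bigl(te(H,h_1),\ldots,te(H,h_p)\bigr)$ lexicographically. If $te(H,h_i)\le t_i$ holds for every $i$ the conclusion follows; otherwise let $i_0$ be the smallest index with $te(H,h_{i_0})>t_{i_0}$, so that $te(H,h_j)\le t_j$ for every $j<i_0$.

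Next I would apply an adaptation of Theorem~\ref{thm:preliminary:structure} with $h=h_{i_0}$, modifying the family $\mathcal{A}(S,u)$ so as to require additionally that $te(H',h_j)\le t_j$ for all $j<i_0$. The comparability hypothesis $\eta_1-\lambda_1 l\ge\cdots\ge\eta_p-\lambda_p l$ yields $h_1\ge h_2\ge\cdots\ge h_p$ pointwise, so any degree bound enforced against $h_{i_0}$ at a vertex outside $V_1=\{v:d_H(v)>h_{i_0}(v)\}$ is automatically a bound against every earlier $h_j$. With the lexicographic choice of $H$, each exchange in the proof either produces an $H'\in\mathcal{A}(V_{n-2},z)$ with $d_{H'}(z)<d_H(z)\le h_{i_0}(z)$ (for $n\ge 3$), or, in the $n=2$ case, gives an $H'$ that strictly decreases $te(\cdot,h_{i_0})$ while preserving the lex prefix for $j<i_0$, contradicting the lex-minimality of $H$. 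The output is a subset $S\subseteq V(G)$ satisfying the three conclusions of Theorem~\ref{thm:preliminary:structure} with $h=h_{i_0}$.

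From there the calculation is exactly the one from the proof of Theorem~\ref{thm:sufficient}. By Lemma~\ref{lem:minimally-omega} and the three properties of $S$,
\begin{equation*}
\sum_{v\in S}h_{i_0}(v)+te(H,h_{i_0})=\OMEGA_l(G\setminus S)+\sum_{v\in S}l(v)-l(G)+e_H(S).
\end{equation*}
Substituting the hypothesis with $i=i_0$, then using $e_H(S)\le e^*_G(S)$ and the sparsity bound $e_H(S)\le\sum_{v\in S}l(v)-l(S)$ (valid because a minimally $l$-partition-connected graph is $l$-sparse, as follows from Propositions~\ref{prop:minimally-edges} and the partition $\{A\}\cup\{\{v\}:v\notin A\}$), the right-hand side collapses, as in Theorem~\ref{thm:sufficient}, to
\begin{equation*}
\sum_{v\in S}h_{i_0}(v)+te(H,h_{i_0})<1+\sum_{v\in S}\bigl(\eta_{i_0}(v)-\lambda_{i_0}l(v)\bigr)+t_{i_0}.
\end{equation*}
Since $h_{i_0}(v)\ge\eta_{i_0}(v)-\lambda_{i_0}l(v)$ by definition and every quantity is an integer, this forces $te(H,h_{i_0})\le t_{i_0}$, contradicting the choice of $i_0$.

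The main obstacle is verifying that each exchange in the adapted Theorem~\ref{thm:preliminary:structure} preserves the lex prefix: specifically, that no degree change at a vertex of $V_1$ inflates $te(H',h_j)$ past $t_j$ for some $j<i_0$. This is precisely where the comparability hypothesis is decisive, since the degree bounds produced by the exchange are all framed against the smallest relevant threshold $h_{i_0}$, which consequently controls every earlier $h_j$; the additional constraint placed on $\mathcal{A}(S,u)$ then propagates automatically through the inductive construction of the ascending chain $V_0\subseteq V_1\subseteq\cdots$.
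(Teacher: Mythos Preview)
Your proposal is correct and follows essentially the same approach as the paper. The paper formalizes the lexicographic minimization via the chain $\Gamma_0\supseteq\Gamma_1\supseteq\cdots\supseteq\Gamma_p$ (Theorem~\ref{thm:preliminary:generalization:excesses}) and then, for each index $q$, runs the computation of Theorem~\ref{thm:sufficient} to obtain $te(H,h_q)\le t_q$ directly, whereas you phrase it as a contradiction at the first violating index $i_0$; the content is the same. Your extra constraint on $\mathcal{A}(S,u)$ is in fact redundant: since $V_1\subseteq V_{n-1}$, the components $X,Y$ of $H\setminus V_{n-1}$ are disjoint from $V_1$, so every exchange already yields $d_{H'}(v)\le d_H(v)$ on $V_1$ and $d_{H'}(v)\le h_{i_0}(v)\le h_j(v)$ off $V_1$, giving $te(H',h_j)\le te(H,h_j)$ for all $j$ automatically.
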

\begin{thm}\label{thm:final:gen}
Let $G$ be an $l$-partition-connected graph  
with the spanning subgraph $F$, where $l$ is a nonincreasing  intersecting supermodular  nonnegative integer-valued  function 
on 
 subsets of $V(G)$. 
Let $p$ be a positive integer. 
For each integer $i$ with $1\le i\le p$, 
let $t_i$ be  a nonnegative integer,
 let  $\lambda_i \in [0,1]$ be a real number, and  let 
 $\eta_i$ be a  real function on $V(G)$ with 
$  \eta_1-\lambda_1 l
\ge \cdots
\ge
 \eta_p-\lambda_p l$.
If  for all $S\subseteq V(G)$  and  $i\in \{1,\ldots,p\}$,
$$\OMEGA_l(G\setminus S)<1+ \sum_{v\in S}\big(\eta_i(v)-2l(v)\big)+l(G)+l(S)-\lambda_i(e^*_G(S)+l(S))+t_i,$$
then  $F$ can be extended to  an $l$-partition-connected  spanning subgraph  $H$ satisfying
 $te(H,h_i)\le t_i$ for all $i$ with $1\le i\le p$, 
where $h_i(v)=\lceil \eta_i(v)-\lambda_i l(v)\rceil+\max\{0,d_F(v)-l(v)\}$ for all vertices  $v$.
\end{thm}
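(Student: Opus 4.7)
The plan is to follow the proof of Theorem~\ref{thm:Second-Generalization} with appropriate modifications to treat the $p$ degree targets simultaneously. As a preliminary reduction, I would assume $F$ is $l$-sparse: if not, remove excess edges from within the $l$-partition-connected components of $F$ to get an $l$-sparse $F'$ with the same components, apply the result to $F'$, and add the removed edges back; the $\max\{0,d_F(v)-l(v)\}$ term in $h_i$ absorbs the readdition since, by monotonicity of $l$, the removed edges are incident only to vertices with $d_F(v)\ge l(v)$. For each $i$, set $\tilde h_i(v):=\lceil\eta_i(v)-\lambda_i l(v)\rceil-\min\{l(v),d_F(v)\}$, so that $h_i=\tilde h_i+d_F$. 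The chain $\eta_1-\lambda_1 l\ge\cdots\ge\eta_p-\lambda_p l$ yields $h_1\ge\cdots\ge h_p$ vertexwise, hence $te(H,h_1)\le\cdots\le te(H,h_p)$ for every spanning subgraph $H$.

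Among all minimally $l$-partition-connected spanning subgraphs $H$ of $G$ containing $F$, select $H^*$ that lexicographically minimizes the tuple
\[
\bigl(te(H,h_p+d_F),\;te(H,h_{p-1}+d_F),\;\ldots,\;te(H,h_1+d_F)\bigr),
\]
giving highest priority to the tightest target $h_p$. Assume, for contradiction, that $te(H^*,h_j)>t_j$ for some $j$, and let $j^*$ be the largest such index; then $te(H^*,h_i)\le t_i$ for every $i>j^*$.

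The central step is to adapt Theorem~\ref{thm:preliminary:generalization} with target $h:=\tilde h_{j^*}$, but restricting the admissible perturbations $\mathcal{A}(V_{n-1},u)$ to those $H'$ that additionally satisfy $te(H',h_i)\le t_i$ for every $i>j^*$. The iterative construction $V_0\subseteq V_1\subseteq\cdots$ proceeds as in the proof of Theorem~\ref{thm:preliminary:generalization}; since $H^*$ itself lies in every such restricted $\mathcal{A}(V_{n-1},u)$, the families are nonempty. When proving the key claim, the swap $H\rightsquigarrow H'$ lex-improves $H^*$: the degree decreases at $z$ and $z'$ can only lower every $te(\cdot,h_i)$; the unit increases at $x,y$ are bounded by $h_{j^*}$, and hence by $h_i$ for $i\le j^*$ by the monotonicity $h_1\ge\cdots\ge h_p$; the higher-priority bounds $i>j^*$ are preserved by the admissibility constraint placed on the building blocks $H_x,H_y$. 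This produces a set $S\subseteq V(G)$ satisfying the three conclusions of Theorem~\ref{thm:preliminary:generalization} relative to $\tilde h_{j^*}$.

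With $S$ in hand, the calculation proceeds exactly as in Theorem~\ref{thm:Second-Generalization} using $(\eta_{j^*},\lambda_{j^*})$ in place of $(\eta,\lambda)$ and carrying the slack $t_{j^*}$: Lemma~\ref{lem:spanningforest:gen} combined with conclusion (i) of the adapted structure theorem gives $\sum_{v\in S}\tilde h_{j^*}(v)+te(H^*,h_{j^*})=\OMEGA_l(G\setminus[S,F])-l(G)+e_{\mathcal{F}}(S)$ with $\mathcal{F}=H^*\setminus E(F)$, and Lemma~\ref{lem:gen} together with the bounds $e_{H^*}(S)\le e^*_G(S)$ and $e_{H^*}(S)\le\sum_{v\in S}l(v)-l(S)$ then convert the hypothesis (for $i=j^*$) into $te(H^*,h_{j^*})<1+t_{j^*}$, hence $te(H^*,h_{j^*})\le t_{j^*}$, contradicting the choice of $j^*$. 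The main obstacle is the verification in the previous paragraph that the admissibility restriction does not break the swap argument: careful tracking of degree changes at all vertices of $X\cup Y$, not only at $x,y,z,z'$, is required, but the monotonicity of the $h_i$ and the lex optimality of $H^*$ are precisely the tools needed, and no essentially new ideas beyond those in Theorems~\ref{thm:preliminary:generalization} and~\ref{thm:Second-Generalization} appear to be required.
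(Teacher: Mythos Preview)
Your lexicographic order is reversed relative to the paper's, and this is where the approach breaks. The paper (following Ozeki's method) takes $H\in\Gamma_p\subseteq\cdots\subseteq\Gamma_1$, where $\Gamma_n$ consists of those $H\in\Gamma_{n-1}$ minimizing $te(H,h_n+d_F)$; that is, one first minimizes with respect to the \emph{largest} target $h_1$, then $h_2$, and so on down to $h_p$. The point of this direction is exactly the observation stated before Theorem~\ref{thm:preliminary:generalization:excesses}: when the swap in the claim forces $d_{H'}(x)\le h_q(x)+d_F(x)$, the monotonicity $h_j\ge h_q$ for $j<q$ guarantees $d_{H'}(x)\le h_j(x)+d_F(x)$ as well, so the already-fixed excesses $te(\cdot,h_j+d_F)$ for $j<q$ are automatically preserved and $H'$ stays in $\Gamma_{q-1}$.

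In your order you first minimize against the smallest target $h_p$, and when working at level $j^*$ you need the swap to preserve the bounds for $i>j^*$, where $h_i\le h_{j^*}$. But then $d_{H'}(x)\le h_{j^*}(x)+d_F(x)$ says nothing about $h_i(x)+d_F(x)$; adding the edge $xy$ can strictly increase the excess at $x$ with respect to $h_i$, and the decrease at $z$ need not compensate (for instance if $h_{j^*}(z)=h_i(z)$ while $h_{j^*}(x)>h_i(x)$). Your proposed fix---restricting $\mathcal{A}$ by the global condition $te(H',h_i)\le t_i$ for $i>j^*$---does not yield the needed contradiction in the $n=2$ step: to contradict lex-minimality of $H^*$ you would need $te(H',h_i)\le\tau_i$ (the actual lex-minimal values), not merely $\le t_i$, and even with $\tau_i$ the swap does not obviously preserve the constraint for the reason above. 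Reversing your lex order to match the paper's (largest $h$ first) makes Ozeki's observation apply directly and the rest of your outline then goes through essentially as written.
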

\subsection{Strategy of the proof}
Let $G$ be a graph with the  spanning subgraph  $H$ and take $xy\in E(G)\setminus E(H)$. 
Let $h$ be  an integer-valued function  on $V(G)$.
It is easy to check that if   $d_H(x)< h(x)$ and $d_H(y)< h(y)$, then  $te(H+xy,h)=te(H,h)$,
 and also this equality  holds for any other  integer-valued function $h'$  on $V(G)$ with $h'\ge  h$.
This observation was used by Ozeki (2015)  
to prove Theorem~6 in~\cite{MR3386038} with a method that
  decreases total excesses from  comparable functions, step by step, by starting from the largest function to the smallest  function. 
Inspired by Ozeki's method,  we now formulate the following  strengthened  version of Theorem~\ref{thm:preliminary:generalization}.
\begin{thm}\label{thm:preliminary:generalization:excesses}
{Let $G$ be an $l$-partition-connected graph  with the $l$-sparse spanning subgraph $F$, where $l$ is an
intersecting supermodular weakly subadditive integer-valued  function on 
subsets of $V(G)$. 
Let $h_1,\ldots,h_q$ be $q$  integer-valued functions on $V(G)$ with $h_1\ge \cdots \ge h_q$.
Define $\Gamma_0$ to be the set of all  $l$-partition-connected spanning subgraphs  $H$ of $G$ containing $F$.
For each positive integer $n$ with $ n\le q$, recursively define  $\Gamma_n$  
to be the set of all  graphs   $H$  belonging to $\Gamma_{n-1}$   with the smallest $te(H, h_n+d_F)$.
If $H\in \Gamma_{q}$, then  there exists subset $S$ of $V(G)$ 
with the following properties:
\begin{enumerate}{
\item $\OMEGA_l(G\setminus [S,F])=\OMEGA_l(H\setminus [S,F])$.
\item $S\supseteq \{v\in V(G):d_H(v)> h_q(v)+d_F(v)\}$.
\item For each vertex $v$ of $S$, $d_H(v)\ge h_q(v)+d_F(v)$.
}\end{enumerate}
}\end{thm}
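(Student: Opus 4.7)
The plan is to mirror the inductive construction from the proof of Theorem~\ref{thm:preliminary:generalization} with $h$ replaced throughout by $h_q$, and to refine the auxiliary family $\mathcal{A}(S,u)$ so that every member lies in $\Gamma_q$ itself. The Ozeki-style observation highlighted just before the theorem statement makes the refinement consistent: because $h_1\ge\cdots\ge h_q$ pointwise, any edge swap that keeps a vertex $v$ at degree $\le h_q(v)+d_F(v)$ also keeps it at degree $\le h_i(v)+d_F(v)$ for every $i<q$, so no such swap can disturb the earlier total excesses.

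Set $V_0=\emptyset$ and $V_1=\{v\in V(G):d_H(v)>h_q(v)+d_F(v)\}$, and for $S\subseteq V(G)$ and $u\in V(G)\setminus S$ let $\mathcal{A}(S,u)$ be the set of all $H'\in\Gamma_q$ containing $F$ that agree with $H$ outside the $l$-partition-connected component $X$ of $H\setminus[S,F]$ containing $u$ and satisfy $d_{H'}(v)\le h_q(v)+d_F(v)$ for every $v\in V(G)\setminus V_1$. Since $H\in\Gamma_q$ meets every stipulated constraint, $H\in\mathcal{A}(S,u)$, so the family is nonempty. Define $V_n$ recursively by
\[V_n=V_{n-1}\cup\{v\notin V_{n-1}:d_{H'}(v)=h_q(v)+d_F(v)\text{ for every }H'\in\mathcal{A}(V_{n-1},v)\}.\]
The core of the argument is the same inductive claim as before: if $xy\in E(G)\setminus E(H)$ and $x,y$ lie in different $l$-partition-connected components of $H\setminus[V_{n-1},F]$, then $x\in V_n$ or $y\in V_n$. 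Assuming the claim fails, I would pick $H_x\in\mathcal{A}(V_{n-1},x)$ and $H_y\in\mathcal{A}(V_{n-1},y)$ with $d_{H_x}(x)<h_q(x)+d_F(x)$ and $d_{H_y}(y)<h_q(y)+d_F(y)$, locate $zz'$ exactly as in Theorem~\ref{thm:preliminary:generalization} via Proposition~\ref{prop:replacing}, and form
\[E(H')=E(H)-zz'+xy-E(H[X])+E(H_x[X])-E(H[Y])+E(H_y[Y]).\]
Proposition~\ref{prop:deducing} certifies $l$-partition-connectivity of $H'$, and the degree accounting at $x,y,z,z'$ and at vertices of $X\cup Y$ is word-for-word the one in Theorem~\ref{thm:preliminary:generalization}.

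The main obstacle is certifying that $H'$ sits correctly in the chain $\Gamma_0\supseteq\cdots\supseteq\Gamma_q$, which is needed both to identify $H'$ as a member of $\mathcal{A}(V_{n-2},z)$ and to obtain the lexicographic contradiction. I would pass through the intermediate graph $I=H-E(H[X])+E(H_x[X])-E(H[Y])+E(H_y[Y])$. Because $X\cap Y=\emptyset$ and $H_x,H_y$ agree with $H$ outside $X,Y$ respectively, the total excess from $h_i+d_F$ decomposes as a disjoint sum over $X$, $Y$, and the complement; combined with the equalities $te(H_x,h_i+d_F)=te(H_y,h_i+d_F)=te(H,h_i+d_F)$ (valid for every $i\le q$ since $H_x,H_y\in\Gamma_q\subseteq\Gamma_{i-1}$ and $H$ attains the minimum in $\Gamma_{i-1}$), this yields $te(I,h_i+d_F)=te(H,h_i+d_F)$ for every $i\le q$. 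Going from $I$ to $H'$, degrees only drop at $z$ and $z'$ and rise at $x$ and $y$; the inequality $d_{H'}(x)\le d_{H_x}(x)+1\le h_q(x)+d_F(x)\le h_i(x)+d_F(x)$ pins the excess at $x$ from $h_i$ to zero, and similarly at $y$, so $te(H',h_i+d_F)\le te(I,h_i+d_F)=te(H,h_i+d_F)$ for every $i\le q$. Invoking the lex-minimality of $H$ successively for $i=1,\ldots,q-1$ then forces equality throughout and shows $H'\in\Gamma_{q-1}$.

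The two contradictions now split by the status of $z$, which is dictated by the induction step. When $n=2$, $V_{n-1}=V_1$ so $z\in V_1$; the degree drop at $z$ strictly reduces the excess from $h_q$ at $z$ while no other vertex can raise its excess, so $te(H',h_q+d_F)<te(H,h_q+d_F)$ with $H'\in\Gamma_{q-1}$, contradicting $H\in\Gamma_q$. When $n\ge 3$, the chain $V_1\subseteq V_{n-2}$ together with $z\notin V_{n-2}$ forces $z\notin V_1$; then the excess at $z$ from $h_q$ is unchanged, hence $te(H',h_q+d_F)=te(H,h_q+d_F)$ and $H'\in\Gamma_q$. Checking the remaining conditions (matching those in Theorem~\ref{thm:preliminary:generalization}) then places $H'\in\mathcal{A}(V_{n-2},z)$, and $d_{H'}(z)<h_q(z)+d_F(z)$ contradicts $z\in V_{n-1}\setminus V_{n-2}$. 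Finally, selecting the smallest $n$ with $V_n=V_{n-1}$ and taking $S=V_n$ delivers the three listed properties exactly as in Theorem~\ref{thm:preliminary:generalization}.
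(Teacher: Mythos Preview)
Your proof is correct and follows exactly the route the paper intends: its own proof is the single line ``Apply the same arguments of Theorem~\ref{thm:preliminary:generalization} with replacing $h_q(v)$ instead of $h(v)$,'' and you carry this out while supplying the one piece the paper leaves implicit---the verification, via the Ozeki observation and the intermediate graph $I$, that the modified graph $H'$ inherits membership in $\Gamma_{q-1}$ (and hence the contradiction goes through against the lexicographic minimality of $H$). One small point: in the $n\ge 3$ branch you assert $te(H',h_q+d_F)=te(H,h_q+d_F)$ from the degree drop at $z$ alone, but the drop at $z'$ could also lower that excess; this does not harm the argument, since a strict drop already contradicts $H\in\Gamma_q$ given $H'\in\Gamma_{q-1}$, so either way you reach a contradiction.
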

\begin{proof}
{Apply  the same arguments of Theorems~\ref{thm:preliminary:generalization} with replacing $h_q(v)$ instead of $h(v)$.
}\end{proof}
\begin{proofF}{
\noindent
\hspace*{-4mm}
\textbf{ Theorem~\ref{thm:final:gen}.}
First, define  $h_q(v)$ and  $\lambda_q$ as with $h(v)$  and $\lambda$ 
 in the proof of Theorem~\ref{thm:Second-Generalization} by replacing $\eta_q$ instead of $\eta$,  where $1\le q\le p$.
Next, for a fixed graph  $H\in \Gamma_p\subseteq \cdots\subseteq\Gamma_1$,  
show that  $te(H,h_q+d_F)\le t_q$,
 for any  $q$ with $1\le q\le p$,
by  repeatedly applying Theorem~\ref{thm:preliminary:generalization:excesses} and
using
the same arguments in the proof of  Theorem~\ref{thm:Second-Generalization}.
}\end{proofF}
%
%
%
%
%
\section{Packing spanning partition-connected subgraphs}
\label{sec:packing}
In this section, we investigate  edge-decomposition  of  highly partition-connected graphs  into partition-connected spanning subgraphs.
For this purpose, we  first form the following lemma, which provides a generalization for  Lemma 3.5.3 in~\cite{MR1743598}.
\begin{thm}\label{thm:generalized:D}
{Let $G$ be a graph and let $l_1, l_2,\ldots, l_m$ be $m$  intersecting supermodular  subadditive integer-valued functions on subsets of $V(G)$.  
 If $F_1, \ldots, F_m$ is a family of  edge-disjoint spanning subgraphs of $G$ 
with the maximum $|E(F_1 \cup \cdots \cup F_m)|$
such that every  graph $F_i$ is $l_i$-sparse,
 then there is a partition $P$ of $V(G)$ such that
 there is no edges in $E(G)\setminus E(F_1 \cup \cdots \cup F_m)$ joining different parts of $P$, 
and also  for each $i$ with $1 \le i \le m$ and  every $A\in P$, the graph $F_i[A]$ is $l_i$-partition-connected.
}\end{thm}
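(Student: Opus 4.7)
Set $E_0 = E(G)\setminus E(F_1\cup\cdots\cup F_m)$. The strategy is to exploit the maximality of $|E(F_1\cup\cdots\cup F_m)|$ to identify tight sets and then aggregate them into a natural atomic partition of $V(G)$. For each edge $e=xy\in E_0$ and each index $i$, the graph $F_i+e$ cannot be $l_i$-sparse, for otherwise the family $(F_1,\ldots,F_{i-1},F_i+e,F_{i+1},\ldots,F_m)$ would still consist of edge-disjoint $l_i$-sparse spanning subgraphs but with a strictly larger total, contradicting maximality. Hence there is a vertex set $T_i(e)$ containing both $x$ and $y$ with $e_{F_i}(T_i(e))=\sum_{v\in T_i(e)}l_i(v)-l_i(T_i(e))$, and Proposition~\ref{prop:sparse} then forces $F_i[T_i(e)]$ to be $l_i$-partition-connected. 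In particular, $x$ and $y$ lie in a common $l_i$-partition-connected component of $F_i$ for every $i$.

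Next, I would call a set $A\subseteq V(G)$ an \emph{atom} if $F_i[A]$ is $l_i$-partition-connected for every $i$. Every singleton is an atom, and Proposition~\ref{thm:basic:union}, applied separately in each $F_i$, shows that the union of two intersecting atoms is again an atom. Thus every vertex lies in a unique maximal atom, and the maximal atoms form a partition $P$ of $V(G)$ on which property (b) of the theorem holds by construction. It remains to verify property (a), namely that each edge of $E_0$ stays inside a single part of $P$.

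Suppose, toward a contradiction, that some $e=xy\in E_0$ has $x$ and $y$ in distinct maximal atoms $A_x$ and $A_y$. For an index $i$, take the minimum-vertex $l_i$-partition-connected subgraph $Q_i$ of $F_i$ containing both $x$ and $y$, which exists by the tight set produced above. Proposition~\ref{prop:xGy-exchange} permits swapping any edge $e'\in E(Q_i)$ for $e$ inside $F_i$ while preserving $l_i$-sparsity, yielding a new maximum family in which $xy$ lies in the new $F_i$ and hence $x,y$ share an $l_i$-partition-connected component of the modified $F_i$. Executing such swaps carefully across the indices should lead to a new maximum family in which a common atom contains both $x$ and $y$, contradicting the separation of $A_x$ and $A_y$ in the original family.

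The principal obstacle is precisely this last step: one must show that the iterated exchange genuinely enlarges a common atom until it contains both $x$ and $y$, rather than merely producing new separations. A cleaner alternative is to invoke Edmonds' matroid partition theorem applied to the union $M_1\vee\cdots\vee M_m$ of the $l_i$-sparse matroids (whose existence is noted in the paper after Proposition~\ref{prop:sparse}); the tight witness in the resulting min--max formula can always be taken of the form $\bigcup_{A\in P}E(G[A])$ for some partition $P$ of $V(G)$, and this $P$ then satisfies both required properties directly.
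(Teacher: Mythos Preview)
Your setup is correct and your instinct about where the difficulty lies is accurate, but neither of your two proposed routes is actually completed.

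In the direct exchange argument, the atoms you define depend on the fixed family $(F_1,\ldots,F_m)$. When you swap an edge $e'\in E(Q_i)$ for $e$ inside $F_i$, you obtain a \emph{different} maximum family $(F_1',\ldots,F_m')$, and the atoms of this new family need not coincide with those of the old one. So finding a common atom for $x$ and $y$ in the modified family does not contradict $A_x\neq A_y$ in the original family. You need a device that transports information across the whole orbit of families reachable by exchanges, not just a single swap. The paper supplies exactly this: it lets $\mathcal{A}_0$ be the set of all maximum families obtainable from $F$ by sequences of single-edge replacements, defines $G_0$ to be the spanning subgraph whose edge set is the union of $E(G)\setminus E(\mathcal{F})$ over all $\mathcal{F}\in\mathcal{A}_0$, and takes $P$ to be the partition into components of $G_0$. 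The crucial Claim is that if $\mathcal{F}'$ is one replacement away from $\mathcal{F}$ and two vertices lie in a common $l_i$-partition-connected subgraph of $\mathcal{F}'_i\cap G_0$, then they already lie in one of $\mathcal{F}_i\cap G_0$; iterating this back along the replacement chain to $F$ gives the conclusion. Your sketch lacks any analogue of $G_0$ and of this pull-back claim.

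As for the matroid route, the assertion that the tight witness in Edmonds' min--max formula ``can always be taken of the form $\bigcup_{A\in P}E(G[A])$'' is exactly the nontrivial content here. For arbitrary matroids this is false; for the $l_i$-sparse matroids it is true, but proving it requires the intersecting supermodularity and subadditivity of the $l_i$ and amounts to essentially the same work as the theorem itself (one must show that closed sets in each $M_i$ are unions of induced edge sets over a vertex partition, and that these partitions can be made to agree across $i$). Stating this without proof leaves the argument circular.
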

\begin{proof}
{Define $F=(F_1,\ldots, F_m)$.
 Let $\mathcal{A}$ be the set of all $m$-tuples $\mathcal{F}=(\mathcal{F}_1,\ldots, \mathcal{F}_m)$ 
with the maximum  $|E(\mathcal{F})|$
 such that $\mathcal{F}_1,\ldots, \mathcal{F}_m$ are edge-disjoint spanning subgraphs of $G$ 
and every $\mathcal{F}_i$ is $l_i$-sparse,
where $E(\mathcal{F}) =E(\mathcal{F}_1 \cup \cdots \cup \mathcal{F}_m)$.
Note that if  $e\in E(G)\setminus E(\mathcal{F})$, then every graph $\mathcal{F}_i+e$ is not $l_i$-sparse; 
otherwise, we replace $\mathcal{F}_i$ by $\mathcal{F}_i+e$ in $\mathcal{F}$, 
which contradicts maximality of $|E(\mathcal{F})|$.
Thus both ends of $e$ lie an $l_i$-partition-connected subgraph of $\mathcal{F}_i$.
Let $Q_i$ be the  $l_i$-partition-connected subgraph of $\mathcal{F}_i$ 
including both ends of $e$ 
with minimum number of vertices.
Let $e'\in Q_i$.
Define $\mathcal{F}'_i=\mathcal{F}_i-e'+e$, 
and $\mathcal{F}'_j=\mathcal{F}_j$ for all $j$ with $j\neq i$.
According to Proposition~\ref{prop:xGy-exchange}, 
the graph $\mathcal{F}'_i$ is again $l_i$-sparse and so 
$\mathcal{F}'=(\mathcal{F}'_1,\ldots, \mathcal{F}'_m) \in \mathcal{A}$.
We say that $\mathcal{F}'$ is obtained from $\mathcal{F}$ by replacing a pair of edges.
Let $\mathcal{A}_0$ be the set of all $m$-tuples $\mathcal{F}$ in $\mathcal{A}$ 
which can be obtained from $F$ by a series of edge replacements.
Let $G_0$ be the spanning subgraph of $G$ with
$$E(G_0)=\bigcup_{\mathcal{F}\in  \mathcal{A}_0} (E(G)\setminus E(\mathcal{F})).$$
Now, we prove the following claim.
%
\vspace{2mm}\\
{\bf Claim.} 
Let  $\mathcal{F}=(\mathcal{F}_1,\ldots, \mathcal{F}_m) \in \mathcal{A}_0$ and 
assume  that 
$\mathcal{F}'=(\mathcal{F}'_1,\ldots, \mathcal{F}'_m)$  is obtained from $\mathcal{F}$ by replacing a pair of edges.
If $x$ and $y$ are two vertices in an  $l_i$-partition-connected subgraph  of $\mathcal{F}'_i \cap G_0$, 
then $x$ and $y$ are also  in an $l_i$-partition-connected subgraph of $\mathcal{F}_i \cap G_0$, where $1 \le i \le m$.
\vspace{2mm}\\
{\bf Proof of Claim.} 
Let $e'$ be the new edge in $E(\mathcal{F}')\setminus E(\mathcal{F})$.
Define $Q'_i$ to be the minimal $l_i$-partition-connected subgraph  of $\mathcal{F}'_i \cap G_0$ including $x$ and $y$.
We may assume that $e'\in E(Q'_i)$; 
otherwise, $E(Q'_i) \subseteq E(\mathcal{F}_i) \cap E(G_0)$ and the proof can easily be completed.
Since $e' \in E(\mathcal{F}')\setminus E(\mathcal{F})$, both ends  of $e'$
  must lie in an $l_i$-partition-connected subgraph   of $\mathcal{F}_i $.
Define $Q_i$ to be the minimal $l_i$-partition-connected subgraph  of $\mathcal{F}_i$
including both ends of $e'$.
By Proposition~\ref{prop:xGy-exchange}, for every edge $e\in E(Q_i)$,
 the graph $\mathcal{F}_i-e+e'$ remains $l_i$-sparse, which can imply that $E(Q_i)\subseteq E(G_0)$.
Define $Q=(Q_i\cup Q'_i)-e'$.
Note that  $Q$ includes  $x$ and $y$, and also $E(Q)\subseteq E(G_0)\cap E(\mathcal{F}_i)$.
Since $Q/V(Q_i)$ and $Q[V(Q_i)]$ are $l_i$-partition-connected, by Proposition~\ref{prop:deducing}, the graph $Q$ itself must be   $l_i$-partition-connected.
Hence  the claim holds.

Define $P$ to be the partition of $V(G)$ obtained from the components of $G_0$.
Let $i \in \{1,\ldots, m\}$,
let   $C_0$ be a  component of $G_0$, 
and let $xy\in E(C_0)$.  
By the definition of $G_0$, 
there is no edges in $E(G)\setminus E(F_1 \cup \cdots \cup F_m)$ joining different parts of $P$, and also
 there are some $m$-tuples $\mathcal{F}^1,\ldots, \mathcal{F}^n$  in
 $\mathcal{A}_0$ such that
 $xy \in E(G)\setminus E(\mathcal{F}^n)$, 
 $F = \mathcal{F}^1$,
and every $\mathcal{F}^k$
 can be obtained from $\mathcal{F}^{k-1}$ by replacing a pair of edges,  where $1 < k \le n$.
As we stated above,  $x$ and $y$ must lie   in an $l_i$-partition-connected  subgraph  of $\mathcal{F}^n_i$.
Let $Q'_i$ be the  minimal $l_i$-partition-connected subgraph of $\mathcal{F}^n_i$
 including $x$ and $y$.
By Proposition~\ref{prop:xGy-exchange}, 
for every edge $e\in E(Q'_i)$, the graph  $\mathcal{F}^n_i-e+xy$ remains $l_i$-sparse,  which can imply
$E(Q'_i)\subseteq E(G_0)$. 
Thus $x$ and $y$ must also lie  in an $l_i$-partition-connected subgraph  of $\mathcal{F}^n_i\cap G_0$.
By repeatedly applying the above-mentioned claim, 
 one can conclude  that 
$x$ and $y$ lie  in an  $l_i$-partition-connected subgraph of $F_i \cap G_0$.
Let $Q_i$ be the  minimal $l_i$-partition-connected subgraph of $F_i$  including $x$ and $y$
so that $E(Q_i)\subseteq E(G_0)$. 
Since $l$ is subadditive, Proposition~\ref{prop:Q:subadditive} implies that $d_{Q_i}(A)\ge 1$,
for every vertex set $A$ with $\{x,y\} \subseteq A\subsetneq V(Q_i)$.
Since $C_0$ is connected,  we must have $V(Q_i)\subseteq V(C_0)$.
In other words,  for every $xy\in E(C_0)$, there is an $l_i$-partition-connected subgraph of $F_i\cap C_0$ including $x$ and $y$.
Since  $C_0$ is connected,  all vertices of $C_0$ must lie in an $l_i$-partition-connected subgraph of $F_i\cap C_0$.
Thus the graph $F_i [V(C_0)]$ itself must be   $l_i$-partition-connected.
Hence the proof is  completed.
}\end{proof}
The following theorem generalizes the well-known result of Nash-Williams~\cite{MR0133253} and Tutte~\cite{MR0140438}.
\begin{thm}\label{thm:main:partition-connected}
{Let $G$ be a graph and let $l_1, l_2,\ldots, l_m$ be $m$  intersecting supermodular subadditive integer-valued functions on subsets of $V(G)$.  
 If  $G$ is $(l_1+\cdots +l_m)$-partition-connected, then  it can be decomposed into $m$ edge-disjoint spanning subgraphs $H_1,\ldots, H_m$ such that every graph $H_i$ is $l_i$-partition-connected.
}\end{thm}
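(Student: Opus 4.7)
The plan is to invoke Theorem~\ref{thm:generalized:D} directly on a maximum-edge family. Pick $F_1,\ldots,F_m$, an $m$-tuple of pairwise edge-disjoint spanning subgraphs of $G$ with each $F_i$ being $l_i$-sparse, chosen to maximize $|E(F_1 \cup \cdots \cup F_m)|$; such a family exists because the empty choice is feasible and the sparseness bound $|E(F_i)|\le \sum_v l_i(v)-l_i(V(G))$ caps the total. Let $P$ be a partition of $V(G)$ guaranteed by Theorem~\ref{thm:generalized:D}: every edge of $E(G)\setminus E(F_1\cup\cdots\cup F_m)$ lies inside a single part of $P$, and for each $i$ and each $A\in P$ the subgraph $F_i[A]$ is $l_i$-partition-connected.

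Write $L=l_1+\cdots+l_m$. For every $i$ and $A\in P$, the graph $F_i[A]$ is both $l_i$-sparse and $l_i$-partition-connected, so evaluating both conditions on the singleton partition of $A$ forces $e_{F_i}(A)=\sum_{v\in A}l_i(v)-l_i(A)$. Since no edge outside $\bigcup_i F_i$ crosses $P$, one has $e_G(P)=\sum_i e_{F_i}(P)$, and summing the identities above produces
\begin{equation*}
\sum_{i=1}^{m}|E(F_i)| \;=\; \sum_{v\in V(G)} L(v)-\sum_{A\in P}L(A)+e_G(P).
\end{equation*}
Combining this with the $L$-partition-connectivity of $G$ (which yields $e_G(P)\ge \sum_{A\in P}L(A)-L(V(G))$) gives $\sum_i|E(F_i)|\ge \sum_v L(v)-L(V(G))$; but sparseness of the individual $F_i$ produces the matching upper bound, so equality must hold termwise and $|E(F_i)|=\sum_v l_i(v)-l_i(V(G))$ for every $i$.

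It remains to invoke Proposition~\ref{prop:sparse}: for each $i$, the graph $F_i$ is $l_i$-sparse and attains the critical edge count $\sum_v l_i(v)-l_i(V(G))$, so the proposition certifies that $e_{F_i}(P')\ge \sum_{A\in P'}l_i(A)-l_i(V(G))$ for every partition $P'$ of $V(G)$, i.e.~$F_i$ is itself $l_i$-partition-connected. Finally, distribute the edges of $E(G)\setminus E(F_1\cup\cdots\cup F_m)$ arbitrarily among $F_1,\ldots,F_m$ to obtain spanning subgraphs $H_1,\ldots,H_m$ that decompose $E(G)$; since adding edges can only increase $e_{H_i}(P')$, each $H_i$ remains $l_i$-partition-connected. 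The only genuinely nontrivial step is passing from maximality of the total edge count to the existence of the structured partition $P$, which is precisely the content of Theorem~\ref{thm:generalized:D}; once that tool is in hand, the sparse-versus-partition-connected duality encoded in Proposition~\ref{prop:sparse} finishes the argument.
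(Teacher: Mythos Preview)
Your proof is correct and follows essentially the same route as the paper: invoke Theorem~\ref{thm:generalized:D} on a maximum family, use the partition $P$ to pin down $e_{F_i}(A)$ exactly, squeeze $\sum_i|E(F_i)|$ between the $L$-partition-connectivity lower bound and the sparseness upper bound to force $|E(F_i)|=\sum_v l_i(v)-l_i(V(G))$ termwise, and finish with Proposition~\ref{prop:sparse}. You are in fact slightly more explicit than the paper in two places---naming Proposition~\ref{prop:sparse} for the final step, and distributing the leftover edges $E(G)\setminus E(F_1\cup\cdots\cup F_m)$ to turn the $F_i$ into an honest decomposition---but the argument is the same.
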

\begin{proof}
{Let $F_1, \ldots, F_m$ be a family of  edge-disjoint spanning subgraphs of $G$ 
with the maximum $|E(F)|$
such that every  graph $F_i$ is $l_i$-sparse, where $F=F_1 \cup \cdots \cup F_m$.
Let $P$ be a partition of $V(G)$ with the properties described in Theorem~\ref{thm:generalized:D}.
Since for  every $A\in P$, the induced subgraph $F_i[A]$ is $l_i$-partition-connected, 
we must have $e_{F_i}(A)=\sum_{v\in A}l_i(v)-l_i(A)$.
Define $l=l_1+\cdots +l_m$. 
By the assumption, $e_G(P)\ge \sum_{A\in P} l(A)-l(G)$.
Since $e_F(P)=e_G(P)$,  we  have
$$|E(F)| = e_F(P)+\sum_{A\in P} e_F(A) \ge \sum_{A\in P}l(A)-l(G)
+\sum_{A\in P}(\sum_{v\in A}l(v)-l(A))=\sum_{v\in V(G)}l(v)-l(G).$$
On the other hand,  
$$|E(F)| =\sum_{1\le i\le m}|E(F_i)|  \le \sum_{1\le i\le m}(\sum_{v\in V(G)}l_i(v)-l_i(G))=\sum_{v\in V(G)}l(v)-l(G).$$
Therefore, for every graph $F_i$, the equality  $|E(F_i)|=\sum_{v\in V(G)}l_i(v) -l_i(G)$ must be hold,
which implies that $F_i$ is $l_i$-partition-connected. 
This  can  complete the proof.
}\end{proof}
\begin{cor}{\rm (\cite{MR1411692}, see~\cite[Theorem 10.5.9]{MR2848535})}
{Every $l_{p+m,m}$-partition-connected graph has a packing of $m$ spanning trees and $p$ spanning $l_{1,0}$-partition-connected subgraphs,  where $l_{n,m}$ denotes the set function that is $n$ on vertices and is $m$ on vertex sets with at least two vertices.
}\end{cor}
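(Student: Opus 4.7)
The plan is to derive this corollary as a direct application of Theorem~\ref{thm:main:partition-connected} with a carefully chosen family of intersecting supermodular functions, namely $m$ copies of $l_{1,1}$ and $p$ copies of $l_{1,0}$. The key arithmetic observation is that on singletons one has $m\cdot l_{1,1}(v)+p\cdot l_{1,0}(v)=m+p$, while on vertex sets of size at least two one has $m\cdot l_{1,1}(A)+p\cdot l_{1,0}(A)=m$, so the sum of these $m+p$ set functions is exactly $l_{p+m,m}$.

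First I would verify that each of $l_{1,1}$ and $l_{1,0}$ satisfies the hypotheses of Theorem~\ref{thm:main:partition-connected}, i.e.\ is intersecting supermodular, subadditive, and integer-valued. For the constant function $l_{1,1}\equiv 1$ this is immediate. For $l_{1,0}$ a short case analysis suffices: for any intersecting $A,B$, the inequality $l_{1,0}(A\cap B)+l_{1,0}(A\cup B)\ge l_{1,0}(A)+l_{1,0}(B)$ reduces to at most two singleton/non-singleton subcases, each of which is checked by inspection, and subadditivity for disjoint pairs is similar.

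Next I would invoke Theorem~\ref{thm:main:partition-connected} with $l_1=\cdots=l_m=l_{1,1}$ and $l_{m+1}=\cdots=l_{m+p}=l_{1,0}$. Since their sum equals $l_{p+m,m}$ and $G$ is by assumption $l_{p+m,m}$-partition-connected, the theorem yields an edge-disjoint decomposition of $G$ into $m$ spanning subgraphs $T_1,\ldots,T_m$ that are each $l_{1,1}$-partition-connected together with $p$ spanning subgraphs $H_1,\ldots,H_p$ that are each $l_{1,0}$-partition-connected.

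Finally I would observe that an $l_{1,1}$-partition-connected spanning subgraph is nothing but a $1$-tree-connected spanning subgraph, since the defining inequality $e(P)\ge |P|-1$ for every partition $P$ is precisely the Nash-Williams--Tutte criterion for the existence of a spanning tree. Thus each $T_i$ contains a spanning tree, and replacing $T_i$ by any one of its spanning trees produces the desired packing of $m$ spanning trees together with the $p$ spanning $l_{1,0}$-partition-connected subgraphs $H_1,\ldots,H_p$. There is no real obstacle here; the only slightly delicate point is the routine verification of intersecting supermodularity of $l_{1,0}$, after which the result is immediate from Theorem~\ref{thm:main:partition-connected}.
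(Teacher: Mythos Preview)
Your proposal is correct and is essentially the intended derivation: the paper states this corollary immediately after Theorem~\ref{thm:main:partition-connected} without a separate proof, and the implicit argument is precisely to take $m$ copies of $l_{1,1}$ and $p$ copies of $l_{1,0}$, observe that their sum is $l_{p+m,m}$, and apply the theorem. Your observation that an $l_{1,1}$-partition-connected spanning subgraph is connected (hence contains a spanning tree) completes the passage from the decomposition to the desired packing.
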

In the following, we give an alternative  proof for a special case of Theorem~\ref{thm:main:result}.
\begin{cor}
{Let $G$ be a graph and let $l$ be an intersecting supermodular  subadditive nonnegative integer-valued function on subsets of $V(G)$.
If $G$ is $2l$-edge-connected, then it has
a spanning  $l$-partition-connected subgraph $H$ such that for each vertex $v$,
$$d_H(v) \le \lceil  \frac{d_G(v)}{2}\rceil+l(v).$$
Furthermore, for a given  arbitrary  vertex $u$ the upper bound can be reduced to  $ \lfloor  \frac{d_G(u)}{2}\rfloor+l(u)-l(G)$.
}\end{cor}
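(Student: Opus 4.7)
My plan is to prove both assertions by orientations, following the second proof of Theorem~\ref{thm:alternative}. Since $G$ is $2l$-edge-connected, halving gives, for every subpartition $\mathcal{P}$ of $V(G)$, the inequality $e_G(\mathcal{P}) = \tfrac{1}{2}\sum_{A\in\mathcal{P}} d_G(A) \ge \sum_{A\in\mathcal{P}} l(A)$. By the supermodular orientation theorem of Frank (the edge-connectivity strengthening of Lemma~\ref{lem:Frank}), $G$ therefore admits an orientation $\vec{G}$ with $d^-_{\vec{G}}(A) \ge l(A)$ for every nonempty proper $A \subseteq V(G)$. For the ``furthermore'' assertion I would instead produce a rooted orientation satisfying $d^-_{\vec{G}}(A) \ge l(A) - [u\in A]\, l(G)$; the relevant subpartition inequality is strictly weaker and hence also holds.

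Next I would refine this orientation so that $d^+_{\vec{G}}(v) \le \lceil d_G(v)/2 \rceil$ at every vertex $v$, and moreover $d^+_{\vec{G}}(u) \le \lfloor d_G(u)/2 \rfloor$ in the rooted variant. This is obtained by iterated directed-path reversal: whenever some $v$ exceeds its target, flow imbalance produces a directed path from $v$ to a vertex $w$ below its target, and reversing that path preserves $l$-arc-connectivity provided no tight set $A$ (one with $d^-_{\vec{G}}(A) = l(A)$) is crossed in the ``wrong'' direction. Passing to a minimally (rooted) $l$-arc-connected spanning subdigraph $\vec{H}$ of $\vec{G}$, the analogue of Lemma~\ref{lem:minimal-directed} yields $d^-_{\vec{H}}(v) = l(v)$ for $v \ne u$ and $d^-_{\vec{H}}(u) = l(u) - l(G)$ in the rooted case. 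The underlying undirected graph $H$ is then $l$-partition-connected since $e_H(P) = \sum_{A\in P} d^-_{\vec{H}}(A) \ge \sum_{A\in P} l(A) - l(G)$ for every partition $P$, and the degree bound is
$$d_H(v) = d^-_{\vec{H}}(v) + d^+_{\vec{H}}(v) \le l(v) + d^+_{\vec{G}}(v) \le l(v) + \left\lceil\frac{d_G(v)}{2}\right\rceil,$$
with the parallel computation at $u$ delivering $\lfloor d_G(u)/2 \rfloor + l(u) - l(G)$.

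The hardest step will be the near-balancing: the path reversals must be orchestrated so as not to undermine the $l$-arc-connectivity, which requires tracking how tight sets are crossed. A careful exchange argument --- choosing the endpoint $w$ inside the tight-set closure of $v$ --- works, but a cleaner option is to invoke a supermodular flow theorem of Frank which produces a balanced $l$-arc-connected orientation in one shot. A secondary technical point is to verify the hypotheses of Lemma~\ref{lem:minimal-directed} for the rooted auxiliary function $\ell(A) = l(A) - [u\in A]\, l(G)$, which reduces to a short check using the subadditivity of $l$.
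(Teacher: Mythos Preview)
Your approach is genuinely different from the paper's, and it has a real gap.

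The paper does \emph{not} argue via orientations here. This corollary sits immediately after Theorem~\ref{thm:main:partition-connected} (the packing theorem) and is proved by a direct decomposition: one defines an auxiliary function $\ell$ with $\ell(v)=\lfloor d_G(v)/2\rfloor-l(v)$ (and $\ell(u)=\lceil d_G(u)/2\rceil-l(u)+l(G)$), $\ell(A)=0$ for $|A|\ge 2$, checks that $G$ is $(l+\ell)$-partition-connected, and then splits $G=H\cup H'$ with $H$ $l$-partition-connected and $H'$ $\ell$-partition-connected. The degree bound follows from $d_{H'}(v)\ge \ell(v)$. This argument uses only that $l$ is \emph{subadditive}, which is exactly the hypothesis of the corollary.

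Your orientation route mirrors the second proof of Theorem~\ref{thm:alternative}, but that theorem assumes $l$ is \emph{nonincreasing}, not merely subadditive. The distinction is not cosmetic: your final ``secondary technical point'' claims that the hypotheses of Lemma~\ref{lem:minimal-directed} for the rooted function $\ell(A)=l(A)-[u\in A]\,l(G)$ follow from subadditivity, but this is false. Take $V(G)=\{a,b\}$ with two parallel edges, $l(a)=l(b)=1$, $l(\{a,b\})=2$; this $l$ is intersecting supermodular, subadditive, nonnegative, and $G$ is $2l$-edge-connected. With $u=a$ one gets $\ell(a)=l(a)-l(G)=-1$, so $\ell$ is neither nonnegative nor element-nonincreasing, and your asserted identity $d^-_{\vec H}(u)=l(u)-l(G)$ is impossible. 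Carrying out your construction concretely, the balanced orientation has one edge each way, the unique minimal $\ell$-arc-connected subdigraph is the single arc $a\to b$, and the resulting $H$ has $d_H(a)=1>0=\lfloor d_G(a)/2\rfloor+l(a)-l(G)$, violating the ``furthermore'' bound. (The corollary itself is fine here: the edgeless $H$ is $l$-partition-connected and meets the bound.)

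So the orientation strategy, as written, does not cover the stated hypotheses. If you want to salvage it you would need either an additional nonincreasing assumption on $l$, or a genuinely different argument for controlling the in-degrees of a minimal arc-connected subdigraph when $\ell$ is allowed to take negative values on singletons --- the ``short check'' you allude to does not exist. The near-balancing step you flag as hardest is actually not the obstacle; the obstacle is downstream, at the point where you extract $d^-_{\vec H}(v)$.
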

\begin{proof}
{Define $\ell(u)=\lceil d_G(u)/2  \rceil -l(u)+l(G)$,  and  $\ell(v)=\lfloor d_G(v)/2  \rfloor -l(v)$, 
for each vertex  $v$ with $v\neq u$ so that $d_G(u)\ge 2l(u)+2\ell(u)-2l(G)-1$ and $d_G(v)\ge 2l(v)+2\ell(v)$. 
Define $\ell(A)=0$ for every vertex set $A$ with $|A| \ge 2$.
Let $P$ be a partition of $V(G)$.  
By the assumption,
$$\sum_{A \in P} d_G(A) \ge
 \sum_{A\in P, |A|\ge 2} 2l(A)+ \sum_{A\in P, |A|= 1}d_G(A)\ge  
 \sum_{A\in P} (2l(A)+2\ell(A))-2l(G)-1.$$
which implies that
$$ e_G(P)= \frac{1}{2}\sum_{A \in P} d_G(A) \ge  \sum_{A\in P} (l(A)+\ell(A))-l(G)-\ell(G).$$
Thus $G$ is $(l+\ell)$-partition-connected.
By Theorem~\ref{thm:main:partition-connected}, 
the  graph $G$ can be decomposed into an $l$-partition-connected spanning   subgraph
  $H$  and an $\ell$-partition-connected spanning   subgraph  $H'$.
For each vertex $v$, we must have 
$d_{H'}(v) \ge \ell(G-v)+\ell(v)-\ell(G)=\ell(v)$.
This implies that $d_H(v)= d_G(v)-d_{H'}(v)\le \lceil d_G(v)/2\rceil+l(v)$.
Likewise, $d_H(u)= d_G(u)-d_{H'}(u)\le \lfloor d_G(u)/2\rfloor+l(u)-l(G)$.
Hence the corollary is proved.
}\end{proof}
\begin{cor}\label{cor:orientations:dec}
{Let $G$ be a graph and let $\ell_1, \ell_2,\ldots, \ell_m$ be $m$ nonincreasing intersecting supermodular nonnegative integer-valued functions on subsets of $V(G)$ with $\ell_1(G)= \cdots=\ell_m(G)=0$ .  
 If  $G$ is $(2\ell_1+\cdots +2\ell_m)$-edge-connected, then  it has an orientation and $m$ edge-disjoint spanning subdigraphs $H_1,\ldots, H_m$ such that every digraph $H_i$ is  $\ell_i$-arc-connected and for each vertex $v$,
$$d^+_G(v) \le \lceil  \frac{d_G(v)}{2}\rceil.$$
Furthermore, for a given  arbitrary  vertex $u$ the upper bound can be reduced to  $ \lfloor  \frac{d_G(u)}{2}\rfloor$.
}\end{cor}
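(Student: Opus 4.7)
The plan is to run the preceding corollary's argument in an $(m+1)$-piece version by absorbing the slack in the edge-connectivity hypothesis into a single auxiliary set function. Define $\ell_0(\{v\})=\lfloor d_G(v)/2\rfloor-\sum_{i=1}^m\ell_i(\{v\})$ for every vertex $v\ne u$, $\ell_0(\{u\})=\lceil d_G(u)/2\rceil-\sum_{i=1}^m\ell_i(\{u\})$ at the distinguished vertex, and $\ell_0(A)=0$ for $A=\emptyset$ and for every $A$ with $|A|\ge 2$; in particular $\ell_0(V(G))=0$. The $(2\sum_i\ell_i)$-edge-connectivity hypothesis applied at singletons makes the singleton values of $\ell_0$ nonnegative. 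A direct case analysis on $|A|$ and $|B|$ confirms that $\ell_0$ is intersecting supermodular, nonincreasing, and integer-valued; since nonincreasingness plus nonnegativity forces subadditivity, $\ell_0$ meets the hypotheses of Theorem~\ref{thm:main:partition-connected}, and the same holds of each $\ell_i$.

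The key computation is that $G$ is $(\ell_0+\ell_1+\cdots+\ell_m)$-partition-connected. For any partition $P$ of $V(G)$ with $|P|\ge 2$, write $2e_G(P)=\sum_{A\in P}d_G(A)$ and bound $d_G(A)$ from below by $2(\ell_0+\sum_i\ell_i)(A)$ using edge-connectivity when $|A|\ge 2$ and the explicit singleton formula when $|A|=1$. The only loss is a deficit of at most $\tfrac{1}{2}$ when the singleton $\{u\}$ happens to lie in $P$ and $d_G(u)$ is odd; integrality of $e_G(P)$ and of the right-hand side absorbs this deficit, yielding $e_G(P)\ge \sum_{A\in P}(\ell_0+\sum_i\ell_i)(A)$, which, since $(\ell_0+\sum_i\ell_i)(V(G))=0$, is exactly the required partition-connectivity inequality. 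Applying Theorem~\ref{thm:main:partition-connected} then decomposes $G$ into edge-disjoint spanning subgraphs $H_0,H_1,\ldots,H_m$ with each $H_i$ being $\ell_i$-partition-connected, and Lemma~\ref{lem:Frank} produces an $\ell_i$-arc-connected orientation of each $H_i$ (the hypothesis $\ell_i(V(G))=0$ holds for $i\ge 1$ by assumption and for $i=0$ by construction). Pasting these orientations yields an orientation of $G$, and the subdigraphs $H_1,\ldots,H_m$ are the ones required by the statement.

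Finally, applying the $\ell_i$-arc-connectivity at the singleton $\{v\}$ gives $d^-_{H_i}(v)\ge \ell_i(\{v\})$, and summing over $i=0,1,\ldots,m$ produces $d^-_G(v)\ge \sum_{i=0}^m\ell_i(\{v\})=\lfloor d_G(v)/2\rfloor$ when $v\ne u$ (respectively $\lceil d_G(u)/2\rceil$ at $u$), from which the stated bounds $d^+_G(v)\le \lceil d_G(v)/2\rceil$ and $d^+_G(u)\le \lfloor d_G(u)/2\rfloor$ follow immediately via $d^+_G(v)=d_G(v)-d^-_G(v)$. The main fiddly step I anticipate is the integer-rounding argument at the odd-degree distinguished vertex $u$, which is precisely the parity phenomenon underlying the asymmetric split $(\lfloor\cdot\rfloor,\lceil\cdot\rceil)$ in the definition of $\ell_0$; once this is cleanly organized, the remainder is a direct extension of the preceding corollary's two-piece decomposition to an $(m+1)$-piece one.
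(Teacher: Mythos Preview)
Your proposal is correct and follows essentially the same approach as the paper: define the auxiliary function $\ell_0$ absorbing the slack, verify $(\ell_0+\ell_1+\cdots+\ell_m)$-partition-connectivity via the edge-count inequality (with the parity/integrality trick at $u$), apply Theorem~\ref{thm:main:partition-connected} to obtain the decomposition, orient each piece via Lemma~\ref{lem:Frank}, and read off the degree bounds from $d^-_{H_i}(v)\ge\ell_i(v)$. If anything, you are slightly more careful than the paper in explicitly checking that nonincreasing plus nonnegative implies subadditive (needed for Theorem~\ref{thm:main:partition-connected}) and in spelling out the integrality step.
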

\begin{proof}
{Define $\ell_0(u)=\lceil d_G(u)/2  \rceil -\ell(u)$,  and  
$\ell_0(v)=\lfloor d_G(v)/2  \rfloor -\ell(v)$ for each vertex $v$ with $v\neq u$, where  $\ell=\ell_1+\cdots +\ell_m$.
Define $\ell_0(A)=0$ for every vertex set $A$ with $|A| \ge 2$.
Let $P$ be a partition of $V(G)$.  
By the assumption,
$$\sum_{A \in P} d_G(A) \ge
 \sum_{A\in P, |A|\ge 2} 2\ell(A)+ \sum_{A\in P, |A|= 1}d_G(A)\ge  
 \sum_{A\in P} (2\ell(A)+2\ell_0(A))-1.$$
which implies that
$$ e_G(P)= \frac{1}{2}\sum_{A \in P} d_G(A) \ge  \sum_{A\in P} (\ell(A)+\ell_0(A))-\ell(G)-\ell_0(G).$$
Thus $G$ is $(\ell+\ell_0)$-partition-connected.
By Theorem~\ref{thm:main:partition-connected}, 
the  graph $G$ can be decomposed into $m+1$ edge-disjoint  spanning   subgraphs $H_0,\ldots, H_m$
  such that every $H_i$ is $\ell_i$-partition-connected.
By Lemma~\ref{lem:Frank}, every $H_i$ has an $\ell_i$-arc-connected orientation.
Consider the  orientation of  $G$ obtained from these orientations.
For each vertex $v$, we must have 
$d^+_{G}(v)\le d_{G}(v) -\sum_{0\le i\le k}d^-_{H_i}(v)  \le \lceil \frac{d_G(v)}{2}\rceil$.
Likewise, $d^+_{G}(u)\le  d_{G}(u) -\sum_{0\le i\le k}d^-_{H_i}(u) \le \lfloor d_G(u)/2\rfloor$.
Hence the corollary is proved.
}\end{proof}
\begin{cor}
{Let $G$ be a graph and let $l_1, l_2,\ldots, l_m$ be $m$ nonincreasing intersecting supermodular nonnegative integer-valued functions on subsets of $V(G)$ and let $r_1,\ldots, r_m$ be $m$ nonnegative integer-valued functions on $V(G)$ with $l_i(G)=\sum_{v\in V(G)}r_i(v)$.  
 If  $G$ is $(2l_1+\cdots +2l_m)$-edge-connected, then  it has an orientation and $m$ edge-disjoint spanning subdigraphs $H_1,\ldots, H_m$ such that every digraph $H_i$ is $r_i$-rooted $l_i$-arc-connected and  for each vertex $v$,
$$d^+_G(v) \le \lceil  \frac{d_G(v)}{2}\rceil.$$
Furthermore, for a given  arbitrary  vertex $u$ the upper bound can be reduced to  $ \lfloor  \frac{d_G(u)}{2}\rfloor$.
}\end{cor}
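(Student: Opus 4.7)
The plan is to adapt the proof of Corollary~\ref{cor:orientations:dec} by absorbing the roots into the set functions. For each $i\in\{1,\ldots,m\}$ I would define
\[
\ell_i(A):=l_i(A)-\sum_{v\in A}r_i(v).
\]
Subtracting the modular function $\sum_{v\in A}r_i(v)$ preserves intersecting supermodularity, subadditivity, and integrality, so each $\ell_i$ is intersecting supermodular, subadditive, and integer-valued; moreover $\ell_i(V(G))=l_i(G)-\sum_v r_i(v)=0$. The crucial point is that an orientation of a spanning subgraph $H_i$ satisfies $d^-_{H_i}(A)\ge\ell_i(A)$ for every $A$ if and only if it is $r_i$-rooted $l_i$-arc-connected in the sense of the paper.

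Next I would set $\ell:=\ell_1+\cdots+\ell_m$ and, writing $u$ for the distinguished vertex, put
\[
\ell_0(\{u\})=\lceil d_G(u)/2\rceil-\ell(u),\qquad \ell_0(\{v\})=\lfloor d_G(v)/2\rfloor-\ell(v)\text{ for }v\ne u,
\]
with $\ell_0(A)=0$ whenever $|A|\ge 2$. Since $r_i\ge 0$ we have $2\ell(v)\le 2l(v)\le d_G(v)$, so $\ell_0$ is a nonnegative, intersecting supermodular, subadditive, integer-valued set function on $V(G)$ with $\ell_0(V(G))=0$. The edge-counting argument of Corollary~\ref{cor:orientations:dec} then carries over verbatim: using $d_G(A)\ge 2l(A)\ge 2\ell(A)$ for $|A|\ge 2$ (which is where $2l$-edge-connectivity is used) together with the singleton bounds built into $\ell_0$ gives $\sum_{A\in P}d_G(A)\ge 2\sum_{A\in P}(\ell+\ell_0)(A)-1$ for every partition $P$, and since both sides are even integers the $-1$ slack disappears, yielding that $G$ is $(\ell_0+\ell_1+\cdots+\ell_m)$-partition-connected.

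Applying Theorem~\ref{thm:main:partition-connected} now decomposes $G$ into $m+1$ edge-disjoint spanning subgraphs $H_0,H_1,\ldots,H_m$ in which each $H_i$ is $\ell_i$-partition-connected. For $H_0$, Lemma~\ref{lem:Frank} produces an $\ell_0$-arc-connected orientation, and for each $H_i$ with $i\ge 1$ I would invoke the rooted generalization of Frank's orientation theorem (Theorem~2 of~\cite{MR642039}, highlighted in the remark just after Lemma~\ref{lem:Frank}) to obtain an orientation with $d^-_{H_i}(A)\ge\ell_i(A)$ for every $A$, i.e.\ an $r_i$-rooted $l_i$-arc-connected orientation of $H_i$. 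The union of these orientations is an orientation of $G$ with $d^-_G(v)\ge\sum_{i=0}^{m}\ell_i(v)=\ell(v)+\ell_0(v)$ for every vertex $v$, so by complementation $d^+_G(v)\le\lceil d_G(v)/2\rceil$ for $v\ne u$ and $d^+_G(u)\le\lfloor d_G(u)/2\rfloor$. The main obstacle is precisely that each $\ell_i$ can be negative on sets with $\sum_{v\in A}r_i(v)>l_i(A)$, so the nonnegativity hypothesis in Lemma~\ref{lem:Frank} is unavailable and one must appeal to its rooted extension; everything else is routine bookkeeping together with the parity trick borrowed from Corollary~\ref{cor:orientations:dec}.
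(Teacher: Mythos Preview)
Your approach is exactly the paper's: it defines $\ell_i(A)=l_i(A)-\sum_{v\in A}r_i(v)$ and then applies Corollary~\ref{cor:orientations:dec} to these $\ell_i$.  The one point on which you diverge is your ``main obstacle'': you worry that $\ell_i$ can take negative values and therefore reach for the rooted extension of Frank's theorem.  But this concern is unfounded.  Since each $l_i$ is nonincreasing and $l_i(G)=\sum_{v\in V(G)}r_i(v)$, for every nonempty $A$ we have
\[
\ell_i(A)=l_i(A)-\sum_{v\in A}r_i(v)\ \ge\ l_i(V(G))-\sum_{v\in A}r_i(v)\ =\ \sum_{v\in V(G)\setminus A}r_i(v)\ \ge\ 0.
\]
Thus every $\ell_i$ is a nonincreasing, intersecting supermodular, \emph{nonnegative} integer-valued function with $\ell_i(G)=0$, and $G$ is $(2\ell_1+\cdots+2\ell_m)$-edge-connected since $\ell_i\le l_i$.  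These are precisely the hypotheses of Corollary~\ref{cor:orientations:dec}, so you may invoke it (and hence Lemma~\ref{lem:Frank}) directly, without any appeal to the stronger rooted orientation result.  With this observation your argument collapses to the paper's one-line proof.
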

\begin{proof}
{Apply Corollary~\ref{cor:orientations:dec} with $\ell_i=l_i-r_i$, where $r_i(A)=\sum_{v\in A}r_i(v)$ for every vertex set $A$.
}\end{proof}
%
%
%
%
%
%
%
%
%
%
%
%
\section{Packing spanning partition-connected sub-hypergraphs}
%
\label{sec:hypergraphs}
In this subsection, we shall develop several results in this paper to hypergraphs in the same way.
Before doing so, we introduce  the needed definitions and notations for hypergraphs.
\subsection{Definitions}
%
 Let $\mathcal{H}$ be a hypergraph (possibly with repetition of hyperedges). 
The rank of  $\mathcal{H}$  is the maximum size of its hyperedges.
The vertex set and  the hyperedge set  of $\mathcal{H}$ are denoted by $V(\mathcal{H})$ and $E(\mathcal{H})$, respectively. 
The degree $d_\mathcal{H}(v)$ of a vertex $v$ is the number of hyperedges of $\mathcal{H}$ including  $v$.
For a set $X\subseteq V(\mathcal{H})$, 
we denote by $\mathcal{H}[X]$ the induced  sub-hypergraph of $\mathcal{H}$  with the vertex set $X$  containing
precisely those hyperedges  $Z$
of $\mathcal{H}$ with $Z\subseteq X$.
We also denote by $\mathcal{H}/X$  the hypergraph obtained from $\mathcal{H}$ by contracting $X$ into a single vertex $u$ 
and replacing each  hyperedge $Z$ with  $Z\cap X\neq \emptyset $ by $(Z\setminus  X)\cup \{u\}$.
A spanning sub-hypergraph $F$ is called {\bf $l$-sparse}, if for all vertex sets $A$, $e_F(A)\le \sum_{v\in A} l(v)-l(A)$,
where $e_F(A)$ denotes the number of hyperedges $Z$ of $F$  with $Z\subseteq A$.
Likewise,  the hypergraph $\mathcal{H}$ is called {\bf $l$-partition-connected}, 
if for every partition $P$ of $V(\mathcal{H})$, 
$e_\mathcal{H}(P)\ge \sum_{A\in P}l(A)-l(\mathcal{H})$, 
where $e_\mathcal{H}(P)$ denotes the number of hyperedges of $\mathcal{H}$ joining different parts of $P$.
Note that if $l$ is intersecting supermodular, then 
the vertex set of $\mathcal{H}$ can be expressed uniquely (up to order) as a disjoint union of vertex sets of some
induced $l$-partition-connected sub-hypergraphs.
These sub-hypergraphs are called the $l$-partition-connected components of $\mathcal{H}$.
To measure $l$-partition-connectivity of $\mathcal{H}$, we define the  parameter
$\OMEGA_l(\mathcal{H})=\sum_{A\in P} l(A)-e_\mathcal{H}(P),$ where 
 $P$ is the partition of $V(\mathcal{H})$ obtained from $l$-partition-connected components of $\mathcal{H}$.
It is not difficult to show that 
$\OMEGA_l(\mathcal{H})$ is  the maximum of all $\sum_{A\in P} l(A)-e_\mathcal{H}(P)$ taken over all
  partitions $P$ of $V(\mathcal{H})$.
The hypergraph $\mathcal{H}$ is said to be {\bf $l$-edge-connected},
 if for all nonempty proper vertex sets $A$ of $V(\mathcal{H})$, $d_\mathcal{H}(A) \ge l(A)$, 
where  $d_\mathcal{H}(A)$ 
denotes the number of hyperedges $Z$ of $\mathcal{H}$ with $Z\cap A\neq \emptyset$ and $Z\setminus A\neq \emptyset$.
For a vertex set $S$, we denote by $\sigma_\mathcal{H}(S)$ the sum of all $|Z\cap S|-1$ taken over all hyperedges $Z$  of $\mathcal{H}$ with $Z\cap S\neq \emptyset$.
Note that for graphs we have $\sigma_G(S) =e_G(S)$.
We call a hypergraph $\mathcal{H}$ directed, if for every hyperedge $Z$, a head vertex $u$ in $Z$ is specified; 
 other vertices of $Z-u$ are called the tails of $Z$.
For a vertex $v$, we denoted by $d^-_{\mathcal{H}}(v)$ the number of hyperedges with head  $v$ and denote by $d^+_{\mathcal{H}}(v)$ and the number of hyperedges with tail $v$. 
We say that a directed hypergraph   $\mathcal{H}$ is {\bf $l$-arc-connected}, if for every vertex set $A$, 
$d^-_{\mathcal{H}}(A)\ge l(A)$,
where $d^-_{\mathcal{H}}(A)$ denotes the number of hyperedges  $Z$  with head vertex in $A$ and 
$Z\setminus A\neq \emptyset $.
Likewise, $\mathcal{H}$ is called {\bf $r$-rooted $l$-arc-connected},
 if for every vertex set $A$, $d_\mathcal{H}^-(A)\ge l(A)-\sum_{v\in A}r(v)$, where
 $r$ is a nonnegative integer-valued on $V(\mathcal{H})$ with $l(\mathcal{H})=\sum_{v\in V(\mathcal{H})}r(v)$.
{\bf Trimming} a hyperedge $Z$ of size at least three is the operation that $Z$ is replaced  by a subset of it with size at least two, see~\cite{MR2848535}.
Trimming  a directed hyperedge $Z$ of size at least three with head  $u$  is the operation that $Z$ is replaced by a subset of it
including  $u$ with size at least two.
A trimmed (directed) hypergraph refers to a (directed) hypergraph which is obtained by a series of trimming operations.
 Throughout this article, all hypergraphs have hyperedges with size at least two.
%
%
%
%
%
%
%
%
%
%
%
%
%
%
%
\subsection{Basic tools}
For every vertex $v$ of a hypergraph $\mathcal{H}$, consider an induced $l$-partition-connected sub-hypergraph of 
$\mathcal{H}$ containing $v$ with
the maximal order. The following proposition shows that these sub-hypergraphs are unique and decompose the
vertex set of $\mathcal{H}$ when l is intersecting supermodular. 
The proofs of the results in this subsection are similar to whose graph versions. 
\begin{proposition}
{Let $\mathcal{H}$ be a hypergraph with  $X, Y\subseteq V(\mathcal{H})$ and  let $l$ be an   intersecting supermodular real function on 
 subsets of $V(\mathcal{H})$.
If  $\mathcal{H}[X]$ and $\mathcal{H}[Y]$ are $l$-partition-connected and $X\cap Y\neq \emptyset$, 
then $\mathcal{H}[X\cup Y]$ is also $l$-partition-connected.
}\end{proposition}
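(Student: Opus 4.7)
The plan is to adapt the proof of Proposition~\ref{thm:basic:union} to the hypergraph setting, with careful bookkeeping to handle hyperedges that can simultaneously touch three or more parts of a partition. Given an arbitrary partition $P$ of $X\cup Y$, I would first identify the parts $A_1,\ldots,A_n\in P$ meeting both $X$ and $Y$, and set $A_{n+1}=Y$, $B_1=X$, $B_{i+1}=B_i\cup A_i$. Splitting the remaining parts of $P$ into $P_1$ (parts contained in $X\setminus Y$) and $P_2$ (parts contained in $Y\setminus X$), I would then form the partition $P'_1=P_1\cup\{A_i\cap X:1\le i\le n\}$ of $X$ and the partition $P'_2=P_2\cup\{W\}$ of $Y$, where $W=(\bigcup_{i=1}^{n}A_i)\cap Y\supseteq X\cap Y$.

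Next, invoking the $l$-partition-connectivity of $\mathcal{H}[X]$ and $\mathcal{H}[Y]$ yields lower bounds on $e_{\mathcal{H}[X]}(P'_1)$ and $e_{\mathcal{H}[Y]}(P'_2)$. Applying intersecting supermodularity of $l$ to each pair $(B_i,A_i)$ (which is intersecting because $X\cap Y\subseteq B_i\cap A_i$) and telescoping over $i=1,\ldots,n+1$, I would obtain
\[
\sum_{i=1}^{n}l(A_i\cap X)+l(W)+l(X\cup Y)+\sum_{i=1}^{n+1}d_\mathcal{H}(B_i,A_i)\ge l(X)+\sum_{i=1}^{n}l(A_i)+l(Y),
\]
where $d_\mathcal{H}(B_i,A_i)$ denotes the number of hyperedges $Z$ with $Z\cap(B_i\setminus A_i)\neq\emptyset$ and $Z\cap(A_i\setminus B_i)\neq\emptyset$.

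The main obstacle will be the hypergraph analogue of the key counting inequality
\[
e_{\mathcal{H}[X\cup Y]}(P)\ge e_{\mathcal{H}[X]}(P'_1)+e_{\mathcal{H}[Y]}(P'_2)+\sum_{i=1}^{n+1}d_\mathcal{H}(B_i,A_i),
\]
because, unlike a graph edge, a single hyperedge can meet many parts of $P$ at once and might in principle be charged to several terms on the right-hand side. I would overcome this by classifying each crossing hyperedge $Z$ of $P$ in $\mathcal{H}[X\cup Y]$ according to whether $Z\subseteq X$, $Z\subseteq Y$ with $Z\not\subseteq X$, or $Z$ meets both $X\setminus Y$ and $Y\setminus X$, and then using the nested structure $B_1\subseteq\cdots\subseteq B_{n+1}=X\cup Y$ together with the disjointness of the $A_i$ to verify that each such $Z$ is accounted for exactly once on the right. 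Once that counting inequality is in place, combining it with the supermodular telescope and the two partition-connectivity bounds yields $e_{\mathcal{H}[X\cup Y]}(P)\ge\sum_{A\in P}l(A)-l(X\cup Y)$, establishing the required $l$-partition-connectivity of $\mathcal{H}[X\cup Y]$.
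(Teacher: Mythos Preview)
Your overall plan mirrors the paper's approach (the paper simply asserts the hypergraph proof is ``similar'' to the graph one), but the specific counting inequality you aim to prove is \emph{false} for hypergraphs. A single hyperedge can be counted in several of the terms $d_{\mathcal{H}}(B_i,A_i)$ simultaneously. Concretely, take $n=2$, a part $A'\in P_1$, and a $3$-hyperedge $Z=\{v_1,v_2,v_3\}$ with $v_1\in A'\subseteq X\setminus Y$, $v_2\in A_1\setminus X$, and $v_3\in A_2\setminus X$. Then $Z$ meets $B_1\setminus A_1=X\setminus A_1$ (via $v_1$) and $A_1\setminus B_1=A_1\setminus X$ (via $v_2$), so it is counted in $d_{\mathcal{H}}(B_1,A_1)$; and $Z$ meets $B_2\setminus A_2\supseteq A_1$ (via $v_2$) and $A_2\setminus B_2=A_2\setminus X$ (via $v_3$), so it is also counted in $d_{\mathcal{H}}(B_2,A_2)$. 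Since $Z\not\subseteq X$ and $Z\not\subseteq Y$, it contributes nothing to $e_{\mathcal{H}[X]}(P'_1)$ or $e_{\mathcal{H}[Y]}(P'_2)$, while it contributes only once to $e_{\mathcal{H}[X\cup Y]}(P)$. Thus your proposed inequality
\[
e_{\mathcal{H}[X\cup Y]}(P)\ \ge\ e_{\mathcal{H}[X]}(P'_1)+e_{\mathcal{H}[Y]}(P'_2)+\sum_{i=1}^{n+1}d_{\mathcal{H}}(B_i,A_i)
\]
does not hold, and the nested/disjointness argument you sketch cannot rescue it.

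The good news is that the $d_{\mathcal{H}}(B_i,A_i)$ terms are unnecessary: in the graph proof (Proposition~\ref{thm:basic:union}) they appear on the left side of \emph{both} Relation~(\ref{eq:convex:1}) and Relation~(\ref{eq:convex:2}) and hence cancel when the two are combined. What you actually need is the weaker inequality
\[
e_{\mathcal{H}[X\cup Y]}(P)\ \ge\ e_{\mathcal{H}[X]}(P'_1)+e_{\mathcal{H}[Y]}(P'_2),
\]
which \emph{does} hold for hypergraphs: any hyperedge $Z\subseteq X$ crossing $P'_1$ crosses $P$ (since $P'_1$ is $P$ restricted to $X$); any $Z\subseteq Y$ crossing $P'_2$ crosses $P$; and no hyperedge is counted on both sides, because $Z\subseteq X\cap Y$ forces $Z\subseteq W$, a single part of $P'_2$. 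Pair this with the pure supermodular telescope (your displayed inequality with the $d_{\mathcal{H}}$ sum deleted, which follows directly from $l(B_i\cap A_i)+l(B_i\cup A_i)\ge l(B_i)+l(A_i)$) and the rest of your argument goes through verbatim.
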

\begin{proposition}\label{prop:hypergraph:contraction}
{Let $\mathcal{H}$ be a hypergraph with $ X\subseteq V(\mathcal{H})$ and  let $l$ be an intersecting supermodular  real function on subsets of $V(\mathcal{H})$.
If  $\mathcal{H}[X]$ and $\mathcal{H}/X$ are $l$-partition-connected,
then $\mathcal{H}$ is also $l$-partition-connected.
}\end{proposition}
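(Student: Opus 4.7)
The plan is to mirror the argument the author gives for Proposition~\ref{prop:deducing}, which is said to specialize the proof of Proposition~\ref{thm:basic:union} to $Y=V(\mathcal{H})$. Given a partition $P$ of $V(\mathcal{H})$, I would split its parts into those entirely inside $V(\mathcal{H})\setminus X$ (call this sub-collection $P_2$) and those $A_1,\ldots,A_n$ that meet $X$. Since every vertex of $X$ lies in some part of $P$ and that part cannot belong to $P_2$, we have $X\subseteq W:=A_1\cup\cdots\cup A_n$. I then form two induced partitions: the partition $P'_1=\{A_i\cap X: 1\le i\le n\}$ of $X$, and the partition $P'_2=P_2\cup\{\{x\}\cup\bigcup_{i=1}^n (A_i\setminus X)\}$ of $V(\mathcal{H}/X)$, where $x$ denotes the contracted vertex.

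The first step is the hyperedge-counting inequality
\[
e_\mathcal{H}(P)\ge e_{\mathcal{H}[X]}(P'_1)+e_{\mathcal{H}/X}(P'_2),
\]
which I would justify by classifying each hyperedge $Z$ of $\mathcal{H}$ according to whether $Z\subseteq X$, $Z\cap X=\emptyset$, or $Z$ meets but is not contained in $X$; in each case its contribution to the right-hand side is at most its contribution to $e_\mathcal{H}(P)$. The only hyperedges that are potentially lost on the right are those that cross several $A_i$'s via $X$ yet miss every part in $P_2$, and losing these can only make the inequality stronger.

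Next I invoke the $l$-partition-connectedness of $\mathcal{H}[X]$ and $\mathcal{H}/X$, using the standard convention that a set $B\ni x$ in $V(\mathcal{H}/X)$ has $l$-value $l((B\setminus\{x\})\cup X)$; in particular the contracted block of $P'_2$ has $l$-value $l(X\cup W)=l(W)$ and $l(V(\mathcal{H}/X))=l(V(\mathcal{H}))$. This yields
\[
e_{\mathcal{H}[X]}(P'_1)\ge\sum_{i=1}^n l(A_i\cap X)-l(X), \qquad e_{\mathcal{H}/X}(P'_2)\ge l(W)+\sum_{A\in P_2}l(A)-l(V(\mathcal{H})).
\]
Adding these, the desired bound $e_\mathcal{H}(P)\ge\sum_{A\in P}l(A)-l(V(\mathcal{H}))$ reduces to the purely set-function inequality
\[
\sum_{i=1}^n l(A_i\cap X)+l(W)\ge l(X)+\sum_{i=1}^n l(A_i).
\]

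To finish I would iterate intersecting supermodularity along the chain $B_1=X$, $B_{i+1}=B_i\cup A_i$. Since the $A_i$ are pairwise disjoint, a straightforward induction gives $B_i\cap A_i=X\cap A_i\ne\emptyset$, so each pair $(B_i,A_i)$ is intersecting and supermodularity yields $l(B_i)+l(A_i)\le l(X\cap A_i)+l(B_{i+1})$. Summing over $i$ and telescoping gives $l(X)+\sum_i l(A_i)\le\sum_i l(X\cap A_i)+l(X\cup W)=\sum_i l(X\cap A_i)+l(W)$, which is exactly what we need. The step I expect to require the most care is the hyperedge-counting inequality: unlike the graph case, where the $d_G(B_i,A_i)$ corrections of Proposition~\ref{thm:basic:union} are carried through the argument, here the correction terms vanish, so one must verify that the classification via $\mathcal{H}/X$ never double-counts a hyperedge of size at least three that straddles several $A_i$'s and several $P_2$-blocks simultaneously, which the block structure of $P'_2$ handles automatically.
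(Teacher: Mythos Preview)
Your argument is correct and follows essentially the same route the paper intends: it adapts the proof of Proposition~\ref{thm:basic:union} with $Y=V(\mathcal{H})$ to the hypergraph setting, exactly as indicated for Proposition~\ref{prop:deducing} and its hypergraph analogue. The only cosmetic differences are that you drop the $d_G(B_i,A_i)$ correction terms (which are nonnegative and not actually needed once supermodularity is applied) and you stop the telescoping chain at step $n$ rather than carrying the trivial step with $A_{n+1}=V(\mathcal{H})$.
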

\begin{lem}
{Let $\mathcal{H}$ be a hypergraph and let $l$ be a real  function on
subsets of $V(\mathcal{H})$.
If $G$ is $l$-partition-connected and $P$ is a   partition of $V(\mathcal{H})$ with 
$$e_\mathcal{H}(P)=\sum_{A\in P}l(A)-l(\mathcal{H}),$$
then for any $A\in P$, the hypergraph $\mathcal{H}[A]$ is also $l$-partition-connected.
}\end{lem}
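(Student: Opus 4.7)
The proof plan mirrors the graph version given in Lemma~\ref{lem:inducing}, since the essential combinatorial identity carries over to hypergraphs with virtually no change. The statement appears to have a typo (``$G$'' should be ``$\mathcal{H}$''); I will read it as asserting that each $\mathcal{H}[A]$ is $l$-partition-connected under the tightness hypothesis.

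First I would fix an arbitrary part $A \in P$ and an arbitrary partition $P'$ of $A$, and form the refined partition $P'' = P' \cup (P \setminus \{A\})$ of $V(\mathcal{H})$. The key observation is the counting identity
\begin{equation*}
e_{\mathcal{H}[A]}(P') = e_{\mathcal{H}}(P'') - e_{\mathcal{H}}(P).
\end{equation*}
To justify it, I would classify each hyperedge $Z$ of $\mathcal{H}$: if $Z$ already meets two distinct parts of $P$ then it still meets two distinct parts of $P''$ (parts outside $A$ are unchanged and $A$ is only further refined), contributing $1$ to both $e_{\mathcal{H}}(P)$ and $e_{\mathcal{H}}(P'')$; if $Z \subseteq A$ then it contributes $0$ to $e_{\mathcal{H}}(P)$, and contributes $1$ to $e_{\mathcal{H}}(P'')$ exactly when it meets two parts of $P'$, i.e.\ exactly when it contributes $1$ to $e_{\mathcal{H}[A]}(P')$; a hyperedge contained in some part of $P \setminus \{A\}$ contributes $0$ everywhere.

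Next, I would use the $l$-partition-connectivity of $\mathcal{H}$ applied to $P''$, giving $e_{\mathcal{H}}(P'') \ge \sum_{A'\in P''} l(A') - l(\mathcal{H})$, together with the tightness hypothesis $e_{\mathcal{H}}(P) = \sum_{A'\in P} l(A') - l(\mathcal{H})$. Subtracting and using $\sum_{A'\in P''}l(A') - \sum_{A'\in P}l(A') = \sum_{A'\in P'}l(A') - l(A)$ (the only changed parts are those inside $A$), one obtains
\begin{equation*}
e_{\mathcal{H}[A]}(P') \;\ge\; \sum_{A'\in P'} l(A') - l(A),
\end{equation*}
which is exactly the required $l$-partition-connectivity inequality for $\mathcal{H}[A]$ with respect to the arbitrary partition $P'$.

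There is no real obstacle here: the only point that needs care is verifying the counting identity for hyperedges rather than edges, since a hyperedge can meet several parts at once. But because $e_\mathcal{H}(\cdot)$ counts crossing hyperedges with multiplicity one regardless of how many parts they meet, the three-case analysis above is clean and no supermodularity of $l$ is used. The argument therefore works for an arbitrary real set function $l$, matching the hypothesis of the statement.
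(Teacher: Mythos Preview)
Your proposal is correct and is exactly the approach the paper intends: the paper does not write out a separate proof for this hypergraph lemma but states that the proofs in this subsection are ``similar to whose graph versions,'' and your argument is precisely the hypergraph transcription of the proof of Lemma~\ref{lem:inducing}, with the appropriate verification that the counting identity $e_{\mathcal{H}[A]}(P') = e_{\mathcal{H}}(P'') - e_{\mathcal{H}}(P)$ still holds when hyperedges may meet several parts.
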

\begin{proposition}
{Let $\mathcal{H}$ be a hypergraph and let  $l$ be  an intersecting supermodular weakly subadditive integer-valued function on 
subsets of $V(\mathcal{H})$.
If  $\mathcal{H}$ is minimally $l$-partition-connected, then
$$|E(\mathcal{H})|= \sum_{v\in V(\mathcal{H})} l(v)-l(\mathcal{H}).$$
}\end{proposition}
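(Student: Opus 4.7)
I would mimic the graph proof of Proposition~\ref{prop:minimally-edges} by induction on $|V(\mathcal{H})|$. For the base case $|V(\mathcal{H})|\le 1$, the hypergraph has no hyperedges (since hyperedges have size at least two), so $|E(\mathcal{H})|=0$, and weak subadditivity makes $\sum_v l(v)-l(\mathcal{H})$ equal to zero as well.

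For the inductive step with $|V(\mathcal{H})|\ge 2$, first apply the $l$-partition-connectivity inequality to the partition $P_{0}$ of $V(\mathcal{H})$ into singletons to obtain $|E(\mathcal{H})|=e_\mathcal{H}(P_0)\ge \sum_{v}l(v)-l(\mathcal{H})\ge 0$, where the last inequality uses weak subadditivity. If $|E(\mathcal{H})|=0$, both sides are zero and we are done. Otherwise, fix an edge $e$; by minimality, $\mathcal{H}-e$ is not $l$-partition-connected, so some partition $P$ of $V(\mathcal{H})$ satisfies $e_{\mathcal{H}-e}(P)<\sum_{A\in P}l(A)-l(\mathcal{H})$. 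Since $\mathcal{H}$ itself is $l$-partition-connected and removing a single hyperedge drops $e_\mathcal{H}(P)$ by at most one, the strict drop is exactly one, forcing $e_\mathcal{H}(P)=\sum_{A\in P}l(A)-l(\mathcal{H})$ and forcing $e$ to join different parts of $P$ (so in particular $|P|\ge 2$ and every $A\in P$ is a proper subset of $V(\mathcal{H})$).

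The hypergraph version of the inducing lemma (just stated above the target proposition) then shows each $\mathcal{H}[A]$ is $l$-partition-connected. I next argue that each $\mathcal{H}[A]$ is in fact minimally $l$-partition-connected: if some $e'\in E(\mathcal{H}[A])$ could be removed while preserving $l$-partition-connectivity of $\mathcal{H}[A]$, then because $\mathcal{H}/A$ inherits $l$-partition-connectivity from $\mathcal{H}$ (pull any partition of $V(\mathcal{H}/A)$ back to one of $V(\mathcal{H})$ by expanding the contracted vertex back to $A$), Proposition~\ref{prop:hypergraph:contraction} applied to $\mathcal{H}-e'$ would yield that $\mathcal{H}-e'$ is still $l$-partition-connected, contradicting the minimality of $\mathcal{H}$. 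Since $|V(\mathcal{H}[A])|<|V(\mathcal{H})|$, the induction hypothesis gives $|E(\mathcal{H}[A])|=\sum_{v\in A}l(v)-l(A)$ for every $A\in P$, and summing yields
\[
|E(\mathcal{H})| \;=\; e_\mathcal{H}(P) + \sum_{A\in P} e_\mathcal{H}(A) \;=\; \Big(\sum_{A\in P} l(A)-l(\mathcal{H})\Big) + \sum_{A\in P}\Big(\sum_{v\in A}l(v)-l(A)\Big) \;=\; \sum_{v\in V(\mathcal{H})}l(v)-l(\mathcal{H}),
\]
which is the claimed identity.

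The only real obstacle is the bookkeeping around contracting $A$ to a single vertex: one must interpret the induced set function $l$ on $V(\mathcal{H}/A)$ via the natural pullback $l(B):=l(B^{*})$, where $B^{*}$ reinserts $A$ whenever the contracted vertex appears in $B$, and one must be comfortable with hyperedges $e'\subseteq A$ becoming singletons in $\mathcal{H}/A$ that contribute nothing to any partition cut. With these conventions the argument transfers verbatim from the graph case, the hypergraph analogues of the inducing lemma and the contraction-deduction proposition (Proposition~\ref{prop:hypergraph:contraction}) doing all the heavy lifting.
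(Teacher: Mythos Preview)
Your proposal is correct and follows essentially the same approach as the paper: the paper does not give a separate proof for the hypergraph statement but simply remarks that the proofs in that subsection ``are similar to whose graph versions,'' and your argument is exactly the hypergraph transcription of the proof of Proposition~\ref{prop:minimally-edges}, invoking the hypergraph inducing lemma and Proposition~\ref{prop:hypergraph:contraction} in place of their graph counterparts. Your explicit remark that $(\mathcal{H}-e')/A=\mathcal{H}/A$ inherits $l$-partition-connectivity from $\mathcal{H}$ is a helpful detail that the graph proof leaves implicit.
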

\begin{proposition}\label{prop:hypergraph:sparse}
{Let $F$ be an $l$-sparse hypergraph with $|E(F)| = \sum_{v\in V(F)}l(v)-l(F)$, where $l$ is a weakly subadditive real function on 
subsets of $V(F)$. 
If $P$ is a partition of $V(F)$, then
$$e_F(P)\ge \sum_{A\in P}l(A)-l(F).$$
Furthermore, the equality holds only if for every $A\in P$, the hypergraph $F[A]$ is $l$-partition-connected.
}\end{proposition}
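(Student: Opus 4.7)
The plan is to mirror, almost verbatim, the proof of Proposition~\ref{prop:sparse} for graphs, taking a little care about the combinatorics of hyperedges relative to a partition. The key observation is that every hyperedge $Z$ of $F$ is either contained in some single part $A\in P$ (and so contributes to $e_F(A)$) or meets at least two different parts of $P$ (and so contributes to $e_F(P)$), and these two alternatives are mutually exclusive. Consequently
\[
|E(F)| \;=\; e_F(P) \;+\; \sum_{A\in P} e_F(A),
\]
which is the same identity that drove the graph argument.

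From the $l$-sparse hypothesis, applied separately to each $A\in P$, we have $e_F(A)\le \sum_{v\in A}l(v)-l(A)$. Summing and substituting into the identity above, together with the assumed saturation $|E(F)|=\sum_{v\in V(F)}l(v)-l(F)$, gives
\[
e_F(P) \;\ge\; \sum_{v\in V(F)}l(v)-l(F) \;-\; \sum_{A\in P}\Bigl(\sum_{v\in A}l(v)-l(A)\Bigr) \;=\; \sum_{A\in P}l(A)-l(F),
\]
which is the desired inequality.

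For the equality clause, if $e_F(P)=\sum_{A\in P}l(A)-l(F)$, then the chain of inequalities above is tight, forcing $e_F(A)=\sum_{v\in A}l(v)-l(A)$ for every $A\in P$. Now $F[A]$ inherits $l$-sparseness from $F$ (any $B\subseteq A$ still satisfies the sparse inequality), and the displayed equality says that $F[A]$ saturates the sparse bound on its own vertex set, i.e.\ $|E(F[A])|=\sum_{v\in A}l(v)-l(A)$. Applying the inequality just established to the hypergraph $F[A]$ with any partition $P'$ of $A$ yields $e_{F[A]}(P')\ge \sum_{B\in P'}l(B)-l(A)$, so $F[A]$ is $l$-partition-connected.

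I do not expect a real obstacle here: the only thing to verify is that the partition identity for $|E(F)|$ goes through in the hypergraph setting, and that the induced sub-hypergraph $F[A]$ is still $l$-sparse. Both are immediate from the definitions given earlier in Section~\ref{sec:hypergraphs}. The rest is purely the same bookkeeping as in the graph case, with no use of any deeper supermodularity properties of $l$ beyond weak subadditivity implicit in the statement.
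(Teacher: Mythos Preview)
Your proposal is correct and follows essentially the same approach as the paper's proof (which is the graph proof of Proposition~\ref{prop:sparse} carried over verbatim, as the paper itself remarks that the proofs in this subsection are similar to their graph versions). Your explicit verification of the identity $|E(F)|=e_F(P)+\sum_{A\in P}e_F(A)$ for hyperedges and the recursive application to $F[A]$ for the equality clause are exactly the steps the paper uses.
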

\begin{proposition}\label{prop:hypergraph:Q:subadditive}
{Let $F$ be an $l$-sparse hypergraph with $Y\subseteq  V(F)$, where  $l$ is  a weakly subadditive real function on subsets of $V(F)$. Let   $Q$ be an  $l$-partition-connected sub-hypergraph of $F$ with the minimum number of vertices including all vertices of  $Y$. 
If $l$ is element-subadditive, then for each $z\in V(Q)\setminus Y$, $d_Q(z)\ge 1$. 
Furthermore, if $l$ is subadditive, then for every vertex set $A$ with $Y\subseteq A\subsetneq V(Q)$, $d_Q(A)\ge 1$.
}\end{proposition}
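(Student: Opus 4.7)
The plan is to adapt the proof of Proposition~\ref{prop:Q:subadditive} to the hypergraph setting, with Proposition~\ref{prop:hypergraph:sparse} playing the role that Proposition~\ref{prop:sparse} played there. The first preliminary step is to verify that Proposition~\ref{prop:hypergraph:sparse} applies to $Q$. Since $F$ is $l$-sparse, so is the sub-hypergraph $Q$, which yields $|E(Q)| \le \sum_{v \in V(Q)} l(v) - l(Q)$. On the other hand, $l$-partition-connectedness of $Q$, applied to the partition of $V(Q)$ into singletons, gives the reverse inequality $|E(Q)| \ge \sum_{v \in V(Q)} l(v) - l(Q)$. Equality therefore holds, which is exactly the hypothesis needed for Proposition~\ref{prop:hypergraph:sparse}.

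Next, I would fix any vertex set $A$ with $Y \subseteq A \subsetneq V(Q)$ and put $B = V(Q) \setminus A$. By the minimality of $Q$, the induced sub-hypergraph $Q[A]$ cannot be $l$-partition-connected: if it were, it would be an $l$-partition-connected sub-hypergraph of $F$ containing $Y$ on strictly fewer vertices than $Q$, contradicting the choice of $Q$. Applying Proposition~\ref{prop:hypergraph:sparse} to the two-part partition $\{A, B\}$ of $V(Q)$ and using the \emph{furthermore} clause contrapositively, I obtain the strict inequality
\[
d_Q(A) \;=\; e_Q(\{A, B\}) \;>\; l(A) + l(B) - l(A \cup B).
\]

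It then remains to show that the right-hand side is nonnegative under either hypothesis. For part~(b), subadditivity of $l$ directly gives $l(A) + l(B) \ge l(A \cup B)$ for any disjoint pair $A, B$. For part~(a), I would specialize to $A = V(Q) \setminus \{z\}$ and $B = \{z\}$ with $z \in V(Q) \setminus Y$, so that element-subadditivity yields $l(A) + l(z) \ge l(A \cup \{z\}) = l(V(Q))$; moreover $d_Q(\{z\}) = d_Q(z)$ because every hyperedge through $z$ has size at least two and therefore necessarily meets $A$. In either case $d_Q(A)$ is a nonnegative integer strictly exceeding a nonnegative real number, so $d_Q(A) \ge 1$, as required. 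I do not anticipate a real obstacle here: the argument is essentially transliteration, and the only substantive point is the equality $|E(Q)| = \sum_{v \in V(Q)} l(v) - l(Q)$ that unlocks Proposition~\ref{prop:hypergraph:sparse}.
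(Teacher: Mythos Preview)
Your proposal is correct and follows essentially the same approach as the paper's proof of the graph version (Proposition~\ref{prop:Q:subadditive}), which the paper indicates carries over verbatim to hypergraphs. The only difference is that you explicitly verify the hypothesis $|E(Q)| = \sum_{v\in V(Q)} l(v) - l(Q)$ needed to invoke Proposition~\ref{prop:hypergraph:sparse}, whereas the paper leaves this step implicit; your version is therefore slightly more complete but not substantively different.
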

\begin{proposition}\label{prop:hypergraph:xGy-exchange}
{Let $\mathcal{H}$ be a hypergraph and let $l$ be an  intersecting supermodular  weakly subadditive  integer-valued function on 
subsets of $V(\mathcal{H})$. Let $F$ be an $l$-sparse spanning sub-hypergraph  of $\mathcal{H}$.  
If  $Z'\in E(\mathcal{H})\setminus E(F)$ and $Q$ is an  $l$-partition-connected sub-hypergraph of $F$ including all vertices of $Z'$ with the minimum number of vertices,  then for every  $Z \in E(Q)$, the hypergraph $F-Z+Z'$ remains $l$-sparse.
}\end{proposition}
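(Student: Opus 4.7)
The plan is to mirror the graph-version proof of Proposition~\ref{prop:xGy-exchange}, using submodularity of the hyperedge-count function $e_F$ together with intersecting supermodularity of $l$, and then invoke the hypergraph sparsity lemma (Proposition~\ref{prop:hypergraph:sparse}) to contradict the minimality of $Q$.

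First I would argue by contradiction: assume $F-Z+Z'$ fails to be $l$-sparse. Then there is a vertex set $A$ with $e_{F-Z+Z'}(A) > \sum_{v\in A} l(v)-l(A)$. Since $F$ itself is $l$-sparse, the only way this inequality can flip sign is if $Z'\subseteq A$, $Z\not\subseteq A$, and the $F$-bound is already tight, i.e.\ $e_F(A)=\sum_{v\in A}l(v)-l(A)$. Set $B=V(Q)$; since $Z'\subseteq V(Q)$ we have $Z'\subseteq A\cap B$, so in particular $|A\cap B|\ge 2$, which is what we need to apply intersecting supermodularity of $l$ to the pair $(A,B)$.

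Next, I would exploit the supermodularity of $e_F$ on the hypergraph: for every hyperedge $Y$, a routine case analysis on the four possibilities $Y\subseteq A\cap B$, $Y\subseteq A\setminus B$, $Y\subseteq B\setminus A$, $Y\subseteq A\cup B$ (and otherwise) shows $[Y\subseteq A]+[Y\subseteq B]\le [Y\subseteq A\cap B]+[Y\subseteq A\cup B]$, hence $e_F(A)+e_F(B)\le e_F(A\cap B)+e_F(A\cup B)$. Combining this with the $l$-sparsity bound on $B$ and $A\cup B$, the tightness on $A$, and the intersecting supermodular inequality $l(A)+l(B)\le l(A\cap B)+l(A\cup B)$, the terms collapse to give
\[
e_F(A\cap B)\ge \sum_{v\in A\cap B} l(v)-l(A\cap B).
\]
Since $F$ is $l$-sparse the reverse inequality holds too, so equality holds on $A\cap B$.

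Finally, applying Proposition~\ref{prop:hypergraph:sparse} to the induced sub-hypergraph $F[A\cap B]$ (which is $l$-sparse and meets the equality condition) yields that $F[A\cap B]$ is $l$-partition-connected. Since $F[A\cap B]$ contains every vertex of $Z'$ and is contained in $V(Q)=B$, the minimality of $Q$ forces $A\cap B=V(Q)$; but then $Z\subseteq V(Q)\subseteq A$, contradicting $Z\not\subseteq A$. The main obstacle I expect is simply verifying cleanly the supermodularity of $e_F$ in the hypergraph setting (the case analysis above), since unlike in graphs an edge is not just an unordered pair; once this is in hand the rest is a direct translation of the graph proof.
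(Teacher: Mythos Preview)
Your approach is correct and mirrors the paper's graph-version proof (Proposition~\ref{prop:xGy-exchange}) essentially line for line; the paper gives no separate argument for the hypergraph case, only remarking that the proofs are similar.

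There is one small but genuine gap in your justification. You write ``the $l$-sparsity bound on $B$'', but sparsity gives only the upper bound $e_F(B)\le \sum_{v\in B}l(v)-l(B)$, which is the wrong direction for the inequality chain
\[
e_F(A\cap B)\ \ge\ e_F(A)+e_F(B)-e_F(A\cup B).
\]
What you actually need is the \emph{lower} bound (hence equality) $e_F(B)=\sum_{v\in B}l(v)-l(B)$. This follows because $Q\subseteq F[B]$ is $l$-partition-connected on $B=V(Q)$: the partition of $B$ into singletons yields $e_F(B)\ge |E(Q)|\ge \sum_{v\in B}l(v)-l(B)$, and sparsity gives the reverse inequality. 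With that correction in place, your supermodularity computation for $e_F$ is clean, the rest of the argument goes through verbatim, and your final step---forcing $A\cap B=V(Q)$ from minimality and then deducing $Z\subseteq V(Q)\subseteq A$---is in fact more explicit than the paper's graph-version proof, which stops at ``contradicts minimality of $Q$'' without separately disposing of the case $A\cap B=V(Q)$.
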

%
%
%
%
%
%
%
%
%
%
%
\subsection{Packing spanning partition-connected sub-hypergraphs}
The following theorem provides an extension for  Theorem~\ref{thm:generalized:D} on hypergraphs.
\begin{thm}\label{thm:hypergraph:generalized:D}
{Let $\mathcal{H}$ be a hypergraph and let $l_1, l_2,\ldots, l_m$ be  $m$ intersecting supermodular  subadditive integer-valued functions on subsets of $V(\mathcal{H})$.  
 If $F_1, \ldots, F_m$ is a family of  edge-disjoint spanning sub-hypergraphs of $\mathcal{H}$ 
with the maximum $|E(F_1 \cup \cdots \cup F_m)|$
such that every  hypergraph $F_i$ is $l_i$-sparse,
 then there is a partition $P$ of $V(\mathcal{H})$ such that
 there is no hyperedges in $E(\mathcal{H})\setminus E(F_1 \cup \cdots \cup F_m)$ joining different parts of $P$, 
and also  for each $i$ with $1 \le i \le m$ and  every $A\in P$, the hypergraph $F_i[A]$ is $l_i$-partition-connected.
}\end{thm}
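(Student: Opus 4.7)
The plan is to adapt the graph proof of Theorem~\ref{thm:generalized:D} verbatim, substituting the hypergraph analogues of the exchange/deduction propositions already established in Section~7. I would start by defining $\mathcal{A}$ as the collection of all $m$-tuples $\mathcal{F}=(\mathcal{F}_1,\ldots,\mathcal{F}_m)$ of edge-disjoint spanning $l_i$-sparse sub-hypergraphs of $\mathcal{H}$ maximizing $|E(\mathcal{F})|:=|E(\mathcal{F}_1\cup\cdots\cup\mathcal{F}_m)|$. For any $Z\in E(\mathcal{H})\setminus E(\mathcal{F})$, the maximality of $|E(\mathcal{F})|$ forces $\mathcal{F}_i+Z$ to violate $l_i$-sparsity for every $i$, hence $V(Z)$ lies in the vertex set of some $l_i$-partition-connected sub-hypergraph of $\mathcal{F}_i$; pick the minimal such $Q_i$ and, using Proposition~\ref{prop:hypergraph:xGy-exchange}, exchange any hyperedge $Z'\in E(Q_i)$ with $Z$ to produce a new tuple $\mathcal{F}'\in\mathcal{A}$. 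Let $\mathcal{A}_0\subseteq\mathcal{A}$ denote the transitive closure of $F=(F_1,\ldots,F_m)$ under such single-hyperedge exchanges, and set
\[
E(\mathcal{H}_0)=\bigcup_{\mathcal{F}\in\mathcal{A}_0}\bigl(E(\mathcal{H})\setminus E(\mathcal{F})\bigr).
\]

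The crucial combinatorial claim to prove is: if $\mathcal{F}'$ is obtained from $\mathcal{F}\in\mathcal{A}_0$ by a single exchange, and two vertices $x,y$ lie in some $l_i$-partition-connected sub-hypergraph of $\mathcal{F}'_i\cap\mathcal{H}_0$, then they already lie in some $l_i$-partition-connected sub-hypergraph of $\mathcal{F}_i\cap\mathcal{H}_0$. I would argue exactly as in the graph case: let $Q'_i$ be the minimal such sub-hypergraph in $\mathcal{F}'_i\cap\mathcal{H}_0$, let $Z'$ be the new hyperedge, assume $Z'\in E(Q'_i)$ (otherwise we are done), and let $Q_i$ be the minimal $l_i$-partition-connected sub-hypergraph of $\mathcal{F}_i$ whose vertex set contains $V(Z')$. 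Proposition~\ref{prop:hypergraph:xGy-exchange} then gives $E(Q_i)\subseteq E(\mathcal{H}_0)$, so $Q=(Q_i\cup Q'_i)-Z'$ has all its hyperedges in $\mathcal{F}_i\cap\mathcal{H}_0$; since $Q/V(Q_i)$ and $Q[V(Q_i)]$ are $l_i$-partition-connected (the former because $Q'_i$ contracts to it after contracting $V(Q_i)$), Proposition~\ref{prop:hypergraph:contraction} certifies that $Q$ itself is $l_i$-partition-connected.

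Next, I would take $P$ to be the partition of $V(\mathcal{H})$ arising from the connected components of $\mathcal{H}_0$ (two vertices lying in the same part iff joined by a chain of hyperedges of $\mathcal{H}_0$). By definition of $\mathcal{H}_0$, every hyperedge of $\mathcal{H}$ not in $E(F_1\cup\cdots\cup F_m)$ belongs to $E(\mathcal{H}_0)$, hence is confined to a single part of $P$. Now fix $i$ and a component $C_0$ of $\mathcal{H}_0$; I want to show $F_i[V(C_0)]$ is $l_i$-partition-connected. For each hyperedge $Z\in E(C_0)$, walk along a chain of exchanges in $\mathcal{A}_0$ terminating in some $\mathcal{F}^n$ with $Z\notin E(\mathcal{F}^n)$, and repeatedly apply the claim to conclude that $V(Z)$ lies in the vertex set of some $l_i$-partition-connected sub-hypergraph $Q_i$ of $F_i\cap\mathcal{H}_0$. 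Subadditivity of $l_i$ together with Proposition~\ref{prop:hypergraph:Q:subadditive} guarantees $d_{Q_i}(A)\ge 1$ for every vertex set $A$ with $V(Z)\subseteq A\subsetneq V(Q_i)$, which forces $V(Q_i)\subseteq V(C_0)$ since $C_0$ is a component of $\mathcal{H}_0$. Iterating this from hyperedge to hyperedge inside $C_0$ and invoking Proposition on unions of intersecting $l_i$-partition-connected sub-hypergraphs (the hypergraph analogue of Proposition~\ref{thm:basic:union}) shows that the entire $F_i[V(C_0)]$ is $l_i$-partition-connected.

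The main obstacle I anticipate is the step where one translates "lying in a common sub-hypergraph" into "lying in the same component of $\mathcal{H}_0$." In the graph case, a single edge $xy$ of $Q_i$ immediately witnesses that $x$ and $y$ are in the same component; in the hypergraph setting the corresponding witness is a hyperedge containing both, which relies on subadditivity (rather than just element-subadditivity) via Proposition~\ref{prop:hypergraph:Q:subadditive} to force every proper cut of $Q_i$ separating the vertices of $Z$ to be crossed by a hyperedge. All other steps are routine rewrites of the graph argument with hyperedges in place of edges and with the hypergraph propositions of Section~7 replacing their graph counterparts.
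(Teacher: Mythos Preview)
Your proposal is correct and essentially identical to the paper's own proof: the same set $\mathcal{A}$, the same exchange closure $\mathcal{A}_0$, the same auxiliary hypergraph $\mathcal{H}_0$, the same claim proved via $Q=(Q_i\cup Q'_i)-Z'$ and Proposition~\ref{prop:hypergraph:contraction}, and the same conclusion via components of $\mathcal{H}_0$ together with Proposition~\ref{prop:hypergraph:Q:subadditive}. The only cosmetic difference is that the paper states the claim for an arbitrary vertex set $Y$ (later taken to be a hyperedge) rather than for a pair $x,y$; your pairwise version combined with the union proposition is equivalent, so nothing is lost.
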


\begin{proof}
{Define $F=(F_1,\ldots, F_m)$.
 Let $\mathcal{A}$ be the set of all $m$-tuples $\mathcal{F}=(\mathcal{F}_1,\ldots, \mathcal{F}_m)$ 
with the maximum  $|E(\mathcal{F})|$
 such that $\mathcal{F}_1,\ldots, \mathcal{F}_m$ are edge-disjoint spanning sub-hypergraphs of $\mathcal{H}$ 
and every $\mathcal{F}_i$ is $l_i$-sparse,
where $E(\mathcal{F}) =E(\mathcal{F}_1 \cup \cdots \cup \mathcal{F}_m)$.
Note that if  $Z\in E(\mathcal{H})\setminus E(\mathcal{F})$, then every hypergraph $\mathcal{F}_i+Z$ is not $l_i$-sparse; 
otherwise, we replace $\mathcal{F}_i$ by $\mathcal{F}_i+Z$ in $\mathcal{F}$, 
which contradicts maximality of $|E(\mathcal{F})|$.
Thus all vertices of $Z$ lie in an $l_i$-partition-connected subgraph of $\mathcal{F}_i$.
Let $Q_i$ be the  $l_i$-partition-connected sub-hypergraph of $\mathcal{F}_i$ 
including  all vertices of $Z$
with minimum number of vertices.
Let $Z'\in Q_i$.
Define $\mathcal{F}'_i=\mathcal{F}_i-Z'+Z$, 
and $\mathcal{F}'_j=\mathcal{F}_j$ for all $j$ with $j\neq i$.
According to Proposition~\ref{prop:hypergraph:xGy-exchange}, 
the hypergraph $\mathcal{F}'_i$ is again $l_i$-sparse and so 
$\mathcal{F}'=(\mathcal{F}'_1,\ldots, \mathcal{F}'_m) \in \mathcal{A}$.
We say that $\mathcal{F}'$ is obtained from $\mathcal{F}$ by replacing a pair of hyperedges.
Let $\mathcal{A}_0$ be the set of all $m$-tuples $\mathcal{F}$ in $\mathcal{A}$ 
which can be obtained from $F$ by a series of hyperedge replacements.
Let $\mathcal{H}_0$ be the spanning sub-hypergraph of $\mathcal{H}$ with
$$E(\mathcal{H}_0)=\bigcup_{\mathcal{F}\in  \mathcal{A}_0} (E(\mathcal{H})\setminus E(\mathcal{F})).$$
Now, we prove the following claim.
%
\vspace{2mm}\\
{\bf Claim.} 
Let  $\mathcal{F}=(\mathcal{F}_1,\ldots, \mathcal{F}_m) \in \mathcal{A}_0$ and 
assume  that 
$\mathcal{F}'=(\mathcal{F}'_1,\ldots, \mathcal{F}'_m)$  is obtained from $\mathcal{F}$ by replacing a pair of hyperedges.
If all vertices of a given vertex set $Y$ lie in an  $l_i$-partition-connected sub-hypergraph  of $\mathcal{F}'_i \cap \mathcal{H}_0$, 
then those vertices  also lie  in an $l_i$-partition-connected sub-hypergraph of $\mathcal{F}_i \cap \mathcal{H}_0$, where $1 \le i \le m$.
\vspace{2mm}\\
{\bf Proof of Claim.} 
Let $Z'$ be the new hyperedge in $E(\mathcal{F}')\setminus E(\mathcal{F})$.
Define $Q'_i$ to be the minimal $l_i$-partition-connected sub-hypergraph  of $\mathcal{F}'_i \cap  \mathcal{H}_0$ including all vertices of $Y$.
We may assume that $Z'\in E(Q'_i)$; 
otherwise, $E(Q'_i) \subseteq E(\mathcal{F}_i) \cap E(\mathcal{H}_0)$ and the proof can easily be completed.
Since $Z' \in E(\mathcal{F}')\setminus E(\mathcal{F})$, all vertices of $Z'$
  must lie in an $l_i$-partition-connected  sub-hypergraph   of $\mathcal{F}_i $.
Define $Q_i$ to be the minimal $l_i$-partition-connected  sub-hypergraph  of $\mathcal{F}_i$
including all vertices of $Z'$.
By Proposition~\ref{prop:hypergraph:xGy-exchange}, for every hyperedge $Z\in E(Q_i)$,
 the hypergraph $\mathcal{F}_i-Z+Z'$ remains $l_i$-sparse, which can imply that $E(Q_i)\subseteq E(\mathcal{H}_0)$.
Define $Q=(Q_i\cup Q'_i)-Z'$.
Note that  $Q$ includes  all vertices of $Y$, and also $E(Q)\subseteq E(\mathcal{H}_0)\cap E(\mathcal{F}_i)$.
Since $Q/V(Q_i)$ and $Q[V(Q_i)]$ are $l_i$-partition-connected, by Proposition~\ref{prop:hypergraph:contraction}, the hypergraph $Q$ itself must be   $l_i$-partition-connected.
Hence  the claim holds.

Define $P$ to be the partition of $V(\mathcal{H})$ obtained from the components of $\mathcal{H}_0$.
Let $i \in \{1,\ldots, m\}$,
let   $C_0$ be a  component of $\mathcal{H}_0$, 
and let $Y\in E(C_0)$.  
By the definition of $\mathcal{H}_0$, 
there is no hyperedges in $E(\mathcal{H})\setminus E(F_1 \cup \cdots \cup F_m)$ joining different parts of $P$, and also
 there are some $m$-tuples $\mathcal{F}^1,\ldots, \mathcal{F}^n$  in
 $\mathcal{A}_0$ such that
 $Y \in E(\mathcal{H})\setminus E(\mathcal{F}^n)$, 
 $F = \mathcal{F}^1$,
and every $\mathcal{F}^k$
 can be obtained from $\mathcal{F}^{k-1}$ by replacing a pair of hyperedges,  where $1 < k \le n$.
As we stated above,  all vertices of $Y$ must lie   in an $l_i$-partition-connected   sub-hypergraph  of $\mathcal{F}^n_i$.
Let $Q'_i$ be the  minimal $l_i$-partition-connected  sub-hypergraph of $\mathcal{F}^n_i$
 including all vertices of $Y$.
By Proposition~\ref{prop:hypergraph:xGy-exchange}, 
for every hyperedge $Z\in E(Q'_i)$, the hypergraph  $\mathcal{F}^n_i-Z+Y$ remains $l_i$-sparse,  which can imply
$E(Q'_i)\subseteq E(\mathcal{H}_0)$. 
Thus all vertices of $Y$ must also lie  in an $l_i$-partition-connected subgraph  of $\mathcal{F}^n_i\cap \mathcal{H}_0$.
By repeatedly applying the above-mentioned claim, 
 one can conclude  that 
all vertices of $Y$ lie  in an  $l_i$-partition-connected  sub-hypergraph of $F_i \cap \mathcal{H}_0$.
Let $Q_i$ be the  minimal $l_i$-partition-connected  sub-hypergraph of $F_i$  including all vertices of $Y$
so that $E(Q_i)\subseteq E(\mathcal{H}_0)$. 
Since $l$ is subadditive, Proposition~\ref{prop:hypergraph:Q:subadditive} implies that $d_{Q_i}(A)\ge 1$,
for every vertex set $A$ with $Y \subseteq A\subsetneq V(Q_i)$.
Since $C_0$ is connected,  we must have $V(Q_i)\subseteq V(C_0)$.
In other words,  for every  $Y\in E(C_0)$, there is an $l_i$-partition-connected  sub-hypergraph of $F_i\cap C_0$ including all vertices of $Y$.
Since  $C_0$ is connected,  all vertices of $C_0$ must lie in an $l_i$-partition-connected  sub-hypergraph of $F_i\cap C_0$.
Thus the hypergraph $F_i [V(C_0)]$ itself must be   $l_i$-partition-connected.
Hence the proof is  completed.
}\end{proof}
The following theorem generalizes a  result   in~\cite[Theorem 2.8]{MR2021107} due to Frank, Kir\'aly, and Kriesell (2003).

\begin{thm}\label{thm:main:partition-connected:hypergraph}
{Let $\mathcal{H}$ be a hypergraph and let $l_1, l_2,\ldots, l_m$ be $m$  intersecting supermodular subadditive integer-valued functions on subsets of $V(\mathcal{H})$.  
 If $\mathcal{H}$ is $(l_1+\cdots +l_m)$-partition-connected,  then it can be decomposed into $m$ edge-disjoint spanning sub-hypergraphs $H_1,\ldots, H_m$ such that every hypergraph $H_i$ is $l_i$-partition-connected.
}\end{thm}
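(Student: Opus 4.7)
The plan is to adapt, word for word, the proof of Theorem~\ref{thm:main:partition-connected} to the hypergraph setting, with Theorem~\ref{thm:hypergraph:generalized:D} playing the role of Theorem~\ref{thm:generalized:D} and Proposition~\ref{prop:hypergraph:sparse} replacing its graph analogue. So first I would select a family $F_1,\ldots,F_m$ of edge-disjoint spanning sub-hypergraphs of $\mathcal{H}$ with every $F_i$ being $l_i$-sparse and with $|E(F)|$ maximum, where $F=F_1\cup\cdots\cup F_m$. Applying Theorem~\ref{thm:hypergraph:generalized:D} yields a partition $P$ of $V(\mathcal{H})$ such that no hyperedge of $\mathcal{H}\setminus F$ joins different parts of $P$, and each $F_i[A]$ is $l_i$-partition-connected for every $A\in P$.

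Next, I would combine the local equalities with the global assumption. Setting $l=l_1+\cdots+l_m$, since $\mathcal{H}$ is $l$-partition-connected we have $e_\mathcal{H}(P)\ge\sum_{A\in P}l(A)-l(\mathcal{H})$, and since no hyperedge outside $F$ crosses $P$ we have $e_F(P)=e_\mathcal{H}(P)$. For each $A\in P$, because $F_i[A]$ is simultaneously $l_i$-sparse and $l_i$-partition-connected (hence minimally so), the hypergraph analogue of Proposition~\ref{prop:minimally-edges} gives $e_{F_i}(A)=\sum_{v\in A}l_i(v)-l_i(A)$, so that $e_F(A)=\sum_{v\in A}l(v)-l(A)$. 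Putting these together,
$$|E(F)|=e_F(P)+\sum_{A\in P}e_F(A)\ge \sum_{A\in P}l(A)-l(\mathcal{H})+\sum_{A\in P}\Bigl(\sum_{v\in A}l(v)-l(A)\Bigr)=\sum_{v\in V(\mathcal{H})}l(v)-l(\mathcal{H}).$$

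On the other hand, each $F_i$ is $l_i$-sparse, applied with $A=V(\mathcal{H})$, so $|E(F_i)|\le\sum_v l_i(v)-l_i(\mathcal{H})$, and summation yields $|E(F)|\le\sum_v l(v)-l(\mathcal{H})$. Therefore equality must hold throughout; in particular $|E(F_i)|=\sum_v l_i(v)-l_i(\mathcal{H})$ for each $i$. By Proposition~\ref{prop:hypergraph:sparse} applied to $F_i$ and any partition of $V(\mathcal{H})$, this equality forces $F_i$ to be $l_i$-partition-connected. Finally, extending $F_1,\ldots,F_m$ to a decomposition of $\mathcal{H}$ by distributing the remaining hyperedges of $E(\mathcal{H})\setminus E(F)$ arbitrarily among the $F_i$ preserves $l_i$-partition-connectivity (adding hyperedges never destroys it), which completes the argument.

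The only step that genuinely requires content is the invocation of Theorem~\ref{thm:hypergraph:generalized:D}; everything else is the same counting trick as in the graph case, relying on the fact that a hyperedge either lies entirely inside one part of $P$ or contributes to $e_\mathcal{H}(P)$, so that $|E(F)|=e_F(P)+\sum_{A\in P}e_F(A)$ with no boundary correction. The principal obstacle, then, sits upstream inside Theorem~\ref{thm:hypergraph:generalized:D}; once that is in hand the present theorem is immediate.
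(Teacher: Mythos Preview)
Your proposal is correct and follows essentially the same route as the paper: choose a maximal edge-disjoint family of $l_i$-sparse spanning sub-hypergraphs, invoke Theorem~\ref{thm:hypergraph:generalized:D} to obtain the partition $P$, and then run the identical two-sided counting on $|E(F)|$ to force $|E(F_i)|=\sum_v l_i(v)-l_i(\mathcal{H})$, whence Proposition~\ref{prop:hypergraph:sparse} gives $l_i$-partition-connectivity. Your closing remark about distributing the leftover hyperedges of $E(\mathcal{H})\setminus E(F)$ to obtain an actual decomposition is a detail the paper leaves implicit, so if anything your write-up is slightly more complete.
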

\begin{proof}
{Let $F_1, \ldots, F_m$ be a family of  edge-disjoint spanning sub-hypergraphs of $\mathcal{H}$ 
with the maximum $|E(F)|$
such that every  hypergraph $F_i$ is $l_i$-sparse, where $F=F_1 \cup \cdots \cup F_m$.
Let $P$ be a partition of $V(\mathcal{H})$ with the properties described in Theorem~\ref{thm:hypergraph:generalized:D}.
Since for  every $A\in P$, the induced sub-hypergraph $F_i[A]$ is $l_i$-partition-connected, 
we must have $e_{F_i}(A)=\sum_{v\in A}l_i(v)-l_i(A)$.
Define $l=l_1+\cdots +l_m$. 
By the assumption, $e_\mathcal{H}(P)\ge \sum_{A\in P} l(A)-l(\mathcal{H})$.
Since $e_F(P)=e_\mathcal{H}(P)$,  we  have
$$|E(F)| = e_F(P)+\sum_{A\in P} e_F(A) 
\ge \sum_{A\in P}l(A)-l(\mathcal{H})+\sum_{A\in P}(\sum_{v\in A}l(v)-l(A))=
\sum_{v\in V(\mathcal{H})}l(v)-l(\mathcal{H}).$$
On the other hand,  
$$|E(F)| =\sum_{1\le i\le m}|E(F_i)|  \le 
\sum_{1\le i\le m}(\sum_{v\in V(\mathcal{H})}l_i(v)-l_i(\mathcal{H}))=\sum_{v\in V(\mathcal{H})}l(v)-l(\mathcal{H}).$$
Therefore, for every hypergraph $F_i$, the equality  $|E(F_i)|=\sum_{v\in V(\mathcal{H})}l_i(v) -l_i(\mathcal{H})$ must be hold,
which implies that $F_i$ is $l_i$-partition-connected. 
This  can  complete the proof.
}\end{proof}
The following result provides an  improvement for Corollary~2.9 in~\cite{MR2021107}.
\begin{cor}\label{cor:hypergraph:rl-edge}
{Let $\mathcal{H}$ be a hypergraph  with  the rank $r$ and let $l$ be an intersecting supermodular  subadditive nonnegative integer-valued function on subsets of $V(\mathcal{H})$.
If $\mathcal{H}$ is $rl$-edge-connected, then it has 
an $l$-partition-connected spanning  sub-hypergraph $H$ such that for each vertex $v$,
$$d_H(v) \le \lceil  \frac{r-1}{r}d_\mathcal{H}(v)\rceil+l(v).$$
Furthermore, for a given arbitrary  vertex $u$ the upper bound can be reduced to 
 $ \lfloor   \frac{r-1}{r}d_\mathcal{H}(u)\rfloor+l(u)-l(\mathcal{H})$.
}\end{cor}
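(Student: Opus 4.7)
The plan is to mimic the graph case that follows Theorem~\ref{thm:main:partition-connected}: construct an auxiliary set function $\ell$ so that $\mathcal{H}$ becomes $(l+\ell)$-partition-connected, then apply Theorem~\ref{thm:main:partition-connected:hypergraph} to peel off an $\ell$-partition-connected spanning sub-hypergraph whose vertex degrees are forced to be large.

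First I would define $\ell(u)=\lceil d_{\mathcal{H}}(u)/r\rceil - l(u)+l(\mathcal{H})$, $\ell(v)=\lfloor d_{\mathcal{H}}(v)/r\rfloor-l(v)$ for every $v\neq u$, and $\ell(A)=0$ for every vertex set $A$ with $|A|\ge 2$. The $rl$-edge-connectivity of $\mathcal{H}$ gives $d_{\mathcal{H}}(v)\ge rl(v)$ for each vertex $v$, so $\ell$ is nonnegative and integer-valued. Since $\ell$ is supported on singletons, the axioms of intersecting supermodularity and subadditivity reduce to checking $\ell\ge 0$; both are immediate.

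Next I would verify that $\mathcal{H}$ is $(l+\ell)$-partition-connected. Let $P$ be a partition of $V(\mathcal{H})$ with $|P|\ge 2$. Because every hyperedge $Z$ has size at most $r$, each hyperedge crossing $P$ contributes to at most $r$ of the quantities $d_{\mathcal{H}}(A)$, so $\sum_{A\in P}d_{\mathcal{H}}(A)\le r\,e_{\mathcal{H}}(P)$. On parts of size at least $2$ we use $rl$-edge-connectivity to get $d_{\mathcal{H}}(A)\ge rl(A)$; on singletons we use $r\lfloor d_{\mathcal{H}}(v)/r\rfloor\le d_{\mathcal{H}}(v)$ and $r\lceil d_{\mathcal{H}}(u)/r\rceil\le d_{\mathcal{H}}(u)+(r-1)$. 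Summing these bounds, and noting that $\ell$ vanishes outside singletons, gives
$$r\,e_{\mathcal{H}}(P)\ \ge\ r\sum_{A\in P}(l+\ell)(A)\ -\ rl(\mathcal{H})\,[\{u\}\in P]\ -\ (r-1)\,[\{u\}\in P].$$
Dividing by $r$ and using that $e_{\mathcal{H}}(P)$ and $l(\mathcal{H})$ are integers while $(r-1)/r<1$, and that $\ell(\mathcal{H})=0$ whenever $|V(\mathcal{H})|\ge 2$, yields $e_{\mathcal{H}}(P)\ge \sum_{A\in P}(l+\ell)(A)-(l+\ell)(\mathcal{H})$.

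Now Theorem~\ref{thm:main:partition-connected:hypergraph} decomposes $\mathcal{H}$ into edge-disjoint spanning sub-hypergraphs $H$ and $H'$ that are respectively $l$-partition-connected and $\ell$-partition-connected. Plugging the partition $\{\{v\},\,V(\mathcal{H})\setminus\{v\}\}$ into the defining inequality for $H'$ gives $d_{H'}(v)\ge \ell(v)$ for each $v$. Since $d_H(v)=d_{\mathcal{H}}(v)-d_{H'}(v)$, the identities $d_{\mathcal{H}}(v)-\lfloor d_{\mathcal{H}}(v)/r\rfloor=\lceil (r-1)d_{\mathcal{H}}(v)/r\rceil$ and $d_{\mathcal{H}}(u)-\lceil d_{\mathcal{H}}(u)/r\rceil=\lfloor (r-1)d_{\mathcal{H}}(u)/r\rfloor$ deliver precisely the two degree bounds of the corollary.

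The main obstacle is the passage from $\sum_{A}d_{\mathcal{H}}(A)$ to $e_{\mathcal{H}}(P)$: for graphs one has the clean equality $\sum_A d_G(A)=2\,e_G(P)$, but in the hypergraph setting the rank gives only the inequality $\sum_A d_{\mathcal{H}}(A)\le r\,e_{\mathcal{H}}(P)$. The resulting fractional slack $(r-1)/r$, combined with the correction at the distinguished vertex $u$, has to be absorbed by an integrality argument in order to reach the exact $(l+\ell)$-partition-connectivity inequality required to invoke Theorem~\ref{thm:main:partition-connected:hypergraph}.
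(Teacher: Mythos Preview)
Your proposal is correct and follows essentially the same route as the paper: define the auxiliary function $\ell$ supported on singletons, verify $(l+\ell)$-partition-connectivity of $\mathcal{H}$ via the rank inequality $\sum_{A\in P}d_{\mathcal{H}}(A)\le r\,e_{\mathcal{H}}(P)$ together with the integrality slack $(r-1)/r<1$, and then invoke Theorem~\ref{thm:main:partition-connected:hypergraph} to split off $H'$ with $d_{H'}(v)\ge\ell(v)$. Your tracking of the indicator $[\{u\}\in P]$ is slightly more explicit than the paper's write-up, but the argument is identical in substance.
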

\begin{proof}
{Define $\ell(u)=\lceil d_\mathcal{H}(u)/r  \rceil -l(u)+l(\mathcal{H})$,  
and  $\ell(v)=\lfloor d_\mathcal{H}(v)/r  \rfloor -l(v)$, 
for each vertex   $v$ with $v\neq u$
 so that $d_\mathcal{H}(u)\ge rl(u)+r\ell(u)+rl(\mathcal{H})-(r-1)$ 
and $d_\mathcal{H}(v)\ge rl(v)+r\ell(v)$. 
Define $\ell(A)=0$ for every  vertex set $A$ with $|A| \ge 2$.
Let $P$ be a partition of $V(\mathcal{H})$.  
By the assumption,
$$\sum_{A \in P} d_\mathcal{H}(A) \ge
 \sum_{A\in P, |A|\ge 2} rl(A)+ \sum_{A\in P, |A|= 1}d_\mathcal{H}(A)\ge  
 \sum_{A\in P} (rl(A)+r\ell(A))-rl(\mathcal{H})-(r-1).$$
which implies that
$$ e_\mathcal{H}(P)\ge  \frac{1}{r}\sum_{A \in P} d_\mathcal{H}(A) \ge  \sum_{A\in P} (l(A)+\ell(A))
-l(\mathcal{H})-\ell(\mathcal{H}).$$
Thus $\mathcal{H}$ is $(l+\ell)$-partition-connected.
By Theorem~\ref{thm:main:partition-connected:hypergraph}, 
the  hypergraph $\mathcal{H}$ can be decomposed into an $l$-partition-connected  spanning  sub-hypergraph
  $H$  and an $\ell$-partition-connected spanning  sub-hypergraph  $H'$.
For each vertex $v$, we must have 
$d_{H'}(v) \ge \ell(\mathcal{H}-v)+\ell(v)-\ell(\mathcal{H})=\ell(v)$.
This implies that $d_H(v)= d_\mathcal{H}(v)-d_{H'}(v) \le \lceil \frac{r-1}{r} d_\mathcal{H}(v)\rceil+l(v)$.
Likewise, $d_H(u)= d_\mathcal{H}(u)-d_{H'}(u)\le  \lfloor \frac{r-1}{r}d_\mathcal{H}(u)\rfloor+l(u)-l(\mathcal{H})$.
Hence the theorem holds.
}\end{proof}
For every hypergraph $\mathcal{H}$, one may associate a  nonnegative set function $r$ such that for every vertex set $A$, 
$r(A)$ is the maximum  of  all $|A \setminus Z|+1$ taken over all hyperedges $Z$ with $|Z\cap A|\neq \emptyset$.
We call $r(A)$  the local rank of  $\mathcal{H}$ on $A$.
According this definition, the above-mentioned corollary could be refined to the following version.
\begin{cor}
{Let $\mathcal{H}$ be a hypergraph  with the  local rank function $r$ and let $l$ be an intersecting supermodular  subadditive nonnegative integer-valued function on subsets of $V(\mathcal{H})$.
If $\mathcal{H}$ is $rl$-edge-connected, then it has 
an $l$-partition-connected spanning sub-hypergraph $H$ such that for each vertex $v$,
$d_H(v) \le \lceil  \frac{r(v)-1}{r(v)}d_\mathcal{H}(v)\rceil+l(v).$
}\end{cor}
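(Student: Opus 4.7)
The plan is to follow the proof of Corollary~\ref{cor:hypergraph:rl-edge} with the uniform rank $r$ replaced by the local rank function. For each vertex $v$, set $\ell(v) = \lfloor d_\mathcal{H}(v)/r(v) \rfloor - l(v)$, and for every vertex set $A$ with $|A|\ge 2$, set $\ell(A) = 0$. By the $rl$-edge-connectivity applied to singletons, $d_\mathcal{H}(v)\ge r(v)l(v)$, so $\ell(v)\ge 0$; one then checks routinely that $\ell$ is intersecting supermodular and subadditive, so $l$ and $\ell$ together meet the hypotheses of Theorem~\ref{thm:main:partition-connected:hypergraph}.

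The main new ingredient is the local-rank fractional inequality
$$e_\mathcal{H}(P) \;\ge\; \sum_{A\in P}\frac{d_\mathcal{H}(A)}{r(A)}$$
for every partition $P$ of $V(\mathcal{H})$. Fix a hyperedge $Z$ crossing $P$ and suppose $Z$ meets exactly $k\ge 2$ parts $A_1,\ldots,A_k$. For each $i$, at least one vertex of $Z$ lies in each of the $k-1$ parts other than $A_i$; hence by the definition of local rank $r(A_i)\ge k$. Therefore $Z$ contributes $\sum_{i=1}^{k} 1/r(A_i) \le 1$ to the right-hand sum while contributing exactly $1$ to $e_\mathcal{H}(P)$.

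Combining this inequality with the $rl$-edge-connectivity bound $d_\mathcal{H}(A)/r(A) \ge l(A)$ for every proper nonempty $A$ of size at least two, and $d_\mathcal{H}(v)/r(v) \ge \lfloor d_\mathcal{H}(v)/r(v)\rfloor = l(v) + \ell(v)$ for singletons, I obtain
$$e_\mathcal{H}(P) \;\ge\; \sum_{A\in P}(l(A) + \ell(A)) \;\ge\; \sum_{A\in P}(l+\ell)(A) - (l+\ell)(\mathcal{H}),$$
since $(l+\ell)(\mathcal{H}) = l(\mathcal{H}) \ge 0$. Hence $\mathcal{H}$ is $(l+\ell)$-partition-connected. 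Theorem~\ref{thm:main:partition-connected:hypergraph} then supplies a decomposition of $\mathcal{H}$ into edge-disjoint spanning sub-hypergraphs $H$ (which is $l$-partition-connected) and $H'$ (which is $\ell$-partition-connected). Applying the $\ell$-partition-connectivity of $H'$ to the partition $\{\{v\},\, V(\mathcal{H})\setminus\{v\}\}$ gives $d_{H'}(v) \ge \ell(v) = \lfloor d_\mathcal{H}(v)/r(v)\rfloor - l(v)$, and hence
$$d_H(v) \;=\; d_\mathcal{H}(v) - d_{H'}(v) \;\le\; d_\mathcal{H}(v) - \lfloor d_\mathcal{H}(v)/r(v)\rfloor + l(v) \;=\; \Big\lceil \tfrac{r(v)-1}{r(v)} d_\mathcal{H}(v) \Big\rceil + l(v),$$
using the identity $a - \lfloor b\rfloor = \lceil a - b\rceil$ valid for any integer $a$.

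The only step requiring genuinely new work is the fractional local-rank inequality above; once this is in hand, the remainder is a verbatim translation of the uniform-rank proof.
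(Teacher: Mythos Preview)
Your proposal is correct and follows exactly the approach the paper intends: the paper's own proof consists of the single sentence ``Apply the same arguments of Corollary~\ref{cor:hypergraph:rl-edge} by replacing the inequality $e_\mathcal{H}(P)\ge \sum_{A \in P} \frac{1}{r(A)} d_\mathcal{H}(A)$,'' and you have carried this out in full, including supplying the hyperedge-by-hyperedge argument for the fractional local-rank inequality that the paper leaves unstated.
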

\begin{proof}
{Apply the same arguments of Corollary~\ref{cor:hypergraph:rl-edge} 
by replacing the inequality $ e_\mathcal{H}(P)\ge \sum_{A \in P}  \frac{1}{r(A)} d_\mathcal{H}(A).$
}\end{proof}
The following result can be proved similarly to whose graph version  and can also be formulated  in a rooted arc-connected version.
\begin{cor}
{Let $\mathcal{H}$ be a hypergraph with the rank $r$ and let $\ell_1, \ell_2,\ldots, \ell_m$ be $m$ nonincreasing intersecting supermodular nonnegative integer-valued functions on subsets of $V(\mathcal{H})$ with $\ell_1(\mathcal{H})= \cdots=\ell_m(\mathcal{H})=0$ .  
 If  $\mathcal{H}$ is $(r\ell_1+\cdots +r\ell_m)$-edge-connected, then  it has an orientation and $m$ edge-disjoint spanning sub-hypergraphs $H_1,\ldots, H_m$ such that every hypergraph $H_i$ is  $\ell_i$-arc-connected and for each vertex $v$,
$$d^+_\mathcal{H}(v) \le \lceil  \frac{r-1}{r}d_\mathcal{H}(v)\rceil.$$
Furthermore, for a given  arbitrary  vertex $u$ the upper bound can be reduced to 
 $ \lfloor   \frac{r-1}{r}d_\mathcal{H}(u)\rfloor$.
}\end{cor}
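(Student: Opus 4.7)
The plan is to transfer the proof of Corollary~\ref{cor:orientations:dec} from graphs to hypergraphs, replacing each ingredient by its hypergraph analog. Setting $\ell = \ell_1 + \cdots + \ell_m$ and fixing the distinguished vertex $u$, I would introduce the auxiliary set function
\[
\ell_0(u) = \Bigl\lceil \tfrac{d_\mathcal{H}(u)}{r}\Bigr\rceil - \ell(u), \qquad
\ell_0(v) = \Bigl\lfloor \tfrac{d_\mathcal{H}(v)}{r}\Bigr\rfloor - \ell(v) \text{ for } v \neq u,
\]
extended by $\ell_0(A) = 0$ for every vertex set $A$ of size at least two. The $r\ell$-edge-connectivity applied to singletons forces $d_\mathcal{H}(v) \ge r\ell(v)$, so $\ell_0 \ge 0$; by inspection $\ell_0$ is intersecting supermodular, nonincreasing, nonnegative, and $\ell_0(\mathcal{H}) = 0$.

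Next, I would verify that $\mathcal{H}$ is $(\ell + \ell_0)$-partition-connected. For any partition $P$ of $V(\mathcal{H})$, each hyperedge meets at most $r$ parts (since $|Z| \le r$), giving $\sum_{A \in P} d_\mathcal{H}(A) \le r\, e_\mathcal{H}(P)$. Splitting the sum over parts of size at least two (to which $r\ell$-edge-connectivity applies) and singleton parts (where the defining inequalities of $\ell_0$ provide matching lower bounds), one obtains
\[
r\, e_\mathcal{H}(P) \;\ge\; \sum_{A \in P}\bigl(r\ell(A) + r\ell_0(A)\bigr) - (r-1),
\]
exactly as in the graph argument. Since $e_\mathcal{H}(P)$ is an integer and $\ell(\mathcal{H}) = \ell_0(\mathcal{H}) = 0$, this yields $e_\mathcal{H}(P) \ge \sum_{A\in P}\bigl(\ell(A) + \ell_0(A)\bigr)$, as needed.

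Now Theorem~\ref{thm:main:partition-connected:hypergraph} (applicable because any nonnegative nonincreasing set function is automatically subadditive, hence so is $\ell_0 + \ell_1 + \cdots + \ell_m$) decomposes $\mathcal{H}$ into edge-disjoint spanning sub-hypergraphs $H_0, H_1, \ldots, H_m$ with each $H_i$ being $\ell_i$-partition-connected. The hypergraph version of Frank's orientation theorem indicated in the remark preceding the second proof of Theorem~\ref{thm:alternative} (namely Theorem~3.2 of~\cite{MR2021108}) then supplies an $\ell_i$-arc-connected orientation of each $H_i$. Combining these orientations yields an orientation of $\mathcal{H}$ in which
\[
d^-_\mathcal{H}(v) \;=\; \sum_{i=0}^{m} d^-_{H_i}(v) \;\ge\; \sum_{i=0}^{m}\ell_i(v) \;=\; \ell(v) + \ell_0(v),
\]
which equals $\lfloor d_\mathcal{H}(v)/r\rfloor$ for $v \neq u$ and $\lceil d_\mathcal{H}(u)/r\rceil$ for $v = u$. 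Since $d^+_\mathcal{H}(v) = d_\mathcal{H}(v) - d^-_\mathcal{H}(v)$, the claimed out-degree bounds follow immediately.

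The main obstacle I anticipate is ensuring that the hypergraph analog of Lemma~\ref{lem:Frank} truly delivers an $\ell_i$-arc-connected orientation at the requisite level of generality (intersecting supermodular nonincreasing nonnegative $\ell_i$ with $\ell_i(\mathcal{H}) = 0$); once that black box is accepted, the partition-connectivity upgrade via $\ell_0$ and the subsequent decomposition are routine hypergraph translations of the graph argument.
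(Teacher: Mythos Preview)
Your proposal is correct and follows essentially the same route the paper indicates: transfer Corollary~\ref{cor:orientations:dec} to hypergraphs by replacing $2$ with the rank $r$, using $e_\mathcal{H}(P)\ge \frac{1}{r}\sum_{A\in P}d_\mathcal{H}(A)$, then apply Theorem~\ref{thm:main:partition-connected:hypergraph} and the hypergraph Frank orientation result. The only cosmetic point is that Theorem~\ref{thm:main:partition-connected:hypergraph} requires each individual $\ell_i$ (including your $\ell_0$) to be intersecting supermodular and subadditive, not merely their sum; you have verified these properties for $\ell_0$, so this is harmless.
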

\subsection{Trimming hypergraphs and preserving partition-connectivity}
As we observed in the previous subsection, the proof of Theorem~\ref{thm:main:partition-connected:hypergraph} follows from the same arguments of whose graph version.
In fact,  Theorem~\ref{thm:main:partition-connected:hypergraph}  can easily be derived from whose graph version, 
using the following generalization of Theorem 9.4.5 in~\cite{MR2848535}.
\begin{thm}
{Let $\mathcal{H}$ be a hypergraph and 
let  $l$ be  an intersecting supermodular weakly subadditive integer-valued function on subsets of $V(\mathcal{H})$.  
If $\mathcal{H}$ is $l$-partition-connected, then it can be trimmed to 
an $l$-partition-connected  graph.
}\end{thm}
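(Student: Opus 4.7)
I would proceed by induction on the trimming excess $\tau(\mathcal{H}) := \sum_{Z \in E(\mathcal{H})} (|Z|-2)$. If $\tau(\mathcal{H})=0$, then every hyperedge already has size $2$ and $\mathcal{H}$ is the desired graph. Otherwise pick any hyperedge $Z \in E(\mathcal{H})$ with $|Z|\geq 3$; the plan of the inductive step is to find a vertex $v \in Z$ such that $\mathcal{H}' := \mathcal{H} - Z + (Z\setminus v)$ is still $l$-partition-connected, and then apply the induction hypothesis to $\mathcal{H}'$ (whose excess is strictly smaller).

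To produce such a $v$, I would argue by contradiction. If no vertex of $Z$ works, then for each $v \in Z$ there is a \emph{tight} partition $P_v$ of $V(\mathcal{H})$, i.e., $e_\mathcal{H}(P_v)=\sum_{A\in P_v}l(A)-l(\mathcal{H})$, for which the hyperedge $Z$ crosses $P_v$ while $Z\setminus v$ lies inside a single part $A_v \in P_v$, so that $v\notin A_v$ and $Z\setminus v \subseteq A_v$. By the hypergraph version of Lemma~\ref{lem:inducing}, each $\mathcal{H}[A_v]$ is $l$-partition-connected. Because $|Z|\geq 3$, any two of these parts satisfy $A_{v_1}\cap A_{v_2}\supseteq Z\setminus\{v_1,v_2\}\neq\emptyset$, so iterating the hypergraph version of Proposition~\ref{thm:basic:union} shows that $\mathcal{H}[W]$ is $l$-partition-connected, where $W := \bigcup_{v\in Z}A_v \supseteq Z$.

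The main obstacle is turning this structural picture into a numerical contradiction. My plan is to coarsen each $P_{v_i}$ by merging the part $A_{v_i}$ with the part $B_{v_i}\in P_{v_i}$ containing $v_i$: since $Z \subseteq A_{v_i}\cup B_{v_i}$, the hyperedge $Z$ no longer crosses the coarsened partition, so $l$-partition-connectivity of $\mathcal{H}$ applied to the coarsening, combined with tightness of $P_{v_i}$, yields
\[
l(A_{v_i})+l(B_{v_i})-l(A_{v_i}\cup B_{v_i})\geq 1.
\]
Applying this inequality for two indices $v_1\neq v_2$ and combining with the intersecting supermodular inequality $l(A_{v_1})+l(A_{v_2})\leq l(A_{v_1}\cap A_{v_2})+l(A_{v_1}\cup A_{v_2})$, together with weak subadditivity of $l$ controlling the $l$-values on the merged unions, should force the contradiction. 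If the bookkeeping on the partition side turns out to be delicate, an alternative route I would pursue in parallel is to first orient $\mathcal{H}$ as an $l$-arc-connected directed hypergraph (via the hypergraph form of Lemma~\ref{lem:Frank}) and then trim each directed hyperedge to a single arc (head plus one chosen tail) using an uncrossing argument on the family of tight in-sets containing the head of $Z$; submodularity of $d^-$ on directed hypergraphs together with intersecting supermodularity of $l$ makes this family closed under intersection, and any tail lying outside its minimum element yields a safe trim.
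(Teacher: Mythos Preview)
Your inductive setup is right, and the observation that a failed trim at $v$ produces a tight partition $P_v$ with a part $A_v\supseteq Z\setminus v$ is exactly the pivot. But the way you try to close the argument is where the gap lies: the ``numerical contradiction'' you sketch from the inequalities $l(A_{v_i})+l(B_{v_i})-l(A_{v_i}\cup B_{v_i})\ge 1$ together with intersecting supermodularity and weak subadditivity is not actually derived, and I do not see how to make it go through. The sets $B_{v_i}$ live in different partitions and need not intersect each other or the $A_{v_j}$, so there is no obvious uncrossing that collapses the picture. Your own hedging (``if the bookkeeping turns out to be delicate\ldots'') reflects a real obstacle, not just cosmetics. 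The orientation fallback is also problematic: Lemma~\ref{lem:Frank} requires $\ell\ge 0$ and $\ell(\mathcal{H})=0$, neither of which follows from the hypotheses here (only intersecting supermodular and weakly subadditive are assumed).

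The paper's argument avoids all of this by not assuming that \emph{every} trim fails. You only need one failure. Pick any $x\in Z$; if $\mathcal{H}-Z+(Z\setminus x)$ is still $l$-partition-connected, recurse. If not, take the resulting tight partition $P$ and its part $X$ with $Z\setminus X=\{x\}$. Now choose any $y\in Z\cap X$ with $y\neq x$ (this exists since $|Z|\ge 3$) and set $\mathcal{H}'=\mathcal{H}-Z+(Z\setminus y)$. Because $y\in X$ and $x\notin X$, one checks directly that $\mathcal{H}'[X]=\mathcal{H}[X]$ (neither $Z$ nor $Z\setminus y$ is contained in $X$) and $\mathcal{H}'/X=\mathcal{H}/X$ (both $Z$ and $Z\setminus y$ contract to the same hyperedge). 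The first is $l$-partition-connected by the hypergraph Lemma~\ref{lem:inducing} applied to the tight $P$, the second because $\mathcal{H}$ is; then Proposition~\ref{prop:hypergraph:contraction} gives that $\mathcal{H}'$ is $l$-partition-connected, and you recurse. No counting, no uncrossing over multiple partitions, and no orientation detour is needed.
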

We  show  below that the  operations can be done without removing  specified vertices  from hyperedges.
\begin{thm}\label{thm:making-graphs-hypergraphs}
{Let $\mathcal{H}$ be a directed hypergraph  and let   $l$ be  an intersecting supermodular weakly subadditive integer-valued function on subsets of $V(\mathcal{H})$.  
If $\mathcal{H}$ is $l$-partition-connected, then it can be trimmed to an $l$-partition-connected directed graph.
}\end{thm}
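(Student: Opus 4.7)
My plan is to proceed by induction on the nonnegative integer $T(\mathcal{H}) := \sum_{Z \in E(\mathcal{H})}(|Z|-2)$, which measures how far $\mathcal{H}$ is from being a directed graph. When $T(\mathcal{H}) = 0$, every hyperedge has size exactly $2$, so $\mathcal{H}$ is already a directed graph and there is nothing to do. For the induction step, it will suffice to produce a directed hyperedge $Z$ with $|Z| \geq 3$, head $h$, and a tail $t \in Z \setminus \{h\}$ such that the hypergraph $\mathcal{H}'$ obtained from $\mathcal{H}$ by replacing $Z$ with the directed hyperedge $Z \setminus \{t\}$ (with the same head $h$) remains $l$-partition-connected. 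Applying the induction hypothesis to $\mathcal{H}'$, which satisfies $T(\mathcal{H}') = T(\mathcal{H}) - 1$, then finishes the proof.

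To locate a safe tail, I would fix an arbitrary directed hyperedge $Z \in E(\mathcal{H})$ with $|Z| \geq 3$, head $h$, and tails $t_1, \dots, t_k$ (so $k \geq 2$), and argue by contradiction that at least one tail can be trimmed. If every tail $t_i$ were \emph{bad} (i.e.\ removing it would destroy $l$-partition-connectivity), then integrality of $l$ together with $l$-partition-connectivity of $\mathcal{H}$ would force, for each $i$, a \emph{tight} partition $P_i$ of $V(\mathcal{H})$ satisfying $e_{\mathcal{H}}(P_i) = \sum_{A \in P_i} l(A) - l(\mathcal{H})$ whose part $A_i \ni h$ obeys $Z \setminus \{t_i\} \subseteq A_i$ and $t_i \notin A_i$. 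By the hypergraph analogue of Lemma~\ref{lem:inducing}, every $\mathcal{H}[A_i]$ would be $l$-partition-connected; and since all $A_i$ contain $h$, they are pairwise intersecting, so iterated application of the hypergraph version of Proposition~\ref{thm:basic:union} would yield an $l$-partition-connected induced sub-hypergraph $\mathcal{H}[A^*]$ with $A^* := A_1 \cup \cdots \cup A_k \supseteq Z$.

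The hard part will be converting this structural information into an actual contradiction. My plan is to choose each $P_i$ so that $A_i$ is inclusion-minimal among parts of tight partitions separating $t_i$ from $Z \setminus \{t_i\}$, and then \emph{uncross} the $P_i$: starting from $P_1$, I would iteratively merge $A_1$ with every part of $P_1$ meeting $A^* \setminus A_1$, producing a partition $Q$ of $V(\mathcal{H})$ whose single enlarged part $A^*_Q$ contains all of $A^* \supseteq Z$. Combining intersecting supermodularity of $l$ with tightness of each $P_i$ should force $Q$ to be tight as well; a subsequent refinement of $Q$ at $A^*_Q$ separating an interior tail $t_j$ from $\{h\}$ would then produce a tight partition contradicting the inclusion-minimality chosen for $A_j$. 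The main obstacle is precisely this uncrossing step, since the classical supermodular inequality $l(A_i)+l(A_j)\le l(A_i\cap A_j)+l(A_i\cup A_j)$ is phrased for pairs of sets rather than for whole partitions: transferring it into the tightness identity $e_{\mathcal{H}}(Q)=\sum_{A\in Q}l(A)-l(\mathcal{H})$ requires carefully tracking how many hyperedges become uncrossed when the parts of $P_1$ are merged, using both intersecting supermodularity and the subadditivity of $l$. Should the direct uncrossing prove too delicate, my fallback plan would be to invoke the undirected trimming theorem (Theorem~9.4.5 of~\cite{MR2848535}) on the undirected shadow of $\mathcal{H}$ and then reinstate the prescribed heads via an exchange argument patterned on Proposition~\ref{prop:hypergraph:xGy-exchange}.
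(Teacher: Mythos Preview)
Your induction framework matches the paper exactly, but the heart of the argument --- locating a safe tail --- is handled very differently, and your version has a genuine gap.

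The paper does \emph{not} argue by contradiction over all tails simultaneously. Instead it picks a single tail $x\in Z\setminus\{h\}$ and tries to trim it. If that fails, the witnessing tight partition $P$ has a part $X$ with $Z\setminus X=\{x\}$; in particular $h\in X$ and $|Z\cap X|\ge 2$, so one can choose a \emph{different} tail $y\in (Z\cap X)\setminus\{h\}$. Replacing $Z$ by $Z-y$ yields a hypergraph $\mathcal{H}'$ satisfying $\mathcal{H}'/X=\mathcal{H}/X$ (since $Z-y$ still has $x\notin X$ and $h\in X$, so the contracted hyperedge is unchanged) and $\mathcal{H}'[X]=\mathcal{H}[X]$ (since $Z\not\subseteq X$, the hyperedge never contributed to the induced sub-hypergraph). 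The first is $l$-partition-connected because contractions preserve $l$-partition-connectivity; the second is $l$-partition-connected by the hypergraph analogue of Lemma~\ref{lem:inducing} applied to the tight partition $P$. Proposition~\ref{prop:hypergraph:contraction} then gives that $\mathcal{H}'$ is $l$-partition-connected, and the induction proceeds. No uncrossing, no union of multiple $A_i$, no minimality choices.

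Your route, by contrast, leaves the decisive step unfinished: you correctly reach that $\mathcal{H}[A^*]$ is $l$-partition-connected with $Z\subseteq A^*$, but this alone is not a contradiction, and the ``uncrossing'' you sketch to push further is, as you yourself flag, the main obstacle and is not carried out. Transferring pairwise supermodular inequalities into a tightness identity for a merged partition $Q$, while simultaneously accounting for hyperedge crossings, is genuinely delicate here and you have not shown it goes through. The paper's two-line manoeuvre (fail at $x$ $\Rightarrow$ succeed at some $y\in X$) sidesteps all of this; I would recommend abandoning the global contradiction and adopting that local argument instead.
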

\begin{proof}
{By induction on the sum of all $|Z|-2$ taken over all hyperedges $Z$.
If this sum  is zero, then $\mathcal{H}$ itself is a graph.
So assume that a directed hyperedge  $Z$ with head $u$ has size at least three.
Let $x$ a vertex  of $Z\setminus \{u\}$.
If replacing $Z$ by $Z-x$ preserves partition-connectivity, then the proof follows by induction.
 Otherwise, there is a partition $P$ of $V(\mathcal{H})$ such that 
$e_{\mathcal{H}}(P)=\sum_{A\in P}l(A)-l(\mathcal{H})$ and $Z\setminus X =\{x\}$, for a vertex set $X \in P$.
Let  $y$ be a vertex of $Z\cap X\setminus \{u\}$.
Now, replace  $Z$ by $Z-y$ and call the resulting  hypergraph $\mathcal{H}'$.
According to this construction, we must have $\mathcal{H}'/X=\mathcal{H}/X$ and $\mathcal{H}[X]=\mathcal{H}'[X]$.
Since $\mathcal{H}'/X$ and $\mathcal{H}'[X]$ are  $l$-partition-connected, by Proposition~\ref{prop:hypergraph:contraction}, the hypergraph $\mathcal{H}'$ itself must be $l$-partition-connected.
Thus by the induction hypothesis the theorem can be hold.
}\end{proof}
The following theorem is a counterpart of Theorem~\ref{thm:making-graphs-hypergraphs}.
\begin{thm}
{Let $\mathcal{H}$ be a directed hypergraph  and let   $l$ be  an intersecting supermodular weakly subadditive integer-valued function on subsets of $V(\mathcal{H})$.  
If $\mathcal{H}$ is $l$-sparse, then it can be trimmed to an $l$-sparse directed graph.
}\end{thm}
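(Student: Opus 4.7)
The plan is to induct on $N(\mathcal{H})=\sum_{Z\in E(\mathcal{H})}(|Z|-2)$. The base case $N(\mathcal{H})=0$ means every hyperedge already has size two, so $\mathcal{H}$ is a directed graph and there is nothing to do.

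For the inductive step, I fix any directed hyperedge $Z\in E(\mathcal{H})$ with $|Z|\ge 3$ and head $u$, and I claim that some tail $x\in Z\setminus\{u\}$ can be removed from $Z$ without destroying $l$-sparseness; replacing $Z$ by $Z-x$ is a valid trim (since $u\in Z-x$ and $|Z-x|\ge 2$) and reduces $N(\mathcal{H})$ by one, so induction will conclude. Suppose for contradiction that no such $x$ works. Since $\mathcal{H}$ is $l$-sparse and only the hyperedge $Z$ is modified, each failure must be witnessed by a tight set: for every $x\in Z\setminus\{u\}$ there exists $A_x\subseteq V(\mathcal{H})$ with $Z\setminus\{x\}\subseteq A_x$, $x\notin A_x$, and $e_\mathcal{H}(A_x)=\sum_{v\in A_x}l(v)-l(A_x)$.

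Pick two distinct tails $x,y\in Z\setminus\{u\}$, which exist since $|Z|\ge 3$. Both $A_x$ and $A_y$ contain $u$, so they meet. The critical observation is that $Z$ itself crosses the pair $A_x,A_y$: we have $Z\subseteq A_x\cup A_y$, yet $x\in A_y\setminus A_x$ and $y\in A_x\setminus A_y$ show $Z\not\subseteq A_x$, $Z\not\subseteq A_y$, and $Z\not\subseteq A_x\cap A_y$. Counting hyperedges by containment then gives the strict inequality
\[
e_\mathcal{H}(A_x)+e_\mathcal{H}(A_y)<e_\mathcal{H}(A_x\cap A_y)+e_\mathcal{H}(A_x\cup A_y),
\]
with $Z$ contributing to the right side only. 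Combining this strict inequality with the sum identity $\sum_{v\in A_x}l(v)+\sum_{v\in A_y}l(v)=\sum_{v\in A_x\cap A_y}l(v)+\sum_{v\in A_x\cup A_y}l(v)$, the tightness of $A_x$ and $A_y$, and the $l$-sparseness of $\mathcal{H}$ applied to $A_x\cap A_y$ and $A_x\cup A_y$, routine bookkeeping yields $l(A_x\cap A_y)+l(A_x\cup A_y)<l(A_x)+l(A_y)$, which contradicts intersecting supermodularity of $l$.

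The main obstacle, I expect, is recognizing that the very hyperedge $Z$ we are trying to trim is itself the crossing hyperedge that forces the uncrossing inequality on $e_\mathcal{H}$ to be strict and thereby produces the contradiction; once this insight is in place, the remaining computation is a standard uncrossing argument and the induction closes immediately.
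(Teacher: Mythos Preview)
Your proof is correct and follows essentially the same route as the paper: induct on $\sum_{Z}(|Z|-2)$, pick two tails $x,y$ of a large hyperedge, obtain tight witness sets $A_x,A_y$ (containing the head $u$, hence intersecting), and exploit that $Z$ itself lies in $A_x\cup A_y$ but in neither $A_x$ nor $A_y$ to make the uncrossing inequality strict. The only cosmetic difference is where the contradiction lands: the paper uses sparseness on $A_x\cap A_y$ together with intersecting supermodularity to force $e_{\mathcal{H}}(A_x\cup A_y)>\sum_{v\in A_x\cup A_y}l(v)-l(A_x\cup A_y)$, contradicting sparseness, whereas you use sparseness on both $A_x\cap A_y$ and $A_x\cup A_y$ to force $l(A_x\cap A_y)+l(A_x\cup A_y)<l(A_x)+l(A_y)$, contradicting supermodularity---these are the same inequality read in opposite directions.
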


\begin{proof}
{By induction on the sum of all $|Z|-2$ taken over all hyperedges $Z$.
If this sum  is zero, then $\mathcal{H}$ itself is a graph.
So assume that a directed hyperedge  $Z$ with head $u$ has size at least three.
Let $x$ and $y$ be two vertices of $Z\setminus \{u\}$.
If replacing $Z$ by $Z-x$ preserves sparse property,  then the proof follows by induction.
Otherwise,  there is a vertex set $X$ including $u$
 such that $Z\setminus X=\{x\}$ and  $e_{\mathcal{H}}(X)=\sum_{v\in X}l(v)-l(X)$.
Corresponding to $y$, there is a vertex set $Y$ including $u$ 
 such that $Z\setminus Y=\{y\}$ and  $e_{\mathcal{H}}(Y)=\sum_{v\in Y}l(v)-l(Y)$.
Note that   $Z$ is neither a subset of  $X$ nor a subset of $Y$. 
Thus $$e_{\mathcal{H}}(X\cup  Y)\ge e_{\mathcal{H}}(X)+e_{\mathcal{H}}(Y)-e_{\mathcal{H}}(X\cap Y)+1.$$
Since $l$ is intersecting supermodular, we must have 
$$e_{\mathcal{H}}(X\cup  Y) \ge \sum_{v\in X}l(v)-l(X)+\sum_{v\in Y}l(v)-l(Y)-
 \sum_{v\in X\cup Y}l(v)+l(X\cap Y)+1
> \sum_{v\in X\cup  Y}l(v)+l(X\cup Y).$$
This is a contradiction, as desired.
}\end{proof}
The following theorem generalizes Theorem 7.4.9 in~\cite{MR2848535}.
\begin{thm}
{Let $\mathcal{H}$ be a directed hypergraph and 
let  $\ell$ be  a positively intersecting supermodular integer-valued function on subsets of $V(\mathcal{H})$ with $\ell(\emptyset)=\ell(\mathcal{H})=0$.   
If $\mathcal{H}$ is $\ell$-arc-connected, then it can be trimmed to 
an $\ell$-arc-connected directed graph.
}\end{thm}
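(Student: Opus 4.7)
The plan is to argue by induction on $\sigma(\mathcal{H}):=\sum_{Z\in E(\mathcal{H})}(|Z|-2)$, following the inductive template of Theorem~\ref{thm:making-graphs-hypergraphs}. When $\sigma(\mathcal{H})=0$ the hypergraph is already a directed graph and there is nothing to do, so I would fix a hyperedge $Z$ of size at least three with head $u$, pick an arbitrary tail $x\in Z\setminus\{u\}$, and form $\mathcal{H}'=\mathcal{H}-Z+(Z-x)$. If $\mathcal{H}'$ is still $\ell$-arc-connected, applying the induction hypothesis to $\mathcal{H}'$ finishes the argument. Otherwise, since the operation changes $d^-_\mathcal{H}(\cdot)$ only at sets $A$ with $u\in A$ and $Z\setminus A=\{x\}$, and decreases it there by exactly one, there must exist such a set $A$ that is tight in $\mathcal{H}$, that is, $d^-_\mathcal{H}(A)=\ell(A)$; automatically $\ell(A)\geq 1$ because $Z$ itself contributes to $d^-_\mathcal{H}(A)$.

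Next I would pick $y\in Z\cap A\setminus\{u\}$, which exists since $|Z\cap A|=|Z|-1\geq 2$ and $u\in Z\cap A$, and set $\mathcal{H}''=\mathcal{H}-Z+(Z-y)$. The entire argument then reduces to showing that $\mathcal{H}''$ is still $\ell$-arc-connected, which would allow the induction hypothesis applied to $\mathcal{H}''$ to complete the proof. Suppose, for contradiction, that $\mathcal{H}''$ fails to be $\ell$-arc-connected. The same local analysis of the trimming operation gives a tight set $B$ with $u\in B$, $Z\setminus B=\{y\}$, and $\ell(B)\geq 1$. Reading off the positions of the vertices of $Z$: the head $u$ lies in $A\cap B$, the tails satisfy $x\in B\setminus A$ and $y\in A\setminus B$, and every remaining vertex $z\in Z\setminus\{u,x,y\}$ lies in $A\cap B$; hence $Z\subseteq A\cup B$ while $Z\not\subseteq A$ and $Z\not\subseteq B$.

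The contradiction then comes from uncrossing. For any two vertex sets $A,B$ the function $d^-_\mathcal{H}$ satisfies the submodular inequality $d^-_\mathcal{H}(A)+d^-_\mathcal{H}(B)\geq d^-_\mathcal{H}(A\cap B)+d^-_\mathcal{H}(A\cup B)$, with slack equal to a sum of nonnegative per-hyperedge contributions. A direct inspection of $Z$ using the geometry above yields $d^-_Z(A)+d^-_Z(B)-d^-_Z(A\cap B)-d^-_Z(A\cup B)=1+1-1-0=1$, so the submodular inequality is strict. Combining this strict inequality with $d^-_\mathcal{H}\geq\ell$ and the positively intersecting supermodular inequality $\ell(A\cap B)+\ell(A\cup B)\geq\ell(A)+\ell(B)$, which applies because $u\in A\cap B$ and $\ell(A),\ell(B)>0$, one obtains
\begin{equation*}
\ell(A)+\ell(B)=d^-_\mathcal{H}(A)+d^-_\mathcal{H}(B)>d^-_\mathcal{H}(A\cap B)+d^-_\mathcal{H}(A\cup B)\geq \ell(A\cap B)+\ell(A\cup B)\geq \ell(A)+\ell(B),
\end{equation*}
a contradiction.

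The main obstacle I anticipate is the verification that the hyperedge $Z$ contributes exactly one unit of slack to the submodular inequality; this requires tracking, case by case, whether the head of $Z$ lies in $A$, $B$, $A\cap B$, or $A\cup B$ and whether $Z$ is contained in each, and it crucially uses $|Z|\geq 3$ to guarantee that the two distinct tails $x$ and $y$ straddle $A$ and $B$ in opposite ways. Once this structural computation is in place, the proof is a direct transfer of the uncrossing strategy of Lemma~\ref{lem:minimal-directed} to the setting of hyperedge trimming.
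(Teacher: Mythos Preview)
Your proof is correct and follows essentially the same route as the paper: induct on $\sum_{Z}(|Z|-2)$, try trimming a tail, and if it fails obtain a tight set; then a second failed trim at another tail yields a second tight set, and uncrossing the two tight sets via submodularity of $d^-_\mathcal{H}$ (with the strict $+1$ coming from the hyperedge $Z$, which enters $A$ and $B$ but not $A\cup B$) contradicts the positively intersecting supermodular inequality for $\ell$. The only cosmetic difference is that the paper fixes both tails $x,y$ at the outset rather than choosing $y\in Z\cap A$ after $A$ is found, but since $Z\setminus A=\{x\}$ forces every other tail into $A$ anyway, the two presentations are equivalent.
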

\begin{proof}
{By induction on the sum of all $|Z|-2$ taken over all hyperedges $Z$.
If this sum  is zero, then $\mathcal{H}$ itself is a graph.
So assume that a directed hyperedge  $Z$ with head $u$ has size at least three.
Let $x$ and $y$ be two vertices of $Z\setminus \{u\}$.
If replacing $Z$ by $Z-x$ preserves arc-connectivity, then the proof follows by induction.
Otherwise,  there is a vertex set $X$ including $u$ such that  $Z\setminus X=\{x\}$ and $\ell(X)=d^-_{\mathcal{H}}(X)>0$.
Corresponding to $y$, there is a vertex set $Y$ including $u$ 
such that  $Z\setminus Y=\{y\}$ and $\ell(Y)=d^-_{\mathcal{H}}(Y)> 0$.
Note that   $Z$ is neither a subset of  $X$ nor a subset of $Y$. 
Thus 
$$\ell(X)+\ell(Y) =
d^-_{\mathcal{H}}(X)+d^-_{\mathcal{H}}(Y) \ge
  d^-_{\mathcal{H}}(X\cup Y)+d^-_{\mathcal{H}}(X\cap Y) +1>
\ell(X\cup Y)+\ell(X\cap Y).$$
Since $\ell$ is intersecting supermodular, we arrive at a contradiction. 
}\end{proof}
\subsection{Spanning partition-connected sub-hypergraphs with restricted degrees}
The following theorem provides a generalization for Theorem~\ref{thm:alternative}  with a new   proof.
\begin{thm}\label{thm:hypergraph:toughness:degrees}
Let $\mathcal{H}$ be a hypergraph  and let $l$ be an intersecting supermodular subadditive  integer-valued function on subsets of $V(\mathcal{H})$.  Let  $h$ be an  integer-valued function on $V(\mathcal{H})$.
If  for all $S\subseteq V(\mathcal{H})$, 
$$\OMEGA_l(\mathcal{H}\setminus S)\le \sum_{v\in S}\big(h(v)-l(v)\big)+l(\mathcal{H})-\sigma_\mathcal{H}(S),$$
then  $\mathcal{H}$ has an $l$-partition-connected  spanning sub-hypergraph  $H$ such that for each vertex  $v$,
 $d_H(v)\le h(v).$
\end{thm}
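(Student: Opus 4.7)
The plan is to adapt the orientation-based second proof of Theorem~\ref{thm:alternative} to the hypergraph setting. The key combinatorial identity that replaces the graph-theoretic identity $e_G(P) = \sum_{v \in S} d_G(v) - e_G(S) + e_{G\setminus S}(\mathcal{P})$ from that proof is
\[
e_{\mathcal{H}}(P) \;=\; \sum_{v \in S} d_{\mathcal{H}}(v) \;-\; \sigma_{\mathcal{H}}(S) \;+\; e_{\mathcal{H}\setminus S}(\mathcal{P}),
\]
valid whenever $S$ is the set of singleton parts of $P$ and $\mathcal{P}$ the non-singleton parts. This holds because each hyperedge $Z$ meeting $S$ automatically crosses parts, contributing $|Z \cap S|$ to the sum of degrees but only $1$ to $e_{\mathcal{H}}(P)$, with the surplus $|Z \cap S|-1$ recorded exactly by $\sigma_{\mathcal{H}}(S)$. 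This identity is the reason $\sigma_{\mathcal{H}}$ (rather than any edge-counting function) appears in the hypothesis.

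I would then mimic the construction of $\ell'$ from the proof of Theorem~\ref{thm:alternative}. Fix a root $r_0 \in V(\mathcal{H})$, and define $\ell(v) = l(v) - l(\mathcal{H})$ if $v = r_0$, $\ell(v) = l(v)$ if $v \neq r_0$, and for vertex sets $A$ with $|A| \ge 2$ set $\ell(A) = l(A) - l(\mathcal{H})$ if $r_0 \in A$ and $\ell(A) = l(A)$ otherwise, so that $\ell(\mathcal{H}) = 0$ and $\ell$ inherits intersecting supermodularity. Then set $\ell'(v) = \max\{\ell(v),\, d_{\mathcal{H}}(v) - h(v) + \ell(v)\}$ and $\ell'(A) = \ell(A)$ for $|A| \ge 2$. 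Using the identity above, together with the hypothesis applied to the set of singleton parts of $P$, a direct computation shows $e_{\mathcal{H}}(P) \ge \sum_{A \in P} \ell'(A)$ for every partition $P$, so that $\mathcal{H}$ is $\ell'$-partition-connected.

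Next, I would invoke the hypergraph analog of Lemma~\ref{lem:Frank} (available from Theorem~3.2 of \cite{MR2021108}, as the author notes in the remark preceding Lemma~\ref{lem:minimal-directed}) to obtain an orientation of $\mathcal{H}$ with $d^-_{\mathcal{H}}(A) \ge \ell'(A)$ for every $A$. In particular $d^+_{\mathcal{H}}(v) = d_{\mathcal{H}}(v) - d^-_{\mathcal{H}}(v) \le h(v) - \ell(v)$. Taking $H$ to be a minimally $\ell$-arc-connected spanning sub-hypergraph of this orientation, the hypergraph version of Lemma~\ref{lem:minimal-directed} (whose proof transcribes since deleting a hyperedge decreases only the head's in-degree) gives $d^-_H(v) = \ell(v)$ for every $v$. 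Hence $d_H(v) = d^-_H(v) + d^+_H(v) \le \ell(v) + (h(v) - \ell(v)) = h(v)$, and for every partition $P$ of $V(H)$ we get $e_H(P) \ge \sum_{A \in P} d^-_H(A) \ge \sum_{A \in P} \ell(A) = \sum_{A \in P} l(A) - l(\mathcal{H})$, so $H$ is $l$-partition-connected.

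The main obstacle I expect is verifying that the cited hypergraph orientation theorem really applies in the exact generality required here (in particular, the underlying function $\ell'$ need not be nonnegative at individual vertices, and $l$ is only subadditive rather than nonincreasing as in the graph version). A secondary delicate point is handling vertices with $h(v) > d_{\mathcal{H}}(v)$: the naive reduction to the case $h(v) \le d_{\mathcal{H}}(v)$ would weaken the hypothesis, but the $\max$ in the definition of $\ell'$ is precisely chosen so that the $\ell'$-partition-connectivity computation still goes through without needing such a reduction.
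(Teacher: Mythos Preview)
Your orientation-based route is genuinely different from the paper's proof, and while it would establish a special case, it does not deliver the theorem in the stated generality.

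The paper's proof avoids orientations altogether. It defines the auxiliary function $\ell(v)=\max\{0,\,d_{\mathcal{H}}(v)-h(v)\}$ on singletons and $\ell(A)=0$ for $|A|\ge 2$, verifies via the identity you wrote down that $\mathcal{H}$ is $(l+\ell)$-partition-connected, and then invokes the hypergraph packing theorem (Theorem~\ref{thm:main:partition-connected:hypergraph}) to split $\mathcal{H}$ into an $l$-partition-connected $H$ and an $\ell$-partition-connected $H'$. The degree bound follows because $d_{H'}(v)\ge \ell(v)$ forces $d_H(v)\le h(v)$. The only properties needed of $l$ and $\ell$ are intersecting supermodularity and subadditivity, both of which the auxiliary $\ell$ enjoys trivially.

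The obstacle you flag is real, not cosmetic. The Frank--Kir\'aly--Kir\'aly orientation theorem requires the target function to be nonnegative, and under the hypothesis of Theorem~\ref{thm:hypergraph:toughness:degrees} the function $l$ is merely subadditive, not nonincreasing and nonnegative as in Theorem~\ref{thm:alternative}. Consequently your shifted $\ell$ (and hence $\ell'$) need not be nonnegative on sets of size at least two: subadditivity does not force $l(A)\ge l(\mathcal{H})$ for $r_0\in A$, nor $l(A)\ge 0$ for $r_0\notin A$. The paper itself signals this limitation in the remark preceding Lemma~\ref{lem:minimal-directed}, noting that the orientation approach yields only ``a special case'' of Theorem~\ref{thm:hypergraph:toughness:degrees}. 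The decomposition proof is chosen precisely because the auxiliary $\ell$ it builds is nonnegative by construction, so no such hypothesis on $l$ is needed beyond what Theorem~\ref{thm:main:partition-connected:hypergraph} already requires.
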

\begin{proof}
{Define $\ell(v)=\max\{0,d_\mathcal{H}(v)-h(v)\}$ for each vertex  $v$, 
and define $\ell(A)=0$ for every vertex set $A$ with $|A|\ge 2$.
Let $P$ be a partition of $V(\mathcal{H})$.
Define $S$ to be the set of all vertices $v$ such that $\{v\}\in P$ 
and  $\ell(v)=d_\mathcal{H}(v)-h(v)$.
Also, define $\mathcal{P}$ to  be set of all vertex sets $A\in P$ such that $A\neq \{v\}$, when $v\in S$.
Note that for every $A\in \mathcal{P}$, $\ell(A)=0$.
By the assumption, 
$$\sum_{A\in \mathcal{P}}l(A)-e_{\mathcal{H} \setminus S}(\mathcal{P}) \le 
\OMEGA_l(\mathcal{H}\setminus S)\le 
\sum_{v\in S}\big(h(v)-l(v)\big)+l(\mathcal{H})-\sigma_\mathcal{H}(S).$$
Since $e_\mathcal{H}(P)=\sum_{v\in S}d_\mathcal{H}(v)-\sigma_\mathcal{H}(S)+e_{\mathcal{H}\setminus S} (\mathcal{P})$, we must have 
$$ e_\mathcal{H}(P) 
\ge \sum_{A\in \mathcal{P}}l(A)+\sum_{v\in S}\big(d_\mathcal{H}(v) -h(v)+l(v)\big)-l(\mathcal{H}),$$
which implies that
$$e_\mathcal{H}(P)  \ge 
\sum_{A\in \mathcal{P}}l(A)+\sum_{v\in S} (\ell(v)+l(v))-l(\mathcal{H})=
\sum_{A\in P}(l(A)+\ell(A)) -l(\mathcal{H})-\ell(\mathcal{H}).$$
Thus $\mathcal{H}$ is $(l+\ell)$-partition-connected. 
By Theorem~\ref{thm:main:partition-connected:hypergraph}, 
the  hypergraph $\mathcal{H}$ can be decomposed into
 an $l$-partition-connected spanning   sub-hypergraph  $H$  and
 an  $\ell$-partition-connected  spanning  sub-hypergraph  $H'$.
For each vertex $v$, we must have 
$d_{H'}(v) \ge \ell(\mathcal{H}-v)+\ell(v)-\ell(\mathcal{H})=\ell(v)$.
This implies that $d_H(v)\le d_\mathcal{H}(v)-d_{H'}(v)\le h(v)$.
Hence the theorem is proved.
}\end{proof}
The following theorem   provides two upper bounds on $\OMEGA_l(\mathcal{H}\setminus S)$ 
depending on two parameters of connectivity of $\mathcal{H}$ and $d_{\mathcal{H}}(v)$  of the vertices $v$ in $S$,
 which generalizes Lemma~\ref{lem:high-edge-connectivity:Theta}.
\begin{thm}
{Let $\mathcal{H}$ be a hypergraph with the rank $r$,  
let $l$ be an intersecting supermodular real function on subsets   of $V(\mathcal{H})$,
and let $k$ be a positive real number. If $S\subseteq V(\mathcal{H})$, then
$$\OMEGA_l(\mathcal{H}\setminus S)\le
 \begin{cases}
\sum_{v\in S}\frac{r-1}{k}d_\mathcal{H}(v)\,-\frac{r}{k}\sigma_\mathcal{H}(S),	
&\text{when $\mathcal{H}$ is $kl$-edge-connected, $k\ge r$, and  $S\neq \emptyset$};\\ 
 \sum_{v\in S}\big(\frac{d_\mathcal{H}(v)}{k}-l(v)\big)+l(\mathcal{H})-\frac{1}{k}\sigma_\mathcal{H}(S),	
&\text{when $\mathcal{H}$ is $kl$-partition-connected and $k\ge 1$}.
\end {cases}$$
}\end{thm}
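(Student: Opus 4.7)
The plan is to adapt the proof of the graph version, Lemma~\ref{lem:high-edge-connectivity:Theta}, by replacing the edge-counting identities with hypergraph analogues in which the rank $r$ controls how many parts a single hyperedge can meet. I would let $P$ be the partition of $V(\mathcal{H})\setminus S$ obtained from the $l$-partition-connected components of $\mathcal{H}\setminus S$, so that $\OMEGA_l(\mathcal{H}\setminus S)=\sum_{A\in P}l(A)-e_{\mathcal{H}\setminus S}(P)$. Because every hyperedge has size at least two, every hyperedge meeting $S$ crosses the partition $P\cup\{\{v\}:v\in S\}$; since $\sum_{v\in S}d_\mathcal{H}(v)$ counts each hyperedge $Z$ with multiplicity $|Z\cap S|$, the number of hyperedges with $Z\cap S\neq\emptyset$ equals $\sum_{v\in S}d_\mathcal{H}(v)-\sigma_\mathcal{H}(S)$. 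Combining this with the hyperedges crossing $P$ inside $\mathcal{H}\setminus S$, I would obtain the key identity
$$e_\mathcal{H}\bigl(P\cup\{\{v\}:v\in S\}\bigr)=e_{\mathcal{H}\setminus S}(P)+\sum_{v\in S}d_\mathcal{H}(v)-\sigma_\mathcal{H}(S).$$

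For the partition-connected case, I would apply $kl$-partition-connectivity directly to $P\cup\{\{v\}:v\in S\}$ to bound its crossing count from below by $k\sum_{A\in P}l(A)+k\sum_{v\in S}l(v)-kl(\mathcal{H})$. Substituting the identity, dividing by $k$, and using $k\ge 1$ so that the coefficient $\tfrac{1}{k}-1$ on $e_{\mathcal{H}\setminus S}(P)$ is nonpositive yields the stated second bound after rearrangement.

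For the edge-connected case, the main technical step is an upper bound on $\sum_{A\in P}d_\mathcal{H}(A)$. A hyperedge $Z\subseteq V(\mathcal{H})\setminus S$ contributes the number of parts of $P$ it meets, which is at most $|Z|\le r$ and is nonzero only when $Z$ crosses $P$ in $\mathcal{H}\setminus S$; a hyperedge with $Z\cap S\neq\emptyset$ contributes at most $|Z\setminus S|\le r-|Z\cap S|$; and a hyperedge contained in $S$ contributes zero. Summing and using $\sum_{Z:\,Z\cap S\neq\emptyset}|Z\cap S|=\sum_{v\in S}d_\mathcal{H}(v)$ together with the count of hyperedges meeting $S$ gives
$$\sum_{A\in P}d_\mathcal{H}(A)\le r\cdot e_{\mathcal{H}\setminus S}(P)+(r-1)\sum_{v\in S}d_\mathcal{H}(v)-r\sigma_\mathcal{H}(S).$$
Combining this with the lower bound $\sum_{A\in P}d_\mathcal{H}(A)\ge k\sum_{A\in P}l(A)$ from $kl$-edge-connectivity, dividing by $k$, and using $k\ge r$ to discard the nonpositive term $(\tfrac{r}{k}-1)e_{\mathcal{H}\setminus S}(P)$ would yield the first bound.

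The principal obstacle is this refined hyperedge bookkeeping. Replacing $|Z\setminus S|$ by the coarser uniform bound $r-1$ would yield only $-\tfrac{r-1}{k}\sigma_\mathcal{H}(S)$, which is strictly weaker than the required $-\tfrac{r}{k}\sigma_\mathcal{H}(S)$; the sharper bound $|Z\setminus S|\le r-|Z\cap S|$ supplies the missing $-\tfrac{1}{k}\sigma_\mathcal{H}(S)$ and, when $r=2$, recovers exactly the factor of $2$ appearing in the graph-case term $-\tfrac{2}{k}e_G(S)$.
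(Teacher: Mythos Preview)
Your proposal is correct and follows essentially the same approach as the paper: choose $P$ to be the partition of $V(\mathcal{H})\setminus S$ into $l$-partition-connected components, use the identity $e_\mathcal{H}(P\cup\{\{v\}:v\in S\})=e_{\mathcal{H}\setminus S}(P)+\sum_{v\in S}d_\mathcal{H}(v)-\sigma_\mathcal{H}(S)$ for the partition-connected case, and for the edge-connected case bound $\sum_{A\in P}d_\mathcal{H}(A)$ hyperedge by hyperedge. The paper organizes the edge-connected bookkeeping through the counts $c_i=\#\{Z:|Z\cap S|=i\}$ and the identity $\sum_{i\ge 1}(r-i)c_i=(r-1)\sum_{v\in S}d_\mathcal{H}(v)-r\sigma_\mathcal{H}(S)$, whereas you phrase it directly via $|Z\setminus S|\le r-|Z\cap S|$; these are the same count, and your version makes the appearance of $r\cdot e_{\mathcal{H}\setminus S}(P)$ (rather than $r c_0$) explicit, which is exactly what is needed to pass to $\OMEGA_l(\mathcal{H}\setminus S)=\sum_{A\in P}l(A)-e_{\mathcal{H}\setminus S}(P)$ after dividing by $k$ and using $k\ge r$.
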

\begin{proof}
{Let $P$ be the partition of $V(\mathcal{H})\setminus S$ obtained from $l$-partition-connected components of $\mathcal{H}\setminus S$.
For every integer $i$ with $0\le i \le r$, denote by $c_i$ the number of hyperedges $Z$ with $|Z\cap S|=i$.
If $\mathcal{H}$ is $kl$-edge-connected and  $S\neq \emptyset$, for any $A\in P$,
there  are at least $kl(A)$ hyperedges $Z$ with 
 $Z\cap A \neq \emptyset$ and $Z\setminus A \neq \emptyset$. Thus
$$\sum_{A\in P}k l(A)-rc_0
\le  \sum_{1\le i \le r} (r-i)c_i =
 \sum_{1\le i \le k}(r-1)ic_i -\sum_{1\le i \le k}r(i-1)c_i=\sum_{v\in S}(r-1)d_{\mathcal{H}}(v)-r\sigma_{\mathcal{H}}(S),$$
which  implies that
$$\OMEGA_l(\mathcal{H}\setminus S)\le \sum_{A\in P} l(A)-\frac{r}{k}e_{\mathcal{H}\setminus S}(P) \le 
\sum_{v\in S}\frac{(r-1)}{k}d_{\mathcal{H}}(v)-\frac{r}{k}\sigma_{\mathcal{H}}(S).$$
When  $\mathcal{H}$ is $kl$-partition-connected and $k\ge 1$,  we have 
$$ \sum_{A\in P}kl(A)+\sum_{v\in S} kl(v)-kl(\mathcal{H}) 
\le
e_\mathcal{H}( P\cup \{\{v\}:v\in S\})
= 
\sum_{v\in S}d_\mathcal{H}(v)\,-\sigma_\mathcal{H}(S)+e_{\mathcal{H}\setminus S}(P),$$
and so
$$k\OMEGA_l(\mathcal{H}\setminus S)=  \sum_{A\in P}kl(A)-ke_{\mathcal{H}\setminus S}(P)\le 
 \sum_{A\in P}kl(A)-e_{\mathcal{H}\setminus S}(P)\le 
\sum_{v\in S}\big(d_\mathcal{H}(v)-kl(v)\big)+kl(\mathcal{H})-\sigma_\mathcal{H}(S).$$
These inequalities can complete the proof.
}\end{proof}
\subsection{An application to packing Steiner trees with restricted degrees}
The following theorem  is a strengthened version of  Theorem 3.1 in~\cite{MR2021107} and can be proved  in the same way,
by replacing the new  improved version of  Corollary~2.9 in~\cite{MR2021107}.
\begin{thm}
{Let $G$ be graph with $S\subseteq V(G)$, where $V(G)\setminus S$ is an independent set. 
If $G$ is $3m$-edge-connected in $S$,  
then it  has a spanning  subgraph  $H$ containing $m$ edge-disjoint Steiner trees spanning $S$
such that for each $v\in S$,
$d_H(v)\le \lceil \frac{2}{3}d_G(v) \rceil +m.$
}\end{thm}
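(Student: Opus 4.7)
The plan is to translate the problem into a hypergraph packing question on the vertex set $S$, and then apply Corollary~\ref{cor:hypergraph:rl-edge} with rank $r=3$ and the constant set function $l\equiv m$, exactly mirroring the strategy of Frank--Kir\'aly--Kriesell~\cite{MR2021107} for their Theorem 3.1 with their rank-$3$ packing corollary replaced by the sharper Corollary~\ref{cor:hypergraph:rl-edge}.

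First, I would associate to $(G,S)$ the hypergraph $\mathcal{H}$ on vertex set $S$ whose hyperedges are the edges of $G[S]$ (as $2$-element hyperedges) together with the neighborhood $N_G(w)\subseteq S$ of each $w\in V(G)\setminus S$ (this is well-defined because $V(G)\setminus S$ is independent). Then $d_{\mathcal{H}}(v)=d_G(v)$ for every $v\in S$, and, by the standard trimming correspondence, an $m$-partition-connected spanning sub-hypergraph of $\mathcal{H}$ decomposes via Theorem~\ref{thm:main:partition-connected:hypergraph} into $m$ edge-disjoint spanning connected sub-hypergraphs which, after trimming each hyperedge to size $2$, lift to $m$ edge-disjoint Steiner trees of $G$ spanning $S$ with the same $S$-degree contribution.

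Following FKK, I would then preprocess $G$ so that every vertex of $V(G)\setminus S$ has degree at most $3$, ensuring that the associated hypergraph has rank $r\le 3$. Concretely, each $w\in V(G)\setminus S$ with $d_G(w)\ge 4$ is replaced by a gadget of new degree-$\le 3$ $V\setminus S$-vertices chosen so that the $S$-degrees are preserved, the edge-connectivity in $S$ remains at least $3m$, and Steiner trees in the modified graph correspond to Steiner trees in $G$ in an edge-disjointness-preserving way. Applying Corollary~\ref{cor:hypergraph:rl-edge} to the resulting rank-$3$, $3m$-edge-connected hypergraph with $l$ the constant $m$ on every nonempty subset of $S$ (which is trivially nonincreasing, intersecting supermodular, and subadditive) yields an $m$-partition-connected spanning sub-hypergraph $H^{\star}$ satisfying
$$d_{H^{\star}}(v)\le \Big\lceil \tfrac{2}{3}\,d_{\mathcal{H}}(v)\Big\rceil + m \;=\; \Big\lceil \tfrac{2}{3}\,d_G(v)\Big\rceil + m \qquad\text{for every } v\in S.$$
Decomposing $H^{\star}$ via Theorem~\ref{thm:main:partition-connected:hypergraph}, trimming each piece to a spanning tree on $S$, and lifting back through the preprocessing then yields the required spanning subgraph $H$ of $G$; the displayed inequality is preserved because trimming only decreases hypergraph degrees.

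The main technical obstacle is the rank-reduction step: the gadget replacing each high-degree $V(G)\setminus S$-vertex must simultaneously preserve the $S$-degrees of $G$, the $3m$-edge-connectivity in $S$, and the Steiner-tree correspondence (a naive binary-tree replacement reduces certain $S$-cuts and must be repaired with carefully chosen edge multiplicities on the internal edges of the gadget). This is precisely the heart of the Frank--Kir\'aly--Kriesell argument that the present paper imports; the only novelty here is that Corollary~\ref{cor:hypergraph:rl-edge} supplies the additional bound $\lceil 2d_G(v)/3\rceil+m$ on each $S$-degree, which was absent from FKK's Corollary 2.9.
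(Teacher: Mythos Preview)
Your proposal is correct and is precisely the approach the paper indicates: it explicitly states (without proof) that the theorem ``can be proved in the same way'' as Theorem~3.1 of Frank--Kir\'aly--Kriesell~\cite{MR2021107}, ``by replacing the new improved version of Corollary~2.9,'' which is exactly Corollary~\ref{cor:hypergraph:rl-edge}. Your description of the hypergraph translation, the rank-$3$ preprocessing, the application of Corollary~\ref{cor:hypergraph:rl-edge} with the constant function $l\equiv m$, and the subsequent decomposition and lifting matches the intended argument.
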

%
%
%
%
%
%
%
%
%
%
%
%
%
%
%
%
%

\end{document}